\DeclareRobustCommand\longtwoheadrightarrow
\DeclareRobustCommand\longhookrightarrow
\DeclareSymbolFontAlphabet{\mathbb}{AMSb}
\DeclareSymbolFontAlphabet{\mathbbl}{bbold}
\newsavebox{\@brx}
\newcommand{\llangle}[1][]{\savebox{\@brx}{\(\m@th{#1\langle}\)}%
  \mathopen{\copy\@brx\kern-0.5\wd\@brx\usebox{\@brx}}}
\newcommand{\rrangle}[1][]{\savebox{\@brx}{\(\m@th{#1\rangle}\)}%
  \mathclose{\copy\@brx\kern-0.5\wd\@brx\usebox{\@brx}}}
\newcommand\reallywidetilde[1]{\ThisStyle{%
  \setbox0=\hbox{$\SavedStyle#1$}%
  \stackengine{-.1\LMpt}{$\SavedStyle#1$}{%
    \stretchto{\scaleto{\SavedStyle\mkern.2mu\AC}{.5150\wd0}}{.6\ht0}%
  }{O}{c}{F}{T}{S}%
}}
\numberwithin{equation}{section}
\renewcommand{\tocsection}[3]{%
	\indentlabel{\@ifnotempty{#2}{\bfseries\ignorespaces#1 #2\quad}}\bfseries#3}
\renewcommand{\tocsubsection}[3]{%
	\indentlabel{\@ifnotempty{#2}{\ignorespaces#1 #2\quad}}#3}
\newcommand\@dotsep{4.5}
\def\@tocline#1#2#3#4#5#6#7{\relax
	\ifnum #1>\c@tocdepth 
	\else
	\par \addpenalty\@secpenalty\addvspace{#2}%
	\begingroup \hyphenpenalty\@M
	\@ifempty{#4}{%
		\@tempdima\csname r@tocindent\number#1\endcsname\relax
	}{%
		\@tempdima#4\relax
	}%
	\parindent\z@ \leftskip#3\relax \advance\leftskip\@tempdima\relax
	\rightskip\@pnumwidth plus1em \parfillskip-\@pnumwidth
	#5\leavevmode\hskip-\@tempdima{#6}\nobreak
	\leaders\hbox{$\m@th\mkern \@dotsep mu\hbox{.}\mkern \@dotsep mu$}\hfill
	\nobreak
	\hbox to\@pnumwidth{\@tocpagenum{\ifnum#1=1\bfseries\fi#7}}\par
	\nobreak
	\endgroup
	\fi}
\renewcommand\csname r@tocindent0\endcsname{0pt}
\def\l@subsection{\@tocline{2}{0pt}{2.5pc}{5pc}{}}
\newcommand{\N}{\mathbb{N}}
\newcommand{\Z}{\mathbb{Z}}
\newcommand{\Q}{\mathbb{Q}}
\newcommand{\R}{\mathbb{R}}
\newcommand{\C}{\mathbb{C}}
\newcommand{\Hq}{\mathbb{H}}
\newcommand{\bk}{\mathbbm{k}}
\newcommand{\Mod}{\operatorname{Mod}}
\newcommand{\AMod}{{}_A\!\Mod}
\newcommand{\ModA}{\Mod_A}
\newcommand{\AModA}{{}_A\!\Mod_A}
\newcommand{\AModB}{{}_A\!\Mod_B}
\newcommand{\FGP}{\operatorname{FGP}}
\newcommand{\Diff}{\operatorname{Diff}}
\newcommand{\WD}{\operatorname{\check{Diff}}}
\newcommand{\CD}{\operatorname{\breve{Diff}}}
\newcommand{\SDiff}{\mathrm{S}\!\Diff}
\newcommand{\NDiff}{\mathrm{N}\!\Diff}
\newcommand{\Symb}{\operatorname{Symb}}
\newcommand{\WS}{\operatorname{\check{Symb}}}
\newcommand{\symb}{\varsigma}
\newcommand{\ZL}{\operatorname{ZL}}
\newcommand{\VF}{\mathfrak{X}^1_d}
\newcommand{\Ann}{\operatorname{Ann}}
\newcommand{\SJ}{\check{J}}
\newcommand{\Sj}{\check{j}}
\newcommand{\Sphat}{\check{p}}
\newcommand{\Spi}{\check{\pi}}
\newcommand{\Siota}{\check{\iota}}
\newcommand{\Sl}{\check{l}}
\renewcommand{\SS}{\check{S}}
\newcommand{\Sr}{\check{r}}
\newcommand{\Scomp}{\check{c}}
\newcommand{\WAsym}{\check{Asym}}
\DeclareFontFamily{U}{mathx}{}
\DeclareFontShape{U}{mathx}{m}{n}{<-> mathx10}{}
\DeclareSymbolFont{mathx}{U}{mathx}{m}{n}
\DeclareMathAccent{\widecheck}{0}{mathx}{"71} 
\newcommand{\CJ}{\breve{J}}
\newcommand{\Cj}{\breve{j}}
\newcommand{\Cpi}{\breve{\pi}}
\newcommand{\Ciota}{\breve{\iota}}
\newcommand{\Col}{\breve{l}}
\newcommand{\CS}{\breve{S}}
\newcommand{\Hom}{\operatorname{Hom}}
\newcommand{\AHom}{{}_A\!\Hom}
\DeclareMathOperator*{\coker}{co{\ker}}
\newcommand{\End}{\operatorname{End}}
\newcommand{\Tor}{\operatorname{Tor}}
\newcommand{\smooth}[1]{{\mathcal{C}^{\infty}\!{({#1})}}}
\newcommand{\id}{\mathrm{id}}
\newcommand{\im}{\mathrm{Im}}
\newcommand{\op}{\mathrm{op}}
\newcommand{\comp}{\operatorname{c}}
\newcommand{\Asym}{\operatorname{Asym}}
\newcommand{\Cl}{\operatorname{\mathcal{Cl}}}
\newcommand{\vsfd}{-15pt}	
\renewcommand{\Im}{\operatorname{Im}}
\theoremstyle{definition}
\newtheorem{defi}{Definition}[section]
\newtheorem{eg}[defi]{Example}
\theoremstyle{plain}
\newtheorem{theo}[defi]{Theorem}
\newtheorem{prop}[defi]{Proposition}
\newtheorem{cor}[defi]{Corollary}
\newtheorem{lemma}[defi]{Lemma}
\newtheorem*{theo*}{Theorem}
\newtheorem*{prop*}{Proposition}
\newtheorem*{cor*}{Corollary}
\newtheorem*{lemma*}{Lemma}
\theoremstyle{remark}
\newtheorem{rmk}[defi]{Remark}
\newcommand{\jetsstorderwrtd}{Proposition~4.6, p.~15}
\newcommand{\jetscorspencerdeltajes}{Corollary~8.31, p.~53}
\newcommand{\jetsproponejses}{Proposition~2.19, p.~10}
\newcommand{\jetspropconnexionsplits}{Proposition~4.10, p.~17}
\newcommand{\jetspropdifferentialoperatorcomposition}{Proposition~10.3, p.~58}
\newcommand{\jetspropfunctorialtwojetseq}{Proposition~7.15, p.~41}
\newcommand{\jetspropoperatorsalsohigherorder}{Proposition~10.2, p.~58}
\newcommand{\jetsproptensorcomparison}{Proposition~8.7, p.~43}
\newcommand{\jetsrmkholprolpi}{Remark~8.15, p.~45}
\newcommand{\jetsrmkiotaholinj}{Remark~8.3, p.~42}
\newcommand{\jetssdifferentialoperators}{§10}
\newcommand{\jetsssSpencer}{§6.3}
\newcommand{\jetssssSplitting}{§2.2.1, p.~8}
\newcommand{\jetssonejetfunctor}{§2}
\newcommand{\jetsssquaternions}{§10.2}
\newcommand{\jetstheohigherwolves}{Theorem~8.30, p.~52}
\newcommand{\jetsdiagtensorcomparison}{(8.13), p.~43}
\newcommand{\jetseqonejetd}{(2.6), p.~7}
\newcommand{\jetseqcommjlmn}{(8.30), p.~46}
\newcommand{\jetseqdefrhon}{(5.8), p.~19}
\newcommand{\jetseqDOfiltration}{(10.3), p.~58}
\newcommand{\jetseqJonedEJonedANdE}{(2.22), p.~9}
\newcommand{\jetseqjpi}{(5.7), p.~19}
\newcommand{\jetseqStwomindd}{(6.29), p.~34}
\newcommand{\jetseqStwomindefNd}{(6.28), p.~34}
\newcommand{\jetsequniquefirstorderlift}{(4.3), p.~15}
\newcommand{\jetsestwojetDt}{(7.7), p.~36}
\newcommand{\jetsexunisymmetricforms}{Example~6.13, p.~31}
\newcommand{\jetsoperatorcomposition}{(10.5), p.~58}
\newcommand{\jetssecDOzero}{§4.2}
\newcommand{\jetssinfinityjets}{§9}
\newcommand{\jetsssimplicittwojet}{§7.2}
\newcommand{\jetssssminfqsf}{§6.2.1}
\newcommand{\jetssssnotation}{§1.2.1}
\newcommand{\jetstheoclassicalnonsemi}{Theorem~5.44, p.~28}
\newcommand{\jetstheoclassicalnjet}{Theorem~8.38, p.~55}
\newcommand{\jetsprophJexact}{Proposition~8.37, p.~55}
\newcommand{\jetsdefdifferentialoperators}{Definition~10.1, p.~58}
\newcommand{\jetslemmasmM}{Lemma~8.17, p.~46}
\newcommand{\jetsdefotherprojections}{Definition~5.9, p.~20}
\newcommand{\jetspropnhjtosh}{Proposition~5.29, p.~23}
\newcommand{\jetspropStensorcomparison}{Proposition~6.15, p.~31}
\newcommand{\jetslemmauniqiota}{Lemma~8.10, p.~44}
\newcommand{\jetspropdefeth}{Proposition~7.6, p.~38}
\newcommand{\jetsproptorsiontwojetses}{Proposition~7.3, p.~36}
\newcommand{\jetspropOmJstab}{Proposition~2.27, p.~11}
\newcommand{\jetspropzeroorder}{Proposition~4.2, p.~15}
\newcommand{\jetspropuniquelift}{Proposition~4.4, p.~15}
\newcommand{\jetsdefiSterminal}{Definition~4.14, p.~17}
\newcommand{\jetsdeftwojet}{Definition~7.7, p.~39}
\newcommand{\jetsproponediffuni}{Proposition~4.3, p.~15}
\newcommand{\jetscorAopzeroorder}{Corollary~10.9, p.~59}
\newcommand{\jetscoralgebrasofDOs}{Corollary~10.8, p.~59}
\newcommand{\jetscordiffcat}{Corollary~10.6, p.~59}
\newcommand{\jetscoruniflat}{Corollary~2.25, p.~11}
\newcommand{\jetsremDOcatareenrichedinMod}{Remark~10.12, p.~59}
\newcommand{\jetsreminfinityjetfunctoronbimodules}{Remark~9.4, p.~56}
\newcommand{\jetsremjetfunctorsonbimodules}{Remark~8.2, p.~42}
\newcommand{\jetsremsemiholjetfunctoronbimodules}{Remark~5.40, p.~27}
\newcommand{\jetsrmkflpj}{Remark~2.12, p.~9}
\newcommand{\jetsrmkgammacomppi}{Remark~8.9, p.~44}
\newcommand{\jetsrmkmaximalextalg}{Remark~6.2, p.~29}
\newcommand{\jetsrmknJbi}{Remark~5.5, p.~18}
\newcommand{\jetsrmknhJexact}{Remark~5.2, p.~18}
\newcommand{\jetsrmkproldop}{Remark~2.20, p.~10}
\newcommand{\jetsssuniversalonejet}{§2.1}
\newcommand{\jetssscman}{§3.1}
\begin{document}

\title{Symbols in Noncommutative Geometry}
\author{Keegan J.~Flood, Mauro Mantegazza, Henrik Winther}
\address{Faculty of Mathematics and Computer Science\\
  UniDistance Suisse\\
  Schinerstrasse 18\\
  3900 Brig\\
  Switzerland}
  \email{keegan.flood@unidistance.ch}
\address{Department of Mathematics and Physics\\
  Charles University\\
  Sokolovsk\'{a} 49/83\\
  186 75 Prague 8\\
  Czech Republic}
  \email{mauro.mantegazza.uni@gmail.com}
\address{Department of Mathematics and Statistics\\
  UiT - The Arctic University of Norway\\
  Hansine Hansens veg 18\\
  N-9019 Tromsø\\
  Norway}
  \email{henrik.winther@uit.no}

\subjclass[2020]{Primary 58A20, 58B34, 16E45, 16S32, 81R60; Secondary 16D90, 16S38, 47F05, 58B32}


\begin{abstract}
	In this paper we prove that the classical Lie bracket of vector fields can be generalized to the noncommutative setting by antisymmetrizing (in a suitable noncommutative sense) their compositions.
	This construction turns out to depend on the representability of linear differential operators, as it relies on the interpretation of vector fields as differential operators.
	In particular we provide necessary and sufficient conditions for (noncommutative) jet modules to be representing objects for differential operators.
	Furthermore, the primary ingredient for guaranteeing the closure of a bracket operation is a treatment of symbols, which classically represent, in an intrinsic way, the highest-order term of a differential operator.
	Thus, we provide an extensive theory of symbols herein.
\end{abstract}

\maketitle

\tableofcontents

\section{Introduction}
Speaking informally, the symbol of a (classical) differential operator is its leading term with respect to partial derivatives \cite{alekseevskij28geometry}.
This object turns out to transform as a (contravariant) tensor under diffeomorphisms, in contrast to the differential operator itself, which transforms in a more complicated way.
This tensoriality means that symbols should be considered as useful geometric objects in their own right.
A striking illustration of this is that the symbol of the Laplace-Beltrami operator $\Delta_g$ of a Riemannian metric $g$ turns out to be precisely the dual metric $g^{-1}$.
Indeed the generalization of the Riemannian example leads to the notion of elliptic operators, defined precisely by the nondegeneracy of their symbol, and by extension, elliptic PDE, for which a vast literature has been produced \cite{vastellipticpde}.
Going further along these lines, a Dirac operator on a (Riemannian) spinor bundle is a differential operator whose symbol is given by the Clifford multiplication \cite{cliffordsymbol}.
There is also a close relationship between connections and symbols.
Connections are equivalent to first order differential operators whose symbols are the identity map (see also Proposition \ref{prop:symbolsofconnections} for the noncommutative analogue of this fact).
Another facet of this interplay is an approach to the idea of quantization.
The algebra of symbols is a commutative one, and thus a section of the symbol map can be seen as a deformation of a commutative algebra into a noncommutative one.
Such a quantization can be facilitated, in the case of symbols of degree $2$, by a choice of a connection and a symmetric covariant tensor of degree $2$.
It turns out that many aspects of differential operators, and in turn the PDE which are associated to them, can be encoded in and computed from the symbol.
One prominent aspect of this is the theory of formal integrability and involutivity of systems of PDE, see for example \cite{GoldschmidtII, Spencer}.

The purpose of the present paper is to generalize as much of this machinery as possible to the setting of noncommutative differential geometry.
The results of this effort will be essential to a self-contained noncommutative theory of differential equations, and in particular, integrability conditions for noncommutative geometric objects.

One can view the Lie bracket as a prototype for this development.
In the setting of differential geometry, the vanishing of (the horizontal part of) the Lie bracket of vector fields, considered as sections of a vector distribution, is the Frobenius integrability of that distribution (a \textit{bona fide} integrability condition).

Previous attempts at generalizing Lie brackets typically depend on extra structure on the algebra.
For example, braided commutators in the case of triangular Hopf algebras, or graded (super)commutators in the case of graded commutative (super)algebras.
In contrast, we utilize a more intrinsic notion of antisymmetrization, which arises naturally from the wedge product (i.e.\ the multiplication map of a DGA).
It is precisely the machinery of symbols, and our associated results about representability of differential operators and symbols, which allow us to show that our notion of antisymmetrization leads to a bracket operation which takes values in vector fields.

In fact all these topics turn out to be closely connected to each other.
We would also like to emphasize that the fact that the duality between jets and differential operators is one of the most important features of classical jet bundles.
Indeed, most of the utility of jet bundles is downstream of this fact.

In our functorial approach to noncommutative differential geometry, this duality amounts to a statement about the representability of the differential operator functors $\Diff^n_d(E, -)$, which associate to each module $F$ the space of linear differential operators of order at most $n$ from $E$ to $F$.

This article is a follow up to \cite{FMW}.
The definitions of jet functors and modules, prolongation maps, differential operators, and the basic properties of and relations between thse objects in the setting of noncommutative differential geometry, can be found there.
\subsection{Some implications}
In the setting of \cite{FMW}, we consider the category of left modules $\AMod$ over a unital associative algebra $A$ to be the noncommutative generalization of the category of vector bundles over a manifold.
This is, indeed, a choice, as one might equally well have started with right modules.
However, selecting right modules at this stage is equivalent to considering the left modules over the opposite algebra $A^{op}$.
Hence, this is a mere choice of convention.

For our constructions, we also equip our algebra $A$ with an \emph{exterior algebra} $\Omega^\bullet_d$, labelled by the \emph{differential} $d\colon \Omega^\bullet_d \rightarrow \Omega^\bullet_d $.
Unlike the matter of right versus left modules, the choice of exterior algebra is essential.

There is a significant body of literature exploring the setting in which these choices have been made.
In this literature, other choices arise which appear largely arbitrary.
First, there are the two notions of right and left vector fields \cite[§2.7]{BeggsMajid}.
\begin{itemize}
\item Which of these should be the generalization of classical vector fields?
\item Second, there is the question of, given a fixed choice of first order calculus, which exterior algebra should one consider?
\end{itemize}
In this work we provide a well-justified answer to the first question, and a partial answer to the second question:
\begin{itemize}
	\item The left vector fields are the ones which naturally appear as differential operators in $\AMod$. 
	\item If one wants holonomic jets and differential operators to have a good correspondence, then the second degree $\Omega^2_d$ of the exterior algebra $\Omega^\bullet_d$ must coincide with the \emph{maximal prolongation} $\Omega^2_{\text{max}, d}$ of $\Omega^1_d$.
\end{itemize}
See §\ref{prop:fields_and_tangent_space} and §\ref{prop:elemental_jet_properties}, respectively, for the full reasoning.
	Crucially, the representability property of the jets of order $2$ is sufficient for giving a more explicit description of the symbol of a holonomic linear differential operator.

\subsection{Overview of results}
Let us consider a unital associative $\bk$-algebra over a unital commutative ring $\bk$, and let $E$ be an object in $\AMod$.
Then we have the functor $\Diff^n_d(E, -)\colon\AMod\to\Mod$, which takes a module $F$ to the space of linear differential operators of order at most $n$ from $E$ to $F$.
We investigate the following question:
\begin{itemize}
\item When is the functor $\Diff^n_d(E, -)$ representable and what is its representing object?
\end{itemize}

To address this question, we construct the natural representing object, which we call the \emph{elemental $n$-jet functor} denoted by $\SJ^n_d$, and we then investigate under what conditions the functors $\SJ^n_d$ and $J^n_d$ coincide. 
The elemental $n$-jet functor is constructed as the image of the natural transformation $\widehat{p}^n_{d}$ (cf.\ \eqref{def:phatn}), defined on an object $E$ as $\widehat{p}^n_{d, E}\colon A \otimes E \rightarrow J^n_d E$, $a\otimes e \mapsto a j^n_{d, E}(e)$.
We show in Proposition \ref{prop:elemental_jets_true_representing} that if $\Diff^n_d(E, -)$ is representable, its representing object is necessarily $\SJ^n_dE$.

When the functor $\Diff^n_d(E, -)$ is representable, we provide a characterization of linear differential operators.
Namely, given a $\bk$-linear map $\Delta\colon E\rightarrow F$ is a linear differential operator of order at most $n$, if and only if the following equation holds (cf.\ Proposition \ref{prop:criterion_WDO}):
\begin{align}
\sum_i a_i \Delta(e_i)=0
&\hfill&
\text{for all }
\sum_i a_i\otimes e_i\in N^n_d(E);
\end{align}
where $N^n_d\colonequals\ker(\widehat{p}^n_d)$ is the kernel functor applied to $\widehat{p}^n_d$, which we call the \emph{functor of differential relations of order $n$} (cf.\ Definition \ref{def:diffrelationsordern}).
This is a generalization to higher order of the condition for first order linear differential operators \cite[\jetsstorderwrtd]{FMW}.

We have a natural transformation $\widehat p^n_d |_{N^{n-1}_d}\colon  N^{n-1}_d\rightarrow S^n_d$.
If $J^{n}_d=\SJ^n_d$, then the functor of symmetric forms $S^n_d$ is generated by prolongations of differential relations (cf.\ Proposition \ref{prop:surjectivephatN} and Definition \ref{def:symgenprolrelations}) in the sense that it is a subfunctor of $\SJ^n_d$, and further satisfies (cf.\ Definition \ref{def:elementalsymforms} and Proposition \ref{prop:elementalexactsequence})
\begin{equation}
	S^n_d = N^{n-1}_d/N^n_d.
\end{equation}

Although our main goal is to further develop noncommutative differential geometry, the generality of our approach means that all constructions apply, in particular, to the classical setting of differential geometry of smooth manifolds.
Due to the classical correspondence results \cite[\jetstheoclassicalnonsemi, \jetstheoclassicalnjet]{FMW}, we may also interpret said constructions as statements about jets in the classical sense.
In this interpretation, our functorial approach is automatically fully coordinate-free, global, and diffeomorphism-invariant.
This perspective allows us to weaken regularity assumptions and generalize some classical results from vector bundles to larger classes of geometric objects, including, but not limited to, quasicoherent sheaves.

Specifially, we extend the classical representability result of differential operators from differential geometry from ${}_{\smooth{M}}\!\FGP$ (vector bundles via Serre-Swan) to the larger category ${}_{\smooth{M}}\!\Mod$, and obtain
\begin{equation}
{}_{\smooth{M}}\!\Hom(J^n_d E,F)
\simeq \Diff^n_d(E,F),
\end{equation}
where $E,F$ and $\Hom$ are interpreted as objects and morphisms in the respective categories (cf.\ Corollary \ref{cor:classicalrepresentability}).

The next line in our noncommutative development is to introduce the symbol maps for linear differential operators.
Symbols can be seen as a natural way to talk about the leading term of a differential operator.
There are two ways to introduce these objects, and we will consider both possibilities and show how they relate to each other in general.
First, we consider $\Symb^n_d(E,F)\colonequals\Diff^n_d(E,F)/\Diff^{n-1}_d(E,F)$ as the fundamental definition (cf.\ Definition \ref{def:symbolquotientdef}), and the symbol of order $n$ of a linear differential operator $\Delta$ of order at most $n$ is just its equivalence class $\symb^n_d(\Delta)=\Delta \mod \Diff^{n-1}_d(E,F)$.
This definition has the useful property that a differential operator of order at most $n$ is actually a differential operator of order $n-1$ if and only if its symbol of order $n$ vanishes.
The second possibility is to consider the map $\Delta \mapsto(\widetilde{\Delta}\circ \iota^n_{d,E}\colon S^n_dE \rightarrow F)$, defined for differential operators $\Delta\colon E \rightarrow F$ of order at most $n$.
This map has the benefit of being readily computable.
If the symmetric forms are generated by prolongations of differential relations, then the two notions are related in the following way:
The map 
\begin{align}
r^n_{d,E,F}\colon\Symb^n_d(E,F)\longrightarrow \AHom(S^n_d(E),F),
&\hfill&
\symb^n_d(\Delta)\longmapsto \widetilde{\Delta}\circ \iota^n_{d,E}
\end{align}
is well-defined and natural in $E$ and $F$ (cf.\ Proposition \ref{prop:symbols_representation_well-defined}).
Moreover, if some further homological conditions are satisfied, $r^n_{d,E,F}$ is a monomorphism and we can identify symbols of a differential operator $\Delta\colon E\to F$ with the restriction to $S^n_d(E)$ of their lift to $J^n_d E$, that is
\begin{equation}
\symb^n_d(\Delta)=\widetilde{\Delta}\circ \iota^n_{d,E}\colon S^n_d(E)\longrightarrow F.
\end{equation}
This result forms the cornerstone of our development of the bracket of vector fields.

Before we consider the bracket, we need to understand what is meant by a vector field in classical differential geometry.
Our point of view is that a vector field is a differential operator of order at most $1$ which annihilates locally constant functions (and in fact the latter forces nonzero vector fields to be of order exactly $1$).
It also happens to hold that the locally constant functions coincide with the kernel of the de Rham differential on the algebra of smooth functions.
Thus, given a first order differential calculus $d\colon A\to\Omega^1_d$, we denote the vector fields (cf.\ Definition \ref{def:vectorfields}) on $A$ by
\begin{equation}
	\VF\colonequals \Ann(\ker(d))\cap \Diff^1_d(A,A),
\end{equation}
where $\operatorname{Ann}$ is the annihilator.
There is also another point of view on classical vector fields, considered as the dual bundle to the differential $1$-forms, cf.\ \cite[§2.7]{BeggsMajid}.
This gives rise to the left and right vector fields mentioned above.
We show that vector fields in our sense are naturally identified with, and isomorphic as bimodules to, the left vector fields (cf.\ Proposition \ref{prop:tangentbundle_is_bundle}).
Once the bimodule structure is established, we show that vector fields (seen as differential operators) satisfy a generalized form of the Leibniz rule (cf.\ Proposition \ref{prop:generalized_Leibniz}),
\begin{equation}
X(ab)= a(X(b)) + (bX)(a).
\end{equation}
Of course, this Leibniz rule reduces to the classical one in the commutative setting (cf.\ Corollary \ref{cor:commutative_generalised_Leibniz_is_Leibniz}).

The vector fields do not, in general, form a subalgebra of the algebra of differential operators on $A$.
However, certain linear combinations of compositions do turn out to always be vector fields.
Our last line of development in this paper is motivated by these considerations, and leads to the development of the brackets of vector fields.

In §\ref{ss:brakets_VF}, we consider the $\bk$-tensor product $\VF \otimes \VF$.
The composition of vector fields can be considered as a map from this space to $\Diff^2_d(A,A)$.
The first step of obtaining a bracket of vector fields is to describe the preimage of $\Diff^1_d(A,A)\subseteq \Diff^2_d(A,A)$ under this map.
With the assumption that $\Omega^2_d = \Omega^2_{d, max}$, i.e.\ the maximal prolongation of $\Omega^1_d$ to second degree, we obtain that this preimage is
\begin{align}
\Asym(\VF)
= \otimes_A^{-1}(\Ann(S^2_d))
\subseteq \VF \otimes \VF,
\end{align}
i.e.\ it coincides with the preimage of the annihilator of the second degree symmetric forms over the natural projection $\otimes_A\colon \VF \otimes \VF \rightarrow \VF \otimes_A \VF$ (cf.\ Definition \ref{def:Asym} and Theorem \ref{theo:asymtodiff}).

This immediately yields the generalization of the Lie bracket in the sense of the map
\begin{equation}
	\Lambda^2 \VF \longrightarrow \VF
\end{equation}
from differential geometry, where our space $\Asym(\VF)$ plays the r\^ole of the classical $\Lambda^2 \VF$.

However, it is desirable to also see the Lie bracket as a binary operation $\VF \times \VF \rightarrow \VF$.
We accomplish this by introducing a map $\wp\colon\VF \otimes \VF \longrightarrow\Asym(\VF)$ (cf.\ Definition \ref{def:quantumliebracket}), which classically corresponds to the alternating sum $X\otimes Y\mapsto X\otimes Y-Y\otimes X$ (cf.\ Proposition \ref{prop:classicalLiebracket}).
This immediately yields a bracket of vector fields.

On the other hand, one is often interested in examples where $\Omega^2_d\ne \Omega^2_{d,max}$, (cf. \cite{woronowicz1989}).
Therefore, in §\ref{ss:elemental_brakets_VF}, we construct an analogous bracket of vector fields using the machinery of elemental differential operators.
Moreover, we provide a characterization of these brackets (cf. \ Theorem \ref{thm:bracketsarebrackets}).
In this case, we only require that $\Omega^1_d$ is flat in $\ModA$.

In §\ref{ss:bracket_examples}, we consider a few classes of examples where the choice of $\wp$ is natural, including Clifford algebras, finite group algebras, and more generally algebras equipped with inner products (cf.\ Theorem \ref{theo:inner_product_brackets} and Corollary \ref{cor:clifford_brackets}).
Of course, the skew-symmetrization map from differential geometry provides an additional example (cf.\ Proposition \ref{prop:classicalLiebracket}).

In addition to this, a few technical results concerning jets for an algebra equipped with the universal exterior algebra will be necessary for our treatment of the results outlined above. 
Since this topic is somewhat outside of the main line of development of this paper, we provide the details in Appendix \ref{section:appendix}.

\subsection{Notation}
We will mostly adopt the notation conventions of \cite[\jetssssnotation]{FMW}.
Compared to \cite{FMW}, in this paper we often treat $(A,B)$-bimodules instead of left $A$-modules, as it is the more general case.
Of course, we can obtain results for left $A$-modules and $A$-bimodules by letting $B$ be $\bk$ and $A$, respectively.
Recall that all results from \cite{FMW} can be formulated for $(A,B)$-modules, c.f.\ \cite[\jetsrmknJbi, \jetsremsemiholjetfunctoronbimodules, \jetsremjetfunctorsonbimodules, \jetsreminfinityjetfunctoronbimodules]{FMW}.

While we present the most general results in $\AModB$, we chose to treat differential operators only on $\AMod$ in order to avoid confusion.
An alternative could be to ask that a $\bk$-linear map between $(A,B)$-modules is a differential operator if it lifts to a jet module as an $(A,B)$-bilinear map.
However, even when we consider $A$-bimodules, such as jet modules or exterior algebras, we are only interested in maps between them being a differential operator only with respect to the left action, as imposing the bilinearity of the lift would severely limit the possibilities.

\subsection*{Acknowledgments}
The authors thank Paolo Aschieri, Andreas \v{C}ap, Rod Gover, Boris Kruglikov, Peter Michor, Jan Slovak, and Thomas Weber for useful discussions.
K.J.F.\ was partially supported by the priority program SPP 2026 \emph{Geometry at Infinity}
of DFG.
M.M.\ was supported by the Charles University PRIMUS grant \emph{Spectral Noncommutative Geometry of Quantum Flag Manifolds} PRIMUS/21/SCI/026.
H.W.\ was partially supported by the UiT Aurora project MASCOT.
This article/publication is based upon work from COST Action CA21109, supported by COST (European Cooperation in Science and Technology).

\section{Representability of the functors of differential operators}
\label{s:Representability}
In this section we analyze a property that characterizes jet bundles in classical differential geometry.
Namely, that of being a representative object for differential operators.
We will thus study when this property extends to the noncommutative case.
\subsection{Jet modules as representing objects}\label{ss:jetmodulesrepresentingobjects}
Let $A$ be a unital associative algebra equipped with an exterior algebra over it.
There exists a natural transformation $\widehat{p}^n_d\colon A\otimes - \to J^n_d$, explicitly defined for any $E$ in $\AModB$ as follows
\begin{align}
\label{def:phatn}
\widehat{p}^n_{d,E}\colon A\otimes E\longrightarrow J^n_d E,
&\hfill&
a\otimes e\longmapsto a j^n_{d,E} (e).
\end{align}
By construction, this map is $(A,B)$-bilinear.
\begin{defi}\label{def:diffrelationsordern}
We denote the kernel of $\widehat{p}^n_d$ by $N^n_d$ and we refer to it as the \emph{functor of differential relations of order $n$ with respect to $d$}.
\end{defi}
The relations between jet prolongations and jet projections (cf.\ \cite[\jetsrmkholprolpi]{FMW}) give us the following commutative diagram for all $m\le n$.
\begin{equation}\label{diag:phat_compatible_with_pi}
\begin{tikzcd}
A\otimes-\ar[d,"\widehat{p}^n_d"']\ar[dr,"\widehat{p}^m_d"]&[30pt]\\
J^n_d\ar[r,"\pi^{n,m}_d"]&J^m_d
\end{tikzcd}
\end{equation}

\begin{rmk}\label{rmk:prosphat}\
\begin{enumerate}
\item In low degrees, we have $N^0_d=\Omega^1_u$ and $N^1_d=N_d$, cf.\ \cite[\jetseqJonedEJonedANdE]{FMW}.
\item Commutativity of \eqref{diag:phat_compatible_with_pi} implies that for all $m\le n$ we have $N^n_d\subseteq N^m_d$.
Thus, giving the following chain of inclusions
\begin{equation}
\Omega^1_u\supseteq N_d\supseteq N^2_d\supseteq \cdots\supseteq N^n_d\supseteq \cdots.
\end{equation}
\item For $n\ge 1$, the functor $A\otimes -$ is the $n$-jet functor for the universal exterior algebra (cf.\ Theorem \ref{theo:universal_higher_jets} in Appendix \ref{section:appendix}), so we can interpret $\widehat{p}^n_d$ as a map $J^n_u\to J^n_d$.
\item For the universal exterior algebra, $N^0_u=\Omega^1_u$ and $N^n_u=0$ for all $n>0$.
\end{enumerate}
\end{rmk}

We can consider the influence of this map on lifts of differential operators.
\begin{theo}\label{theo:do_lift_uniquely}
	Let $\Omega^{\bullet}$ be an exterior algebra over $A$ and $E$ in $\AMod$.
	The following are equivalent.
	\begin{enumerate}
		\item\label{theo:do_lift_uniquely:i} The map $\widehat{p}^n_{d,E}$ is surjective;
		\item \label{theo:do_lift_uniquely:ii} The map $\widehat{p}^m_{d,E}$ is surjective for all $m\le n$ such that $\pi^{n,m}_{d,E}$ is surjective;
		\item\label{theo:do_lift_uniquely:iii} For any $F$ in $\AMod$ and $\Delta \in \Diff^n_d(E,F)$, the $n$-jet lift $\widetilde{\Delta}\in \AHom(J^n_d E,F)$ is unique;
		\item\label{theo:do_lift_uniquely:iv} For any $F$ in $\AMod$ and $\Delta \in \Diff^m_d(E,F)$ with $m \le n$ such that $\pi^{n,m}_{d,E}$ is surjective, the $m$-jet lift $\widetilde{\Delta}\in \AHom(J^m_d E,F)$ is unique.
	\end{enumerate}
\end{theo}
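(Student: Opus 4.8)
The plan is to reduce all four statements to a single levelwise fact and then to assemble the equivalences by a short diagram chase based on \eqref{diag:phat_compatible_with_pi}. The basic observation is that, for every $m$, the image of $\widehat{p}^m_{d,E}$ is exactly the left $A$-submodule of $J^m_d E$ generated by $j^m_{d,E}(E)$, since $\widehat{p}^m_{d,E}(a\otimes e)=a\,j^m_{d,E}(e)$. I would therefore first isolate the following \emph{levelwise claim}: $\widehat{p}^m_{d,E}$ is surjective if and only if, for every $F$ in $\AMod$ and every $\Delta\in\Diff^m_d(E,F)$, the $m$-jet lift $\widetilde{\Delta}\in\AHom(J^m_d E,F)$ is unique.

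For the forward direction of the claim, recall that a lift satisfies $\widetilde{\Delta}\circ j^m_{d,E}=\Delta$, so any two lifts agree on $j^m_{d,E}(E)$; being $A$-linear they then agree on the submodule it generates, which is all of $J^m_d E$ once $\widehat{p}^m_{d,E}$ is surjective, whence uniqueness. For the converse, which I expect to carry the only real content, I would argue contrapositively: if $\widehat{p}^m_{d,E}$ is not surjective, set $M\colonequals\operatorname{im}(\widehat{p}^m_{d,E})$, a proper left $A$-submodule of $J^m_d E$, and take $F\colonequals J^m_d E/M$. The zero map $E\to F$ is a differential operator of order at most $m$, and its lifts are precisely the $A$-linear maps $J^m_d E\to F$ vanishing on $j^m_{d,E}(E)$, equivalently on $M$. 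Both the zero map and the nonzero canonical projection $J^m_d E\to J^m_d E/M$ are such lifts, so uniqueness fails. The points to verify carefully are that $M$ is genuinely a left $A$-submodule (which follows from the $A$-linearity of $\widehat{p}^m_{d,E}$ recorded just after \eqref{def:phatn}) and that the zero operator is admissible as an order-$m$ differential operator.

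With the levelwise claim in hand, the four equivalences follow formally. Statement \eqref{theo:do_lift_uniquely:i} is the $m=n$ case of surjectivity and \eqref{theo:do_lift_uniquely:iii} is the $m=n$ case of lift-uniqueness, so the claim at $m=n$ yields \eqref{theo:do_lift_uniquely:i}$\Leftrightarrow$\eqref{theo:do_lift_uniquely:iii}. For \eqref{theo:do_lift_uniquely:i}$\Leftrightarrow$\eqref{theo:do_lift_uniquely:ii} I would use the commutative triangle \eqref{diag:phat_compatible_with_pi}, that is $\widehat{p}^m_{d,E}=\pi^{n,m}_{d,E}\circ\widehat{p}^n_{d,E}$: if $\widehat{p}^n_{d,E}$ and $\pi^{n,m}_{d,E}$ are surjective then so is the composite $\widehat{p}^m_{d,E}$, giving \eqref{theo:do_lift_uniquely:i}$\Rightarrow$\eqref{theo:do_lift_uniquely:ii}, while taking $m=n$ and using $\pi^{n,n}_{d,E}=\id$ recovers \eqref{theo:do_lift_uniquely:i} from \eqref{theo:do_lift_uniquely:ii}. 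Finally \eqref{theo:do_lift_uniquely:ii}$\Leftrightarrow$\eqref{theo:do_lift_uniquely:iv} is the levelwise claim applied separately at each $m\le n$ for which $\pi^{n,m}_{d,E}$ is surjective. The substantive content thus lies entirely in the levelwise claim; everything else is the diagram identity together with $\pi^{n,n}_{d,E}=\id$ and the closure of surjections under composition.
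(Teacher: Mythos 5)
Your proposal is correct, and the two substantive arguments in it coincide with the paper's: your forward direction of the levelwise claim is exactly the paper's proof of \eqref{theo:do_lift_uniquely:i}$\implies$\eqref{theo:do_lift_uniquely:iii} (two lifts agree on $j^m_{d,E}(E)$, hence by $A$-linearity on $Aj^m_{d,E}(E)=\operatorname{im}(\widehat{p}^m_{d,E})$), and your contrapositive converse --- quotienting $J^m_d E$ by $M=\operatorname{im}(\widehat{p}^m_{d,E})$ to exhibit the nonzero projection as a second lift of the zero operator --- is precisely the paper's cokernel argument for \eqref{theo:do_lift_uniquely:iv}$\implies$\eqref{theo:do_lift_uniquely:ii}, rephrased. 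Where you genuinely differ is the assembly. The paper runs the cycle \eqref{theo:do_lift_uniquely:i}$\implies$\eqref{theo:do_lift_uniquely:iii}$\implies$\eqref{theo:do_lift_uniquely:iv}$\implies$\eqref{theo:do_lift_uniquely:ii}$\implies$\eqref{theo:do_lift_uniquely:i}, and its step \eqref{theo:do_lift_uniquely:iii}$\implies$\eqref{theo:do_lift_uniquely:iv} transfers \emph{uniqueness} downward from level $n$ to level $m$: an $m$-jet lift precomposed with $\pi^{n,m}_{d,E}$ is an $n$-jet lift (via \cite[\jetspropoperatorsalsohigherorder]{FMW}), so uniqueness at $n$ together with surjectivity of $\pi^{n,m}_{d,E}$ forces uniqueness at $m$. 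You instead prove the full biconditional at each fixed level and transfer \emph{surjectivity} downward through the triangle $\widehat{p}^m_{d,E}=\pi^{n,m}_{d,E}\circ\widehat{p}^n_{d,E}$ of \eqref{diag:phat_compatible_with_pi}. Your route is more modular: the only nonformal content is isolated in a single levelwise statement, and you avoid any appeal to the fact that order-$m$ operators admit order-$n$ lifts, at the cost of invoking the quotient construction at every relevant level rather than once. The two points you flagged for verification do go through: $\operatorname{im}(\widehat{p}^m_{d,E})$ is a left $A$-submodule because $\widehat{p}^m_{d,E}$ is $A$-linear, and the zero map is an order-$m$ differential operator since $0=0\circ j^m_{d,E}$.
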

\begin{proof}
For \eqref{theo:do_lift_uniquely:i}$\implies$\eqref{theo:do_lift_uniquely:iii}, let $\Delta\in\Diff^n_d(E,F)$, and let $\widetilde{\Delta}_1,\widetilde{\Delta}_2\colon J^n_d E\to F$ be two lifts of $\Delta$.
By $A$-linearity, $\widetilde{\Delta}_1\circ \widehat{p}^n_{d,E}(a\otimes e)=a\Delta(e)=\widetilde{\Delta}_2\circ \widehat{p}^n_{d,E}(a\otimes e)$.
Surjectivity of $\widehat{p}^{n}_{d,E}$ implies $\widetilde{\Delta}_1=\widetilde{\Delta}_2$, whence the uniqueness of the lift.
For \eqref{theo:do_lift_uniquely:iii}$\implies$\eqref{theo:do_lift_uniquely:iv}, let $\Delta\in\Diff^m_d(E,F)$, and let $\widetilde{\Delta}_1,\widetilde{\Delta}_2\colon J^m_d E\to F$ be two lifts of $\Delta$.
By precomposing them with $\pi^{n,m}_{d,E}$, we obtain lifts of $\Delta$ to $J^n_d E$, cf.\ \cite[\jetspropoperatorsalsohigherorder]{FMW}.
By uniqueness of the lift at $n$, we get $\widetilde{\Delta}_1\circ \pi^{n,m}_{d,E}=\widetilde{\Delta}_2\circ \pi^{n,m}_{d,E}$.
Surjectivity of $\pi^{n,m}_{d,E}$ implies $\widetilde{\Delta}_1=\widetilde{\Delta}_2$, and hence uniqueness of the lift.
For \eqref{theo:do_lift_uniquely:iv}$\implies$\eqref{theo:do_lift_uniquely:ii}, consider the cokernel projection of the map $\widehat{p}^m_{d,E}$.
This is an $A$-linear map $J^m_d E\to \coker(\widehat{p}^m_{d,E})$ which vanishes on the image of $\widehat{p}^m_{d,E}$, that is $Aj^m_{d,E}(E)$.
In particular, it vanishes when precomposed with $j^m_{d,E}$.
Thus, the cokernel projection lifts $0\colon E\to F$, as does the zero map.
By \eqref{theo:do_lift_uniquely:iv}, the cokernel projection, and hence the cokernel, vanishes.
Thus, $\widehat{p}^m_{d,E}$ is surjective.
The final implication \eqref{theo:do_lift_uniquely:ii}$\implies$\eqref{theo:do_lift_uniquely:i} follows as $\pi^{n,n}_{d,E}=\id_E$ is an epi.
\end{proof}
\begin{rmk}
Points \eqref{theo:do_lift_uniquely:ii} and \eqref{theo:do_lift_uniquely:iv} hold for all $m\le n$ if in particular all $m$-jet exact sequences are exact for $m\le n$.
\end{rmk}
The composition with $j^n_{d,E}$ realizes a map $\AHom(J^n_d E,F)\to \Diff^n_d(E,F)$ which is natural in both $E$ and $F$, so in particular it induces a natural transformation of functors $\AMod\to \Mod$
\begin{equation}\label{eq:epi_defining_Diff}
\AHom(J^n_d E,-)\longtwoheadrightarrow \Diff^n_d(E,-).
\end{equation} 
This map is surjective by definition, and Theorem \ref{theo:do_lift_uniquely} provides a criterion for injectivity, making it a natural isomorphism.
In other words, under these hypotheses, the functor $\Diff^n_d(E,-)$ is representable, giving us the following.
\begin{cor}\label{cor:representability_diff_op}
	The jet module $J^n_d E$ is a representing object for linear differential operators of order at most $n$ on $E$, i.e.\ for the functor $\Diff^n_d(E,-)$, if and only if the equivalent conditions from Theorem \ref{theo:do_lift_uniquely} hold.
\end{cor}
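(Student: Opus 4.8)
The plan is to obtain Corollary \ref{cor:representability_diff_op} directly from the surjective natural transformation \eqref{eq:epi_defining_Diff} together with Theorem \ref{theo:do_lift_uniquely}. The key observation is that representability of $\Diff^n_d(E,-)$ is, almost by definition, the statement that the canonical natural transformation $\AHom(J^n_d E,-)\longtwoheadrightarrow \Diff^n_d(E,-)$ is an isomorphism. Since this transformation is already known to be a natural \emph{epimorphism} (it is componentwise surjective because every differential operator lifts), the only remaining content is its injectivity. So the whole corollary reduces to identifying injectivity of this map with the uniqueness-of-lifts condition, which is exactly item \eqref{theo:do_lift_uniquely:iii} of the theorem.

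First I would make the representability statement precise: $J^n_d E$ represents $\Diff^n_d(E,-)$ precisely when the natural transformation $\eta_F\colon\AHom(J^n_d E,F)\to\Diff^n_d(E,F)$ sending $\phi\mapsto\phi\circ j^n_{d,E}$ is a natural isomorphism. Both sides are functors $\AMod\to\Mod$ and $\eta$ is natural in $F$, so it suffices to check that each component $\eta_F$ is a bijection. Surjectivity of each $\eta_F$ holds unconditionally, as recorded in \eqref{eq:epi_defining_Diff}. For injectivity, note that $\eta_F(\phi_1)=\eta_F(\phi_2)$ means $\phi_1$ and $\phi_2$ are two $n$-jet lifts of the same differential operator $\Delta\colonequals\phi_1\circ j^n_{d,E}$; hence $\eta_F$ is injective for all $F$ if and only if every $\Delta\in\Diff^n_d(E,F)$ admits at most one lift $\widetilde{\Delta}\in\AHom(J^n_d E,F)$.

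This last condition is exactly item \eqref{theo:do_lift_uniquely:iii} of Theorem \ref{theo:do_lift_uniquely}, quantified over all $F$ in $\AMod$. By the equivalences proved there, condition \eqref{theo:do_lift_uniquely:iii} holds if and only if any (equivalently, all) of the listed conditions \eqref{theo:do_lift_uniquely:i}--\eqref{theo:do_lift_uniquely:iv} hold. Chaining these observations gives: $\Diff^n_d(E,-)$ is representable with representing object $J^n_d E$ $\iff$ each $\eta_F$ is bijective $\iff$ each $\eta_F$ is injective $\iff$ lifts are unique for all $F$ $\iff$ the equivalent conditions of the theorem hold. This yields the corollary.

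I do not expect a serious obstacle here, since the heavy lifting is entirely contained in Theorem \ref{theo:do_lift_uniquely}; the corollary is a packaging of that theorem in the language of representable functors. The one point requiring minor care is the Yoneda-style bookkeeping: one must confirm that naturality of $\eta$ in $F$ is genuinely established (this is the naturality asserted just before \eqref{eq:epi_defining_Diff}) and that ``representing object'' is taken to mean a natural isomorphism $\AHom(J^n_d E,-)\cong\Diff^n_d(E,-)$ with the specific unit given by composition with $j^n_{d,E}$, rather than some a priori different isomorphism. Since uniqueness of the representing object up to canonical isomorphism is automatic, and the candidate transformation $\eta$ is the natural one, this subtlety is purely formal and does not affect the argument.
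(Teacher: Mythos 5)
Your proposal is correct and is essentially the paper's own argument: the paper likewise observes that the canonical map \eqref{eq:epi_defining_Diff} given by composition with $j^n_{d,E}$ is surjective by definition, and that Theorem \ref{theo:do_lift_uniquely}\eqref{theo:do_lift_uniquely:iii} is precisely the injectivity criterion, so the corollary is immediate. Your closing remark on reading ``representing object'' via the canonical transformation rather than an abstract isomorphism is the right caveat, and it matches the paper's intended reading (the abstract-representability question is treated separately in Proposition \ref{prop:elemental_jets_true_representing}).
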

\begin{cor}\label{cor:universal_representability}
The universal jet module $J^n_u E$ is a representing object for linear differential operators of order at most $n$ on $E$, with respect to $\Omega^1_u$.
\end{cor}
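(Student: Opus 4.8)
The plan is to deduce this as an immediate specialization of Corollary~\ref{cor:representability_diff_op}: that result says $J^n_u E$ represents $\Diff^n_u(E,-)$ exactly when one of the equivalent conditions of Theorem~\ref{theo:do_lift_uniquely} holds for the universal exterior algebra, so it suffices to check condition~\eqref{theo:do_lift_uniquely:i}, namely surjectivity of $\widehat{p}^n_{u,E}\colon A\otimes E\to J^n_u E$. I would verify this directly from the structural facts already recorded for the universal calculus in Remark~\ref{rmk:prosphat}, so that no new computation is needed.

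For $n\ge 1$, Remark~\ref{rmk:prosphat}(iii) identifies the functor $A\otimes-$ with the universal jet functor $J^n_u$ (via Theorem~\ref{theo:universal_higher_jets} in Appendix~\ref{section:appendix}), and under this identification $\widehat{p}^n_{u,E}$ is the canonical map $a\otimes e\mapsto a\,j^n_{u,E}(e)$ realizing the isomorphism; in particular it is surjective. This is consistent with Remark~\ref{rmk:prosphat}(iv), which records $\ker(\widehat{p}^n_{u,E})=N^n_u E=0$ for $n>0$, supplying the injectivity half. For $n=0$ one has $J^0_u E=E$ and $\widehat{p}^0_{u,E}$ is the action map $a\otimes e\mapsto a\cdot e$, which is surjective since $A$ is unital (note $1\otimes e\mapsto e$). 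Hence condition~\eqref{theo:do_lift_uniquely:i} holds for every $n\ge 0$, and Corollary~\ref{cor:representability_diff_op} yields the claim.

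There is no real obstacle here: all of the work sits upstream, in the appendix identification $A\otimes-\cong J^n_u$ and the vanishing $N^n_u=0$. The only point requiring slight care is matching conventions, namely that the map realizing $A\otimes-\cong J^n_u$ is exactly $\widehat{p}^n_{u,E}$ (equivalently, that the universal jet prolongation corresponds to $e\mapsto 1\otimes e$), so that surjectivity of $\widehat{p}^n_{u,E}$ is literally condition~\eqref{theo:do_lift_uniquely:i} rather than merely surjectivity of some other comparison map.
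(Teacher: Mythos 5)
Your proposal is correct and follows essentially the same route as the paper: both reduce to Corollary~\ref{cor:representability_diff_op} by verifying surjectivity of $\widehat{p}^n_{u,E}$, which by Theorem~\ref{theo:universal_higher_jets} is $\id_{A\otimes E}$ for $n>0$ (since $j^n_{u,E}(e)=1\otimes e$) and the multiplication map for $n=0$. Your extra remarks on $N^n_u=0$ and on matching the comparison map with $\widehat{p}^n_{u,E}$ are sound but not needed beyond what the paper's one-line argument already contains.
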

\begin{proof}
The map $\widehat{p}^n_{u,E}$ is always surjective, being $\id_{A\otimes E}$ for $n>0$, and the multiplication map for $n=0$, (cf.\ Theorem \ref{theo:universal_higher_jets} in Appendix \ref{section:appendix}).
\end{proof}

We will now assume the $n$-jet exact sequence to be left exact.
From \eqref{diag:phat_compatible_with_pi} we obtain the following diagram, where $\widehat{\iota}_{N^n_d(E)}$ is the kernel inclusion for $\widehat{p}^n_d$ and the left vertical map is obtained by the kernel universal property.
\begin{equation}\label{diag:unitogeneral}
	\begin{tikzcd}
		 0\ar[r]& N^{n-1}_d \ar[r,hook,"\widehat{\iota}_{N^{n-1}_d}"]\ar[d,"\widehat{p}^{n}_d|_{N^{n-1}_d}"'] &[30pt] A\otimes - \ar[r,"\widehat{p}^{n-1}_d"] \ar[d,"\widehat{p}^{n}_d"] &[30pt] J^{n-1}_d\ar[r,two heads]\ar[d,equals]& \coker(\widehat{p}^{n-1}_d)\\
		 0\ar[r]& S^{n}_d \ar[r,hook,"\iota^{n}_d"] & J^{n}_d \ar[r, "\pi^{n,n-1}_d"] & J^{n-1}_d
	\end{tikzcd}
\end{equation}

\begin{prop}\label{prop:surjectivephatN}
	Let the  $n$-jet sequence be left exact at $E$ in $\AModB$, then the following are equivalent:
	\begin{enumerate}
		\item\label{prop:surjectivephatN:1} $(\widehat{p}^{n}_d|_{N^{n-1}_d})_E$ is surjective;
		\item\label{prop:surjectivephatN:2} $\iota^n_{d,E}(S^{n}_d(E)) \subseteq Aj^{n}_{d,E}(E)$.
	\end{enumerate}
\end{prop}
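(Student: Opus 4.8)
The plan is to read off both implications as short diagram chases in \eqref{diag:unitogeneral}, evaluated at $E$. The starting observation is that the image of $\widehat{p}^n_{d,E}$ is exactly $Aj^n_{d,E}(E)$, by the defining formula $\widehat{p}^n_{d,E}(a\otimes e)=aj^n_{d,E}(e)$; hence condition \eqref{prop:surjectivephatN:2} is precisely the inclusion $\iota^n_{d,E}(S^n_d(E))\subseteq\im(\widehat{p}^n_{d,E})$. The two facts I will lean on are that the left-hand square of \eqref{diag:unitogeneral} commutes, i.e.\ $\iota^n_{d,E}\circ(\widehat{p}^n_d|_{N^{n-1}_d})_E=\widehat{p}^n_{d,E}\circ\widehat{\iota}_{N^{n-1}_d}$, and that $\iota^n_{d,E}$ is a monomorphism satisfying $\pi^{n,n-1}_{d,E}\circ\iota^n_{d,E}=0$ (the latter by left exactness of the bottom row at $E$).

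For \eqref{prop:surjectivephatN:1}$\implies$\eqref{prop:surjectivephatN:2}, I would take an arbitrary $s\in S^n_d(E)$, use surjectivity to pick $x\in N^{n-1}_d(E)$ with $(\widehat{p}^n_d|_{N^{n-1}_d})_E(x)=s$, and then apply $\iota^n_{d,E}$. Commutativity of the left square rewrites $\iota^n_{d,E}(s)$ as $\widehat{p}^n_{d,E}$ applied to the image of $x$ in $A\otimes E$, placing it in $\im(\widehat{p}^n_{d,E})=Aj^n_{d,E}(E)$, which is exactly \eqref{prop:surjectivephatN:2}.

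For the converse \eqref{prop:surjectivephatN:2}$\implies$\eqref{prop:surjectivephatN:1}, I would again fix $s\in S^n_d(E)$ and use the inclusion to write $\iota^n_{d,E}(s)=\widehat{p}^n_{d,E}(y)$ for some $y\in A\otimes E$. The key step is to verify $y\in N^{n-1}_d(E)$: applying $\pi^{n,n-1}_{d,E}$ and using commutativity of \eqref{diag:phat_compatible_with_pi} (with the right vertical map being the identity) gives $\widehat{p}^{n-1}_{d,E}(y)=\pi^{n,n-1}_{d,E}(\widehat{p}^n_{d,E}(y))=\pi^{n,n-1}_{d,E}(\iota^n_{d,E}(s))=0$, so $y$ indeed lies in $\ker(\widehat{p}^{n-1}_{d,E})=N^{n-1}_d(E)$. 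Then $\iota^n_{d,E}\bigl((\widehat{p}^n_d|_{N^{n-1}_d})_E(y)\bigr)=\widehat{p}^n_{d,E}(y)=\iota^n_{d,E}(s)$, and injectivity of $\iota^n_{d,E}$ forces $(\widehat{p}^n_d|_{N^{n-1}_d})_E(y)=s$, giving surjectivity.

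There is essentially no hard obstacle here; the argument is a routine chase once the maps are matched up. The only place requiring care is the converse direction, where one must confirm that the chosen preimage $y$ lands in $N^{n-1}_d(E)$ rather than merely in $A\otimes E$—and this is precisely where the left exactness hypothesis on the $n$-jet sequence enters, through the identification of $S^n_d(E)$ with $\ker(\pi^{n,n-1}_{d,E})$ and the relation $\pi^{n,n-1}_{d,E}\circ\iota^n_{d,E}=0$.
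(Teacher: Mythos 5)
Your proposal is correct and follows essentially the same route as the paper's proof: both implications are diagram chases in \eqref{diag:unitogeneral}, with the forward direction read off from commutativity of the left square and the converse obtained by lifting $\iota^n_{d,E}(\xi)=\widehat{p}^n_{d,E}(y)$ and using $\pi^{n,n-1}_{d}\circ\widehat{p}^n_d=\widehat{p}^{n-1}_d$ to place $y$ in $N^{n-1}_d(E)$. Your explicit appeal to injectivity of $\iota^n_{d,E}$ at the final step is a detail the paper leaves implicit, but the argument is the same.
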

\begin{proof}
	We evaluate \eqref{diag:unitogeneral} at $E$.
	The implication \eqref{prop:surjectivephatN:1}$\implies$ \eqref{prop:surjectivephatN:2} follows from the commutativity of the left square.
	For \eqref{prop:surjectivephatN:2}$\implies$ \eqref{prop:surjectivephatN:1}, let $\xi\in S^{n}_d(E)$.
	Since $\iota^n_{d,E}(S^{n}_d(E))\subseteq Aj^{n}_{d,E}(E)$, we can write $\xi=\sum_i a_i j^{n}_{d,E}(e_i)$ for some $a_i\in A$ and $e_i\in E$.
	Furthermore, $0=\pi^{n,n-1}_{d,E}(\xi)=\pi^{n,n-1}_{d,E}\circ \widehat{p}^{n}_{d,E}(\sum_i a_i\otimes e_i)=\widehat{p}^{n-1}_{d,E}(\sum_i a_i\otimes e_i)$.
	It follows that $\sum_i a_i\otimes e_i\in N^{n-1}_d(E)$, and thus $(\widehat{p}^{n}_d|_{N^{n-1}_d})_E$ is surjective.
\end{proof}
\begin{defi}\label{def:symgenprolrelations}
	If either of the equivalent properties of Proposition \ref{prop:surjectivephatN} holds, we will say that $E$-valued symmetric forms of degree $n$ are \emph{generated by prolongations of differential relations of order $n-1$}.
\end{defi}

\begin{cor}\label{cor:ker_and_cok}
Let the $n$-jet sequence for $E$ be left exact, and suppose that the jet module $J^{n-1}_d E$ is a representing objects for differential operators $\Diff^{n-1}_d(E,-)$.
Then the following hold:
	\begin{enumerate}
		\item $\ker\left(\widehat{p}^{n}_d|_{N^{n-1}_d}\right)_E = N^{n}_d(E)$;
		\item $\coker\left(\widehat{p}^{n}_d|_{N^{n-1}_d}\right)_E \cong \coker(\widehat{p}^{n}_{d,E})$.
	\end{enumerate}
\end{cor}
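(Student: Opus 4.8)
The plan is to read off both statements from the diagram \eqref{diag:unitogeneral} evaluated at $E$, after using the representability hypothesis to pin down its top row. First I would translate the assumption that $J^{n-1}_d E$ represents $\Diff^{n-1}_d(E,-)$ into the concrete statement that $\widehat{p}^{n-1}_{d,E}$ is surjective: by Corollary \ref{cor:representability_diff_op} this representability is equivalent to the conditions of Theorem \ref{theo:do_lift_uniquely}, and in particular to condition \eqref{theo:do_lift_uniquely:i}, which is exactly surjectivity of $\widehat{p}^{n-1}_{d,E}$. Hence $\coker(\widehat{p}^{n-1}_{d,E})=0$, and the top row of \eqref{diag:unitogeneral} at $E$ is the short exact sequence $0\to N^{n-1}_d(E)\to A\otimes E\to J^{n-1}_d E\to 0$, while the bottom row $0\to S^n_d(E)\to J^n_d E\to J^{n-1}_d E$ is left exact by hypothesis.

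For part (i) no hypothesis is in fact needed: the map $(\widehat{p}^n_d|_{N^{n-1}_d})_E$ is the restriction of $\widehat{p}^n_{d,E}$ to the submodule $N^{n-1}_d(E)\subseteq A\otimes E$, so its kernel is $N^{n-1}_d(E)\cap\ker(\widehat{p}^n_{d,E})=N^{n-1}_d(E)\cap N^n_d(E)$; since $N^n_d\subseteq N^{n-1}_d$ by Remark \ref{rmk:prosphat}, this intersection is $N^n_d(E)$.

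For part (ii) I would apply the snake lemma to \eqref{diag:unitogeneral} at $E$, whose vertical maps are $f=(\widehat{p}^n_d|_{N^{n-1}_d})_E$, $g=\widehat{p}^n_{d,E}$, and $h=\id_{J^{n-1}_d E}$. The top row is exact at $A\otimes E$ because $\widehat{\iota}_{N^{n-1}_d}$ is the kernel inclusion of $\widehat{p}^{n-1}_d$, and right exact at $J^{n-1}_d E$ by the surjectivity established above; the bottom row is exact at $S^n_d(E)$ since $\iota^n_{d,E}$ is monic, and at $J^n_d E$ by left exactness of the $n$-jet sequence. The lemma then produces the exact sequence $\ker f\to\ker g\to\ker h\to\coker f\to\coker g\to\coker h$. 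As $h$ is the identity, $\ker h=\coker h=0$, so the connecting map vanishes and the sequence collapses to $0\to\coker f\to\coker g\to 0$, yielding $\coker(\widehat{p}^n_d|_{N^{n-1}_d})_E\cong\coker(\widehat{p}^n_{d,E})$ as desired; as a byproduct it also gives $\ker f\cong\ker g=N^n_d(E)$, re-confirming part (i).

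The only delicate point, and the step I would double-check, is the bookkeeping for the snake lemma: one must verify that the two hypotheses deliver precisely the two exactness inputs it consumes, namely surjectivity of $\widehat{p}^{n-1}_{d,E}$ on top (from representability) and left exactness of the $n$-jet sequence on the bottom. Once the rightmost vertical arrow is recognised as the identity, the vanishing of the connecting homomorphism, and hence the isomorphism of cokernels, is automatic.
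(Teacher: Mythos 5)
Your proposal is correct and follows essentially the same route as the paper: translate representability of $J^{n-1}_d E$ into surjectivity of $\widehat{p}^{n-1}_{d,E}$ via Theorem \ref{theo:do_lift_uniquely}, then apply the snake lemma to \eqref{diag:unitogeneral} evaluated at $E$. Your side observation that part (i) holds with no hypotheses at all, since $\ker\bigl(\widehat{p}^n_d|_{N^{n-1}_d}\bigr)_E = N^{n-1}_d(E)\cap N^n_d(E) = N^n_d(E)$ by Remark \ref{rmk:prosphat}, is a valid minor sharpening not made explicit in the paper.
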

\begin{proof}
Consider the diagram \eqref{diag:unitogeneral} in degree $n$ evaluated at $E$, and compute all kernels and cokernels.
\begin{equation}\label{diag:cokfest}
	\begin{tikzcd}
	&\ker\left(\widehat{p}^{n}_d|_{N^{n-1}_d}\right)_E\ar[d,hook]&N^{n}_d(E)\ar[d,hook,"\widehat{\iota}_{N^n_d(E)}"]&[30pt]0\ar[d]\\
		 0\ar[r]& N^{n-1}_d(E) \ar[r,hook,"\widehat{\iota}_{N^{n-1}_d(E)}"]\ar[d,"\left(\widehat{p}^{n}_d|_{N^{n-1}_d}\right)_E"'] & A\otimes E \ar[r,"\widehat{p}^{n-1}_{d,E}"] \ar[d,"\widehat{p}^{n}_{d,E}"] & J^{n-1}_dE\ar[r,two heads]\ar[d,equals]& \coker(\widehat{p}^{n-1}_{d,E})\\
		 0\ar[r]& S^{n}_d(E) \ar[r,hook,"\iota^{n}_{d,E}"]\ar[d,two heads] & J^{n}_d E \ar[r, "\pi^{n,n-1}_{d,E}"]\ar[d,two heads] & J^{n-1}_d E \ar[d]\\
		 &\coker\left(\widehat{p}^{n}_d|_{N^{n-1}_d}\right)_E&\coker(\widehat{p}^{n}_{d,E})&0
	\end{tikzcd}
\end{equation}
By assumption, together with the equivalence in Theorem \ref{theo:do_lift_uniquely}, it follows that $\coker(\widehat{p}^{n-1}_{d,E})$ vanishes.
Applying the snake lemma yields the result.
\end{proof}

\begin{cor}\label{cor:symm_forms_N/N}
Given the assumptions of Corollary \ref{cor:ker_and_cok}, $J^{n}_d E$ is a representing object for $\Diff^{n}_d(E,-)$ if and only if $E$-valued symmetric forms of degree $n$ are generated by prolongations of differential relations of order ${n-1}$.
If this happens, then $S^n_d(E)\simeq (N^{n-1}_d/N^n_d)(E)$.
\end{cor}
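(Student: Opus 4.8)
The plan is to reduce everything to the two identifications already established in Corollary~\ref{cor:ker_and_cok} and to translate each of the two conditions in the statement into the vanishing of a cokernel. First I would record what ``being a representing object'' means concretely in terms of $\widehat{p}^{n}_{d,E}$. By Corollary~\ref{cor:representability_diff_op}, the jet module $J^{n}_d E$ represents $\Diff^{n}_d(E,-)$ precisely when the equivalent conditions of Theorem~\ref{theo:do_lift_uniquely} hold, and condition~\eqref{theo:do_lift_uniquely:i} there is exactly the surjectivity of $\widehat{p}^{n}_{d,E}$, i.e.\ the vanishing of $\coker(\widehat{p}^{n}_{d,E})$. On the other hand, Proposition~\ref{prop:surjectivephatN} tells us that $E$-valued symmetric forms of degree $n$ are generated by prolongations of differential relations of order $n-1$ precisely when $(\widehat{p}^{n}_d|_{N^{n-1}_d})_E$ is surjective, i.e.\ when $\coker(\widehat{p}^{n}_d|_{N^{n-1}_d})_E$ vanishes.

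The equivalence of the two conditions is then immediate: Corollary~\ref{cor:ker_and_cok}(2) supplies the isomorphism $\coker(\widehat{p}^{n}_d|_{N^{n-1}_d})_E \cong \coker(\widehat{p}^{n}_{d,E})$, so one of these cokernels vanishes if and only if the other does. This is exactly the asserted ``if and only if''. Note that the hypotheses of Corollary~\ref{cor:ker_and_cok} (left exactness of the $n$-jet sequence at $E$ and representability at level $n-1$) are precisely what license this step, so no additional assumptions are needed.

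For the final isomorphism I would assume either (hence, by the above, both) of the equivalent conditions hold, so that the map $(\widehat{p}^{n}_d|_{N^{n-1}_d})_E \colon N^{n-1}_d(E) \to S^{n}_d(E)$ is surjective. By Corollary~\ref{cor:ker_and_cok}(1) its kernel is exactly $N^{n}_d(E)$, whence the first isomorphism theorem gives $S^{n}_d(E) \simeq N^{n-1}_d(E)/N^{n}_d(E) = (N^{n-1}_d/N^{n}_d)(E)$, as claimed.

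There is no genuine obstacle here, since the substantive work — the snake-lemma identification of the kernel and the cokernel of the restricted map in the diagram \eqref{diag:unitogeneral} — has already been carried out in Corollary~\ref{cor:ker_and_cok}; this corollary is really the engine of the argument. The only point requiring a moment of care is that the surjection produced by Proposition~\ref{prop:surjectivephatN} genuinely has codomain $S^{n}_d(E)$ rather than merely its image inside $J^{n}_d E$, which is ensured by the commutativity of the left-hand square of \eqref{diag:unitogeneral} together with the left exactness of the $n$-jet sequence that places $S^{n}_d(E)$ as the kernel of $\pi^{n,n-1}_{d,E}$.
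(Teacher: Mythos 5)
Your proposal is correct and follows exactly the route the paper intends: Corollary~\ref{cor:symm_forms_N/N} is stated without proof precisely because it is the immediate combination of Corollary~\ref{cor:representability_diff_op} (representability $\Leftrightarrow$ $\coker(\widehat{p}^n_{d,E})=0$), Proposition~\ref{prop:surjectivephatN} with Definition~\ref{def:symgenprolrelations} (generation by prolongations $\Leftrightarrow$ surjectivity of $(\widehat{p}^{n}_d|_{N^{n-1}_d})_E$), and the kernel/cokernel identifications of Corollary~\ref{cor:ker_and_cok}, which is how you argue. Your closing remark about the codomain of the restricted map being genuinely $S^n_d(E)$, via left exactness and the left square of \eqref{diag:unitogeneral}, is the right point of care and is handled correctly.
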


\begin{rmk}\label{rmk:representability_low_degree}
We have that $\coker(\widehat{p}^{n}_d)=0$ for $n=0$ and $n=1$.
Thus $J^0_d$ and $J^1_d$ represent differential operators of order at most $0$ and $1$, respectively.
\end{rmk}
In degree $2$ the situation is already slightly more delicate.
In fact, we have the following results involving the maximal exterior algebra , cf.\ \cite[Lemma 1.32, p.~23]{BeggsMajid} or \cite[\jetsrmkmaximalextalg]{FMW}, and the associated minimal functor of symmetric forms, cf.\ \cite[\jetssssminfqsf]{FMW}.
\begin{prop}\label{prop:rep_for_2jet}
Let $\Omega^\bullet_d$ be an exterior algebra over $A$, then
\begin{enumerate}
\item\label{prop:rep_for_2jet:1} $\widehat{p}^2_{d}(N_d)=\iota^2_d(S^2_{d,\text{min}})\subseteq J^2_d$.
\suspend{enumerate}
Moreover, if $E$ in $\AModB$ is such that $\Tor_1^A(\Omega^1_d,E)=0$, then
\resume{enumerate}
\item\label{prop:rep_for_2jet:2} $S^2_{d}(E)$ is generated by prolongations of differential relations of order $1$ if and only if $\Omega^2_d=\Omega^2_{d,\text{max}}$;
\item\label{prop:rep_for_2jet:3} The map $\widehat{p}^2_{d,E}$ is surjective if and only if $\Omega^2_d=\Omega^2_{d,\text{max}}$;
\item\label{prop:rep_for_2jet:4} $N^2_{d}(E)=\{\sum_{i,j} a_{i,j}\otimes b_{i,j}e_j\in N_d(E)|da_{i,j}\otimes_A db_{i,j}\otimes_A e_j=0\}$, and $S^2_{d,\text{min}}(E)=N^1_d(E)/N^2_d(E)$.
\end{enumerate}
\end{prop}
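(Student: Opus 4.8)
The plan is to establish the four claims in order, using the commutative diagram \eqref{diag:unitogeneral} together with Proposition \ref{prop:surjectivephatN} and Corollaries \ref{cor:ker_and_cok}--\ref{cor:symm_forms_N/N}, and to isolate the dependence on $\Omega^2_d$ through the degree-two symbol sequence. For \eqref{prop:rep_for_2jet:1}, I would first note that by \eqref{diag:phat_compatible_with_pi} with $m=1$ one has $\pi^{2,1}_d\circ\widehat{p}^2_d=\widehat{p}^1_d$, which vanishes on $N_d=N^1_d$; hence the image $\widehat{p}^2_d(N_d)$ lands in $\ker(\pi^{2,1}_d)$. Since $N_d$ is the kernel of $\widehat{p}^1_d$ it depends only on $\Omega^1_d$ (cf.\ Remark \ref{rmk:prosphat}), so this image is the intrinsic second-order symbol of the first-order relations. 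I would then identify it with $\iota^2_d(S^2_{d,\text{min}})$ by unwinding the definition of the minimal symmetric forms from \cite[\jetssssminfqsf]{FMW}: $S^2_{d,\text{min}}$ is the kernel of the wedge map $\Omega^1_d\otimes_A\Omega^1_d\to\Omega^2_{d,\text{max}}$, which is spanned exactly by the elements $da\otimes_A db$ coming from first-order relations $a\,db$ that vanish in $\Omega^1_d$, and these are precisely the images $\widehat{p}^2_d(N_d)$ realises in $J^2_d$. This part uses only the definition of the maximal prolongation and the commutative diagram, as required since the claim precedes the $\Tor$ hypothesis.

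For \eqref{prop:rep_for_2jet:2}, the $\Tor$ hypothesis $\Tor_1^A(\Omega^1_d,E)=0$ is invoked to guarantee left-exactness of the $2$-jet sequence at $E$, so that Proposition \ref{prop:surjectivephatN} applies: $S^2_d(E)$ is generated by prolongations iff $(\widehat{p}^2_d|_{N^1_d})_E$ is surjective, that is, iff its image equals $\iota^2_{d,E}(S^2_d(E))$. By \eqref{prop:rep_for_2jet:1} this image is $\iota^2_{d,E}(S^2_{d,\text{min}}(E))$, so the condition becomes the equality $S^2_{d,\text{min}}(E)=S^2_d(E)$ inside $J^2_d E$. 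I would then place the two symbol sequences $0\to S^2_{d,\text{min}}\to\Omega^1_d\otimes_A\Omega^1_d\to\Omega^2_{d,\text{max}}\to 0$ and $0\to S^2_d\to\Omega^1_d\otimes_A\Omega^1_d\to\Omega^2_d\to 0$ into a ladder over the canonical surjection $\Omega^2_{d,\text{max}}\twoheadrightarrow\Omega^2_d$, evaluate at $E$, and run the snake lemma; the $\Tor$ vanishing keeps the rows exact after tensoring, so that the equality $S^2_{d,\text{min}}(E)=S^2_d(E)$ is equivalent to injectivity, hence bijectivity, of $\Omega^2_{d,\text{max}}\to\Omega^2_d$, i.e.\ to $\Omega^2_d=\Omega^2_{d,\text{max}}$.

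Claim \eqref{prop:rep_for_2jet:3} I would deduce from \eqref{prop:rep_for_2jet:2} almost formally. Because $J^1_d$ always represents order-one operators (Remark \ref{rmk:representability_low_degree}), the hypotheses of Corollary \ref{cor:ker_and_cok} hold in degree $n=2$ once the $2$-jet sequence is left-exact, so Corollary \ref{cor:symm_forms_N/N} states that $J^2_d E$ represents $\Diff^2_d(E,-)$ iff $S^2_d(E)$ is generated by prolongations; and by Corollary \ref{cor:representability_diff_op} together with Theorem \ref{theo:do_lift_uniquely}\eqref{theo:do_lift_uniquely:i}, representability of $J^2_d E$ is equivalent to surjectivity of $\widehat{p}^2_{d,E}$. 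Chaining these with \eqref{prop:rep_for_2jet:2} gives \eqref{prop:rep_for_2jet:3}.

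Finally, for \eqref{prop:rep_for_2jet:4} the identification $S^2_{d,\text{min}}(E)=N^1_d(E)/N^2_d(E)$ is the first isomorphism theorem applied to $(\widehat{p}^2_d|_{N^1_d})_E$: its image is $\iota^2_{d,E}(S^2_{d,\text{min}}(E))$ by \eqref{prop:rep_for_2jet:1}, while its kernel is $N^1_d(E)\cap\ker(\widehat{p}^2_{d,E})=N^2_d(E)$, using $N^2_d\subseteq N^1_d$ from Remark \ref{rmk:prosphat}. The explicit description of $N^2_d(E)$ then amounts to computing this symbol: writing a general element of $N^1_d(E)$ as $\sum_{i,j}a_{i,j}\otimes b_{i,j}e_j$ and applying the Leibniz-type behaviour of $j^2_{d,E}$ on products, its image in $S^2_{d,\text{min}}(E)\subseteq\Omega^1_d\otimes_A\Omega^1_d\otimes_A E$ is $\sum da_{i,j}\otimes_A db_{i,j}\otimes_A e_j$, so membership in $N^2_d(E)$ is exactly the vanishing of this expression together with membership in $N^1_d(E)$. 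The main obstacle is the forward direction of \eqref{prop:rep_for_2jet:2}: passing from the $E$-level equality $S^2_{d,\text{min}}(E)=S^2_d(E)$ back to the module-level equality $\Omega^2_d=\Omega^2_{d,\text{max}}$ requires the $\Tor$ hypothesis to ensure that tensoring with $E$ neither creates nor destroys the relevant kernel, so that the snake-lemma connecting map faithfully detects $\ker(\Omega^2_{d,\text{max}}\to\Omega^2_d)$; making this exactness precise, together with the companion symbol formula needed in \eqref{prop:rep_for_2jet:4}, is where the real work lies.
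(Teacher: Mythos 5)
Your overall architecture tracks the paper's quite closely: for \eqref{prop:rep_for_2jet:1} and \eqref{prop:rep_for_2jet:4} you plan exactly the explicit Leibniz-type computation the paper carries out in \eqref{eq:imageN1max}, namely that a generic element $\sum_{i,j}a_{i,j}\otimes b_{i,j}e_j$ of $N_d(E)$ is sent by $\widehat{p}^2_{d,E}$ to $-\iota^2_{d,E}(\sum_{i,j}da_{i,j}\otimes_A db_{i,j}\otimes_A e_j)$ (you drop the harmless sign), followed by the first isomorphism theorem for $\widehat{p}^2_{d}|_{N_d}$ with kernel $N_d(E)\cap\ker(\widehat{p}^2_{d,E})=N^2_d(E)$; and your route to \eqref{prop:rep_for_2jet:3} through Corollary \ref{cor:symm_forms_N/N} and Corollary \ref{cor:representability_diff_op} is formally equivalent to the paper's direct cokernel computation via Corollary \ref{cor:ker_and_cok}. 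The preliminary observation in \eqref{prop:rep_for_2jet:1} that $\widehat{p}^2_d(N_d)\subseteq\ker(\pi^{2,1}_d)$ is correct but does not by itself locate the image inside $\iota^2_d(S^2_d)$ without exactness (not assumed in \eqref{prop:rep_for_2jet:1}); since you then compute directly, this is only a redundancy, not an error.

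The genuine gap is in the forward direction of \eqref{prop:rep_for_2jet:2}. Your snake-lemma ladder requires the two wedge sequences to stay exact after applying $-\otimes_A E$, and you assert that "the Tor vanishing keeps the rows exact"; but the hypothesis is $\Tor_1^A(\Omega^1_d,E)=0$, which yields exactness of the $2$-jet sequence at $E$ and says nothing about $\Tor_1^A(\Omega^2_{d,\text{max}},E)$ or $\Tor_1^A(\Omega^2_d,E)$, which is what left-exactness of those tensored rows would need. Moreover, even granting exact rows, the connecting map only identifies $S^2_d(E)/S^2_{d,\text{min}}(E)$ with the image of $K\otimes_A E$ in $\Omega^2_{d,\text{max}}\otimes_A E$, where $K=\ker(\Omega^2_{d,\text{max}}\twoheadrightarrow\Omega^2_d)$; the vanishing of that image for a single fixed $E$ does not force $K=0$ (it fails already for $E=0$ or any $E$ annihilating $K$), so "injectivity, hence bijectivity, of $\Omega^2_{d,\text{max}}\to\Omega^2_d$" does not follow from the stated hypotheses. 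The paper does not argue homologically here at all: having identified $\Im(\widehat{p}^2_{d,E}|_{N_d(E)})=\iota^2_{d,E}(S^2_{d,\text{min}}(E))$ via \eqref{eq:imageN1max}, it reads the equivalence of $S^2_d(E)=S^2_{d,\text{min}}(E)$ with $\Omega^2_d=\Omega^2_{d,\text{max}}$ directly off the description of the minimal functor of symmetric forms imported from \cite[\jetssssminfqsf]{FMW}, rather than re-deriving it. You flag the difficulty yourself ("where the real work lies"), but flagging is not closing it: as written, the "only if" of \eqref{prop:rep_for_2jet:2} — and hence of \eqref{prop:rep_for_2jet:3}, which you derive from it — rests on an unproved exactness-plus-faithfulness claim.
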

\begin{proof}\
\begin{enumerate}
\item By \cite[\jetsrmkflpj]{FMW}, for all $E$ in $\AModB$, we can write a generic element of $N_d(E)$ as $\sum_{i,j} a_{i,j}\otimes b_{i,j} e_j$, where $\sum_i a_{i,j}\otimes b_{i,j}\in N_d$ and $e_j\in E$ for all $j$.
By algebraic manipulation, we have
\begin{equation}\label{eq:imageN1max}
\begin{split}
\widehat{p}^2_{d,E}\left(\sum_{i,j} a_{i,j}\otimes b_{i,j}e_j\right)
&=\sum_{i,j} [a_{i,j}\otimes 1]\otimes_A[1\otimes b_{i,j}e_j]\\
&=-\sum_{i,j} \iota^1_{d,E}(da_{i,j})\otimes_A[1\otimes b_{i,j}e_j]+\sum_{i,j} [1\otimes 1]\otimes_A[a_{i,j}\otimes b_{i,j}e_j]\\
&=-\sum_{i,j} \iota^1_{d,E}(da_{i,j}b_{i,j})\otimes_A[1\otimes e_j]+\sum_{i,j} \iota^1_{d,E}(da_{i,j})\otimes_A([b_{i,j}\otimes e_j]-[1\otimes b_{i,j}e_j])+0\\
&=0+\sum_{i,j} \iota^1_{d,E}(da_{i,j})\otimes_A\iota^1_{d,E}\rho^1_{d,E}([b_{i,j}\otimes e_j])\\
&=-\iota^2_{d,E}\left(\sum_{i,j} da_{i,j}\otimes_A db_{i,j}\otimes_A e_j\right).
\end{split}
\end{equation}
Where $\rho^1_{d,E}$ is defined at \cite[\jetseqdefrhon]{FMW}, and we can write the final equation using $\iota^2_d$, since $S^2_{d,\text{min}}\subseteq S^2_d$.
The elements thus obtained constitute $\iota^2_{d,E}(S^2_{d,\text{min}}(E))$, cf.\ \cite[\jetseqStwomindd]{FMW}.
\item $\Tor^A_1(\Omega^1_d,E)=0$ implies that the $2$-jet sequence is short exact, cf.\ \cite[\jetsssimplicittwojet]{FMW}, and thus we can consider the map $\widehat{p}^2_{d,E}|_{N_d(E)}$ as a restriction.
Under these conditions, \eqref{prop:rep_for_2jet:1} says that $\Im(\widehat{p}^2_{d,E}|_{N_d(E)})=S^2_{d,\text{min}}(E)$.
This implies that $\widehat{p}^2_{d,E}|_{N_d(E)}$ is surjective if and only if $S^2_d(E)=S^2_{d,\text{min}}(E)$, or equivalently if and only if $\Omega^2_d=\Omega^2_{d,\text{max}}$.
\item Since $\Tor_1^A(\Omega^1_d,E)=0$, the $2$-jet short exact sequence holds.
Moreover, the $1$-jet always represents differential operators, so we can apply Corollary \ref{cor:ker_and_cok}, which in particular tells us that
\begin{equation}
\coker(\widehat{p}^2_{d,E})
\cong\coker(\widehat{p}^2_{d,E}|_{N_d(E)})
=S^2_d(E)/S^2_{d,\text{min}}(E).
\end{equation}
And thus surjectivity holds if and only if $\Omega^2_d$ is maximal.
\item The first part follows from Corollary \ref{cor:ker_and_cok} together with the computation \eqref{eq:imageN1max}.
The second part is a consequence of Corollary \ref{cor:symm_forms_N/N}.\qedhere
\end{enumerate}
\end{proof}

\subsection{Classical case}
In classical differential geometry, the $n$-jet vector bundle $J^n E$ of a vector bundle $E\to M$ represents linear differential operators of order at most $n$, i.e.\ $\Diff^n(E,-)$.
However, this representability result is only assured on the category of vector bundles over $M$.
By the Serre-Swann theorem, this category is equivalent to the category of finitely generated projective $\smooth{M}$-modules.
In particular, this ensures that the $n$-jet module $J^n_{d} \Gamma(M,E)$, with respect to the de Rham exterior algebra, represents the functor of differential operators $\Diff^n_{d}(\Gamma(M,E),-)$ on the category ${}_{\smooth{M}}\!\FGP$.

In this section, we extend this representability result to the whole category ${}_{\smooth{M}}\!\Mod$.
In order to do so, we apply Corollary \ref{cor:representability_diff_op}, and in particular we show that $\widehat{p}^n_{d}$ is surjective.
Note that, in the following result, second countability is assumed in the definition of a smooth manifold.
\begin{lemma}\label{lemma:classical_jets_generated_by_prolongations}
Given a smooth manifold $M$, global sections of $J^n_d (M\times \R)$ are (finite) $\smooth{M}$-linear combinations of prolongations.
That is, the map $\widehat{p}^n_{d,\smooth{M}}\colon \smooth{M}\otimes\smooth{M}\to J^n_d \smooth{M}$ is surjective.
\end{lemma}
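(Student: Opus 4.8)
The plan is to identify $J^n_d\smooth{M}$ with the module of global sections of the classical jet bundle $J^n(M\times\R)$, using the correspondence \cite[\jetstheoclassicalnjet]{FMW}, then to exhibit a single \emph{finite} family of prolongations that spans every fibre, and finally to invoke the standard fact that such a family generates all global sections over $\smooth{M}$. Abbreviate $j^n\colonequals j^n_{d,\smooth{M}}$. The local input is this: on a coordinate chart $U$ with coordinates $x^1,\dots,x^m$ a section of the trivial bundle $J^n|_U$ is a tuple $(s_\beta)_{|\beta|\le n}$, the $\beta$-slot of $j^n(f)$ being a fixed nonzero multiple of $\partial^\beta f$. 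Form the square matrix, of size the fibre rank $\binom{m+n}{n}$, whose $(\alpha,\beta)$ entry is the $\beta$-slot of $j^n(x^\alpha)$ for $|\alpha|,|\beta|\le n$. Since $\partial^\beta x^\alpha$ vanishes unless $\beta\le\alpha$ componentwise and equals the nonzero constant $\alpha!$ when $\beta=\alpha$, ordering multi-indices by total degree makes this matrix block triangular with nonzero constant diagonal, so its determinant is a nonzero constant. Hence $\{j^n(x^\alpha)\}_{|\alpha|\le n}$ is a frame for $J^n|_U$; in particular these finitely many prolongations span every fibre over $U$.

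To produce a finite \emph{global} family I would use second countability to fix a Whitney embedding $M\hookrightarrow\R^N$, letting $y^1,\dots,y^N\in\smooth{M}$ be the restrictions of the ambient coordinates. I claim the finitely many prolongations $\{j^n(y^\gamma)\}_{\gamma\in\N^N,\ |\gamma|\le n}$ span every fibre of $J^n$. Indeed, at any $p\in M$ the tangent space projects isomorphically onto some coordinate $m$-plane of $\R^N$, so $m$ of the $y^i$ restrict to local coordinates near $p$; the monomials $y^\gamma$ in these coordinates with $|\gamma|\le n$ are among our functions, and by the previous step their prolongations already frame $J^n$ near $p$. Thus one fixed finite family spans all fibres.

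Let $K$ be the number of multi-indices $\gamma\in\N^N$ with $|\gamma|\le n$, and let $\Phi\colon M\times\R^K\to J^n(M\times\R)$ be the vector bundle morphism sending the $\gamma$-th constant basis section to $j^n(y^\gamma)$. By the claim $\Phi$ is fibrewise surjective, hence a surjection of vector bundles; its kernel is a subbundle, and a choice of fibre metric on the trivial bundle splits it, giving a smooth section $\Sigma$ with $\Phi\circ\Sigma=\id$. For a global section $s$ of $J^n$, the composite $\Sigma(s)$ is a $K$-tuple $(f_\gamma)\in\smooth{M}^{K}$, whence $s=\sum_{|\gamma|\le n} f_\gamma\,j^n(y^\gamma)=\widehat{p}^n_{d,\smooth{M}}\big(\sum_{|\gamma|\le n} f_\gamma\otimes y^\gamma\big)$, a finite $\smooth{M}$-combination of prolongations. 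This places $s$ in the image of $\widehat{p}^n_{d,\smooth{M}}$, so the latter is surjective.

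The crux, and the reason second countability is hypothesized, is \emph{finiteness} on noncompact $M$. A direct partition-of-unity argument would only write $s$ as a locally finite but possibly infinite combination of prolongations, which need not represent an element of $\smooth{M}\otimes\smooth{M}$. The Whitney embedding is exactly what collapses the local frames into one finite global generating family, after which the elementary splitting of fibrewise-surjective bundle maps furnishes genuinely finite coefficients; the triangular-matrix computation, though the technical heart of the local step, is otherwise routine.
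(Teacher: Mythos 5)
Your proof is correct, but its global step takes a genuinely different route from the paper's. The local step is the same mathematics in different packaging: the paper proves by induction on total degree that the basis jets $e_{k_1\dots k_m}$ over a chart are $\smooth{U_i}$-combinations of prolongations of the monomials $\tfrac{1}{k_1!\cdots k_m!}x^{k_1}\cdots x^{k_m}$, which is exactly your observation that the matrix $\bigl(\partial^\beta x^\alpha\bigr)_{|\alpha|,|\beta|\le n}$ is block triangular in total degree with invertible constant diagonal. Globally, however, the paper uses a \emph{finite atlas} (citing \cite[Corollary, p.~20]{greub1972connections}) plus a subordinate partition of unity, writes $\sigma=\sum_i\rho_i\sigma$, and extends the local coordinate monomials and coefficients by bump functions to global functions $\widetilde{x}_{i,1}^{k_1}\cdots\widetilde{x}_{i,m}^{k_m}$; finiteness of the combination comes from finiteness of the atlas. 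You instead fix a Whitney embedding $M\hookrightarrow\R^N$, show the finitely many prolongations $j^n(y^\gamma)$, $|\gamma|\le n$, span every fibre (the minor/projection argument for why some $m$ of the $y^i$ are local coordinates near each point is sound), and then split the resulting fibrewise-surjective bundle map $M\times\R^K\twoheadrightarrow J^n(M\times\R)$ by a fibre metric to get globally smooth, genuinely finite coefficients. Your route buys an explicit finite global generating family (monomials in embedding coordinates) and avoids the fiddly extension of local data, at the cost of importing the Whitney embedding theorem and the kernel-subbundle/splitting lemma; the paper's route stays more elementary and chart-level, with the finite-atlas theorem playing the role your embedding plays. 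Both rest on second countability, and both implicitly use the identification $J^n_d\smooth{M}\cong\Gamma(M,J^n(M\times\R))$ intertwining $j^n_{d,\smooth{M}}$ with the classical prolongation, which your citation of \cite[\jetstheoclassicalnjet]{FMW} covers.
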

\begin{proof}
A manifold $M$ of dimension $m$ can always be covered by a finite atlas $\{(U_i,x_i)|i\in I\}$ of charts, cf.\ \cite[Corollary, p.~20]{greub1972connections}.
Associated to this atlas, we can always produce a partition of unity $\{\rho_i|i\in I\}$.
Let $\sigma\in\Gamma(M,J^n(M\times \R))$, and consider for each $i\in I$ the restriction $\sigma|_{U_i}\in\Gamma(U_i,J^n (M\times \R))$.
The local chart $x_i=(x_{i,1},\dots,x_{i,m})\colon U_i\to \R^m$ induces an isomorphism $\Gamma(U_i,J^n(M\times \R))\cong\Gamma(U_i,J^n(U_i\times \R))\cong \Gamma(x_i(U_i),J^n(x_i(U_i)\times\R))$, and via the latter isomorphism, we can use the so-called jet coordinates to write every prolongation of a function $f\in\smooth{U_i}$ as
\begin{align}\label{eq:jet_coordinates}
j^n(f)
=\left(f_{k_1,\dots,k_m}|(k_1,\dots,k_m)\in K_n\right).
&\hfill&
\text{where }
K_h\colonequals \left\{(k_1,\dots,k_m)\in\N^m\middle| \sum_{j=1}^m k_j\le h\right\}\text{ for }
h\ge 0.
\end{align}
Here, $f_{k_1,\dots,k_m}=\partial^{k_1}_{x_{i,1}}\cdots\partial^{k_m}_{x_{i,m}}f$, where in particular, the case where $k_h=0$ denotes the absence of the derivation $\partial_{x_{i,h}}$.
For a generic section of the $n$-jet bundle, we can represent it as a ${n+m}\choose{m}$-tuple of functions as in \eqref{eq:jet_coordinates}, where the functions $f_{k_1,\dots,k_m}\in \smooth{U_i}$ are potentially unrelated.
This means that the set of tuples of the form \eqref{eq:jet_coordinates} where only one $f_{k_1,\dots,k_m}$ is the constant $1$ and the others are $0$, is a basis for $\Gamma\left(U_i,J^n(U_i\times\R)\right)$.
We denote this particular tuple by $e_{k_1,\dots, k_m}$.
We now prove that each element of this basis can be written as a finite $\smooth{U_i}$-linear combination of prolongations, proving that the whole $\Gamma\left(U_i,J^n(U_i\times\R)\right)$ can be generated via linear combination of prolongations of a finite set of functions.

We proceed by induction on $k=\sum_{h=1}^m k_h$, called the degree, to prove that the family of monomials of coordinate functions $\{\frac{1}{k_1!\cdots k_m!}x_{i,1}^{k_1}\cdots x_{i,m}^{k_m}|(k_1,\dots,k_m)\in K_k\}$ are the functions whose prolongations generate the first ${m+k}\choose{k}$ elements of our basis.
For $k=0$, the only element of $K_0$ is $(0,\dots,0)$, so it is enough to prove the result for $e_{0,\dots,0}=(1,0,\dots,0)$, and in fact we can write it as $j^n(1)$.
Suppose the statement is true for $k-1$, we will now generate each element $e_{k_1\dots k_n}$ of degree $k$.
We have that the only nonzero component in degree $k$ of
\begin{equation}
j^n\left(\frac{1}{k_1!\cdots k_m!} x_{i,1}^{k_1}\cdots x_{i,m}^{k_m}\right)
\end{equation}
is in the component $(k_1\dots k_m)$, where it is $1$.
Every component in degree higher than $k$ vanishes.
It follows that $e_{k_1\dots k_m}-j^n(\frac{1}{k_1!\cdots k_m!} x_{i,1}^{k_1}\cdots x_{i,m}^{k_m})$ is a $\smooth{x_i(U_i)}$-linear combination of elements $e_{k_1\dots k_h}$ for $h<k$.
By inductive hypothesis, we know there exists a family of $a_{h_1\cdots h_m}\in \smooth{U_i}$, for $(h_1,\cdots, h_m)\in K_{n-1}$, such that
\begin{align}
e_{k_1\dots k_m}-j^n\left(\frac{1}{k_1!\cdots k_m!} x_{i,1}^{k_1}\cdots x_{i,m}^{k_m}\right)
=\sum_{(h_1,\cdots, h_m)\in K_{n-1}} a_{h_1\cdots h_m} j^n\left(\frac{1}{h_1!\cdots h_m!}x_{i,1}^{h_1}\cdots x_{i,m}^{h_m}\right).
\end{align}
This implies that
\begin{align}
e_{k_1\dots k_m}
=j^n\left(\frac{1}{k_1!\cdots k_m!} x_{i,1}^{k_1}\cdots x_{i,m}^{k_m}\right)
+\sum_{(h_1,\cdots, h_m)\in K_{n-1}} a_{h_1\cdots h_m} j^n\left(\frac{1}{h_1!\cdots h_m!}x_{i,1}^{h_1}\cdots x_{i,m}^{h_m}\right),
\end{align}
proving the inductive step.

We have thus shown that we can write
\begin{align}
\sigma|_{U_i}
=\sum_{(k_1,\cdots, k_m)\in K_n} a_{k_1\cdots k_m} j^n\left(\frac{1}{k_1!\cdots k_m!}x_{i,1}^{k_1}\cdots x_{i,m}^{k_m}\right),
&\hfill&
\text{for }
a_{k_1\cdots k_m}\in \smooth{U_i}.
\end{align}
Since $\rho_i$ is the constant function zero on a neighborhood of the boundary of $U_i$, we can extend each $a_{k_1\cdots k_m}$ to a function $\widetilde{a}_{k_1\cdots k_m}$ on $M$ that coincides with $a_{k_1,\dots,k_m}$ on the support of $\rho_i$ and is zero elsewhere.
Similarly, every coordinate function $x_i\colon U_i\to \R^m$ can be extended to a global function $\widetilde{x}_i$ on $M$ so that it coincides with $x_i$ on the support of $\rho_i$.
In particular, on the support of $\rho_i$ we will also obtain $j(x_{i,1}^{k_1}\cdots x_{i,m}^{k_m})=j(\widetilde{x}_{i,1}^{k_1}\cdots \widetilde{x}_{i,m}^{k_m})$, as we can see from the local description using jet coordinates.
Thus, we have
\begin{equation}
\begin{split}
\sigma
&=\sum_{i\in I}
\rho_i\sigma\\
&=\sum_{(k_1,\cdots, k_m)\in K_n} \rho_i a_{k_1\cdots k_m} j^n\left(\frac{1}{k_1!\cdots k_m!}x_{i,1}^{k_1}\cdots x_{i,m}^{k_m}\right)\\
&=\sum_{(k_1,\cdots, k_m)\in K_n} \rho_i \widetilde{a}_{k_1\cdots k_m} j^n\left(\frac{1}{k_1!\cdots k_m!}\widetilde{x}_{i,1}^{k_1}\cdots \widetilde{x}_{i,m}^{k_m}\right).
\end{split}
\end{equation}
In other words, the prolongations of functions $\widetilde{x}_{i,1}^{k_1}\cdots\widetilde{x}_{i,m}^{k_m}$ with $(k_1,\dots,k_m)\in K_n$ generate the space of sections $\Gamma(M,J^n(M\times\R))$ as a $\smooth{M}$-module.
Since the collection of these functions is finite, this completes the proof.
\end{proof}
\begin{rmk}
Lemma \ref{lemma:classical_jets_generated_by_prolongations} offers an alternative proof of the fact that the module of differential forms is generated by $\smooth{M}$ and the de Rham differential $d$.
In fact, given $\omega\in\Omega^1_{dR}\subset J^1_d(M\times \R)$, there exist $a_i,b_i\in\smooth{M}$, such that $\omega=\sum_i a_i j^1_d(b_i)$.
As noted in \cite[p.~945]{Crainic}, cf.\ \cite[\jetsrmkproldop]{FMW}, we have
\begin{equation}
\omega
=\sum_i a_i j^1_d(b_i)
=\sum_i a_i db_i +\sum_i a_i b_ij^1_d(1).
\end{equation}
Applying $\pi^{1,0}_d$ to both sides of this equation we get $\sum_i a_i b_i=0$, and thus $\omega=\sum_i a_i db_i$.
\end{rmk}
Before continuing, we will refine the result \cite[\jetsproptensorcomparison]{FMW} in the situations where the jet functors preserves epimorphisms.
E.g.\ if $\Omega^1_d$, $\Omega^2_d$, $\Omega^3_d$ are flat in $\Mod_A$, $\dh^{m-1}_d=0$ and $S^m_d$ is exact for all $2\le m\le n$, cf.\ \cite[\jetsprophJexact]{FMW}.
\begin{lemma}
\label{lemma:tensorial_comparison}
Let $\Omega^\bullet_d$ be an exterior algebra over $A$ such that the functor $J^n_d$ preserves epimorphisms.
Then, the natural transformation
\begin{equation}
\gamma^n_d\colon J^n_d A\otimes_A -\longrightarrow J^n_d
\end{equation}
is a natural epimorphism of endofunctors of $\AModB$.
\end{lemma}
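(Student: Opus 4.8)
The goal is to show that $\gamma^n_d\colon J^n_d A\otimes_A -\longrightarrow J^n_d$ is a natural epimorphism on $\AModB$, under the hypothesis that $J^n_d$ preserves epimorphisms. The plan is to exploit the compatibility of $\gamma^n_d$ with prolongations together with the fact that every object $E$ in $\AModB$ admits a free presentation, and then use naturality and right-exactness of the tensor product to reduce the surjectivity of $\gamma^n_{d,E}$ to the case of free modules.

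First I would recall the defining property of the comparison map $\gamma^n_d$ from \cite[\jetsproptensorcomparison]{FMW}: it is the natural transformation whose component on $E$ sends an element of the form $\xi\otimes_A e$ (with $\xi\in J^n_d A$, $e\in E$) to the value obtained by ``transporting'' the abstract jet $\xi$ along the prolongation of $e$. Concretely, $\gamma^n_{d,E}\bigl(j^n_{d,A}(a)\otimes_A e\bigr)=a\,j^n_{d,E}(e)$, so that $\gamma^n_d$ intertwines the multiplication structure of $J^n_d A$ with the prolongation maps $j^n_{d,E}$. The key observation is that naturality of $\gamma^n_d$ in $E$ gives, for any epimorphism $\phi\colon E\twoheadrightarrow E'$ in $\AModB$, a commutative square relating $\gamma^n_{d,E}$ and $\gamma^n_{d,E'}$, in which the vertical maps are $J^n_d A\otimes_A\phi$ (surjective, since $-\otimes_A-$ is right exact) and $J^n_d\phi$ (surjective, by the standing hypothesis that $J^n_d$ preserves epimorphisms).

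The second step is the reduction to free modules. Choosing a free presentation $A^{(\Lambda)}\twoheadrightarrow E$ of an arbitrary $E$ in $\AModB$, the naturality square above shows that it suffices to prove $\gamma^n_{d}$ is an epimorphism on free modules: if $\gamma^n_{d,A^{(\Lambda)}}$ is surjective, then in the commutative square the composite $J^n_d\phi\circ\gamma^n_{d,A^{(\Lambda)}}=\gamma^n_{d,E}\circ(J^n_d A\otimes_A\phi)$ is surjective (being a composite of surjections), and since it factors through $\gamma^n_{d,E}$, the latter is surjective as well. For a free module, and even for $A$ itself, surjectivity of $\gamma^n_{d,A}$ is essentially tautological: its image contains every $a\,j^n_{d,A}(b)=\gamma^n_{d,A}\bigl(j^n_{d,A}(a)\cdot b\otimes_A 1\bigr)$, hence contains $A\,j^n_{d,A}(A)$, which is all of $J^n_d A$ because $\widehat{p}^n_{d,A}$ lands there and one checks directly that $J^n_d A$ is generated over $A$ by prolongations. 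One then extends from $A$ to arbitrary free modules $A^{(\Lambda)}$ by additivity of both functors and compatibility of $\gamma^n_d$ with finite direct sums.

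The main obstacle I expect is verifying the base case cleanly, i.e.\ that $\gamma^n_{d,A}$ is genuinely surjective onto all of $J^n_d A$ rather than merely onto the $A$-submodule generated by prolongations; this requires knowing that $J^n_d A$ itself is generated as a left $A$-module by $\{a\,j^n_{d,A}(b)\}$, which is exactly the content of surjectivity of $\widehat{p}^n_{d,A}$ composed with the identification of $A\otimes A$ as the universal jet module (Remark \ref{rmk:prosphat}), and one must take care that the infinite-rank free case $A^{(\Lambda)}$ is handled by passing to finite subsums, using that each element of $J^n_d A^{(\Lambda)}$ involves only finitely many generators. Once these bookkeeping points are settled, the argument is a formal consequence of right-exactness of $\otimes_A$ and the epimorphism-preservation hypothesis.
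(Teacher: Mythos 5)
Your reduction step is sound and is in fact exactly the paper's argument: the paper also takes a presentation $\varphi\colon P\twoheadrightarrow E$ (projective rather than free, which changes nothing) and uses the naturality square for $\gamma^n_d$, with $J^n_d A\otimes_A\varphi$ surjective by right-exactness of the tensor product and $J^n_d(\varphi)$ surjective by hypothesis. The genuine gap is in your base case. You justify surjectivity of $\gamma^n_d$ on free modules by claiming that ``$J^n_d A$ is generated over $A$ by prolongations,'' i.e.\ that $\widehat{p}^n_{d,A}$ is surjective. That is false in general, and its failure is one of the central themes of this paper: surjectivity of $\widehat{p}^n_{d,E}$ is equivalent to $J^n_d E$ representing $\Diff^n_d(E,-)$ (Theorem \ref{theo:do_lift_uniquely}, Corollary \ref{cor:representability_diff_op}), and already for $n=2$ with $E=A$ (where $\Tor^A_1(\Omega^1_d,A)=0$ holds automatically) it is equivalent to $\Omega^2_d=\Omega^2_{d,\text{max}}$ by Proposition \ref{prop:rep_for_2jet}. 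So for any calculus with non-maximal second degree — precisely the situation the lemma is meant to cover — the submodule $A\,j^n_{d,A}(A)=\SJ^n_d A$ that your argument produces is a \emph{proper} submodule of $J^n_d A$, and the base case collapses. There is also a computational slip feeding into this: the right action on $J^n_d A$ is $\xi\cdot b=J^n_d(R_b)(\xi)$, so $j^n_{d,A}(a)\cdot b=j^n_{d,A}(ab)$ and hence $\gamma^n_{d,A}\bigl(j^n_{d,A}(a)\cdot b\otimes_A 1\bigr)=j^n_{d,A}(ab)$, not $a\,j^n_{d,A}(b)$; likewise the compatibility reads $\gamma^n_{d,E}\bigl(j^n_{d,A}(a)\otimes_A e\bigr)=j^n_{d,E}(ae)$ rather than $a\,j^n_{d,E}(e)$.

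The conclusion of your base case is nevertheless true, for a reason that has nothing to do with generation by prolongations, and replacing your justification repairs the proof. At $E=A$ the map $\gamma^n_{d,A}\colon J^n_d A\otimes_A A\to J^n_d A$ is the canonical map $\xi\otimes_A a\mapsto \xi\cdot a$, i.e.\ the right unitor, hence an isomorphism for every calculus; more generally — and this is what the paper actually invokes — \cite[\jetsproptensorcomparison]{FMW} states that $\gamma^n_{d,P}$ is an isomorphism whenever $P$ is flat in $\AModB$, which covers free and projective modules at once. With that substitution your naturality-square argument becomes the paper's proof verbatim, and the bookkeeping you flag about infinite-rank free modules disappears (if you insist on building up from $E=A$ by hand instead, you must genuinely verify that $J^n_d$ commutes with arbitrary direct sums — it does, being constructed from right-exact functors and kernels — rather than only passing to finite subsums).
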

\begin{proof}
By \cite[\jetsproptensorcomparison]{FMW}, which cn be extended to $(A,B)$-bimodules, we know that the map $\gamma^n_{d,P}$ is an isomorphism when $P$ is flat in $\AModB$.

Module categories have enough projectives, and thus for all $E$ in $\AModB$ there exists a surjection $\varphi\colon P\twoheadrightarrow E$, where $P$ is projective in $\AModB$.
By naturality of $\gamma^n_d$ with respect to $\varphi$, the following diagram commutes
\begin{equation}
\begin{tikzcd}
J^n_d A\otimes P\ar[r,"\gamma^n_{d,P}","\sim"']\ar[d,two heads,"\id_{J^n_d A} \otimes_A\varphi"']&J^n_d P\ar[d,two heads,"J^n_d(\varphi)"]\\
J^n_d A\otimes_A E\ar[r,"\gamma^n_{d,E}"]&J^n_d E
\end{tikzcd}
\end{equation}
Since the map $J^n_d(\varphi)$ is surjective, by the properties of epimorphisms, it follows that also $\gamma^n_{d,E}$ is surjective.
\end{proof}
\begin{prop}\label{prop:classical_jets_elemental_also_over_nonfgp_bundles}
For the de Rham exterior algebra, the natural transformation $\widehat{p}^n_d\colon \smooth{M}\otimes- \to J^n_d$ is a natural epimorphism.
\end{prop}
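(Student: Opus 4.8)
The plan is to reduce the statement for an arbitrary $E$ to the already-established case $E=\smooth{M}$ from Lemma~\ref{lemma:classical_jets_generated_by_prolongations}, by routing it through the tensorial comparison map $\gamma^n_d$ of Lemma~\ref{lemma:tensorial_comparison}. The key observation is that, with $A=\smooth{M}$, the natural transformation $\widehat{p}^n_d$ factors for every module $E$ as the composite
\begin{equation}
\smooth{M}\otimes E\cong (\smooth{M}\otimes\smooth{M})\otimes_A E\xrightarrow{\,\widehat{p}^n_{d,\smooth{M}}\otimes_A\id_E\,} J^n_d\smooth{M}\otimes_A E\xrightarrow{\,\gamma^n_{d,E}\,} J^n_d E,
\end{equation}
where the first isomorphism identifies $a\otimes e$ with $(a\otimes 1)\otimes_A e$. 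Tracing an element through this composite gives $a\otimes e\mapsto (a\,j^n_{d,\smooth{M}}(1))\otimes_A e\mapsto a\,\gamma^n_{d,E}(j^n_{d,\smooth{M}}(1)\otimes_A e)=a\,j^n_{d,E}(e)$, using the compatibility of $\gamma^n_d$ with prolongations (cf.\ \cite[\jetsproptensorcomparison, \jetsrmkgammacomppi]{FMW}), which recovers precisely $\widehat{p}^n_{d,E}(a\otimes e)$. So it suffices to show that both factors are surjective.

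To invoke Lemma~\ref{lemma:tensorial_comparison} I would first check that $J^n_d$ preserves epimorphisms for the de Rham exterior algebra, by verifying the sufficient conditions recorded just before that lemma. In the de Rham setting the forms $\Omega^k_{dR}=\Gamma(M,\Lambda^k T^*M)$ are finitely generated projective, hence flat, in $\Mod_A$; the de Rham algebra realises the maximal prolongation of $\Omega^1_{dR}$, so the relevant holonomic obstruction terms vanish; and each symmetric-form functor $S^m_d$ is exact, being (up to the flat twist by symmetric powers of the cotangent bundle) a tensoring by a flat module. With these in hand, \cite[\jetsprophJexact]{FMW} gives that $J^n_d$ preserves epimorphisms, and Lemma~\ref{lemma:tensorial_comparison} then yields that $\gamma^n_{d,E}$ is surjective for every $E$.

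Finally, since $\widehat{p}^n_{d,\smooth{M}}$ is surjective by Lemma~\ref{lemma:classical_jets_generated_by_prolongations}, the induced map $\widehat{p}^n_{d,\smooth{M}}\otimes_A\id_E$ is surjective as well, because tensoring a surjection over $A$ preserves epimorphisms by right-exactness. Composing two epimorphisms, I conclude that $\widehat{p}^n_{d,E}$ is surjective for every $E$, i.e.\ that $\widehat{p}^n_d$ is a natural epimorphism. The hard part will be the bookkeeping around the factorization: establishing rigorously that the canonical identification $\smooth{M}\otimes E\cong(\smooth{M}\otimes\smooth{M})\otimes_A E$ intertwines $\widehat{p}^n_{d,E}$ with $\gamma^n_{d,E}\circ(\widehat{p}^n_{d,\smooth{M}}\otimes_A\id_E)$, which ultimately rests on the naturality of $\widehat{p}^n_d$ together with the defining property $\gamma^n_{d,E}(j^n_{d,\smooth{M}}(1)\otimes_A e)=j^n_{d,E}(e)$ of the comparison map from \cite{FMW}. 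The verification that $J^n_d$ preserves epimorphisms is a secondary technical point, but one that follows routinely from the projectivity of the de Rham forms and the maximality of the de Rham prolongation.
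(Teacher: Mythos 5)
Your proposal is correct and follows essentially the same route as the paper: surjectivity of $\widehat{p}^n_{d,\smooth{M}}$ from Lemma~\ref{lemma:classical_jets_generated_by_prolongations}, exactness of $J^n_d$ in the classical setting via \cite[\jetsprophJexact]{FMW} feeding into Lemma~\ref{lemma:tensorial_comparison} to make $\gamma^n_{d,E}$ surjective, and the factorization $\widehat{p}^n_{d,E}=\gamma^n_{d,E}\circ(\widehat{p}^n_{d,\smooth{M}}\otimes_{\smooth{M}}\id_E)$ — your element trace through $a\otimes e\mapsto(a\otimes 1)\otimes_A e$ is exactly the paper's commutative square, and your appeal to $\gamma^n_{d,E}(j^n_{d,\smooth{M}}(1)\otimes_A e)=j^n_{d,E}(e)$ matches the compatibility the paper establishes by induction from $l^n_d$.
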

\begin{proof}
By Lemma \ref{lemma:classical_jets_generated_by_prolongations}, we know that the map $\widehat{p}^n_{d,\smooth{M}}\colon \smooth{M}\otimes \smooth{M}\to J^n_d \smooth{M}$ is surjective.

By \cite[\jetsprophJexact]{FMW}, since the classical case satisfies all the required assumptions, the functor $J^n_d$ is in particular exact, and thus in particular preserves epimorphisms.
We can thus apply Lemma \ref{lemma:tensorial_comparison}, telling us that there is a natural epimorphism $\gamma^n_d\colon J^n_d\smooth{M} \otimes_{\smooth{M}}-\to J^n_d$ of endofunctors of ${}_\smooth{M}\!\Mod_{\R}\cong {}_\smooth{M}\!\Mod$.
Since $\gamma^n_d$ is compatible with $l^n_{d}\colon J^n_d\hookrightarrow J^1_d\circ J^{n-1}_d$, we can prove by induction that $\gamma^n_d$ is compatible with the jet prolongation $j^n_d$, i.e.\ $j^n_{d,E}=\gamma^n_d\circ j^n_{d,\smooth{M}}\otimes_\smooth{M} id_E$.
It follows that $\gamma^n_d$ is also compatible with $\widehat{p}^n_d$, giving us the following commutative diagram.
\begin{equation}
\begin{tikzcd}[column sep=90pt]
\smooth{M}\otimes \smooth{M}\otimes_\smooth{M} E\ar[r,two heads,"\widehat{p}^n_{d,\smooth{M}}\otimes_\smooth{M} \id_E"]\ar[d,"\wr"']&J^n_d \smooth{M}\otimes_\smooth{M} E \ar[d,two heads,"\gamma^n_d"]\\
\smooth{M}\otimes E\ar[r,"\widehat{p}^n_{d,E}"']&J^n_d E
\end{tikzcd}
\end{equation}
By the properties of epimorphisms, it follows that also $\widehat{p}^n_{d,E}$ is surjective.
\end{proof}
\begin{cor}\label{cor:classicalrepresentability}
The $n$-jet functor for the de Rham exterior algebra represents the functor of differential operators of order at most $n$.
I.e., we have the following isomorphism, which is natural in $E$ and $F$ in ${}_{\smooth{M}}\Mod$
\begin{equation}
{}_{\smooth{M}}\!\Hom(J^n_d E,F)
\simeq \Diff^n_d(E,F)
\end{equation}
\end{cor}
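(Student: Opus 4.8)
The plan is to reduce the statement to the general representability criterion of Corollary \ref{cor:representability_diff_op}, which says that $J^n_d E$ represents $\Diff^n_d(E,-)$ precisely when the equivalent conditions of Theorem \ref{theo:do_lift_uniquely} hold. Among these, the most convenient to verify is condition \eqref{theo:do_lift_uniquely:i}, namely the surjectivity of $\widehat{p}^n_{d,E}$.

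First I would invoke Proposition \ref{prop:classical_jets_elemental_also_over_nonfgp_bundles}, which for the de Rham exterior algebra asserts exactly that $\widehat{p}^n_d\colon\smooth{M}\otimes-\to J^n_d$ is a natural epimorphism. In particular $\widehat{p}^n_{d,E}$ is surjective for every $E$ in ${}_{\smooth{M}}\!\Mod$, so condition \eqref{theo:do_lift_uniquely:i} of Theorem \ref{theo:do_lift_uniquely} holds for all such $E$. Consequently, by the equivalence in that theorem, the $n$-jet lift of each differential operator is unique (condition \eqref{theo:do_lift_uniquely:iii}).

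Next I would feed this into Corollary \ref{cor:representability_diff_op} to conclude that $J^n_d E$ is a representing object for $\Diff^n_d(E,-)$. Concretely, the natural transformation \eqref{eq:epi_defining_Diff}, obtained by precomposition with $j^n_{d,E}$, is surjective by construction, and the uniqueness of lifts just established makes it injective; hence it is a natural isomorphism ${}_{\smooth{M}}\!\Hom(J^n_d E,F)\simeq\Diff^n_d(E,F)$. Since this transformation is natural in both $E$ and $F$, so is the resulting isomorphism.

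There is essentially no remaining obstacle: all the substantive work has already been carried out in the preceding results. The genuinely hard step was establishing the surjectivity of $\widehat{p}^n_d$ over the whole category ${}_{\smooth{M}}\!\Mod$ rather than merely over finitely generated projectives---achieved in Lemma \ref{lemma:classical_jets_generated_by_prolongations} via the chart-and-partition-of-unity computation in jet coordinates, and then propagated to arbitrary modules through the tensorial comparison of Lemma \ref{lemma:tensorial_comparison}. Once that surjectivity is in hand, the corollary is a purely formal consequence of the representability machinery, so the only care needed here is to check that naturality is preserved under the identification, which is immediate from the construction of \eqref{eq:epi_defining_Diff}.
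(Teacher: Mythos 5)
Your proof is correct and follows exactly the route the paper intends: establish surjectivity of $\widehat{p}^n_{d,E}$ via Proposition \ref{prop:classical_jets_elemental_also_over_nonfgp_bundles} and then apply the representability criterion of Theorem \ref{theo:do_lift_uniquely} through Corollary \ref{cor:representability_diff_op}. This matches the paper's own stated strategy for this corollary, so there is nothing to add.
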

\subsection{Lifts of the zero map}\label{ss:Lifts_of_the_zero_map}
We can be more precise in describing the obstruction to representability of differential operators.
\begin{defi}
Let $\ZL^n(E,F)$ be the kernel of the map $-\circ j^n_{d,E}\colon \AHom(J^n_d E,F)\to \Diff^n_d(E,F)$, called the \emph{module of zero lifts}.
\end{defi}
The elements of $\ZL^n(E,F)$ are $A$-linear maps $J^n_d E\to F$ that lift the zero map through $j^n_{d,E}$, whence the name.
The construction $\ZL^n$ is bifunctorial in $E$ and $F$.
In particular, $\ZL^n(E,-)$ is the kernel of the natural epimorphism \eqref{eq:epi_defining_Diff}, and therefore it is the obstruction to the representability of differential operators by $J^n_d E$.
\begin{prop}\label{prop:zero_lift_representable}
The lifts of the zero map are represented by $\coker(\widehat{p}^n_{d})$, i.e.\ there is a natural isomorphism
\begin{equation}
\ZL^n(E,-)\simeq \AHom(\coker(\widehat{p}^n_{d,E}),-).
\end{equation}
\end{prop}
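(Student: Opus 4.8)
The plan is to realise both sides as sub-, respectively quotient-, Hom functors of $\AHom(J^n_d E,-)$ and to match them by means of the universal property of the cokernel. Write $q^n_{d,E}\colon J^n_d E \twoheadrightarrow \coker(\widehat{p}^n_{d,E})$ for the cokernel projection of $\widehat{p}^n_{d,E}$. Since $q^n_{d,E}$ is an epimorphism, precomposition yields for every $F$ in $\AMod$ an injection
\begin{equation}
(-)\circ q^n_{d,E}\colon \AHom(\coker(\widehat{p}^n_{d,E}),F)\longhookrightarrow \AHom(J^n_d E,F),
\end{equation}
whose image is exactly the set of $A$-linear maps $\psi\colon J^n_d E\to F$ satisfying $\psi\circ \widehat{p}^n_{d,E}=0$, by the universal property of the cokernel.

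First I would identify this image with $\ZL^n(E,F)$. Recall that $\im(\widehat{p}^n_{d,E})=Aj^n_{d,E}(E)$, the $A$-submodule of $J^n_d E$ generated by the prolongations. Hence, for an $A$-linear $\psi$, the condition $\psi\circ \widehat{p}^n_{d,E}=0$ reads $a\,\psi(j^n_{d,E}(e))=\psi(a\,j^n_{d,E}(e))=0$ for all $a\in A$ and $e\in E$; evaluating at $a=1$ shows this is equivalent to $\psi(j^n_{d,E}(e))=0$ for all $e\in E$, i.e.\ $\psi\circ j^n_{d,E}=0$. The latter is precisely the defining condition for membership in $\ZL^n(E,F)=\ker\big((-)\circ j^n_{d,E}\big)$. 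Thus $(-)\circ q^n_{d,E}$ corestricts to a bijection $\AHom(\coker(\widehat{p}^n_{d,E}),F)\xrightarrow{\sim}\ZL^n(E,F)$.

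Finally I would verify naturality. For a morphism $f\colon F\to F'$ in $\AMod$ the square relating the two instances of $(-)\circ q^n_{d,E}$ commutes on the nose, since postcomposition by $f$ commutes with precomposition by $q^n_{d,E}$; restricting to the matched submodules gives naturality of the isomorphism in $F$. Naturality in $E$ then follows in the same manner from the functoriality of $E\mapsto\coker(\widehat{p}^n_{d,E})$ (the cokernel of the natural transformation $\widehat{p}^n_d$) together with the bifunctoriality of $\ZL^n$ recorded above.

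There is no genuine obstacle here: the statement is a direct application of the universal property of the cokernel, and the only point requiring a word of justification is the equivalence $\psi\circ\widehat{p}^n_{d,E}=0\iff\psi\circ j^n_{d,E}=0$, which rests on the elementary fact that $j^n_{d,E}(E)$ generates $\im(\widehat{p}^n_{d,E})$ as an $A$-module.
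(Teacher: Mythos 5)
Your proof is correct and follows essentially the same route as the paper: both identify $\AHom(\coker(\widehat{p}^n_{d,E}),-)$ with the kernel of $-\circ j^n_{d,E}$ via the universal property of the cokernel applied to the right exact sequence $A\otimes E\to J^n_d E\to \coker(\widehat{p}^n_{d,E})\to 0$. The only cosmetic difference is that where you verify the equivalence $\psi\circ\widehat{p}^n_{d,E}=0\iff\psi\circ j^n_{d,E}=0$ elementwise (via $A$-linearity and evaluating at $a=1$), the paper packages the same step as the extension/restriction-of-scalars adjunction $\AHom(A\otimes E,-)\simeq\Hom(E,-)$ carrying $-\circ\widehat{p}^n_{d,E}$ to $-\circ j^n_{d,E}$.
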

\begin{proof}
Consider the following right exact sequence in $\AMod$.
\begin{equation}
\begin{tikzcd}
A\otimes E \ar[r,"\widehat{p}^{n}_{d,E}"]& J^{n}_d E\ar[r,two heads]& \coker(\widehat{p}^{n}_{d,E})\ar[r]&0
\end{tikzcd}
\end{equation}
We now apply the functor $E\mapsto \AHom(E,-)$ to this sequence.
This functor is contravariant and maps colimits into limits, as the functor $\AHom(-,F)\colon \AMod^\op\to \Mod$ is left exact for all $F$.
We thus have the following left exact sequence.
\begin{equation}
\begin{tikzcd}
0\ar[r]& \AHom(\coker(\widehat{p}^n_{d,E}),-)\ar[r,hook]& \AHom(J^{n}_dE,-)\ar[r,"-\circ\widehat{p}^{n}_{d,E}"]&[30pt] \AHom(A\otimes E,-)
\end{tikzcd}
\end{equation}
By the adjunction between extension and restriction of scalars, we have the natural isomorphism $\AHom(A\otimes -,-)\simeq \Hom(-,-)$.
Furthermore, by applying this isomorphism to $-\circ \widehat{p}^n_{d,E}$, we obtain the map $-\circ j^n_{d,E}$, whose image is $\Diff^n_d(E,-)$ by definition.
\begin{equation}
\begin{tikzcd}
0\ar[r]& \AHom(\coker(\widehat{p}^n_{d,E}),-)\ar[r,hook]& \AHom(J^{n}_dE,-)\ar[rr,"-\circ j^{n}_{d,E}"]\ar[dr,two heads]& & \Hom(E,-)\\[-10pt]
&&&\Diff^n_d(E,-)\ar[ur,hook]
\end{tikzcd}
\end{equation}
since $\AHom(\coker(\widehat{p}^n_{d,E}),-)$ is kernel of the map $-\circ j^n_{d,E}\colon \AHom(J^n_d E,F)\to \Diff^n_d(E,F)$, we get the desired natural isomorphism.
\end{proof}
Proposition \ref{prop:zero_lift_representable} is an alternative way of showing the relation between the representability of differential operators and the surjectivity of $\widehat{p}^n_{d}$, cf.\ Theorem \ref{theo:do_lift_uniquely}.
\section{Elemental jet functors and elemental linear differential operators}
\subsection{Elemental jets}\label{ss:elementaljets}
So far we have shown several criteria that establish when a jet module is a representing object for differential operators, cf.\ Corollary \ref{cor:representability_diff_op} and Proposition \ref{prop:zero_lift_representable}.
These results involve the surjectivity of the map $\widehat{p}^n_d$.
Therefore, the image of this map is itself worth studying.
\begin{defi}\label{def:elementalnjetfunctor}
Given an exterior algebra $\Omega^\bullet_d$ over an algebra $A$, we call \emph{elemental $n$-jet functor} the image of the natural transformation $\widehat{p}^n_d\colon A\otimes - \to J^n_d$ and we denote it by $\SJ^n_d$.
Given an $(A,B)$-bimodule $E$, we call the elements of $\SJ^n_d E$ \emph{elemental $n$-jets of $E$}.
\end{defi}
For all $E$, we have $\SJ^n_d E=Aj^n_d(E)$ and thus elemental jets of $E$ are $A$-linear combinations of prolongations of elements of $E$.

In the following proposition we use the properties shown in the previous §\ref{s:Representability}, translating them in terms of the elemental jet functors.
\begin{prop}\label{prop:elemental_jet_properties}\
\begin{enumerate}
\item\label{prop:elemental_jet_properties:1} $\SJ^i_d=J^i_d$ for $i=0,1$.
\item\label{prop:elemental_jet_properties:2} given $E$ in $\AMod$ such that $\Tor^A_1(\Omega^1_d,E)=0$, $\SJ^2_d E=J^2_d E$ if and only if $\Omega^2_d=\Omega^2_{d,\text{max}}$.
\item\label{prop:elemental_jet_properties:3} $\SJ^n_d(E)=A\otimes E/N^n_d(E)$ and thus for $n\ge 1$ we have $\SJ^n_d=J^n_u/N^n_d$.
\item\label{prop:elemental_jet_properties:4} Differential operators  on $E$ are representable with representing object $J^n_d E$ if and only if all jets are elemental, i.e.\ when $\SJ^n_dE=J^n_d E$.
\item\label{prop:elemental_jet_properties:5} $\SJ^n_u=J^n_u$.
\end{enumerate}
\end{prop}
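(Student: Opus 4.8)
The plan is to prove each of the five items by directly applying the results established in the previous section, since Proposition \ref{prop:elemental_jet_properties} is essentially a repackaging of those results in the language of the elemental jet functor $\SJ^n_d = \Im(\widehat{p}^n_d)$. I would first establish item \eqref{prop:elemental_jet_properties:3}, as it gives the basic structural description of $\SJ^n_d$ that clarifies the others. By the first isomorphism theorem applied to $\widehat{p}^n_{d,E}\colon A\otimes E \to J^n_d E$, the image $\SJ^n_d(E) = Aj^n_{d,E}(E)$ is isomorphic to $(A\otimes E)/\ker(\widehat{p}^n_{d,E}) = (A\otimes E)/N^n_d(E)$, using Definition \ref{def:diffrelationsordern}. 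For $n\ge 1$, Remark \ref{rmk:prosphat}\eqref{theo:do_lift_uniquely:iii} identifies $A\otimes -$ with the universal jet functor $J^n_u$, whence $\SJ^n_d = J^n_u/N^n_d$.

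Next I would dispatch items \eqref{prop:elemental_jet_properties:1} and \eqref{prop:elemental_jet_properties:5} together, as they are the statements that $\widehat{p}^n_d$ is already an epimorphism in the relevant cases. For \eqref{prop:elemental_jet_properties:1}, Remark \ref{rmk:representability_low_degree} records that $\coker(\widehat{p}^n_d)=0$ for $n=0,1$, so $\widehat{p}^0_d$ and $\widehat{p}^1_d$ are surjective; surjectivity means $\SJ^i_d = \Im(\widehat{p}^i_d) = J^i_d$ for $i=0,1$. For \eqref{prop:elemental_jet_properties:5}, Corollary \ref{cor:universal_representability} (via its proof) shows that $\widehat{p}^n_{u,E}$ is always surjective, being $\id_{A\otimes E}$ for $n>0$ and the multiplication map for $n=0$; hence $\SJ^n_u = J^n_u$ for all $n$.

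For item \eqref{prop:elemental_jet_properties:4}, I would invoke Corollary \ref{cor:representability_diff_op} together with Theorem \ref{theo:do_lift_uniquely}. The equivalence there says $J^n_d E$ represents $\Diff^n_d(E,-)$ precisely when $\widehat{p}^n_{d,E}$ is surjective, and surjectivity of $\widehat{p}^n_{d,E}$ is exactly the condition $\SJ^n_d E = \Im(\widehat{p}^n_{d,E}) = J^n_d E$. Finally, item \eqref{prop:elemental_jet_properties:2} is the degree-$2$ specialization: under $\Tor^A_1(\Omega^1_d,E)=0$, Proposition \ref{prop:rep_for_2jet}\eqref{prop:rep_for_2jet:3} states that $\widehat{p}^2_{d,E}$ is surjective if and only if $\Omega^2_d=\Omega^2_{d,\text{max}}$, which translates directly to $\SJ^2_d E = J^2_d E$ if and only if $\Omega^2_d=\Omega^2_{d,\text{max}}$.

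I do not expect any genuine obstacle here, since every item reduces to a previously proven statement via the single observation that ``$\SJ^n_d E = J^n_d E$'' is synonymous with ``$\widehat{p}^n_{d,E}$ is surjective.'' The only point requiring a modicum of care is the identification $A\otimes - \cong J^n_u$ in item \eqref{prop:elemental_jet_properties:3} for the $n=0$ versus $n\ge 1$ cases, where the map $\widehat{p}^n_u$ changes character; I would simply cite Remark \ref{rmk:prosphat} and the appendix result to handle the bookkeeping cleanly.
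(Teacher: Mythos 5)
Your proposal is correct and follows essentially the same route as the paper's proof: each item is dispatched by citing the same prior results (Remark \ref{rmk:representability_low_degree} for (i), Proposition \ref{prop:rep_for_2jet}.\eqref{prop:rep_for_2jet:3} for (ii), Corollary \ref{cor:representability_diff_op} for (iv), Corollary \ref{cor:universal_representability} for (v)), with (iii) following from the epi--mono factorization of $\widehat{p}^n_{d,E}$ and the identification of $A\otimes-$ with $J^n_u$ from Remark \ref{rmk:prosphat}. The only differences are cosmetic: your cross-reference for that identification points at the wrong item label (it should be point (iii) of Remark \ref{rmk:prosphat}, not an item of Theorem \ref{theo:do_lift_uniquely}), and for (v) you quote the surjectivity of $\widehat{p}^n_{u,E}$ directly from the proof of Corollary \ref{cor:universal_representability} where the paper routes through item (iv) --- an immaterial variation.
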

\begin{proof}\
\begin{enumerate}
\item Cf.\ Remark \ref{rmk:representability_low_degree}.
\item Cf.\ Proposition \ref{prop:rep_for_2jet}.\eqref{prop:rep_for_2jet:3}.
\item It follows by definition of $\SJ^n_d E$, as $\widehat{p}^n_{d,E}$ surjects onto $\SJ^n_d E$ and has kernel $N^n_d(E)$.
\item Cf.\ Corollary \ref{cor:representability_diff_op}.
\item It follows from Corollary \ref{cor:universal_representability} and \eqref{prop:elemental_jet_properties:4}.\qedhere
\end{enumerate}
\end{proof}
\begin{rmk}
	Proposition \ref{prop:elemental_jet_properties}.\eqref{prop:elemental_jet_properties:2} indicates that the representability aspect of the jet functors behaves as in the classical setting, only in the case of the maximal exterior algebra rather than a generic exterior algebra (at least to degree $2$).
\end{rmk}

The jet prolongations $j^n_d\colon \id_{\AModB}\to J^n_d$ factor through the elemental jet functors, as for all $e\in E$ we have $j^n_{d,E}(e)=\widehat{p}^n_{d,E}(1\otimes e)$.
We will denote the factorizations by $\Sj^n_d\colon \id_{\AModB}\to \SJ^n_d$, and in general, our elemental jet notation will be the analogue of the holonomic jet notation but with a $\check{\phantom{A}}$ on top.
We denote the components of the canonical epi-mono factorization of the transformation $\widehat{p}^n_d$ by
\begin{equation}\label{eq:phatnepimonocomponents}
\begin{tikzcd}
A\otimes - \ar[r,two heads,"\Sphat^n_d"]& \SJ^n_d\ar[r,hook,"\iota_{\SJ^n_d}"]& J^n_d.
\end{tikzcd}
\end{equation}
Similarly, the jet projection of an elemental jet is an elemental jet.
Explicitly, given an elemental jet $\sum_i a_ij^n_{d,E} (e_i)\in \SJ^n_d(E)$, we have $\pi^{n,m}_{d,E}(\sum_i a_ij^n_{d,E} (e_i))=\sum_i a_ij^m_{d,E} (e_i)$.
Therefore, the restrictions of the holonomic jet projections $\pi^{n,m}_d\colon J^n_d\to J^m_d$, for $n\ge m$, to elemental jets,  factor through the inclusion $\iota_{\SJ^{m}_d}\colon \SJ^m_d\hookrightarrow J^m_d$.
We thus denote the resulting factorizations by $\Spi^{n,m}_d\colon \SJ^n_d\to \SJ^m_d$.
\begin{defi}\label{def:elementalsymforms}
We term $\SS^n_d\colonequals N^{n-1}_d/N^{n}_d$ the functor of \emph{elemental symmetric forms}.
For $n=0$ we set $\SS^0_d=\id_{\AModB}$.
\end{defi}
Notice that if we define $N^{-1}_d(E)=A\otimes E$, then the definition of $\SS^n_d$ is compatible with that of $\SS^0_d$.
\begin{prop}\label{prop:elementalexactsequence}
The jet projections between elemental jet functors are surjective.

Moreover, $\ker(\Spi^{n,n-1}_d\colon\SJ^n_d\to \SJ^{n-1}_d)=\SS^n_d$, i.e.\ the following is a short exact sequence
\begin{equation}\label{diag:elemental_jet_ses}
\begin{tikzcd}
0\ar[r]&\SS^n_d\ar[r,hook,"\Siota^n_d"]&\SJ^n_d\ar[r,two heads,"\Spi^{n,n-1}_d"]&\SJ^{n-1}_d\ar[r]&0.
\end{tikzcd}
\end{equation}
Furthermore, if $J^{n-1}_d=\SJ^{n-1}_d$, the $n$-jet sequence is left exact, and symmetric forms of degree $n$ are generated by prolongations, then $S^n_d\simeq \SS^n_d$, and thus elemental jets satisfy the jet short exact sequences.
\end{prop}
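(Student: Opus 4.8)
The plan is to treat the three assertions in turn, using throughout the presentation $\SJ^n_d=(A\otimes-)/N^n_d$ from Proposition \ref{prop:elemental_jet_properties}.\eqref{prop:elemental_jet_properties:3}, under which the epimorphism $\Sphat^n_d\colon A\otimes-\twoheadrightarrow\SJ^n_d$ of \eqref{eq:phatnepimonocomponents} is the quotient projection with kernel $N^n_d$. Surjectivity of $\Spi^{n,n-1}_d$ is immediate: a generic elemental $(n-1)$-jet $\sum_i a_i j^{n-1}_{d,E}(e_i)$ is the image under the holonomic projection of the elemental $n$-jet $\sum_i a_i j^n_{d,E}(e_i)$, and hence lies in the image of $\Spi^{n,n-1}_d$.

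For the kernel computation the key step is to identify $\Spi^{n,n-1}_d$ with the canonical quotient map $(A\otimes-)/N^n_d\to(A\otimes-)/N^{n-1}_d$, which is well-defined since $N^n_d\subseteq N^{n-1}_d$ by Remark \ref{rmk:prosphat}. I would verify this by a short diagram chase: commutativity of \eqref{diag:phat_compatible_with_pi} gives $\iota_{\SJ^{n-1}_d}\circ\Spi^{n,n-1}_d\circ\Sphat^n_d=\pi^{n,n-1}_d\circ\widehat{p}^n_d=\widehat{p}^{n-1}_d=\iota_{\SJ^{n-1}_d}\circ\Sphat^{n-1}_d$, and cancelling the monomorphism $\iota_{\SJ^{n-1}_d}$ yields $\Spi^{n,n-1}_d\circ\Sphat^n_d=\Sphat^{n-1}_d$. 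This is exactly the assertion that $\Spi^{n,n-1}_d$ is the quotient projection, so the third isomorphism theorem identifies $\ker(\Spi^{n,n-1}_d)=N^{n-1}_d/N^n_d$, which is $\SS^n_d$ by Definition \ref{def:elementalsymforms}. Together with the surjectivity already shown, this gives the short exact sequence \eqref{diag:elemental_jet_ses}, with $\Siota^n_d$ the kernel inclusion.

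For the final claim I would appeal to Corollary \ref{cor:symm_forms_N/N}. By Proposition \ref{prop:elemental_jet_properties}.\eqref{prop:elemental_jet_properties:4} the hypothesis $J^{n-1}_d=\SJ^{n-1}_d$ is equivalent to $J^{n-1}_d$ representing $\Diff^{n-1}_d(E,-)$, so together with left exactness of the $n$-jet sequence the hypotheses of Corollary \ref{cor:ker_and_cok}, and hence of Corollary \ref{cor:symm_forms_N/N}, are in force. Since symmetric forms of degree $n$ are assumed generated by prolongations, that corollary delivers $S^n_d\simeq N^{n-1}_d/N^n_d=\SS^n_d$ and, applying Proposition \ref{prop:elemental_jet_properties}.\eqref{prop:elemental_jet_properties:4} once more, $J^n_d=\SJ^n_d$. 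Under these three identifications the elemental short exact sequence \eqref{diag:elemental_jet_ses} becomes the holonomic jet short exact sequence, which is precisely the assertion that elemental jets satisfy the jet short exact sequences.

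I expect the only genuinely delicate point to be the diagram chase establishing that $\Spi^{n,n-1}_d$ is the canonical quotient projection; the remaining arguments are either elementary (the surjectivity and the third isomorphism theorem) or direct invocations of the representability results of §\ref{s:Representability}.
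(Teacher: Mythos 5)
Your proof is correct, and it shares the paper's overall skeleton—surjectivity deduced from the factorization of \eqref{diag:phat_compatible_with_pi} through the elemental functors, and the final claim obtained from Proposition \ref{prop:elemental_jet_properties}.\eqref{prop:elemental_jet_properties:4} together with Corollaries \ref{cor:ker_and_cok} and \ref{cor:symm_forms_N/N}, exactly as the paper does—but the kernel identification proceeds by a genuinely different mechanism. The paper constructs the elemental analogue \eqref{diag:elementalcokfest} of diagram \eqref{diag:unitogeneral} and runs the snake lemma on it, obtaining $\ker\left(\Sphat^{n}_d|_{N^{n-1}_d}\right)=N^{n}_d$ and $\coker\left(\Sphat^{n}_d|_{N^{n-1}_d}\right)=0$, and from these $\ker(\Spi^{n,n-1}_d)\cong N^{n-1}_d/N^{n}_d$; you instead cancel the monomorphism $\iota_{\SJ^{n-1}_d}$ to obtain $\Spi^{n,n-1}_d\circ\Sphat^{n}_d=\Sphat^{n-1}_d$, recognize $\Spi^{n,n-1}_d$ as the canonical map of quotients $(A\otimes-)/N^{n}_d\to(A\otimes-)/N^{n-1}_d$ (legitimate, since $N^{n}_d\subseteq N^{n-1}_d$ by Remark \ref{rmk:prosphat}, and kernels, cokernels and the third isomorphism theorem in the functor category are computed pointwise), and read off the kernel directly. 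Your route is more elementary—no snake lemma, no auxiliary rows—and it in fact renders your first paragraph redundant, since the induced map between quotients by nested subobjects is automatically an epimorphism; what the paper's longer computation buys is the elemental counterpart of Corollary \ref{cor:ker_and_cok}, namely the explicit identification of $\Sphat^{n}_d|_{N^{n-1}_d}$ as a presentation of $\SS^{n}_d$ with kernel exactly $N^{n}_d$, which keeps the elemental development structurally parallel to the holonomic computations of §\ref{s:Representability}. The one step you flag as delicate is indeed sound: $\iota_{\SJ^{n-1}_d}$ is a monomorphism by the epi--mono factorization \eqref{eq:phatnepimonocomponents}, and the identity $\pi^{n,n-1}_d\circ\iota_{\SJ^{n}_d}=\iota_{\SJ^{n-1}_d}\circ\Spi^{n,n-1}_d$ that your chase uses holds by the very definition of $\Spi^{n,n-1}_d$ as the factorization of the restricted holonomic projection.
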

\begin{proof}
Commutativity of \eqref{diag:phat_compatible_with_pi} implies a factorization of the projection through the elemental jet functors, and thus the following commutative diagram
\begin{equation}\label{diag:elemental_jet_projection}
\begin{tikzcd}
A\otimes -\ar[r,two heads,"\Sphat^n_d"']\ar[rr,bend left,"\widehat{p}^n_d"]\ar[d,equals]&\SJ^n_d\ar[d,"\Spi^{n,n-1}_d"]\ar[r,hook,"\iota_{\SJ^n_d}"']&J^n_d\ar[d,"\pi^{n,n-1}_d"]\\
A\otimes -\ar[r,two heads,"\Sphat^{n-1}_d"]\ar[rr,bend right,"\widehat{p}^{n-1}_d"]&\SJ^{n-1}_d\ar[r,hook,"\iota_{\SJ^{n-1}_d}"]&J^{n-1}_d
\end{tikzcd}
\end{equation}
In particular, commutativity of the left square in \eqref{diag:elemental_jet_projection} tells us that its top-right composition is an epimorphism, since the bottom-left composition is.
It follows that also $\Spi^{n,n-1}_d\colon \SJ^n_d\to \SJ^{n-1}_d$ is an epi, as it is the last term in an epimorphic composition.

We now have a diagram equivalent to \eqref{diag:unitogeneral} but for elemental jet functors.
We compute kernels and cokernels of the vertical maps, and of course $\widehat{p}^n_d$ and $\Sphat^n_d$ have the same kernel, i.e.\ $N^n_d$.
Thus we obtain the following.
\begin{equation}\label{diag:elementalcokfest}
	\begin{tikzcd}
	&\ker\left(\Sphat^{n}_d|_{N^{n-1}_d}\right)\ar[d,hook]&N^{n}_d \ar[d,hook]&[30pt]0\ar[d]\\
		 0\ar[r]& N^{n-1}_d \ar[r,hook]\ar[d,"\Sphat^{n}_d|_{N^{n-1}_d}"'] & A\otimes - \ar[r,two heads,"\Sphat^{n-1}_{d,E}"] \ar[d,two heads,"\Sphat^{n}_{d}"] & \SJ^{n-1}_d\ar[r]\ar[d,equals]& 0\\
		 0\ar[r]& \ker(\Spi^{n,n-1}_d) \ar[r,hook]\ar[d,two heads] & \SJ^{n}_d \ar[r,two heads, "\Spi^{n,n-1}_{d}"]\ar[d] & \SJ^{n-1}_d \ar[d]\ar[r]&0\\
		 &\coker\left(\Sphat^{n}_d|_{N^{n-1}_d}\right)&0&0
	\end{tikzcd}
\end{equation}
By the snake lemma, $\coker\left(\Sphat^{n}_d|_{N^{n-1}_d}\right)=0$ and $\ker\left(\Sphat^{n}_d|_{N^{n-1}_d}\right)=N^n_d$.
It follows that $\ker(\Spi^{n,n-1}_d)\cong N^{n-1}_d/N^n_d=\SS^n_d$, and the bottom row of \eqref{diag:elementalcokfest} gives \eqref{diag:elemental_jet_ses}.

The final statements follow from Corollary \ref{cor:symm_forms_N/N}.
\end{proof}

Before moving to a different topic, we observe that under reasonable regularity, we can see elemental symmetric forms as symmetric forms.
\begin{lemma}\label{lemma:elemental_forms_are_forms}
If $\Omega^\bullet_d$ is an exterior algebra over $A$ such that the $n$-jet sequence is left exact, then there exists a unique monomorphism $\iota_{\SS^n_d}\colon \SS^n_d\to S^n_d$ such that the following diagram commutes
\begin{equation}
\begin{tikzcd}
	{\SS^n_d} & {\SJ^n_d}\\
	{S^n_d} & {J^n_d}
	\arrow["{\Siota^n_d}", hook, from=1-1, to=1-2]
	\arrow["{\iota^n_d}"', hook, from=2-1, to=2-2]
	\arrow["{\iota_{\SS^n_d}}"', hook, from=1-1, to=2-1]
	\arrow["{\iota_{\SJ^n_d}}", hook, from=1-2, to=2-2]
\end{tikzcd}
\end{equation}
\end{lemma}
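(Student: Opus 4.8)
The plan is to realize both $\SS^n_d$ and $S^n_d$ as kernels of the respective jet projections and then invoke the universal property of the latter. By Proposition \ref{prop:elementalexactsequence}, the elemental symmetric forms fit into the short exact sequence \eqref{diag:elemental_jet_ses}, so $\Siota^n_d$ exhibits $\SS^n_d$ as the kernel of $\Spi^{n,n-1}_d$; in particular $\Spi^{n,n-1}_d\circ\Siota^n_d=0$. On the other side, the hypothesis that the $n$-jet sequence is left exact means precisely that $\iota^n_d$ exhibits $S^n_d$ as the kernel of $\pi^{n,n-1}_d$. Since all the maps involved are natural transformations of functors $\AModB\to\Mod$, and this functor category is abelian with kernels computed pointwise, every universal property I need is available there.

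First I would form the composite $\iota_{\SJ^n_d}\circ\Siota^n_d\colon\SS^n_d\to J^n_d$ and show it factors through $S^n_d$. Postcomposing with $\pi^{n,n-1}_d$ and using the commutativity of the right-hand square of diagram \eqref{diag:elemental_jet_projection}, namely $\pi^{n,n-1}_d\circ\iota_{\SJ^n_d}=\iota_{\SJ^{n-1}_d}\circ\Spi^{n,n-1}_d$, gives $\pi^{n,n-1}_d\circ\iota_{\SJ^n_d}\circ\Siota^n_d=\iota_{\SJ^{n-1}_d}\circ\Spi^{n,n-1}_d\circ\Siota^n_d=0$, the last equality because $\Siota^n_d$ is the kernel inclusion of $\Spi^{n,n-1}_d$. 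The universal property of $S^n_d=\ker(\pi^{n,n-1}_d)$ then yields a \emph{unique} natural transformation $\iota_{\SS^n_d}\colon\SS^n_d\to S^n_d$ with $\iota^n_d\circ\iota_{\SS^n_d}=\iota_{\SJ^n_d}\circ\Siota^n_d$, which is exactly the commutativity and the uniqueness asserted in the statement.

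Finally I would verify that $\iota_{\SS^n_d}$ is monic, which is immediate once the factorization is in place. The composite $\iota^n_d\circ\iota_{\SS^n_d}$ equals $\iota_{\SJ^n_d}\circ\Siota^n_d$, a composite of two monomorphisms: $\iota_{\SJ^n_d}$ is the monic component of the epi-mono factorization \eqref{eq:phatnepimonocomponents} of $\widehat{p}^n_d$, and $\Siota^n_d$ is a kernel inclusion. Hence $\iota^n_d\circ\iota_{\SS^n_d}$ is monic, and a transformation whose postcomposite with $\iota^n_d$ is monic must itself be monic.

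I do not expect a genuine obstacle here: the argument is a formal diagram chase in the abelian functor category, and the only point requiring care is to invoke left exactness of the $n$-jet sequence precisely at the step where the kernel universal property of $S^n_d$ is used, since that hypothesis is exactly what guarantees $S^n_d$ really is $\ker(\pi^{n,n-1}_d)$ and not merely some subfunctor of $J^n_d$.
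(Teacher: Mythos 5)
Your proposal is correct and follows essentially the same route as the paper's proof: both exhibit $\SS^n_d$ and $S^n_d$ as kernels of the respective jet projections (via Proposition \ref{prop:elementalexactsequence} and the left-exactness hypothesis), obtain $\iota_{\SS^n_d}$ with its uniqueness from the kernel universal property applied to the commuting right square of \eqref{diag:elemental_jet_projection}, and deduce monicity from the fact that $\iota^n_d\circ\iota_{\SS^n_d}=\iota_{\SJ^n_d}\circ\Siota^n_d$ is a composite of monomorphisms. Your explicit verification that $\pi^{n,n-1}_d\circ\iota_{\SJ^n_d}\circ\Siota^n_d=0$ merely spells out a step the paper leaves implicit.
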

\begin{proof}
Consider the diagram
\begin{equation}
\label{diag:elementaljetinjection}
\begin{tikzcd}
	0 & {\SS^n_d} & {\SJ^n_d} & {\SJ^{n-1}_d}&0\\
	0 & {S^n_d} & {J^n_d} & {J^{n-1}_d}
	\arrow["{\Siota^n_d}", hook, from=1-2, to=1-3]
	\arrow["{\Spi^{n,n-1}_d}", from=1-3, to=1-4]
	\arrow[from=1-4, to=1-5]
	\arrow[from=2-1, to=2-2]
	\arrow["{\iota^n_d}"', hook, from=2-2, to=2-3]
	\arrow["{\pi^{n,n-1}_d}"', from=2-3, to=2-4]
	\arrow["{\iota_{\SS^n_d}}", dashed, hook, from=1-2, to=2-2]
	\arrow["{\iota_{\SJ^n_d}}", hook, from=1-3, to=2-3]
	\arrow["\iota_{\SJ^{n-1}_d}", hook, from=1-4, to=2-4]
	\arrow[from=1-1, to=1-2]
\end{tikzcd}
\end{equation}
The right square commutes by the definition of $\Spi^{n,n-1}_d$.
The top horizontal sequence is exact by Proposition \ref{prop:elementalexactsequence} and the bottom one is left exact by hypothesis.
The existence and uniqueness of the left vertical map $\iota_{\SS^n_d}$, making the left square commute, follows by the universal property of the kernel.

Further, since both $\iota^n_d$ and $\iota_{\SJ^n_d}$ are monos, the commutativity of the left square in \eqref{diag:elementaljetinjection} implies that $\iota_{\SJ^2_d}$ is a mono.
\end{proof}
\begin{rmk}\label{rmk:conditions_lemma elemental_symm_forms}
The conditions of Lemma \ref{lemma:elemental_forms_are_forms} are satisfied for $n=2$ if $\Omega^1_d$ is flat in $\ModA$, cf.\ \cite[\jetspropfunctorialtwojetseq]{FMW}.
For generic $n$ see \cite[\jetstheohigherwolves]{FMW}.
\end{rmk}

\subsection{Elemental linear differential operators}
Analogously to \cite[\jetsdefdifferentialoperators]{FMW}, we give a notion of differential operator corresponding to elemental jet functors.
\begin{defi}
\label{def:differentialoperators}	
Let $E,F\in \AMod$.
A $\bk$-linear map $\Delta\colon E \rightarrow F$ is called a \textit{elemental linear differential operator} of order at most $n$ with respect to the exterior algebra $\Omega^\bullet_d$, if it factors through the prolongation operator $\Sj^{n}_d\colon \id_{\AMod}\to \SJ^n_d$, i.e.\ there exists an $A$-module map $\widetilde \Delta \in \AHom(\SJ^n_d E,F)$ such that the following diagram commutes:
	\begin{equation}
	\begin{tikzcd}\label{diag:universal_prop_elemental_DOs}
		\SJ_d^nE \arrow[dr, "\widetilde\Delta"] & \\
		E \arrow[r,"\Delta"] \arrow[u,"\Sj^n_{d,E}"] & F 
	\end{tikzcd}
	\end{equation}
	If $n$ is minimal, we say that $\Delta$ is an \emph{elemental linear differential operator of order $n$} with respect to the exterior algebra $\Omega^\bullet_d$.

	We denote the set of elemental linear differential operators $E\to F$ of order at most $n$ by $\WD^n_d(E,F)$, and the set of all elemental linear differential operators of finite order by $\WD_d(E,F)$.
\end{defi}

We now prove properties analogous to those holding for differential operators in the following.
\begin{prop}\label{prop:properties_elemental_DOs}
Let $\Omega^\bullet_d$ be an exterior algebra over $A$, then
\begin{enumerate}
\item\label{prop:properties_elemental_DOs:1} $\WD^n_d(E,F)$ is the image of $-\circ \Sj^n_d\colon \AMod(\SJ^n_d E,F)\to \Hom(E,F)$, and thus $\WD^n_d(E,F)$ and $\WD_d(E,F)$ are $\bk$-submodules of $\Hom(E,F)$.
\item\label{prop:properties_elemental_DOs:2} The lift of an elemental linear differential operator of order at most $n$ to the corresponding elemental $n$-jet module is unique.
\item\label{prop:properties_elemental_DOs:3} $\Diff^n_d(E,F)\subseteq \WD^n_d(E,F)$ and equality holds if $\SJ^n_d E=J^n_d E$.
The lift to the elemental jet module of a holonomic differential operator is the restriction of any lift to the corresponding holonomic jet module.
\item\label{prop:properties_elemental_DOs:4} $\WD^0_d(E,F)=\AHom(E,F)$ and $\WD^1_d(E,F)=\Diff^1_d(E,F)$.
\item\label{prop:properties_elemental_DOs:5} $\WD^m_d(E,F)\subseteq \WD^n_d(E,F)$ for all $m\le n$, thus giving a $\bk$-module filtration
\begin{equation}
\AHom(E,F)\subseteq \WD^1_d(E,F)\subseteq \cdots\subseteq \WD^n_d(E,F)\subseteq \cdots\subseteq \WD_d(E,F)\subseteq \Hom(E,F).
\end{equation}
\item\label{prop:properties_elemental_DOs:6} $\WD^0_u(E,F)=\AHom(E,F)$ and $\WD^n_u(E,F)=\Hom(E,F)$ for $n>0$.
\end{enumerate}
\end{prop}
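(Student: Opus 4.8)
The plan is to treat the six statements in turn, since each is the elemental analogue of a standard fact about holonomic differential operators and follows either directly from Definition \ref{def:differentialoperators} or from the structural results on elemental jets in Proposition \ref{prop:elemental_jet_properties} and Proposition \ref{prop:elementalexactsequence}. Throughout I would exploit the defining relation $\Sj^n_{d,E}=\Sphat^n_{d,E}(1\otimes-)$ together with the surjectivity of $\Sphat^n_d$ from the factorization \eqref{eq:phatnepimonocomponents}, and the identity $\iota_{\SJ^n_d}\circ\Sj^n_d=j^n_d$ relating elemental and holonomic prolongations.

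For \eqref{prop:properties_elemental_DOs:1}, the defining diagram \eqref{diag:universal_prop_elemental_DOs} literally says $\Delta\in\WD^n_d(E,F)$ iff $\Delta=\widetilde\Delta\circ\Sj^n_{d,E}$ for some $\widetilde\Delta\in\AHom(\SJ^n_d E,F)$, so $\WD^n_d(E,F)$ is exactly the image of the $\bk$-linear map $-\circ\Sj^n_{d,E}$ and hence a $\bk$-submodule of $\Hom(E,F)$; that $\WD_d(E,F)$ is a submodule then follows once \eqref{prop:properties_elemental_DOs:5} is known, since it exhibits $\WD_d$ as an increasing union. For \eqref{prop:properties_elemental_DOs:2}, I would mimic the implication \eqref{theo:do_lift_uniquely:i}$\implies$\eqref{theo:do_lift_uniquely:iii} of Theorem \ref{theo:do_lift_uniquely}, the key point being that here the surjectivity of $\Sphat^n_{d,E}$ is automatic: since $\SJ^n_d E=Aj^n_d(E)$ is generated by prolongations under the $A$-action and any two lifts agree on $aj^n_{d,E}(e)=a\Delta(e)$ by $A$-linearity, they must coincide.

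For \eqref{prop:properties_elemental_DOs:3}, given a holonomic lift $\widetilde\Delta\colon J^n_d E\to F$ of $\Delta\in\Diff^n_d(E,F)$, I would restrict along $\iota_{\SJ^n_d}$; because $\iota_{\SJ^n_d}\circ\Sj^n_d=j^n_d$, the composite $\widetilde\Delta\circ\iota_{\SJ^n_d}$ is an elemental lift, which simultaneously proves the inclusion and the restriction assertion, while uniqueness from \eqref{prop:properties_elemental_DOs:2} makes the restriction independent of the chosen holonomic lift. When $\SJ^n_d E=J^n_d E$ the mono $\iota_{\SJ^n_d}$ is an isomorphism and $\Sj^n_d=j^n_d$, so factoring through one prolongation is equivalent to factoring through the other, yielding equality. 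Statement \eqref{prop:properties_elemental_DOs:4} is then immediate: $\SJ^0_d=J^0_d=\id$ with $\Sj^0_d=\id$ makes an order-zero lift just an $A$-linear map, and $\WD^1_d=\Diff^1_d$ follows from \eqref{prop:properties_elemental_DOs:3} together with $\SJ^1_d=J^1_d$ from Proposition \ref{prop:elemental_jet_properties}.\eqref{prop:elemental_jet_properties:1}.

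For \eqref{prop:properties_elemental_DOs:5}, I would use the elemental projections $\Spi^{n,m}_d$ and the compatibility $\Spi^{n,m}_{d,E}\circ\Sj^n_{d,E}=\Sj^m_{d,E}$ (the projection of a prolongation is a prolongation): precomposing an order-$m$ lift with $\Spi^{n,m}_{d,E}$ produces an $A$-linear order-$n$ lift, giving the inclusion and hence the whole filtration, whose final terms are definitional. Finally, \eqref{prop:properties_elemental_DOs:6} specializes \eqref{prop:properties_elemental_DOs:4} for the degree-zero claim, and for $n>0$ uses $\SJ^n_u=J^n_u=A\otimes-$ (Remark \ref{rmk:prosphat} and Proposition \ref{prop:elemental_jet_properties}.\eqref{prop:elemental_jet_properties:5}) with $\Sj^n_u=1\otimes-$; the extension--restriction adjunction $\AHom(A\otimes E,F)\cong\Hom(E,F)$ then realizes every $\bk$-linear $\varphi$ as an elemental operator via $a\otimes e\mapsto a\varphi(e)$. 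I do not anticipate a genuine obstacle here, as the content is entirely formal; the only real care needed is the bookkeeping of the four factorizations $\Sphat^n_d,\iota_{\SJ^n_d},\Spi^{n,m}_d,\Sj^n_d$ and the verification of the small commuting triangles they satisfy, with the equality case of \eqref{prop:properties_elemental_DOs:3} being the point most likely to hide a subtlety.
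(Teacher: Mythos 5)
Your proposal is correct and follows essentially the same route as the paper: parts \eqref{prop:properties_elemental_DOs:1}--\eqref{prop:properties_elemental_DOs:4} use the identical arguments (image of $-\circ\Sj^n_{d,E}$, uniqueness via $\SJ^n_dE=Aj^n_{d,E}(E)$ and $A$-linearity, restriction along $\iota_{\SJ^n_dE}$ using $\iota_{\SJ^n_dE}\circ\Sj^n_{d,E}=j^n_{d,E}$). For \eqref{prop:properties_elemental_DOs:5} and \eqref{prop:properties_elemental_DOs:6} you simply spell out what the paper delegates to citations --- precomposition with $\Spi^{n,m}_{d,E}$ using $\Spi^{n,m}_{d}\circ\Sj^n_d=\Sj^m_d$, and the extension--restriction adjunction giving the lift $a\otimes e\mapsto a\varphi(e)$ for the universal calculus --- which matches the referenced arguments exactly.
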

\begin{proof}\ 
\begin{enumerate}
\item The first part follows by definition, and the second follows from the fact that $j^n_d$, and hence $\Sj^n_d$, are $\bk$-linear, and $\WD_d(E,F)=\bigcup_{n\in\N} \WD^n_d(E,F)$.
\item All elements of $\SJ^n_d E$ are of the form $\sum_i a_i \Sj^n_{d,E}(e_i)$ for some $a_i\in A$ and $e_i\in E$.
Thus, for all lifts $\widetilde{\Delta}$ of $\Delta\colon E\to F$, by $A$-linearity we have the following
\begin{equation}\label{eq:uniqueness_lift_WDO}
\widetilde{\Delta}\left(\sum_i a_i \Sj^n_d(e_i)\right)
=\sum_i a_i \widetilde{\Delta}\circ \Sj^n_{d,E}(e_i)
=\sum_i a_i \Delta(e_i).
\end{equation}
Thus the value of $\widetilde{\Delta}$ is uniquely determined by $\Delta$.
\item Let $\Delta\colon E\to F$ be a differential operator and $\widetilde{\Delta}\colon J^n_d E\to F$ be one of its lifts.
The restriction $\widetilde{\Delta}\circ \iota_{\SJ^n_d E}\colon \SJ^n_d E\to F$ lifts $\Delta$, as we have
\begin{equation}
\widetilde{\Delta}\circ \iota_{\SJ^n_d E}\circ \Sj^n_{d,E}
=\widetilde{\Delta}\circ j^n_{d,E}
=\Delta.
\end{equation}
Point \eqref{prop:properties_elemental_DOs:2} completes the proof.
\item Follows from \eqref{prop:properties_elemental_DOs:3} and Proposition \ref{prop:elemental_jet_properties}.\eqref{prop:elemental_jet_properties:1}.
\item The proof is analogous to that of \cite[\jetspropoperatorsalsohigherorder]{FMW}.
\item It follows from Proposition \ref{prop:universal_DOs} (cf. Appendix \ref{section:appendix}) and \eqref{prop:properties_elemental_DOs:4} with either \eqref{prop:properties_elemental_DOs:3} or \eqref{prop:properties_elemental_DOs:5}.
\qedhere
\end{enumerate}
\end{proof}
As a consequence of \eqref{prop:properties_elemental_DOs:1} and \eqref{prop:properties_elemental_DOs:2} of Proposition \ref{prop:properties_elemental_DOs}, we obtain the following result.
\begin{cor}\label{cor:WDO_always_representable}
For any exterior algebra $\Omega^\bullet_d$ over $A$, the functor of elemental linear differential operators of order at most $n$ is representable, with representative object $\SJ^n_d$.
I.e., the map $-\circ\Sj^n_d$ induces an isomorphism
\begin{equation}
\AHom(\SJ^n_d E,F)\simeq \WD^n_d(E,F),
\end{equation}
natural in both $E$ and $F$ in $\AMod$.
\end{cor}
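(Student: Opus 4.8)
The plan is to assemble the two parts of Proposition~\ref{prop:properties_elemental_DOs} into a bijection and then upgrade it to a natural isomorphism of bifunctors. First I would fix $E,F$ in $\AMod$ and consider the $\bk$-linear map
\[
-\circ \Sj^n_{d,E}\colon \AHom(\SJ^n_d E, F)\longrightarrow \Hom(E,F),
\qquad
\widetilde\Delta\longmapsto \widetilde\Delta\circ \Sj^n_{d,E}.
\]
By \eqref{prop:properties_elemental_DOs:1} of Proposition~\ref{prop:properties_elemental_DOs}, the image of this map is exactly $\WD^n_d(E,F)$, so corestricting to its image yields a surjection onto $\WD^n_d(E,F)$. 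By \eqref{prop:properties_elemental_DOs:2} of the same proposition, every elemental differential operator of order at most $n$ admits a \emph{unique} lift through $\Sj^n_{d,E}$; equivalently, $-\circ \Sj^n_{d,E}$ is injective. Combining surjectivity onto the image with injectivity produces the asserted $\bk$-linear bijection $\AHom(\SJ^n_d E, F)\simeq \WD^n_d(E,F)$.

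It then remains to promote this family of bijections to a natural isomorphism of bifunctors, contravariant in $E$ and covariant in $F$ (matching the variance of $\AHom(\SJ^n_d-,-)$). Writing $\Phi_{E,F}(\widetilde\Delta)=\widetilde\Delta\circ \Sj^n_{d,E}$, naturality in $F$ is immediate from associativity of composition: for $\psi\colon F\to F'$ one has $\psi\circ(\widetilde\Delta\circ \Sj^n_{d,E})=(\psi\circ\widetilde\Delta)\circ \Sj^n_{d,E}$. Naturality in $E$ reduces to the naturality of the elemental prolongation: for $\varphi\colon E'\to E$ in $\AMod$, the identity $\Phi_{E',F}(\widetilde\Delta\circ \SJ^n_d(\varphi))=\Phi_{E,F}(\widetilde\Delta)\circ\varphi$ holds precisely because $\SJ^n_d(\varphi)\circ \Sj^n_{d,E'}=\Sj^n_{d,E}\circ\varphi$, which is exactly the statement that $\Sj^n_d\colon \id_{\AMod}\to\SJ^n_d$ is a natural transformation.

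Since both ingredients are already in hand, there is no genuine obstacle; the only point demanding care is bookkeeping the variances so that the naturality squares are set up consistently. I would also emphasise that, in contrast with the holonomic Corollary~\ref{cor:representability_diff_op}, no hypothesis on $\Omega^\bullet_d$—such as surjectivity of $\widehat{p}^n_d$ or left exactness of the jet sequence—is required here: representability is automatic because $\SJ^n_d$ is \emph{by definition} the image of $\widehat{p}^n_d$, so the surjectivity built into the object is exactly what compensates for the possible failure of representability in the holonomic setting.
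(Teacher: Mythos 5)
Your proposal is correct and follows exactly the paper's route: the corollary is derived as an immediate consequence of points (i) and (ii) of Proposition~\ref{prop:properties_elemental_DOs}, with surjectivity onto the image giving well-definedness and uniqueness of lifts giving injectivity. Your explicit verification of naturality in $E$ and $F$ (via associativity of composition and naturality of $\Sj^n_d$) just spells out what the paper leaves implicit, and your closing remark correctly identifies why no hypothesis on $\Omega^\bullet_d$ is needed here.
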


We will now consider the most general possibility, that the functor of differential operators is representable, but not necessarily by the jet module.
It turns out that the modules of elemental jets are the only possible representative objects even for the functors of linear differential operators.
\begin{prop}\label{prop:elemental_jets_true_representing}
Let $\Omega^\bullet_d$ be an exterior algebra over $A$, then the functor $\Diff^n_d(E,-)$ is representable if and only if the representing object is $\SJ^n_d E$ and in $\AMod$ we have
\begin{equation}
J^n_d E=\SJ^n_d(E)\oplus \coker(\widehat{p}^n_{d,E}).
\end{equation}
\end{prop}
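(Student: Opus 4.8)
The plan is to read off everything from the natural short exact sequence underlying \eqref{eq:epi_defining_Diff} and then transport a short exact sequence of \emph{representable} functors back to objects by the Yoneda lemma. The backward implication is immediate: asserting that $\SJ^n_d E$ is a representing object already says that $\Diff^n_d(E,-)$ is representable, so the whole content lies in the forward direction. Accordingly, suppose $\Diff^n_d(E,-)\simeq\AHom(R,-)$ naturally, for some $R$ in $\AMod$. The defining natural epimorphism $-\circ j^n_{d,E}\colon\AHom(J^n_d E,-)\twoheadrightarrow\Diff^n_d(E,-)$ of \eqref{eq:epi_defining_Diff} has kernel $\ZL^n(E,-)$, which by Proposition \ref{prop:zero_lift_representable} is $\AHom(\coker(\widehat{p}^n_{d,E}),-)$, included via precomposition with the cokernel projection $q\colon J^n_d E\twoheadrightarrow\coker(\widehat{p}^n_{d,E})$. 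Writing $C\colonequals\coker(\widehat{p}^n_{d,E})$ and using the representability isomorphism, I obtain a short exact sequence of representable functors
\[
0\longrightarrow\AHom(C,-)\xrightarrow{\,-\circ q\,}\AHom(J^n_d E,-)\longrightarrow\AHom(R,-)\longrightarrow0.
\]

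Next I would apply the (contravariant) Yoneda lemma. The right-hand natural transformation $\AHom(J^n_d E,-)\to\AHom(R,-)$ is necessarily $-\circ u$ for a unique $A$-linear map $u\colon R\to J^n_d E$; since it is an epimorphism of functors, evaluating at $F=R$ and lifting $\id_R$ exhibits a retraction of $u$, so $u$ is a split monomorphism. To pin down its image, note that $\ker(-\circ u)=\ker(-\circ j^n_{d,E})=\ZL^n(E,-)$ coincides with the image of $-\circ q$; comparing the identity of subfunctors $\{\phi\mid\phi\circ u=0\}=\{\phi\mid\phi|_{\ker q}=0\}$ on all $F$ forces $\Im(u)=\ker(q)$. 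But $\ker(q)=\Im(\widehat{p}^n_{d,E})=\SJ^n_d E$, so $u$ restricts to an isomorphism $R\cong\SJ^n_d E$; in particular the representing object is necessarily $\SJ^n_d E$.

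Finally, the splitting of $u$ provides a retraction $r$ with $J^n_d E=\Im(u)\oplus\ker(r)$, and because $\ker(q)=\Im(u)$ the projection $q$ restricts to an isomorphism $\ker(r)\cong C$; hence $J^n_d E\cong\SJ^n_d E\oplus\coker(\widehat{p}^n_{d,E})$, as claimed. The main obstacle is precisely this passage from functors to objects: one must recognise that the connecting inclusion is exactly precomposition with the cokernel projection $q$ (so that its kernel is the elemental jet module $\SJ^n_d E$), and that epimorphy at the functor level is equivalent to $u$ being split — it is this splitting that both forces the representing object to be $\SJ^n_d E$ and produces the complementary summand $\coker(\widehat{p}^n_{d,E})$.
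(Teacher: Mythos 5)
Your proof is correct, and it takes a genuinely different route from the paper's. The paper sandwiches $\Diff^n_d(E,-)$ between $\AHom(J^n_d E,-)$ and the functor of elemental differential operators $\WD^n_d(E,-)\simeq\AHom(\SJ^n_d E,-)$ (Corollary \ref{cor:WDO_always_representable}), using the inclusion $\Diff^n_d(E,-)\hookrightarrow\WD^n_d(E,-)$ from Proposition \ref{prop:properties_elemental_DOs}: Yoneda then produces a monomorphism $\varphi\colon\SJ^n_d E\to R$, injectivity of $-\circ\varphi$ kills $\coker(\varphi)$ so that $\varphi$ is an isomorphism, and the splitting comes from lifting $\id_{\SJ^n_d E}$ through the surjection $-\circ\iota_{\SJ^n_d E}$ in diagram \eqref{diag:boat}. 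You instead work entirely on the other side of \eqref{eq:epi_defining_Diff}: Proposition \ref{prop:zero_lift_representable} rewrites the kernel $\ZL^n(E,-)$ as $\AHom(\coker(\widehat{p}^n_{d,E}),-)$ included via $-\circ q$, giving a short exact sequence of representable functors, which you then descend through Yoneda --- epimorphy evaluated at $F=R$ splits $u\colon R\to J^n_d E$, and the comparison of kernels $\{\phi\mid\phi\circ u=0\}=\{\phi\mid\phi|_{\ker q}=0\}$ forces $\Im(u)=\ker(q)=\Im(\widehat{p}^n_{d,E})=\SJ^n_d E$. That last step is correct but terse: ``comparing on all $F$'' is doing real work, and it would be worth exhibiting the two test objects, namely $F=J^n_d E/\Im(u)$ with the quotient projection (giving $\ker(q)\subseteq\Im(u)$) and $F=J^n_d E/\ker(q)$ (giving $\Im(u)\subseteq\ker(q)$). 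Your route never invokes elemental differential operators, so it is self-contained within §\ref{ss:Lifts_of_the_zero_map}; in effect you show that a short exact sequence of representables $0\to\AHom(C,-)\to\AHom(J,-)\to\AHom(R,-)\to 0$ descends to a split exact sequence $0\to R\to J\to C\to 0$, which also makes the Remark following the proposition (the decomposition $\AHom(J^n_d E,-)\simeq\Diff^n_d(E,-)\oplus\ZL^n(E,-)$) immediate rather than derived. What the paper's route buys in exchange is that Corollary \ref{cor:representability_iff_WDO=DO} --- representability holds precisely when elemental and honest differential operators coincide --- is transparent from \eqref{diag:boat}, whereas from your argument one would still need to bring in Corollary \ref{cor:WDO_always_representable} to recover it.
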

\begin{proof}
Consider the following diagram of functors $\AMod\to \Mod$, where we assume that the functor $\Diff^n_d(E,-)$ is representable with representing object $R$ in $\AMod$.
\begin{equation}\label{diag:boat}
\begin{tikzcd}
\AHom(J^n_d E,-)\ar[dr,two heads]\ar[ddr,bend right,"-\circ\iota_{\SJ^n_d{E}}"']\ar[rr,"-\circ j^n_{d,E}"]&&\Hom(E,-)\\
&\Diff^n_d(E,-)\simeq \AHom(R,-)\ar[ru,hook]\ar[d,hook,dashed]\\
&\WD^n_d(E,-)\simeq \AHom(\SJ^n_d(E),-)\ar[ruu,bend right,hook,"-\circ\Sj^n_{d,E}"']\\
\end{tikzcd}
\end{equation}
We obtain the solid arrows via the definition of linear differential operators and elemental linear differential operators, whereas the dashed arrow exists by the uniqueness of the epi-mono factorization (or via the lifting property).
This arrow is forced to be a monomorphism, since $-\circ\Sj^n_{d,E}$ is a monomorphism.

We can now apply the Yoneda lemma on the left side of the diagram \eqref{diag:boat}, which gives us an induced diagram in $\AMod$
\begin{equation}
\begin{tikzcd}
\SJ^n_d E\ar[dr,"\varphi"']\ar[rr,hook,"\iota_{\SJ^n_d E}"]&&J^n_d E\\
&R^n_d(E)\ar[ru]
\end{tikzcd}
\end{equation}
Since $\iota_{\SJ^n_d E}$ is an inclusion, so is the map $\varphi\colon\SJ^n_d\to R^n_d(E)$.
From \eqref{diag:boat}, we see that the precomposition with $\varphi$ must be injective.
Then consider the cokernel projection $\pi\colon R\to \coker(\varphi)$ and the zero map between the same modules.
It follows that $\pi\circ\varphi=0=0\circ\varphi$, and thus $\pi=0$, which happens if and only if $\coker(\varphi)=0$, i.e.\ if and only if $\varphi$ is an epimorphism, and hence an isomorphism.
We will thus identify $R$ with $\SJ^n_d(E)$.

Observe that in \eqref{diag:boat}, the precomposition with $\iota_{\SJ^n_d E}$ must be surjective, so in particular the identity $\id_{\SJ^n_d E}$ has a preimage, which must thus be a retraction for $\iota_{\SJ^n_d E}$.
This implies that the following short exact sequence splits
\begin{equation}
\begin{tikzcd}
0\ar[r]&\SJ^n_d(E)\ar[r,hook,"\iota_{\SJ^n_d E}"]&J^n_d(E)\ar[r,two heads]&\coker(\iota_{\SJ^n_d E})=\coker(\widehat{p}^n_{d,E})\ar[r]&0.
\end{tikzcd}
\end{equation}
\end{proof}
\begin{rmk}
If the functor of differential operators of order at most $n$ on $E$ is representable, Proposition \ref{prop:elemental_jets_true_representing} gives us the following decomposition
\begin{align}
\AHom(J^n_d E,-)
\simeq\AHom(\SJ^n_d E\oplus \coker(\widehat{p}^n_{d,E},-)
\simeq\AHom(\SJ^n_d E,-)\oplus \AHom(\coker(\widehat{p}^n_{d,E}),-).
\end{align}
Via Proposition \ref{prop:zero_lift_representable}, we obtain
\begin{align}
\AHom(J^n_d E,-)
\simeq\WD^n_d(E,-)\oplus \ZL^n(E,-)
\simeq\Diff^n_d(E,-)\oplus \ZL^n(E,-).
\end{align}
\end{rmk}
As an immediate consequence of Proposition \ref{prop:elemental_jets_true_representing}, we obtain the following.
\begin{cor}\label{cor:representability_iff_WDO=DO}
Linear differential operators of order at most $n$ on $E$ in $\AMod$ are representable if and only if all elemental linear differential operators are differential operators.
\end{cor}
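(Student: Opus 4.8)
The plan is to deduce the corollary directly from two representability facts already in hand: that $\WD^n_d(E,-)$ is \emph{always} representable, with representing object $\SJ^n_d E$ (Corollary \ref{cor:WDO_always_representable}), and that \emph{whenever} $\Diff^n_d(E,-)$ is representable its representing object is forced to be $\SJ^n_d E$ as well (Proposition \ref{prop:elemental_jets_true_representing}). Throughout I would use the inclusion $\Diff^n_d(E,F)\subseteq \WD^n_d(E,F)$ of Proposition \ref{prop:properties_elemental_DOs}.\eqref{prop:properties_elemental_DOs:3}, which holds for every $F$ and assembles into a natural monomorphism $\Diff^n_d(E,-)\hookrightarrow \WD^n_d(E,-)$ of subfunctors of $\Hom(E,-)$. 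The hypothesis ``all elemental linear differential operators are differential operators'' is exactly the assertion that this inclusion is an equality for every $F$, i.e.\ that the natural monomorphism is an isomorphism of functors.

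For the easy direction, I would suppose that every elemental differential operator is a differential operator. Then $\Diff^n_d(E,-)$ and $\WD^n_d(E,-)$ coincide as subfunctors of $\Hom(E,-)$; since the latter is representable by $\SJ^n_d E$ via Corollary \ref{cor:WDO_always_representable}, so is the former. Nothing further is needed beyond invoking the always-representability of elemental operators.

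For the converse, I would assume $\Diff^n_d(E,-)$ representable and read the conclusion off the proof of Proposition \ref{prop:elemental_jets_true_representing}. That argument produces, through diagram \eqref{diag:boat} and the Yoneda lemma, a monomorphism $\varphi\colon \SJ^n_d E\to R$ into the representing object $R$ and shows $\varphi$ to be an isomorphism. Transporting this across Yoneda, the natural monomorphism $\Diff^n_d(E,-)\simeq\AHom(R,-)\hookrightarrow \AHom(\SJ^n_d E,-)\simeq \WD^n_d(E,-)$ is identified with precomposition $-\circ\varphi$, hence is itself an isomorphism. Therefore the inclusion $\Diff^n_d(E,F)\subseteq \WD^n_d(E,F)$ is an equality for every $F$, which says precisely that all elemental differential operators are differential operators.

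I do not expect a genuine obstacle here, as the statement is essentially bookkeeping on top of Proposition \ref{prop:elemental_jets_true_representing}. The one point demanding care is to confirm that the isomorphism supplied by representability is compatible with the \emph{canonical} inclusion $\Diff^n_d(E,-)\hookrightarrow\WD^n_d(E,-)$, rather than being some abstractly isomorphic but unrelated comparison. This compatibility is guaranteed by the commutativity of \eqref{diag:boat}: both functors embed in $\Hom(E,-)$ by postcomposition with the prolongations $j^n_{d,E}$ and $\Sj^n_{d,E}$, and tracing that the Yoneda transform $\varphi$ is exactly $\iota_{\SJ^n_d E}$ under the identification $R\cong\SJ^n_d E$ closes the argument.
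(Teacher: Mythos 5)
Your proposal is correct and follows essentially the same route as the paper, which derives this corollary as an immediate consequence of Proposition \ref{prop:elemental_jets_true_representing} (with Corollary \ref{cor:WDO_always_representable} supplying the converse direction); you have merely made explicit the bookkeeping the paper leaves implicit. Your care about identifying the Yoneda transform of the dashed arrow in \eqref{diag:boat} with the canonical inclusion $\Diff^n_d(E,-)\hookrightarrow\WD^n_d(E,-)$ is exactly the point the paper's proof of that proposition settles, since both subfunctors embed in $\Hom(E,-)$ compatibly via $-\circ j^n_{d,E}$ and $-\circ\Sj^n_{d,E}$.
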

In other words, the functor of elemental linear differential operators is the truly representable one, and the functor of differential operators is representable only when these two notions coincide.

\subsection{Elemental infinity jet}
\label{ss:Elemental_infinity_jet}
As in \cite[\jetssinfinityjets]{FMW}, we can define the infinity jet also for elemental jets.

Let $A$ be an associative unital $\bk$-algebra, and $\Omega^\bullet_d$ an exterior algebra over it.
Consider the following diagram, called the \emph{elemental jet tower}, in the abelian category of functors $\AModB\to \AModB$, constructed using the jet projections.
\begin{equation}\label{diag:jettower}
\begin{tikzcd}
\cdots \SJ^n_d\ar[r,two heads,"\Spi^{n,n-1}_d"]&\SJ^{n-1}_d\cdots \ar[r,two heads,"\Spi^{3,2}_d"]& \SJ^2_d \ar[r,two heads,"\Spi^{2,1}_d"] & \SJ^1_d \ar[r,two heads,"\pi^{1,0}_d"] & \id_{\AModB}.
\end{tikzcd}
\end{equation}
\begin{defi}[$\infty$-jet functor]
We call the limit of the elemental jet tower in the category of functors $\AModB\to \AModB$ the \emph{elemental $\infty$-jet functor}, denoted
\begin{equation}
\SJ^{\infty}_d\colonequals \lim_{n\in\N} \SJ^n_d.
\end{equation}
\end{defi}
As for the holonomic case, we obtain the \emph{elemental $\infty$-jet projections} and the \emph{elemental $\infty$-jet prolongation}
\begin{align}
\Spi^{\infty,n}_d\colon \SJ^\infty_d\longtwoheadrightarrow \SJ^n_d,
&\hfill&
\Sj^\infty_d\colon \id_{\AModB}\longhookrightarrow \SJ^\infty_d.
\end{align}
such that for all $n\ge m$, we have
\begin{align}
\Spi^{n,m}_d\circ\Spi^{\infty,n}_d=\Spi^{\infty,m}_d,
&\hfill&
\Spi^{\infty,n}_d\circ\Sj^{\infty}_d=\Sj^n_d.
\end{align}

\begin{rmk}
Compared to the finite order elemental jets, we cannot in general write the elemental $\infty$-jet as a quotient of $A\otimes -$.
In fact, consider the following tower of short exact sequences
\begin{equation}
\begin{tikzcd}
&\vdots\ar[d,hook]&\vdots\ar[d,equals]&\vdots\ar[d,two heads,"\Spi^{n+1,n}_d"]\\
0\ar[r]&N^n_d\ar[d,hook]\ar[r,hook]&A\otimes-\ar[d,equals]\ar[r,two heads,"\widehat{p}^n_d"]&\SJ^n_d\ar[d,two heads,"\Spi^{n,n-1}_d"]\ar[r]&0\\
0\ar[r]&N^{n-1}_d\ar[d,hook]\ar[r,hook]&A\otimes-\ar[d,equals]\ar[r,two heads,"\widehat{p}^{n-1}_d"]&\SJ^{n-1}_d\ar[d,two heads,"\Spi^{n-1,n-2}_d"]\ar[r]&0\\
&\vdots\ar[d,hook]&\vdots\ar[d,equals]&\vdots\ar[d,two heads,"\Spi^{1,0}_d"]\\
0\ar[r]&\Omega^1_u\ar[r,hook]&A\otimes-\ar[r,two heads,"\widehat{p}^0_d"]&\SJ^0_d=\id_{\AModB}\ar[r]&0
\end{tikzcd}
\end{equation}
If we now take the limit of this tower, we obtain a long exact sequence
\begin{equation}
\begin{tikzcd}
0\ar[r]&\lim\limits_{n\in\N} N^n_d\ar[r,hook]&\lim\limits_{n\in\N} A\otimes-\ar[r]&\lim\limits_{n\in\N} \SJ^n_d\ar[r]&{\lim\limits_{n\in\N}}^1 N^n_d\ar[r]&{\lim\limits_{n\in\N}}^1 A\otimes-\ar[r]&\cdots
\end{tikzcd}
\end{equation}
where ${\lim\limits_{n\in\N}}^1$ is the right derived functor of the limit functor.
Since the central and the right tower are made of surjective maps, they satisfy the Mittag-Leffler condition, implying
\begin{align}
\begin{tikzcd}
{\lim\limits_{n\in\N}}^1 A\otimes-=0,
\end{tikzcd}
&\hfill&
\begin{tikzcd}
{\lim\limits_{n\in\N}}^1 \SJ^n_d=0.
\end{tikzcd}
\end{align}
Then, since the limit of inclusions is the intersection, we have $N^{\infty}_d\colonequals\bigcap_{n\in\N} N^n_d= \lim\limits_{n\in\N} N^n_d$.
Thus we obtain the exact sequence
\begin{equation}
\begin{tikzcd}
0\ar[r]&N^{\infty}_d\ar[r,hook]& A\otimes-\ar[r]&\SJ^\infty_d\ar[r,"\widehat{p}^{\infty}_d"]&{\lim\limits_{n\in\N}}^1 N^n_d\ar[r]&0
\end{tikzcd}
\end{equation}
Thus, the map $\widehat{p}^{\infty}_d$ is surjective if and only if ${\lim\limits_{n\in\N}}^1 N^n_d=0$.
\end{rmk}
\begin{defi}
We term $N^{\infty}_d\colonequals\bigcap_{n\in\N} N^n_d= \lim\limits_{n\in\N} N^n_d$ the \emph{functor of infinite order differential relations}.
\end{defi}
\begin{rmk}
This yields two immediate notions of infinite-order elemental differential operator. 
One notion is to consider $\bk$-linear maps $\Delta\colon E\rightarrow F$ which admit $A$-linear lifts to $\SJ^{\infty}_d E$. 
The other is to consider $\bk$-linear maps $\Delta\colon E\rightarrow F$ which admit $A$-linear lifts to $A\otimes E/N^{\infty}_dE$ or, equivalently, whose universal lifts vanish on $N^{\infty}_dE$.
\end{rmk}

\subsection{Criteria for differential operators}
The form of elemental linear differential operators allows us to give the following characterization.
\begin{prop}\label{prop:criterion_WDO}
Let $\Omega^\bullet_d$ be an exterior algebra on $A$, and let $\Delta\colon E\to F$ be a $\bk$-linear map between left $A$-modules.
\begin{enumerate}
\item\label{prop:criterion_WDO:1} $\Delta$ is an elemental linear differential operator of order at most $n$ if an only if 
\begin{align}\label{eq:criterion_WDO}
\sum_i a_i \Delta(e_i)=0
&\hfill&
\text{for all }
\sum_i a_i\otimes e_i\in N^n_d(E);
\end{align}
\item\label{prop:criterion_WDO:2} A linear differential operator $\Delta$ of order at most $n$ satisfies \eqref{eq:criterion_WDO};
\item\label{prop:criterion_WDO:3} If the functor $\Diff^n_d$ is representable, then $\Delta$ is a differential operator of order $n$ if and only if \eqref{eq:criterion_WDO} holds.
\end{enumerate}
\end{prop}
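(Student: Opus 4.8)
The plan is to reduce all three statements to the description of $\SJ^n_d$ as a quotient of $A\otimes E$ established in Proposition \ref{prop:elemental_jet_properties}.\eqref{prop:elemental_jet_properties:3}, namely $\SJ^n_d E=(A\otimes E)/N^n_d(E)$, together with the universal property of elemental differential operators from Definition \ref{def:differentialoperators}.

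For part \eqref{prop:criterion_WDO:1}, I would first use the extension--restriction adjunction (as in the proof of Proposition \ref{prop:zero_lift_representable}) to rewrite the $\bk$-linear map $\Delta$ as the $A$-linear map $\overline{\Delta}\colon A\otimes E\to F$, $a\otimes e\mapsto a\Delta(e)$. The key observation is that $\Sj^n_{d,E}(e)=\Sphat^n_{d,E}(1\otimes e)$, so that precomposing any lift $\wt\Delta\colon\SJ^n_d E\to F$ with the quotient map $\Sphat^n_{d,E}$ recovers exactly $\overline{\Delta}$ by $A$-linearity. Since $\Sphat^n_{d,E}$ is the cokernel projection with kernel $N^n_d(E)$, the universal property of the quotient says that $\overline{\Delta}$ factors through $\Sphat^n_{d,E}$ (equivalently, that an $A$-linear lift $\wt\Delta$ exists) if and only if $\overline{\Delta}$ vanishes on $N^n_d(E)$. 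Spelling out $\overline{\Delta}(\sum_i a_i\otimes e_i)=\sum_i a_i\Delta(e_i)$, this vanishing is precisely condition \eqref{eq:criterion_WDO}, while ``an $A$-linear lift exists'' is by definition the assertion that $\Delta$ is an elemental linear differential operator of order at most $n$.

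Parts \eqref{prop:criterion_WDO:2} and \eqref{prop:criterion_WDO:3} then follow formally. For \eqref{prop:criterion_WDO:2}, the inclusion $\Diff^n_d(E,F)\subseteq\WD^n_d(E,F)$ of Proposition \ref{prop:properties_elemental_DOs}.\eqref{prop:properties_elemental_DOs:3} shows that every order-at-most-$n$ differential operator is in particular an elemental one, so \eqref{prop:criterion_WDO:1} applies verbatim. For \eqref{prop:criterion_WDO:3}, representability of $\Diff^n_d(E,-)$ gives, via Corollary \ref{cor:representability_iff_WDO=DO}, the equality $\WD^n_d(E,F)=\Diff^n_d(E,F)$; combined with \eqref{prop:criterion_WDO:1} this identifies membership in $\Diff^n_d(E,F)$ with condition \eqref{eq:criterion_WDO}.

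I expect the only real subtlety to be the bookkeeping in part \eqref{prop:criterion_WDO:1}: one must check carefully that ``factoring through $\Sj^n_{d,E}$ by an $A$-linear map'' and ``$\overline{\Delta}$ descending along the quotient $\Sphat^n_{d,E}$'' are literally the same condition, i.e.\ that the lift $\wt\Delta$ and the factorization of $\overline{\Delta}$ determine each other. This is exactly the content of the representability statement $\AHom(\SJ^n_d E,F)\simeq\WD^n_d(E,F)$ of Corollary \ref{cor:WDO_always_representable}, so no genuinely new difficulty arises once that identification is invoked.
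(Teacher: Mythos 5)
Your proposal is correct and follows essentially the same route as the paper: both directions of part \eqref{prop:criterion_WDO:1} reduce to the identification $\SJ^n_d E=(A\otimes E)/N^n_d(E)$, with the forward implication coming from $A$-linearity of the lift applied to $\sum_i a_i\Sj^n_{d,E}(e_i)=0$ and the converse from factoring the $A$-linear map $a\otimes e\mapsto a\Delta(e)$ through the quotient. Parts \eqref{prop:criterion_WDO:2} and \eqref{prop:criterion_WDO:3} are handled exactly as in the paper, via the inclusion $\Diff^n_d\subseteq\WD^n_d$ and Corollary \ref{cor:representability_iff_WDO=DO}.
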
 
\begin{proof}\
\begin{enumerate}
\item If $\Delta$ is an elemental linear differential operator, its lift $\widetilde{\Delta}\colon \SJ^n_d E\to F$ satisfies \eqref{eq:uniqueness_lift_WDO}.
If $\sum_i a_i\otimes e_i\in N_d(E)$, then $\sum_i a_i \Sj^n_{d,E} (e_i)=\sum_i a_i j^n_{d,E} (e_i)=0$.
Thus, by linearity of $\widetilde{\Delta}$ and \eqref{eq:uniqueness_lift_WDO} we obtain \eqref{eq:criterion_WDO}.
Vice versa, consider the map $\id_A\cdot \Delta\colon A\otimes E\to F$ mapping $a\otimes e$ to $a\Delta(e)$.
This map is $A$-linear and vanishes on $N^n_d(E)$ by \eqref{eq:criterion_WDO}, so it factors to the quotient $A\otimes E/N_d(E)=\SJ^n_d$ as an $A$-linear map $\widetilde{\Delta}\colon\SJ^n_d E\to F$.
By construction, it makes \eqref{diag:universal_prop_elemental_DOs} commute, making $\Delta$ an elemental linear differential operator of order at most $n$.
\item Follows from the fact that differential operators are elemental linear differential operators.
\item Follows from Corollary \ref{cor:representability_iff_WDO=DO}.\qedhere
\end{enumerate}
\end{proof}
\begin{rmk}\
\begin{enumerate}
\item For $n=0,1$, Proposition \ref{prop:criterion_WDO} provides a characterization of differential operators of order at most $n$, cf.\ Remark \ref{rmk:representability_low_degree}.
Condition \eqref{eq:criterion_WDO} is equivalent to $A$-linearity for $n=0$, cf.\ \cite[\jetssecDOzero]{FMW}, and to that of \cite[\jetsstorderwrtd]{FMW} for $n=1$.
\item For the universal exterior algebra, $n=0$ is the only relevant case, as $N^n_u=0$ for all $n>0$.
\end{enumerate}
\end{rmk}

We can now formulate conditions for differential operators of order at most $2$.
\begin{cor}
Let $\Omega^\bullet_d$ be an exterior algebra over $A$, and let $E$ in $\AMod$ be such that $\Tor^A_1(\Omega^1_d,E)=0$.
Then the following hold.
\begin{enumerate}
\item A $\bk$-linear map $\Delta\colon E\to F$ is an elemental linear differential operator of order at most $2$ if and only if
\begin{align}\label{eq:char2WDO}
\sum_{i,j}a_{i,j}\Delta(b_{i,j} e_j)=0,
&\hfill&
\text{ for all }
\sum_{i} a_{i,j}\otimes b_{i,j}\in N_d,
e_j\in E
\text{ such that }
\sum_{i,j}da_{i,j}\otimes_A db_{i,j}\otimes_A e_j=0.
\end{align}
\item A differential operator of order at most $2$ satisfies \eqref{eq:char2WDO}.
\item If moreover $\Omega^2_d=\Omega^2_{d,\text{max}}$, then elemental linear differential operators of order at most $2$ are differential operators and they are characterized by \eqref{eq:char2WDO}.
\end{enumerate}
\end{cor}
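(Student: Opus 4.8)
The plan is to obtain all three statements by substituting the explicit computation of the second-order relations functor into the general criterion for elemental operators, so that essentially no new work is required. The key input is Proposition~\ref{prop:rep_for_2jet}.\eqref{prop:rep_for_2jet:4}: under the standing hypothesis $\Tor^A_1(\Omega^1_d,E)=0$ (which guarantees the $2$-jet sequence is short exact, and is exactly where this assumption is used), one has
\[
N^2_d(E)=\Bigl\{\,\sum_{i,j} a_{i,j}\otimes b_{i,j}e_j\in N_d(E)\ \Big|\ \sum_{i,j} da_{i,j}\otimes_A db_{i,j}\otimes_A e_j=0\,\Bigr\}.
\]

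For part~(i) I would apply Proposition~\ref{prop:criterion_WDO}.\eqref{prop:criterion_WDO:1} in degree $n=2$, which states that $\Delta$ is an elemental linear differential operator of order at most $2$ if and only if $\sum_k a_k\Delta(e_k)=0$ for every $\sum_k a_k\otimes e_k\in N^2_d(E)$. Inserting the description of $N^2_d(E)$ above, with $a_k=a_{i,j}$ and $e_k=b_{i,j}e_j$, turns this condition verbatim into \eqref{eq:char2WDO}. The only point to verify is that letting $\sum_k a_k\otimes e_k$ range over $N^2_d(E)$ is the same as letting $\sum_i a_{i,j}\otimes b_{i,j}\in N_d$ and $e_j\in E$ range subject to $\sum_{i,j} da_{i,j}\otimes_A db_{i,j}\otimes_A e_j=0$; this is immediate, since by construction both parametrize the same subset of $A\otimes E$.

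Part~(ii) then follows formally: by Proposition~\ref{prop:properties_elemental_DOs}.\eqref{prop:properties_elemental_DOs:3} every differential operator of order at most $2$ is in particular an elemental linear differential operator of order at most $2$, so it satisfies \eqref{eq:char2WDO} by part~(i); equivalently, this is the specialization of Proposition~\ref{prop:criterion_WDO}.\eqref{prop:criterion_WDO:2}. For part~(iii), the extra hypothesis $\Omega^2_d=\Omega^2_{d,\text{max}}$, combined with $\Tor^A_1(\Omega^1_d,E)=0$, yields $\SJ^2_d E=J^2_d E$ by Proposition~\ref{prop:elemental_jet_properties}.\eqref{prop:elemental_jet_properties:2}. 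Proposition~\ref{prop:properties_elemental_DOs}.\eqref{prop:properties_elemental_DOs:3} then upgrades the inclusion $\Diff^2_d(E,F)\subseteq\WD^2_d(E,F)$ to an equality, so the two classes of operators coincide and are jointly characterized by \eqref{eq:char2WDO} via part~(i).

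I do not expect any genuine obstacle: the analytic content, namely splitting the $2$-jet sequence and extracting the closed form of $N^2_d(E)$, has already been discharged in Proposition~\ref{prop:rep_for_2jet}, and what remains is bookkeeping. The only care needed is to confirm that the index reorganization implicit in writing a generic element of $N^2_d(E)$ as $\sum_{i,j} a_{i,j}\otimes b_{i,j}e_j$ (relying on the description of $N_d(E)$ used in the proof of Proposition~\ref{prop:rep_for_2jet}) does not silently impose extra constraints, which it does not.
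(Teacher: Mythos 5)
Your proposal is correct and follows essentially the same route as the paper, whose proof is exactly the combination of Proposition \ref{prop:rep_for_2jet} (the explicit description of $N^2_d(E)$ under the Tor-vanishing hypothesis) with Proposition \ref{prop:criterion_WDO}; your citations for part~(iii) via Proposition \ref{prop:elemental_jet_properties}.\eqref{prop:elemental_jet_properties:2} and Proposition \ref{prop:properties_elemental_DOs}.\eqref{prop:properties_elemental_DOs:3} are interchangeable with the paper's appeal to representability, since $\SJ^2_dE=J^2_dE$ is equivalent to representability by Proposition \ref{prop:elemental_jet_properties}.\eqref{prop:elemental_jet_properties:4}.
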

\begin{proof}
The statements follow directly from Proposition \ref{prop:rep_for_2jet} and Proposition \ref{prop:criterion_WDO}.
\end{proof}

\subsection{Category of finite order elemental linear differential operators}
As in \cite[\jetssdifferentialoperators]{FMW}, we can interpret linear differential operators as morphisms of a category.
But before continuing, we give a result concerning holonomic jets.
\begin{lemma}\label{lemma:jets_preserving_monos}
If $\Omega^1_d$ is flat in $\ModA$, then
\begin{enumerate}
\item\label{lemma:jets_preserving_monos:1} $J^n_d$ preserves monomorphisms;
\item\label{lemma:jets_preserving_monos:2} The map $l^{n,m}_d\colon J^{n+m}_d\to J^n_d\circ J^m_d$, cf.\ \cite[\jetslemmasmM]{FMW} is a mono.
\end{enumerate}
\end{lemma}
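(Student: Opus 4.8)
The plan is to prove the two claims in sequence: first (i) by induction on $n$, and then (ii) by realizing $l^{n,m}_d$ as a left factor of the full holonomic embedding into iterated first jets.

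First I would dispatch the base case $n=1$ of (i). The first jet short exact sequence $0\to \Omega^1_d\otimes_A E\to J^1_d E\to E\to 0$ is natural in $E$ (cf.\ \cite{FMW}), so a monomorphism $f\colon E\hookrightarrow E'$ induces a morphism of short exact sequences whose outer vertical maps are $\id_{\Omega^1_d}\otimes_A f$ and $f$. Since $\Omega^1_d$ is flat in $\ModA$, the functor $\Omega^1_d\otimes_A-$ is exact, so the left vertical map is a monomorphism; the right vertical map $f$ is a monomorphism by hypothesis. A four-lemma diagram chase then forces the middle map $J^1_d(f)$ to be a monomorphism, and iterating shows each power $(J^1_d)^{\circ k}$ preserves monomorphisms.

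For the inductive step of (i) I would use the holonomic embedding $l^{1,n-1}_d\colon J^n_d\hookrightarrow J^1_d\circ J^{n-1}_d$, which is a monomorphism by construction (cf.\ \cite[\jetslemmasmM]{FMW}). Given a monomorphism $f\colon E\hookrightarrow E'$, the inductive hypothesis makes $J^{n-1}_d(f)$ a monomorphism, and the base case makes $J^1_d\big(J^{n-1}_d(f)\big)$ a monomorphism. Naturality of $l^{1,n-1}_d$ gives a commuting square, so the composite $J^n_d E\xrightarrow{l^{1,n-1}_{d,E}} J^1_d J^{n-1}_d E\to J^1_d J^{n-1}_d E'$ is a monomorphism; since this composite equals $J^1_d J^{n-1}_d(f)\circ l^{1,n-1}_{d,E}=l^{1,n-1}_{d,E'}\circ J^n_d(f)$ and a left factor of a monomorphism is a monomorphism, $J^n_d(f)$ is a monomorphism. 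This proves (i).

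For (ii), let $\Phi_k\colon J^k_d\hookrightarrow (J^1_d)^{\circ k}$ denote the full holonomic embedding obtained by iterating the maps $l^{1,\bullet}_d$; it is a monomorphism because it is a composite of maps of the form $(J^1_d)^{\circ a}$ applied to the monomorphisms $l^{1,b}_d$, and $(J^1_d)^{\circ a}$ preserves monomorphisms by (i). The coherence (associativity) relations satisfied by the family $l^{n,m}_d$ (cf.\ \cite[\jetslemmasmM]{FMW}) express $\Phi_{n+m}$ as the composite $J^{n+m}_d\xrightarrow{l^{n,m}_d} J^n_d J^m_d\xrightarrow{J^n_d(\Phi_m)} J^n_d (J^1_d)^{\circ m}\to (J^1_d)^{\circ(n+m)}$, where the last arrow is $\Phi_n$ whiskered with $(J^1_d)^{\circ m}$. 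The middle map $J^n_d(\Phi_m)$ is a monomorphism since $J^n_d$ preserves monomorphisms by (i), and the last map is a monomorphism since $\Phi_n$ is. As $\Phi_{n+m}$ is a monomorphism and $l^{n,m}_d$ is its left factor, $l^{n,m}_d$ is a monomorphism. The main obstacle I anticipate is pinning down the precise coherence identity among the $l^{n,m}_d$ needed to factor $\Phi_{n+m}$ in this way; if it is not stated in the requisite form in \cite[\jetslemmasmM]{FMW}, one must reconstruct it from the construction of the holonomic jet functor, which is the only genuinely delicate bookkeeping. The remaining steps — the first jet sequence, the flatness input, and the ``left factor of a monomorphism is a monomorphism'' observations — are routine.
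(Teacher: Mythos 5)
Your part (i) is correct and amounts to the paper's argument with its citations unpacked: the paper embeds $J^n_d$ into the nonholonomic jet in one step via $\iota_{J^n_d}\colon J^n_d\hookrightarrow J^{(n)}_d$, which is a monomorphism under the flatness hypothesis by \cite[\jetsrmkiotaholinj]{FMW}, and then invokes left exactness of $J^{(n)}_d$ from \cite[\jetsrmknhJexact]{FMW}; you instead reprove that $J^1_d$ preserves monomorphisms from the $1$-jet short exact sequence and flatness of $\Omega^1_d$ in $\ModA$ (a correct snake/four-lemma argument), and run an induction through the one-step kernel inclusions $l^n_d\colon J^n_d\hookrightarrow J^1_d\circ J^{n-1}_d$. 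Both routes rest on the same two facts, and your version is self-contained where the paper's is citation-driven; either is fine.

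For part (ii), the obstacle you flagged is real: your factorization $\iota_{J^{n+m}_d}=\bigl(\iota_{J^n_d}\ast (J^1_d)^{\circ m}\bigr)\circ J^n_d(\iota_{J^m_d})\circ l^{n,m}_d$ is a coherence identity among the maps $l^{n,m}_d$ that is not stated in \cite[\jetslemmasmM]{FMW} and would itself need an inductive verification. The paper sidesteps the full coherence entirely: \cite[\jetslemmasmM]{FMW} supplies the single commuting square
\begin{equation*}
l^n_{d,J^m_d}\circ l^{n,m}_d = J^1_d\bigl(l^{n-1,m}_d\bigr)\circ l^{n+m}_d,
\end{equation*}
and one inducts on $n$ with base case $l^{0,m}_d$ an isomorphism: by the inductive hypothesis and part (i), the right-hand side is a composite of monomorphisms, so the left-hand side is a monomorphism, and your own ``left factor of a mono is a mono'' observation then forces $l^{n,m}_d$ to be one. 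So the cleanest repair of your write-up is to replace the global factorization through $(J^1_d)^{\circ(n+m)}$ by this one-square induction, which needs exactly the identity that \cite[\jetslemmasmM]{FMW} actually provides and nothing more.
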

\begin{proof}\
\begin{enumerate}
\item Let $\phi\colon E\hookrightarrow F$ be a monomorphism in $\AModB$.
By naturality of $\iota_{J^n_d}$, we obtain
\begin{equation}
\begin{tikzcd}
J^n_d E\ar[d,"J^n_d(\phi)"']\ar[r,"\iota_{J^n_d E}"]&J^{(n)}_d E\ar[d,"J^{(n)}_d(\phi)"]\\
J^n_d F\ar[r,"\iota_{J^n_d F}"]&J^{(n)}_d F
\end{tikzcd}
\end{equation}
Notice that under the given hypotheses, $\iota_{J^n_d}$ is a monomorphism, cf.\ \cite[\jetsrmkiotaholinj]{FMW}, and the functor $J^n_d$ is left exact, cf.\ \cite[\jetsrmknhJexact]{FMW}.
This implies that $J^{(n)}_d(\phi)$ is a mono, and thus by categorical properties of monos, also $J^n_d(\phi)$ is a mono.
\item We proceed by induction on $n$.
First of all $l^{0,m}_d$ is an isomorphism.
Now consider the diagram given by \cite[\jetslemmasmM]{FMW}:
\begin{equation}\label{diag:inddeflnm}
\begin{tikzcd}[column sep=40pt]
J^{n+m}_d\ar[r,hookrightarrow,"l^{n+m}_d"]\ar[d,"l^{n,m}_d"']&J^1_d\circ J^{n+m-1}_d\ar[d,"J^1_d (l^{n-1,m}_d)"]\\
J^n_d\circ J^{m}_d\ar[r,hookrightarrow,"l^n_{d,J^m_d}"]&J^1_d\circ J^{n-1}_d\circ J^m_d
\end{tikzcd}
\end{equation}
By inductive hypothesis, $l^{m-1,n}_d$ is a mono, and by \eqref{lemma:jets_preserving_monos:1}, $J^1_d(l^{n-1,m}_d)$ is also a mono.
Since also $l^{n+m}_d$ is a mono, by categorical properties of monos then also $l^{n,m}_d$ is a mono.\qedhere
\end{enumerate}
\end{proof}
The following are some more properties that elemental jet functors inherit from the holonomic ones.
\begin{lemma}\label{lemma:elemental_smm}\ 
\begin{enumerate}
\item\label{lemma:elemental_smm:1} The functor $\SJ^n_d$ preserves epis, and if $\Omega^1_d$ is flat in $\ModA$, then it also preserves monos.
\item\label{lemma:elemental_smm:2} If $\Omega^1_d$ is flat in $\ModA$, the $(A,B)$-bilinear natural inclusion $l^{n,m}_d\colon J^{n+m}_d\hookrightarrow J^n_d\circ J^m_d$ factors to an $(A,B)$-bilinear natural inclusion $\Sl^{n,m}_d\colon \SJ^{n+m}_d\hookrightarrow \SJ^n_d\circ \SJ^m_d$.
There is a unique factorization compatible with the jet prolongations, i.e.\ such that the following two diagrams commute
\begin{equation}\label{diag:compatibility_elemental_l_with_l}
\begin{tikzcd}
\SJ^{n+m}_d\ar[d,hook,"\iota_{\SJ^{n+m}_d}"']\ar[r,hook,"\Sl^{n,m}_d"]&\SJ^n_d\circ \SJ^m_d\ar[d,"(\iota_{\SJ^n_d})_{J^m_d}\circ \SJ^n_d(\iota_{\SJ^m_d})",hook]&[60pt]\id_{\AModB}\ar[r,"\Sj^m_d",hook]\ar[d,"\Sj^{n+m}_d"',hook]&\SJ^m_d\ar[d,"\Sj^n_{d,\SJ^m_d}",hook]\\
J^{n+m}_d\ar[r,"l^{n,m}_d",hook]&J^n_d\circ J^m_d&\SJ^{n+m}_d\ar[r,"\Sl^{n,m}_d",hook]&\SJ^n_d\circ \SJ^m_d
\end{tikzcd}
\end{equation}
\end{enumerate}
\end{lemma}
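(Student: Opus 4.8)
The plan is to treat the two parts separately, both by the same style of diagram chase used earlier in the section (as in Lemma~\ref{lemma:tensorial_comparison} and Lemma~\ref{lemma:jets_preserving_monos}), exploiting the epi--mono factorization $A\otimes-\overset{\Sphat^n_d}{\twoheadrightarrow}\SJ^n_d\overset{\iota_{\SJ^n_d}}{\hookrightarrow}J^n_d$ from \eqref{eq:phatnepimonocomponents}.

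For part \eqref{lemma:elemental_smm:1}, to see that $\SJ^n_d$ preserves epis I would take an epimorphism $\phi\colon E\twoheadrightarrow F$ and use naturality of $\Sphat^n_d$: the relation $\SJ^n_d(\phi)\circ\Sphat^n_{d,E}=\Sphat^n_{d,F}\circ(\id_A\otimes\phi)$ exhibits the left-hand side as an epimorphism, since $\id_A\otimes\phi$ is epi (tensoring preserves surjections) and $\Sphat^n_{d,F}$ is epi by construction; hence $\SJ^n_d(\phi)$ is epi. For the mono statement I would instead use naturality of the inclusion $\iota_{\SJ^n_d}$: for a mono $\phi\colon E\hookrightarrow F$ the square gives $\iota_{\SJ^n_d F}\circ\SJ^n_d(\phi)=J^n_d(\phi)\circ\iota_{\SJ^n_d E}$, whose right-hand side is a mono because $J^n_d$ preserves monos when $\Omega^1_d$ is flat in $\ModA$ (Lemma~\ref{lemma:jets_preserving_monos}.\eqref{lemma:jets_preserving_monos:1}) and $\iota_{\SJ^n_d E}$ is a mono; hence $\SJ^n_d(\phi)$ is a mono. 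This is the same two-line argument as in Lemma~\ref{lemma:jets_preserving_monos}.\eqref{lemma:jets_preserving_monos:1}.

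For part \eqref{lemma:elemental_smm:2}, the strategy is to build $\Sl^{n,m}_d$ as a factorization of $l^{n,m}_d\circ\iota_{\SJ^{n+m}_d}$ through the monomorphism $(\iota_{\SJ^n_d})_{J^m_d}\circ\SJ^n_d(\iota_{\SJ^m_d})\colon\SJ^n_d\circ\SJ^m_d\hookrightarrow J^n_d\circ J^m_d$. Note that this composite is indeed a mono: $(\iota_{\SJ^n_d})_{J^m_d}$ is a mono as a component of $\iota_{\SJ^n_d}$, and $\SJ^n_d(\iota_{\SJ^m_d})$ is a mono precisely because $\SJ^n_d$ preserves monos under the flatness hypothesis, which is exactly part \eqref{lemma:elemental_smm:1}. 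The core computation is to evaluate $l^{n,m}_d\circ\iota_{\SJ^{n+m}_d}$ on a generating prolongation. Since $\SJ^{n+m}_dE=Aj^{n+m}_d(E)$, and using the compatibility of $l^{n,m}_d$ with the prolongations, $l^{n,m}_{d,E}\circ j^{n+m}_{d,E}=j^n_{d,J^m_dE}\circ j^m_{d,E}$ (cf.\ \cite[\jetslemmasmM]{FMW}), I would rewrite $j^m_{d,E}=\iota_{\SJ^m_dE}\circ\Sj^m_{d,E}$ and $j^n_{d,\SJ^m_dE}=\iota_{\SJ^n_d\SJ^m_dE}\circ\Sj^n_{d,\SJ^m_dE}$, then apply naturality of $j^n_d$ (resp.\ of $\iota_{\SJ^n_d}$) along $\iota_{\SJ^m_dE}$ to identify $j^n_{d,J^m_dE}\circ j^m_{d,E}$ with $\big[(\iota_{\SJ^n_d})_{J^m_dE}\circ\SJ^n_d(\iota_{\SJ^m_dE})\big]\circ\Sj^n_{d,\SJ^m_dE}\circ\Sj^m_{d,E}$. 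This shows that each prolongation lands in the image of the target mono, and, extending $A$-linearly, that the whole of $\SJ^{n+m}_dE$ does, producing the factorization $\Sl^{n,m}_{d,E}$ together with the commutativity of both squares in \eqref{diag:compatibility_elemental_l_with_l} (the right-hand square being exactly what the computation reads off on prolongations).

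Finally I would record the formal properties. Injectivity of $\Sl^{n,m}_d$ is automatic: its composite with the target mono equals $l^{n,m}_d\circ\iota_{\SJ^{n+m}_d}$, which is a mono because $l^{n,m}_d$ is a mono (Lemma~\ref{lemma:jets_preserving_monos}.\eqref{lemma:jets_preserving_monos:2}) and $\iota_{\SJ^{n+m}_d}$ is a mono, so the first factor is a mono. Uniqueness of the factorization, and hence of $\Sl^{n,m}_d$ as a natural transformation, follows since the target map is a monomorphism; naturality and $(A,B)$-bilinearity are inherited from those of $l^{n,m}_d$ and of the inclusions, as a bilinear natural map factoring through a bilinear natural mono is again bilinear and natural. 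The main obstacle I anticipate is purely bookkeeping: keeping the various naturality squares for $j^n_d$ and $\iota_{\SJ^n_d}$ (evaluated at $\SJ^m_dE$ versus $J^m_dE$) correctly aligned so that the identification in the core computation matches the exact composite mono appearing in \eqref{diag:compatibility_elemental_l_with_l}, rather than any conceptual difficulty.
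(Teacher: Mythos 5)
Your proof is correct and takes essentially the same route as the paper: part (1) is exactly the paper's naturality chase through the epi--mono factorization $A\otimes-\twoheadrightarrow\SJ^n_d\hookrightarrow J^n_d$ together with Lemma \ref{lemma:jets_preserving_monos}, and in part (2) your direct factorization of $l^{n,m}_d\circ\iota_{\SJ^{n+m}_d}$ through the composite mono $(\iota_{\SJ^n_d})_{J^m_d}\circ \SJ^n_d(\iota_{\SJ^m_d})$ — verified on generating prolongations via $l^{n,m}_d\circ j^{n+m}_d=j^n_{d,J^m_d}\circ j^m_d$ and part (1) — is just the element-wise form of the paper's epi--mono lifting argument in \eqref{diag:llp_elemental_l}. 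Your uniqueness (via the target mono) and the inheritance of naturality and $(A,B)$-bilinearity likewise match the paper's reasoning.
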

\begin{proof}\
\begin{enumerate}
\item Let $\phi\colon E\to F$ be a map in $\AModB$.
By definition of elemental jet functors, the following diagram commutes
\begin{equation}\label{diag:elemental_jet_on_maps}
\begin{tikzcd}
A\otimes E\ar[d,"\id_A\otimes \phi"']\ar[r,two heads,"\widehat{p}_{d,E}"]&\SJ^n_d E\ar[d,"\SJ^n_d (\phi)"]\ar[r,hook]&J^n_d E\ar[d,"J^n_d(\phi)"]\\
A\otimes F\ar[r,two heads,"\widehat{p}_{d,F}"]&\SJ^n_d F\ar[r,hook]&J^n_d F
\end{tikzcd}
\end{equation}
If $\phi$ is an epimorphism, then $\id_A\otimes \phi$ is, by right exactness of the tensoring with a module.
Thus, the diagonal of the left square in \eqref{diag:elemental_jet_on_maps} is also an epimorphism, which implies $\SJ^n_d(\phi)$ is an epi.

Now, if $\Omega^1_d$ is flat in $\ModA$ and $\phi$ is a mono, then by Lemma \ref{lemma:jets_preserving_monos}, so is $J^n_d(\phi)$.
It follows that the diagonal of the right square in \eqref{diag:elemental_jet_on_maps} is a mono, and therefore so is $\SJ^n_d(\phi)$.
\item Let $l^{n,m}_d\colon J^{n+m}_d\to J^n_d\circ J^m_d$ be as in \cite[\jetslemmasmM]{FMW}, then equation \cite[\jetseqcommjlmn]{FMW} implies that the following diagram commutes in $\AModB$.
\begin{equation}\label{diag:composed_phat}
\begin{tikzcd}[column sep=40pt]
A\otimes -\ar[r,"\id_A\otimes j^m_d"]\ar[d,"\widehat{p}^{n+m}_d"']&A\otimes J^m_d\ar[d,"\widehat{p}^n_{d,J^m_d}"]\\
J^{n+m}_d\ar[r,"l^{n,m}_d"]&J^n_d\circ J^m_d
\end{tikzcd}
\end{equation}
Commutativity of \eqref{diag:composed_phat} induces a factorization through the images of the vertical maps, that is a map $\widehat{l}^{n,m}_d$ making the following diagram commutative.
\begin{equation}\label{diag:composed_phat_factorisation}
\begin{tikzcd}[column sep=40pt]
A\otimes -\ar[r,"\id_A\otimes j^m_d"]\ar[d,two heads,"\Sphat^{n+m}_d"']&A\otimes J^m_d\ar[d,two heads,"\Sphat^n_{d,J^m_d}"]\\
\SJ^{n+m}_d\ar[r,"\widehat{l}^{n,m}_d"]\ar[d,hook,"\iota_{\SJ^{n+m}_d}"']&\SJ^n_d\circ J^m_d\ar[d,hook,"(\iota_{\SJ^n_d})_{J^m_d}"]\\
J^{n+m}_d\ar[r,"l^{n,m}_d"]&J^n_d\circ J^m_d
\end{tikzcd}
\end{equation}
By applying the functor $A\otimes -$ to the factorization of $j^n_d$ through $\SJ^n_d$, we can provide a factorization for the map $\id_A\otimes j^,_d$ through $A\otimes \SJ^n_d$.
We can now add to \eqref{diag:composed_phat_factorisation} the following maps.
\begin{equation}\label{diag:wanted_elemental_l}
\begin{tikzcd}[column sep=40pt]
A\otimes -\ar[r,"\id_A\otimes \Sj^m_d"]\ar[d,two heads,"\Sphat^{n+m}_d"']&A\otimes \SJ^m_d\ar[d,two heads,"\Sphat^n_{d,\SJ^m_d}"]\ar[r,"\id_A\otimes \iota_{\SJ^n_d}"]&A\otimes J^m_d\ar[d,two heads,"\Sphat^n_{d,J^m_d}"]\\
\SJ^{n+m}_d\ar[rr,bend right=10pt,"\widehat{l}^{n,m}_d"']\ar[r,dashed,"\Sl^{n,m}_d"]\ar[d,hook,"\iota_{\SJ^{n+m}_d}"']&\SJ^n_d\circ \SJ^m_d \ar[r,"\SJ^n_d( \iota_{\SJ^m_d})"]&\SJ^n_d\circ J^m_d\ar[d,hook,"(\iota_{\SJ^n_d})_{J^m_d}"]\\
J^{n+m}_d\ar[rr,"l^{n,m}_d"']&&J^n_d\circ J^m_d
\end{tikzcd}
\end{equation}
The inner right square commutes by naturality of $\Sphat^n_d$ with respect to the inclusion $\iota_{\SJ^n_d}\colon\SJ^n_d\hookrightarrow J^n_d$.
We are now left to find a dashed map in \eqref{diag:wanted_elemental_l} that commutes in the diagram.
The existence of $\Sl^{n,m}_d$ follows from the lifting properties of epimorphisms and monomorphisms in the category of $\AModB$.
More explicitly, in the hypotheses of \eqref{lemma:elemental_smm:2}, we can apply \eqref{lemma:elemental_smm:1} to obtain that $\SJ^n_d(\iota_{\SJ^n_d})$ is a monomorphism.
Thus, we obtain the following square, where the vertical left map is an epi and the right one is a mono.
\begin{equation}\label{diag:llp_elemental_l}
\begin{tikzcd}[column sep=60pt]
A\otimes -\ar[d,two heads,"\Sphat^{n+m}_d"']\ar[r,"\Sphat^n_{d,J^m_d}\circ(\id_A\otimes \Sj^m_d)"]&\SJ^n_d\circ \SJ^m_d\ar[d,hook,"\SJ^n_d(\iota_{\SJ^n_d})"]\\
\SJ^{n+m}_d\ar[ur,dashed,"\Sl^{n,m}_d"]\ar[r,"\widehat{l}^{n,m}_d"']&\SJ^n_d\circ J^m_d
\end{tikzcd}
\end{equation}
Thus $\Sl^{n,m}_d$ exists and is unique by the left lifting property of epimorphism with respect to monomorphisms in abelian categories.
The commutativity of \eqref{diag:compatibility_elemental_l_with_l} follows from that of \eqref{diag:wanted_elemental_l}.
In particular, the left square of \eqref{diag:compatibility_elemental_l_with_l} is the bottom square of \eqref{diag:wanted_elemental_l}, and the right square is obtained by restricting  the top right square of \eqref{diag:wanted_elemental_l} to $\bk\otimes -\cong \id_{\AModB}$.

There is a unique $\Sl^{n,m}_d$ such that the squares \eqref{diag:compatibility_elemental_l_with_l} commute.
More specificly, it is enough to give the right square, as the values of an $(A,B)$-linear function on $\SJ^{n+m}_d$ are entirely determined by its values on the image of $\Sj^{n+m}_d$.
The map $\Sl^{n,m}_d$ must be a mono, since both $\iota_{\SJ^{n+m}_d}$ and $l^{n,m}_d$ are.\qedhere
\end{enumerate}
\end{proof}
We are now prepared to state the following result.
\begin{prop}\label{prop:cat_properties_WDOs}
Let $\Omega^\bullet_d$ be an exterior algebra over $A$ such that $\Omega^1_d$ is flat in $\ModA$, then
\begin{enumerate}
\item\label{prop:cat_properties_WDOs:1} The composition of linear differential operators extends to a composition of elemental linear differential operators
\begin{equation}
\circ \colon \WD^m_d(F,G)\times \WD^n_d(E,F)\longrightarrow \WD^{n+m}_d(E,G).
\end{equation}
\item\label{prop:cat_properties_WDOs:2} There is a category $\WD_d$ with the same objects as $\AMod$ and with morphisms between $E$ and $F$ given by $\WD_d(E,F)$.
\end{enumerate}
\end{prop}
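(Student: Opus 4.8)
The plan is to mirror the construction of the composition of holonomic differential operators from \cite[\jetspropdifferentialoperatorcomposition]{FMW}, substituting the elemental iterated-jet inclusion $\Sl^{m,n}_d$ furnished by Lemma \ref{lemma:elemental_smm}.\eqref{lemma:elemental_smm:2} for its holonomic counterpart. Since $\WD^n_d(E,F)\subseteq\Hom(E,F)$ by Proposition \ref{prop:properties_elemental_DOs}.\eqref{prop:properties_elemental_DOs:1}, the operation $\circ$ is nothing but ordinary composition of $\bk$-linear maps, so the content of \eqref{prop:cat_properties_WDOs:1} is the closure assertion that this composite is again an elemental linear differential operator of order at most $n+m$.

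First I would take $\Delta_1\in\WD^n_d(E,F)$ and $\Delta_2\in\WD^m_d(F,G)$ with their lifts $\widetilde{\Delta}_1\colon\SJ^n_d E\to F$ and $\widetilde{\Delta}_2\colon\SJ^m_d F\to G$ (unique by Proposition \ref{prop:properties_elemental_DOs}.\eqref{prop:properties_elemental_DOs:2}), and exhibit the candidate lift
\begin{equation}
\Theta\colonequals \widetilde{\Delta}_2\circ\SJ^m_d(\widetilde{\Delta}_1)\circ\Sl^{m,n}_{d,E}\colon \SJ^{m+n}_d E\longrightarrow G,
\end{equation}
which is $A$-linear as a composite of $A$-linear maps. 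It then remains to check that $\Theta$ lifts $\Delta_2\circ\Delta_1$ through $\Sj^{m+n}_{d,E}$. Precomposing with $\Sj^{m+n}_{d,E}$ and applying the right-hand compatibility square of \eqref{diag:compatibility_elemental_l_with_l}, i.e.\ $\Sl^{m,n}_d\circ\Sj^{m+n}_d=\Sj^m_{d,\SJ^n_d}\circ\Sj^n_d$, turns the composite into $\widetilde{\Delta}_2\circ\SJ^m_d(\widetilde{\Delta}_1)\circ\Sj^m_{d,\SJ^n_d E}\circ\Sj^n_{d,E}$. Naturality of the elemental prolongation $\Sj^m_d\colon\id_{\AModB}\to\SJ^m_d$ with respect to $\widetilde{\Delta}_1$ rewrites $\SJ^m_d(\widetilde{\Delta}_1)\circ\Sj^m_{d,\SJ^n_d E}$ as $\Sj^m_{d,F}\circ\widetilde{\Delta}_1$; since $\widetilde{\Delta}_2\circ\Sj^m_{d,F}=\Delta_2$ and $\widetilde{\Delta}_1\circ\Sj^n_{d,E}=\Delta_1$, the whole expression collapses to $\Delta_2\circ\Delta_1$. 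Hence $\Theta$ is a lift and $\Delta_2\circ\Delta_1\in\WD^{m+n}_d(E,G)$, establishing \eqref{prop:cat_properties_WDOs:1}.

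For \eqref{prop:cat_properties_WDOs:2} I would then assemble the category as in \cite[\jetssdifferentialoperators]{FMW}: objects are those of $\AMod$, hom-sets are $\WD_d(E,F)=\bigcup_{n\in\N}\WD^n_d(E,F)$, and composition is ordinary map composition, which lands in the appropriate hom-set by \eqref{prop:cat_properties_WDOs:1}. The identity of $E$ is $\id_E\in\AHom(E,E)=\WD^0_d(E,E)$ via Proposition \ref{prop:properties_elemental_DOs}.\eqref{prop:properties_elemental_DOs:4}, while associativity and the unit laws are inherited verbatim from composition of $\bk$-linear maps, all relevant arrows lying inside $\Hom$.

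The single load-bearing input is Lemma \ref{lemma:elemental_smm}: both the existence of the $(A,B)$-linear map $\Sl^{m,n}_d$ and its compatibility with prolongations rest on $\Omega^1_d$ being flat in $\ModA$, which is precisely the standing hypothesis. Given that lemma, the remaining work is a diagram chase, so the place demanding care is keeping the orders $m$ and $n$ in their correct slots when invoking the compatibility square and the naturality of $\Sj^m_d$; I expect no homological obstacles beyond those already discharged inside Lemma \ref{lemma:elemental_smm}.
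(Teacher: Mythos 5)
Your proposal is correct and follows essentially the same route as the paper: you construct the identical lift $\widetilde{\Delta}_2\circ\SJ^m_d(\widetilde{\Delta}_1)\circ\Sl^{m,n}_{d,E}$ from Lemma \ref{lemma:elemental_smm}.\eqref{lemma:elemental_smm:2} and verify it via the compatibility square \eqref{diag:compatibility_elemental_l_with_l} together with naturality of $\Sj^m_d$, which is precisely the diagram chase the paper encodes in \eqref{eq:elementaljetsoperatorcomposition}. Your explicit treatment of identities and associativity for part \eqref{prop:cat_properties_WDOs:2} just spells out what the paper leaves as an immediate consequence of part \eqref{prop:cat_properties_WDOs:1}.
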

\begin{proof}\
\begin{enumerate}
	\item The construction of the lift of the composition is the elemental jet analogue of that found in \cite[\jetspropdifferentialoperatorcomposition]{FMW}:
		Let $\Delta_1 \in \WD^n_d(E,F)$ and $\Delta_2 \in \WD^m_d(F,G)$.
		Then the elemental jet lift of the composition $\Delta_2 \circ \Delta_1$ is given explicitly by \begin{equation}
			\reallywidetilde{\Delta_2 \circ \Delta_1} = \widetilde{\Delta_2} \circ \SJ^m_d(\widetilde{\Delta_1}) \circ \Sl^{m,n}_{d,E},
			\label{eq:elementaljetcompositionlift}
		\end{equation}
		which we deduce from the following commutative diagram.
\begin{equation}\label{eq:elementaljetsoperatorcomposition}
\begin{tikzcd}
\SJ^{m+n}_d E\ar[r,"\Sl^{m,n}_{d,E}"]&\SJ^m_d(\SJ^n_dE) \arrow[rd,near start, "\SJ^m_d(\widetilde{\Delta}_1)"] \\
&\SJ^n_dE \arrow[rd,near start, "\widetilde\Delta_1"] \arrow[u,"\Sj^m_{d,\SJ^n_d E}"] &       \SJ^m_dF \arrow[rd, near start, "\widetilde\Delta_2"]     &[20pt]\\
&E\ar[uul,"\Sj^{m+n}_{d,E}"] \arrow[r, "\Delta_1"] \arrow[u,"\Sj^n_{d,E}"'] &       F \arrow[r, "\Delta_2"] \arrow[u,"\Sj^m_{d,F}"]   &       G
\end{tikzcd}
\end{equation}
The map $\Sl^{m,n}_d\colon \SJ^{n+m}_d\to\SJ^m_d\circ \SJ^n_d$ is given by \eqref{lemma:elemental_smm:2} of Lemma \ref{lemma:elemental_smm}.
\item This follows directly from \eqref{prop:cat_properties_WDOs:1}.\qedhere
\end{enumerate}
\end{proof}
\begin{defi}
We call the category $\WD_d$ described by Lemma \ref{prop:cat_properties_WDOs}.\eqref{prop:cat_properties_WDOs:2}, the \emph{category of finite order elemental linear differential operators}.
\end{defi}
We immediately obtain the following properties
\begin{prop}\label{prop:include_and_forget}
Let $\Omega^1_d$ be flat in $\ModA$, then
\begin{enumerate}
\item there are faithful inclusions of categories $\AMod\subseteq\Diff_d\subseteq\WD_d$;
\item there is a faithful forgetful functor $\WD_d\hookrightarrow \Mod$;
\item the restriction of the forgetful functors $\WD_d\hookrightarrow \Mod$ and $\Diff_d\hookrightarrow \Mod$ to $\AMod$ is the forgetful functor;
\end{enumerate}
\end{prop}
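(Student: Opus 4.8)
The plan is to exploit the fact that all three categories $\AMod$, $\Diff_d$, and $\WD_d$ have the \emph{same} objects, and that their morphism spaces form a nested chain of $\bk$-submodules of $\Hom(E,F)$, with composition in every case given by the ordinary composition of the underlying $\bk$-linear maps. Granting this, each inclusion is forced to be the identity on objects and a subset inclusion on morphisms, and the forgetful functors send a morphism to its underlying $\bk$-linear map; so the entire statement reduces to checking that these assignments preserve identities and composition and are injective on hom-spaces.

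First I would assemble the hom-space chain. Order-zero (elemental) differential operators are exactly the $A$-linear maps, so $\AHom(E,F)=\WD^0_d(E,F)$ by Proposition \ref{prop:properties_elemental_DOs}.\eqref{prop:properties_elemental_DOs:4}, and likewise $\AHom(E,F)=\Diff^0_d(E,F)$ by \cite[\jetssecDOzero]{FMW}; hence $\AHom(E,F)\subseteq\Diff_d(E,F)$. The inclusion $\Diff_d(E,F)\subseteq\WD_d(E,F)$ is Proposition \ref{prop:properties_elemental_DOs}.\eqref{prop:properties_elemental_DOs:3}, and $\WD_d(E,F)\subseteq\Hom(E,F)$ is Proposition \ref{prop:properties_elemental_DOs}.\eqref{prop:properties_elemental_DOs:1}. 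The flatness of $\Omega^1_d$ enters here only to guarantee, through Proposition \ref{prop:cat_properties_WDOs}, that $\WD_d$ is a genuine category at all: its composition is well-defined precisely because the factorization $\Sl^{n,m}_d$ of Lemma \ref{lemma:elemental_smm} exists. The category $\Diff_d$ itself is supplied by \cite[\jetssdifferentialoperators]{FMW}.

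It then remains to verify functoriality. In each of the three categories the identity on $E$ is the $A$-linear map $\id_E$, and composition of morphisms is the ordinary composition of the underlying $\bk$-linear maps; this is the content of the closure results \cite[\jetspropdifferentialoperatorcomposition]{FMW} for $\Diff_d$ and Proposition \ref{prop:cat_properties_WDOs}.\eqref{prop:cat_properties_WDOs:1} for $\WD_d$, which say that the set-theoretic composite of two such operators is again one of the same type. Consequently both inclusions preserve identities and composition and are faithful, faithfulness being just injectivity of a subset inclusion. The forgetful functor $\WD_d\hookrightarrow\Mod$ (and, identically, $\Diff_d\hookrightarrow\Mod$) sends an object to its underlying $\bk$-module and a morphism to its underlying $\bk$-linear map; it is functorial for the same reason and faithful because $\WD_d(E,F)\subseteq\Hom(E,F)$. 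Statement (iii) is obtained by tracing an object of $\AMod$ and an $A$-linear map through the inclusion $\AMod\hookrightarrow\WD_d$ (resp.\ $\AMod\hookrightarrow\Diff_d$) followed by the forgetful functor, recovering exactly the underlying $\bk$-module and $\bk$-linear map. The whole argument is essentially bookkeeping; the one point I would be most careful about is confirming that the three composition operations genuinely coincide as the underlying $\bk$-linear composition rather than being merely abstractly compatible, which is why I lean on the explicit lift formula \eqref{eq:elementaljetcompositionlift} and the closure statements to force the categorical composition to be the set-theoretic one, so that the inclusions are strict functors.
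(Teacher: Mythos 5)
Your proposal is correct and follows essentially the same route as the paper, which proves the inclusions by citing the hom-module filtration and the containments $\Diff^n_d(E,F)\subseteq\WD^n_d(E,F)\subseteq\Hom(E,F)$ from Proposition \ref{prop:properties_elemental_DOs} (points \eqref{prop:properties_elemental_DOs:3} and \eqref{prop:properties_elemental_DOs:5}), with functoriality and faithfulness then being immediate since composition in all three categories is ordinary composition of the underlying $\bk$-linear maps. Your version merely spells out details the paper leaves implicit — in particular the correct observation that flatness of $\Omega^1_d$ enters only through Proposition \ref{prop:cat_properties_WDOs} to make $\WD_d$ a category in the first place.
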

\begin{proof}\
\begin{enumerate}
\item The inclusions are given by \eqref{prop:properties_elemental_DOs:3} and \eqref{prop:properties_elemental_DOs:5} of Proposition \ref{prop:properties_elemental_DOs}.
\item Obtained via Proposition \ref{prop:properties_elemental_DOs}.\eqref{prop:properties_elemental_DOs:5}.
\item Follows by construction.
\qedhere
\end{enumerate}
\end{proof}
We have the following result concerning finite limits.
\begin{prop}
Let $\Omega^1_d$ be flat in $\ModA$, then the inclusions $\AMod\subseteq\WD_d$ preserves finite limits.

Given a cone for a diagram, the universal arrow to the limit is a differential operator of order at most equal to the maximum of the orders of the differential operators of the cone.
\end{prop}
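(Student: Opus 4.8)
The plan is to combine the fact that $\AMod$ is abelian (hence complete) with the characterization of elemental operators by the relation functor $N^n_d$ from Proposition~\ref{prop:criterion_WDO}. Write $U\colon\AMod\to\Mod$ for the forgetful functor to $\bk$-modules. Being right adjoint to $A\otimes-$, it preserves limits, so for a finite diagram $D\colon\mathcal{I}\to\AMod$ with limit $L$ and projections $p_i\colon L\to D(i)$, the underlying module $U(L)$ is the limit of $U\circ D$, and the family $(p_i)$ is jointly monic, since the comparison $L\hookrightarrow\prod_i D(i)$ into the finite product is a monomorphism. I would then show that the image cone $(L,(p_i))$ is a limit cone in $\WD_d$.

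First I would fix an arbitrary cone $(C,(q_i))$ over $D$ in $\WD_d$, with $q_i\in\WD^{n_i}_d(C,D(i))$, and set $n\colonequals\max_i n_i$, which exists because the diagram is finite; by the filtration of Proposition~\ref{prop:properties_elemental_DOs}.\eqref{prop:properties_elemental_DOs:5}, each $q_i$ then lies in $\WD^n_d(C,D(i))$. Every arrow $D(f)$ of the diagram is $A$-linear, i.e.\ of order $0$, so its composite in $\WD_d$ with an elemental operator coincides with the ordinary composite of underlying maps, the forgetful functor $\WD_d\to\Mod$ being faithful (Proposition~\ref{prop:include_and_forget}). Hence the cone relations $D(f)\circ q_i=q_j$ already hold for the underlying $\bk$-linear maps, so $(C,(q_i))$ is a cone over $U\circ D$ in $\Mod$, and the universal property of $U(L)$ produces a unique $\bk$-linear map $u\colon C\to L$ with $p_i\circ u=q_i$ for all $i$.

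The key step, where the content of the statement really lies, is to check that this $u$ is itself an elemental differential operator of order at most $n$. By Proposition~\ref{prop:criterion_WDO}.\eqref{prop:criterion_WDO:1} this amounts to proving $\sum_k a_k\,u(c_k)=0$ for every $\sum_k a_k\otimes c_k\in N^n_d(C)$. Applying the $A$-linear projection $p_i$ and using $p_i\circ u=q_i$ gives $p_i\!\left(\sum_k a_k\,u(c_k)\right)=\sum_k a_k\,q_i(c_k)$, which vanishes because $q_i\in\WD^n_d(C,D(i))$ satisfies the very same criterion. Since the $(p_i)$ are jointly monic, I conclude $\sum_k a_k\,u(c_k)=0$, so $u\in\WD^n_d(C,L)$; in particular its order is at most $\max_i n_i$, which is the asserted bound.

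Finally, uniqueness of $u$ in $\WD_d$ is automatic, as any factoring $\WD_d$-arrow is in particular a $\bk$-linear factorization and hence equals $u$ by the universal property used to build it. This establishes both that the inclusion preserves finite limits and the order estimate. I do not anticipate a genuine obstacle beyond this bookkeeping: the argument uses only Proposition~\ref{prop:criterion_WDO} and the preservation of limits by $U$, while the flatness of $\Omega^1_d$ enters solely to guarantee that $\WD_d$ is a category (Proposition~\ref{prop:cat_properties_WDOs}), a standing hypothesis of the statement.
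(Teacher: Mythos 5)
Your proposal is correct, and the order bound $n=\max_i n_i$ comes out exactly as in the statement. The overall skeleton matches the paper's proof — both use that the forgetful functor to $\Mod$ preserves limits, reduce to the unique $\bk$-linear comparison map $u$, and conclude with the joint monicity of the limit projections — but the step certifying that $u$ is an elemental operator is carried out by a genuinely different key lemma. The paper uses representability (Corollary \ref{cor:WDO_always_representable}): it lifts the whole cone to $A$-linear maps $\widetilde{\Delta}_i\colon \SJ^n_d E\to F(i)$, invokes the universal property of the limit in $\AMod$ to produce $\widetilde{\Delta}_!\colon \SJ^n_d E\to \lim_I F$, and then checks $\widetilde{\Delta}_!\circ \Sj^n_{d,E}=\Delta_!$ via the limit in $\Mod$. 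You instead never form a cone of lifts in $\AMod$: you apply the relation criterion of Proposition \ref{prop:criterion_WDO}.\eqref{prop:criterion_WDO:1}, noting that each $A$-linear projection $p_i$ sends $\sum_k a_k\,u(c_k)$ to $\sum_k a_k\,q_i(c_k)=0$ for $\sum_k a_k\otimes c_k\in N^n_d(C)$, and joint monicity kills the element. Your route is somewhat more elementary (no limit computation inside $\AMod$, no explicit jet lift), and it isolates clearly that the only role of the flatness hypothesis is to make $\WD_d$ a category; the paper's route buys an explicit description of the jet lift $\widetilde{\Delta}_!$ of the universal arrow, which is occasionally useful in its own right. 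Since the criterion of Proposition \ref{prop:criterion_WDO} is itself proved by factoring through $\SJ^n_d E=A\otimes E/N^n_d(E)$, the two arguments are ultimately equivalent, but as written yours is a valid independent proof.
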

\begin{proof}
Let $I$ be a finite category and consider a diagram $F\colon I\to \AMod$.
Let $\lim_I F$ be the limit in $\AMod$, we will now show that this is also the limit in $\WD_d$.
First of all, by Proposition \ref{prop:include_and_forget}, we know that applying the forgetful functor $\WD_d\to \Mod$ to $F$ is equivalent to applying the forgetful functor $\AMod\to \Mod$ to it.
The latter is a right adjoint, and thus it preserves limits.
Now let $E$ be in $\WD_d$ and consider a cone $\Delta\colon E\to F$ in $\WD_d$, where $E$ here is interpreted as the constant diagram.
Since $\lim_I F$ is also the limit in $\Mod$, we know that there is a unique compatible $\Delta_{!}\colon E\to \lim_I F$ in $\Mod$.
We need to prove that it is a differential operator.
For all $i\in I$, we have $\Delta_i\in\WD^{n_i}_d(E,F(i))$ for some finite order $n_i$.
Let $n\colonequals \max\{n_i|i\in I\}$, then for all $i\in I$, we have $\Delta_i\in \WD^n_d(E,F(i))\cong \AHom(\SJ^n_d E,F(i))$, thus corresponding to a cone $\widetilde{\Delta}\colon \SJ^n_d E\to F$ in $\AMod$.
The limit universal property ensures the existence of a unique $\widetilde{\Delta}_{!}\colon \SJ^n_d E\to \lim_I F$ in $\AMod$ commuting with the cone.
\begin{equation}
\begin{tikzcd}
\SJ^n_d E\ar[rd,dashed,"\widetilde{\Delta}_!"']\ar[rrd,"\widetilde{\Delta}_i"]\ar[rrrd,bend left=10pt,"\widetilde{\Delta}_{i'}"]\\
E\ar[u,"\Sj^n_{d,E}",hook]\ar[r,dashed,"\Delta_!"']&\lim\limits_I F\ar[r,"\pi_i"']\ar[rr,bend right,"\pi_{i'}"']&F(i)\ar[r]&F(i')
\end{tikzcd}
\end{equation}
Now, for all $i\in I$,
\begin{align}
\pi_i\circ \widetilde{\Delta}_!\circ\Sj^n_{d,E}
=\widetilde{\Delta}_i\circ\Sj^n_{d,E}
=\Delta_i
=\pi_i \circ\Delta_!.
\end{align}
By the universal property of the limit in $\Mod$, we must thus have $\widetilde{\Delta}_!\circ\Sj^n_{d,E}=\Delta_!$, making $\Delta_!$ an elemental linear differential operator of order at most $n$.
\end{proof}
This result does not necessarily hold for infinite limits, as the universal arrow is not necessarily a differential operator of finite order.

\subsection{Primitive jets}
Similarly to the definition of elemental jets, we define the functor $\CJ^n_d$ as the image of the natural transformation $\widehat{p}^{(n)}_d$ defined by
\begin{align}\label{eq:nonholelemental}
\widehat{p}^{(n)}_{d,E}\colon A\otimes E\longrightarrow J^{(n)}_d E,
&\hfill&
a\otimes e\longmapsto aj^{(n)}_d(e),
\end{align}
which we term the \emph{primitive $n$-jet functor}.

We also define the following functor: $N^{(n)}_d\colonequals\ker(\widehat{p}^{(n)}_d)$, and notice that as for .
\begin{rmk}
Further, one can consider the semiholonomic analogue of these constructions, but they coincide with $\CJ^n_d$ and $N^{(n)}_d$, since $\iota_{J^{[n]}_d}\colon J^{[n]}_d\hookrightarrow J^{(n)}_d$ is always a monomorphism.
\end{rmk}

The relations between nonholonomic jet prolongations and jet projections (cf.\ \cite[\jetseqjpi]{FMW}) give us the following commutative diagram for all $m\le n$.
\begin{equation}\label{diag:phatn_compatible_with_pi}
\begin{tikzcd}
A\otimes-\ar[d,"\widehat{p}^{(n)}_d"']\ar[dr,"\widehat{p}^{(m)}_d"]&[30pt]\\
J^{(n)}_d\ar[r,"\pi^{(n,m)}_d"]&J^{(m)}_d
\end{tikzcd}
\end{equation}

Analogously to Remark \ref{rmk:prosphat}, we obtain the following
\begin{rmk}\label{rmk:prosphatn}\
\begin{enumerate}
\item In low degrees, we have $N^{(0)}_d=\Omega^1_u$ and $N^{(1)}_d=N_d$.
\item Commutativity of \eqref{diag:phat_compatible_with_pi} implies that for all $m\le n$ we have $N^{(n)}_d\subseteq N^{(m)}_d$.
\end{enumerate}
\end{rmk}

We can relate this new notion of jet functor with the elemental one via the following result.
\begin{lemma}\label{lemma:skelprimitivecompare}
We have the following:\
\begin{enumerate}
\item\label{lemma:skelprimitivecompare:1} The map $\iota_{J^n_d}\colon J^{n}_d\to J^{(n)}_d$ restricts to a natural epimorphism
\begin{equation}\label{eq:skelprimitivecompare}
\breve{p}^n_d\colon \SJ^n_d\longtwoheadrightarrow \CJ^n_d
\end{equation}
which is a natural isomorphism if $\iota_{J^n_d}\colon J^n_d\to J^{(n)}_d$ is a monomorphism.
\item\label{lemma:skelprimitivecompare:2} There exists a natural inclusion
\begin{equation}\label{eq:holnonholNds}
N^n_d\longhookrightarrow N^{(n)}_d
\end{equation}
which is compatible with the inclusions of the same map with lower $n$ and is a natural isomorphism if and only if $\breve{p}^n_d$ is a natural isomorphism.
\item\label{lemma:skelprimitivecompare:3} $\CJ^n_d E=A\otimes E/N^{(n)}_d$, and thus for $n>1$ we have $\CJ^n_d=J^n_u/N^{(n)}_d$.
\item\label{lemma:skelprimitivecompare:4} For $n=0,1,2$, the map $\breve{p}^n_d$ is a natural isomorphism.
\item\label{lemma:skelprimitivecompare:5} If $\Omega^1_d$ is flat in $\ModA$, then $\breve{p}^n_d$ is a natural isomorphism for all $n\in\N$.
\item\label{lemma:skelprimitivecompare:6} $\CJ^n_u=\SJ^n_u=J^n_u=A\otimes E$ for all $n\in\N$.
\end{enumerate}
\end{lemma}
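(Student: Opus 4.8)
The backbone of the whole statement is the identity $\widehat{p}^{(n)}_d=\iota_{J^n_d}\circ\widehat{p}^n_d$, which I would record first: it follows from the prolongation relation $j^{(n)}_d=\iota_{J^n_d}\circ j^n_d$ together with the $A$-linearity of $\iota_{J^n_d}$. Every point below is extracted by comparing, through this identity, the two epi--mono factorizations $A\otimes-\twoheadrightarrow\SJ^n_d\hookrightarrow J^n_d$ and $A\otimes-\twoheadrightarrow\CJ^n_d\hookrightarrow J^{(n)}_d$.

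For \eqref{lemma:skelprimitivecompare:1}, the $A$-linearity of $\iota_{J^n_d}$ and the equality $\SJ^n_d=Aj^n_d(-)$ give $\iota_{J^n_d}(\SJ^n_d)=A\,\iota_{J^n_d}(j^n_d(-))=Aj^{(n)}_d(-)=\CJ^n_d$, so $\iota_{J^n_d}$ restricts to a natural epimorphism $\breve{p}^n_d\colon\SJ^n_d\twoheadrightarrow\CJ^n_d$; when $\iota_{J^n_d}$ is monic this restriction is injective, hence an isomorphism. For \eqref{lemma:skelprimitivecompare:2}, the same identity gives $N^n_d=\ker\widehat{p}^n_d\subseteq\ker\widehat{p}^{(n)}_d=N^{(n)}_d$, and compatibility with the lower-degree inclusions follows from the compatibility of $\iota_{J^n_d}$ with the jet projections, cf.\ \eqref{diag:phatn_compatible_with_pi}. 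Applying the snake lemma to
\begin{equation}
\begin{tikzcd}
0\ar[r]&N^n_d\ar[r,hook]\ar[d,hook]&A\otimes-\ar[r,two heads,"\Sphat^n_d"]\ar[d,equals]&\SJ^n_d\ar[r]\ar[d,"\breve{p}^n_d"]&0\\
0\ar[r]&N^{(n)}_d\ar[r,hook]&A\otimes-\ar[r,two heads]&\CJ^n_d\ar[r]&0
\end{tikzcd}
\end{equation}
yields $\ker(\breve{p}^n_d)\cong N^{(n)}_d/N^n_d$; since $\breve{p}^n_d$ is already epi, it is an isomorphism precisely when $N^n_d\hookrightarrow N^{(n)}_d$ is. Finally \eqref{lemma:skelprimitivecompare:3} is the first isomorphism theorem applied to $\widehat{p}^{(n)}_d$, combined with the identification $A\otimes-=J^n_u$ from Remark \ref{rmk:prosphat}.

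The remaining items supply sufficient conditions for $\breve{p}^n_d$ to be an isomorphism. For \eqref{lemma:skelprimitivecompare:4}, the degrees $n=0,1$ follow from \eqref{lemma:skelprimitivecompare:2} and Remark \ref{rmk:prosphatn}, which give $N^{(0)}_d=\Omega^1_u=N^0_d$ and $N^{(1)}_d=N_d=N^1_d$; the degree $n=2$ follows from \eqref{lemma:skelprimitivecompare:1} once one observes that $\iota_{J^2_d}$ is the single-step defining inclusion $l^2_d\colon J^2_d\hookrightarrow J^1_d\circ J^1_d=J^{(2)}_d$, which is monic with no flatness hypothesis (the hooked arrow of \eqref{diag:inddeflnm}, cf.\ \cite[\jetslemmasmM]{FMW}). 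For \eqref{lemma:skelprimitivecompare:5}, flatness of $\Omega^1_d$ in $\ModA$ makes $\iota_{J^n_d}$ monic for all $n$, cf.\ \cite[\jetsrmkiotaholinj]{FMW}, so \eqref{lemma:skelprimitivecompare:1} applies. For \eqref{lemma:skelprimitivecompare:6}, Corollary \ref{cor:universal_representability} gives $\widehat{p}^n_u=\id_{A\otimes-}$ for $n\ge1$, so $\SJ^n_u=J^n_u=A\otimes-$ and the composite $A\otimes-=\SJ^n_u\xrightarrow{\breve{p}^n_u}\CJ^n_u\hookrightarrow J^{(n)}_u$ equals $\widehat{p}^{(n)}_u$, which is a split monomorphism: under $J^{(n)}_u\cong(A\otimes-)^{\circ n}$ it reads $a\otimes e\mapsto a\otimes1\otimes\cdots\otimes1\otimes e$ and is retracted by collapsing all but the first tensor factor via the iterated $A$-action. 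Hence $\breve{p}^n_u$ is monic as well as epi, so all four functors coincide.

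I expect the only genuinely delicate point to be the degree-two case of \eqref{lemma:skelprimitivecompare:4}: one must ensure $\iota_{J^2_d}$ is monic \emph{without} any flatness assumption, which is exactly what separates \eqref{lemma:skelprimitivecompare:4} from the $n=2$ instance of \eqref{lemma:skelprimitivecompare:5}. This rests on recognizing $\iota_{J^2_d}$ as the single inclusion $l^2_d$ rather than an iterated $l^{n,m}_d$, the latter needing $\Omega^1_d$ flat by Lemma \ref{lemma:jets_preserving_monos}. The split-monomorphism claim in \eqref{lemma:skelprimitivecompare:6} is the other place calling for a short explicit check rather than a citation.
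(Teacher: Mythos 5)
Your proof is correct and takes essentially the same route as the paper's: both rest on the factorization $\widehat{p}^{(n)}_d=\iota_{J^n_d}\circ\widehat{p}^n_d$ and the comparison of the two epi--mono factorizations, with the snake lemma giving $\ker(\breve{p}^n_d)\cong N^{(n)}_d/N^n_d$ for \eqref{lemma:skelprimitivecompare:2}, the identification $\iota_{J^2_d}=l^2_d$ (monic by construction, no flatness) for \eqref{lemma:skelprimitivecompare:4}, and monomorphy of $\iota_{J^n_d}$ under flatness or universality for \eqref{lemma:skelprimitivecompare:5} and \eqref{lemma:skelprimitivecompare:6}. Your only deviations are cosmetic: at $n=0,1$ the paper simply notes $J^{(n)}_d=J^n_d$ rather than comparing $N^{(n)}_d$ with $N^n_d$, and your explicit split-monomorphism check in \eqref{lemma:skelprimitivecompare:6} is already supplied verbatim by Theorem \ref{theo:universal_higher_jets}, which the paper cites directly.
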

\begin{proof}\
\begin{enumerate}
\item By compatibility of the arrow $\iota_{J^n_d}\colon J^{n}_d\to J^{(n)}_d$ with the jet prolongation, we have the commutativity of the outer square in the following diagram
\begin{equation}\label{diag:primitiveskelcomp}
\begin{tikzcd}
\ar[r]A\otimes -\ar[rr, bend left,"\widehat{p}^n_d"]\ar[r,two heads]\ar[d,equals]&\SJ^n_d\ar[d,dashed,two heads ,"\breve{p}^n_d"']\ar[r,hookrightarrow] & J^n_d \ar[d,"\iota_{J^n_d}"]\\
A\otimes - \ar[rr,bend right,"\widehat{p}^{(n)}_d"']\ar[r,two heads]&\CJ^n_d\ar[r,hookrightarrow]&J^{(n)}_d
\end{tikzcd}
\end{equation}
This commutativity induces the compatible map $\breve{p}^n_d$ between the images of the horizontal maps in \eqref{diag:primitiveskelcomp}, by the properties of the epi-mono factorization.
The map $\breve{p}^n_d$ is an epimorphism because it is the last map in a composition which is itself an epimorphism.

When $\iota_{J^n_d}$ is a monomorphism, then $\breve{p}^n_d$ is also a monomorphism, as it is the first map in a composition which is a monomorphism.

\item We obtain the desired map as follows, via the kernel universal property.
\begin{equation}
\begin{tikzcd}
N^n_d\ar[d,dashed,hook]\ar[r,hook]&A\otimes -\ar[d,equals] \ar[r,two heads]&\SJ^n_d\ar[d,two heads ,"\breve{p}^n_d"]\\
N^{(n)}_d\ar[r,hook]&A\otimes - \ar[r,two heads]&\CJ^n_d
\end{tikzcd}
\end{equation}
By the snake lemma, this map is a monomorphism, and its cokernel is isomorphic to the kernel of $\breve{p}^n_d$.
This implies the last part of the statement.
Universality yields the compatibility with the other inclusion maps.

\item It follows by definition of $\CJ^n_d$, as $\widehat{p}^n_d$ surjects onto $\CJ^n_d$ with kernel $N^{(n)}_d$.

\item For $n=0,1$, $J^{(n)}_d=J^n_d$, so the statement is trivial.
For $n=2$, we have that $\iota_{J^2_d}=l^2_d$ is a monomorphism, so $\breve{p}^n_d$ is an isomorphism by \eqref{lemma:skelprimitivecompare:1}.
Thus, $\breve{p}^n_d$ gives the desired isomorphism.
\item Using a similar argument, if $\Omega^1_d$ is flat in $\Mod_A$, then $\iota_{J^{n}_d}$ is a monomorphism, cf.\ \cite[\jetsrmkiotaholinj]{FMW}, implying $\SJ^n_d\simeq\CJ^n_d$ via \ref{lemma:skelprimitivecompare:1}.
\item By Theorem \ref{theo:universal_higher_jets} (cf. Appendix \ref{section:appendix}), we know that $\iota_{J^n_u}$ is a monomorphism, so the result follows from \eqref{lemma:skelprimitivecompare:1}.\qedhere
\end{enumerate}
\end{proof}
\begin{rmk}
By construction, $\CJ^n_d$ only depends on the first grade $\Omega^1_d$ of the exterior algebra.
Hence, Lemma \ref{lemma:skelprimitivecompare} implies that for $n\le 2$, or for all $n$ when $\Omega^1_d$ flat in $\ModA$, all the information required to construct elemental jet functors $\SJ^n_d$ is contained solely in $d\colon A\rightarrow \Omega^1_d$.
\end{rmk}

Analogously to the case of elemental jet, we can define a jet prolongation
\begin{equation}
\Cj^n_d\colon \id_{\AModB}\longhookrightarrow \CJ^n_d,
\end{equation}
as the unique factorization of $j^{(n)}_d\colon \id_{\AModB}\to J^{(n)}_d$ through $\CJ^n_d$.

The diagram \eqref{diag:phatn_compatible_with_pi} implies that $\pi^{(n,m)}_d$ factors uniquely as the projection map
\begin{equation}
\Cpi^{n,m}_d\colon \CJ^n_d\longtwoheadrightarrow \CJ^m_d.
\end{equation}
These maps are natural epimorphism, since for all $E$ in $\AModB$, any element $\widehat{p}^{(m)}_d(a\otimes e)\in\CJ^m_d E$, can be seen as $\Cpi^{n,m}_d\circ \widehat{p}^{(n)}_d(a\otimes e)$.

Furthermore, we see that any map $\pi^{(n,n-1;m)}_d$ yields the same factorization $\Cpi^{n,n-1}_d$ in virtue of the following result.
\begin{prop}
We have an inclusion $\CJ^n_d\subseteq J^{[n]}_d$.
\end{prop}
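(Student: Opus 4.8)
The plan is to reduce the statement to the single observation that the nonholonomic prolongation already takes values inside the semiholonomic submodule, after which $A$-linearity finishes the argument. The only genuinely structural input is the compatibility of the prolongations with the inclusion $\iota_{J^{[n]}_d}\colon J^{[n]}_d\hookrightarrow J^{(n)}_d$, which the preceding remark records as always being a monomorphism.

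First I would invoke, from \cite{FMW}, that the jet prolongations are compatible with this inclusion, i.e.\ the nonholonomic prolongation factors as $j^{(n)}_d=\iota_{J^{[n]}_d}\circ j^{[n]}_d$. Identifying $J^{[n]}_d E$ with its image under the mono $\iota_{J^{[n]}_d}$, this says precisely that $j^{(n)}_{d,E}(e)\in J^{[n]}_d E$ for every $E$ in $\AModB$ and every $e\in E$. Note that the analogous statement for $J^n_d$ is unavailable in general, since $\iota_{J^n_d}$ need not be a mono (cf.\ Lemma \ref{lemma:skelprimitivecompare}.\eqref{lemma:skelprimitivecompare:1}); it is exactly the monicity of $\iota_{J^{[n]}_d}$ that makes the semiholonomic target the right one.

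Next I would recall that, by definition, $\CJ^n_d E$ is the image of $\widehat{p}^{(n)}_{d,E}\colon A\otimes E\to J^{(n)}_d E$, so that $\CJ^n_d E=A\,j^{(n)}_d(E)$ consists of the $A$-linear combinations $\sum_i a_i j^{(n)}_{d,E}(e_i)$ inside $J^{(n)}_d E$. Since $J^{[n]}_d E$ is an $A$-submodule of $J^{(n)}_d E$ containing all the prolongations $j^{(n)}_{d,E}(e)$, it is closed under these combinations, whence $\CJ^n_d E=A\,j^{(n)}_d(E)\subseteq J^{[n]}_d E$. Finally, naturality of $\widehat{p}^{(n)}_d$, $j^{(n)}_d$, $j^{[n]}_d$, and $\iota_{J^{[n]}_d}$ promotes this pointwise containment to an inclusion of subfunctors $\CJ^n_d\subseteq J^{[n]}_d$ of $J^{(n)}_d$.

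I do not expect a serious obstacle: granting the factorization $j^{(n)}_d=\iota_{J^{[n]}_d}\circ j^{[n]}_d$, everything else is a routine $A$-linearity and naturality argument. The only point requiring care is to invoke this compatibility correctly and to keep track of the identification of $J^{[n]}_d E$ with its image, so that the containment is read inside the common ambient functor $J^{(n)}_d$.
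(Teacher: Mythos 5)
Your proof is correct and is essentially the paper's own argument: both rest on the single key input \cite[\jetspropnhjtosh]{FMW} that the nonholonomic prolongation is semiholonomic (the compositions $\pi^{(n,n-1;m)}_d\circ j^{(n)}_d$ agree for all $m$, equivalently $j^{(n)}_d=\iota_{J^{[n]}_d}\circ j^{[n]}_d$), combined with $A$-linearity. Your formulation via closure of the $(A,B)$-subbimodule $J^{[n]}_d E\subseteq J^{(n)}_d E$ under $A$-linear combinations of prolongations is just the equalizer characterization of $J^{[n]}_d$, which the paper uses when it checks that $\pi^{(n,n-1;m)}_d\circ \widehat{p}^{(n)}_d$ is independent of $m$, stated in different words.
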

\begin{proof}
It is enough to prove that the compositions $\pi^{(n,n-1;m)}_d\circ \widehat{p}^n_d$ coincide for all $m$, cf.\ \cite[\jetsdefotherprojections]{FMW}.
This is true because the same happens for $j^{(n)}_d$ in place of $\widehat{p}^n_d$.
Cf.\ \cite[\jetspropnhjtosh]{FMW}.
\end{proof}

We can also give an associated notion of differential operator
\begin{defi}
\label{def:primitivedifferentialoperators}	
Let $E,F\in \AMod$.
A $\bk$-linear map $\Delta\colon E \rightarrow F$ is called a \textit{primitive linear differential operator} of order at most $n$ with respect to the first order differential calculus $\Omega^1_d$, if it factors through the prolongation operator $\Cj^n_d\colon \id_{\AMod}\to \CJ^n_d$, i.e.\ there exists an $A$-module map $\widetilde \Delta \in \AHom(\CJ^n_d E,F)$ such that the following diagram commutes:
	\begin{equation}
	\begin{tikzcd}\label{diag:universal_prop_primitive_DOs}
		\CJ_d^nE \arrow[dr, "\widetilde\Delta"] & \\
		E \arrow[r,"\Delta"] \arrow[u,"\Cj^n_{d,E}"] & F 
	\end{tikzcd}
	\end{equation}
	If $n$ is minimal, we say that $\Delta$ is a \emph{primitive linear differential operator of order $n$} with respect to the first order differential calculus $\Omega^1_d$.

	We denote the set of primitive linear differential operators $E\to F$ of order at most $n$ by $\CD^n_d(E,F)$, and the set of all primitive linear differential operators of finite order by $\CD_d(E,F)$.
\end{defi}

As for elemental jets, the primitive jets are also representative objects for the functor of primitive differential operators.
\begin{prop}\label{prop:properties_primitive_DOs}
Let $\Omega^1_d$ be a first order differential calculus over $A$, then
\begin{enumerate}
\item\label{prop:properties_primitive_DOs:1} $\CD^n_d(E,F)$ is the image of $-\circ \Cj^n_d\colon \AMod(\CJ^n_d E,F)\to \Hom(E,F)$, and thus $\CD^n_d(E,F)$ and $\CD_d(E,F)$ are $\bk$-submodules of $\Hom(E,F)$.
\item\label{prop:properties_primitive_DOs:2} The lift of a primitive linear differential operator of order at most $n$ to the corresponding elemental $n$-jet module is unique.
\item\label{prop:properties_primitive_DOs:3} The functor of primitive linear differential operators of order at most $n$ is representable, with representative object $\CJ^n_d$.
I.e., the map $-\circ\Cj^n_d$ induces an isomorphism
\begin{equation}
\AHom(\CJ^n_d E,F)\simeq \CD^n_d(E,F),
\end{equation}
natural in both $E$ and $F$ in $\AMod$.
\item\label{prop:properties_primitive_DOs:4}
\begin{equation}\label{diag:inclusion_primitive_DOs}
\begin{tikzcd}[column sep=2pt,row sep=8pt]
&\Hom\\
&\WD^n_d\ar[u,hook]\\
\Diff^n_d\ar[ur,hook]&&\CD^n_d\ar[ul,hook]\\
&\SDiff^n_d\ar[ul,hook]\ar[ur,hook]\\
&\NDiff^n_d\ar[u,hook]
\end{tikzcd}
\end{equation}
\item\label{prop:properties_primitive_DOs:5} $\CD^0_d(E,F)=\AHom(E,F)$ and $\CD^1_d(E,F)=\WD^1_d(E,F)=\Diff^1_d(E,F)$, $\CD^2_d(E,F)=\WD^2_d(E,F)$, and if $\Omega^1_d$ is flat in $\ModA$, then $\WD^n_d= \CD^n_d(E,F)$ for all $n$.
\item\label{prop:properties_primitive_DOs:6} $\CD^m_d(E,F)\subseteq \CD^n_d(E,F)$ for all $m\le n$, thus giving a $\bk$-module filtration
\begin{equation}
\AHom(E,F)\subseteq \WD^1_d(E,F)\subseteq \cdots\subseteq \WD^n_d(E,F)\subseteq \cdots\subseteq \WD_d(E,F)\subseteq \Hom(E,F).
\end{equation}
\item\label{prop:properties_primitive_DOs:7} $\CD^0_u(E,F)=\AHom(E,F)$ and $\CD^n_u(E,F)=\Hom(E,F)$ for $n>0$.
\item\label{prop:properties_primitive_DOs:8} $\Delta$ is a primitive linear differential operator of order at most $n$ if an only if 
\begin{align}
\sum_i a_i \Delta(e_i)=0
&\hfill&
\text{for all }
\sum_i a_i\otimes e_i\in N^{(n)}_d(E);
\end{align}
\end{enumerate}
\end{prop}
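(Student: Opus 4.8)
The plan is to treat Proposition~\ref{prop:properties_primitive_DOs} as the primitive-jet counterpart of the elemental theory in Proposition~\ref{prop:properties_elemental_DOs}, Corollary~\ref{cor:WDO_always_representable}, and Proposition~\ref{prop:criterion_WDO}, using Lemma~\ref{lemma:skelprimitivecompare} and the inclusion $\CJ^n_d\subseteq J^{[n]}_d$ as the bridge. The one structural fact underpinning \eqref{prop:properties_primitive_DOs:1}--\eqref{prop:properties_primitive_DOs:3} is that $\CJ^n_d E$ is generated as an $A$-module by the image of $\Cj^n_{d,E}$, since it is by definition the image of the $A$-linear map $\widehat{p}^{(n)}_{d,E}\colon a\otimes e\mapsto a\,\Cj^n_{d,E}(e)$.

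For \eqref{prop:properties_primitive_DOs:1}, by definition $\CD^n_d(E,F)$ is precisely the image of $-\circ\Cj^n_d$, and $\bk$-linearity of $\Cj^n_d$ gives the submodule claims exactly as in Proposition~\ref{prop:properties_elemental_DOs}.\eqref{prop:properties_elemental_DOs:1}. For \eqref{prop:properties_primitive_DOs:2}, every element of $\CJ^n_d E$ has the form $\sum_i a_i\Cj^n_{d,E}(e_i)$, so $A$-linearity forces any lift to satisfy $\widetilde\Delta(\sum_i a_i\Cj^n_{d,E}(e_i))=\sum_i a_i\Delta(e_i)$, pinning it down uniquely; this is verbatim Proposition~\ref{prop:properties_elemental_DOs}.\eqref{prop:properties_elemental_DOs:2}. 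Then \eqref{prop:properties_primitive_DOs:3} is formal: \eqref{prop:properties_primitive_DOs:1} makes $-\circ\Cj^n_d$ surject onto $\CD^n_d$ and \eqref{prop:properties_primitive_DOs:2} makes it injective, so it is the asserted natural isomorphism, as in Corollary~\ref{cor:WDO_always_representable}. The characterization \eqref{prop:properties_primitive_DOs:8} uses $\CJ^n_d E=A\otimes E/N^{(n)}_d(E)$ from Lemma~\ref{lemma:skelprimitivecompare}.\eqref{lemma:skelprimitivecompare:3}: the $A$-linear map $a\otimes e\mapsto a\Delta(e)$ descends to the quotient by $N^{(n)}_d(E)$ iff it annihilates $N^{(n)}_d(E)$, and such a descent is exactly a lift of $\Delta$ through $\Cj^n_d$; this transcribes Proposition~\ref{prop:criterion_WDO}.\eqref{prop:criterion_WDO:1} with $N^n_d$ replaced by $N^{(n)}_d$.

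The degree computations follow by comparison. By Lemma~\ref{lemma:skelprimitivecompare}.\eqref{lemma:skelprimitivecompare:4}, $\breve{p}^n_d$ is an isomorphism for $n=0,1,2$, whence $\CD^n_d=\WD^n_d$ there; combined with Proposition~\ref{prop:properties_elemental_DOs}.\eqref{prop:properties_elemental_DOs:4} this gives $\CD^0_d=\AHom$, $\CD^1_d=\WD^1_d=\Diff^1_d$, and $\CD^2_d=\WD^2_d$, and if $\Omega^1_d$ is flat in $\ModA$, Lemma~\ref{lemma:skelprimitivecompare}.\eqref{lemma:skelprimitivecompare:5} upgrades this to all $n$, proving \eqref{prop:properties_primitive_DOs:5}. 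For \eqref{prop:properties_primitive_DOs:6}, the relations $\Cpi^{n,m}_d\circ\Cj^n_d=\Cj^m_d$ let one precompose a lift through $\Cj^m_d$ with $\Cpi^{n,m}_d$ to obtain a lift through $\Cj^n_d$ (equivalently, apply \eqref{prop:properties_primitive_DOs:8} with $N^{(n)}_d\subseteq N^{(m)}_d$ from Remark~\ref{rmk:prosphatn}). For \eqref{prop:properties_primitive_DOs:7}, Lemma~\ref{lemma:skelprimitivecompare}.\eqref{lemma:skelprimitivecompare:6} gives $\CJ^n_u=\SJ^n_u$, hence $\CD^n_u=\WD^n_u$, and Proposition~\ref{prop:properties_elemental_DOs}.\eqref{prop:properties_elemental_DOs:6} concludes.

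I expect the inclusion diagram \eqref{prop:properties_primitive_DOs:4} to require the most care, as it interweaves all six classes. The left column $\NDiff^n_d\subseteq\SDiff^n_d\subseteq\Diff^n_d\subseteq\WD^n_d\subseteq\Hom$ is already available (its upper inclusions from Proposition~\ref{prop:properties_elemental_DOs}.\eqref{prop:properties_elemental_DOs:3} and \eqref{prop:properties_elemental_DOs:5}, its lower two from \cite{FMW}), so the new content is the branch through $\CD^n_d$. For $\CD^n_d\subseteq\WD^n_d$ I would precompose lifts with the epimorphism $\breve{p}^n_d\colon\SJ^n_d\twoheadrightarrow\CJ^n_d$ of Lemma~\ref{lemma:skelprimitivecompare}.\eqref{lemma:skelprimitivecompare:1}, which satisfies $\breve{p}^n_d\circ\Sj^n_d=\Cj^n_d$, so that $\widetilde\Delta\circ\breve{p}^n_d$ lifts $\Delta$ through $\Sj^n_d$ (equivalently, combine \eqref{prop:properties_primitive_DOs:8} with $N^n_d\subseteq N^{(n)}_d$). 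For $\SDiff^n_d\subseteq\CD^n_d$ I would restrict a semiholonomic lift $J^{[n]}_d E\to F$ along the inclusion $\CJ^n_d E\hookrightarrow J^{[n]}_d E$ established above, using that $\Cj^n_d$ agrees with $j^{[n]}_d$ under this inclusion. The genuine obstacle is the bookkeeping of the compatibilities among the prolongations $j^{(n)}_d$, $j^{[n]}_d$, $\Cj^n_d$ and the projections $\Cpi^{n,m}_d$, needed to verify that each such restriction or precomposition really lifts the same $\Delta$; once these are in place, every arrow in the diagram is an instance of restricting a lift along a monomorphism, or precomposing a lift with an epimorphism, of representing objects.
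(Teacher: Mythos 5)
Your proposal is correct and takes essentially the same route as the paper: points (i), (ii), (iii), (vi), (vii), (viii) transfer \textit{mutatis mutandis} from Proposition~\ref{prop:properties_elemental_DOs}, Corollary~\ref{cor:WDO_always_representable}, and Proposition~\ref{prop:criterion_WDO}, and point (v) follows from representability together with Lemma~\ref{lemma:skelprimitivecompare}.\eqref{lemma:skelprimitivecompare:4} and \eqref{lemma:skelprimitivecompare:5}, exactly as in the paper. For point (iv), your explicit arrows --- precomposing primitive lifts with the epimorphism $\breve{p}^n_d\colon \SJ^n_d\twoheadrightarrow\CJ^n_d$ to get $\CD^n_d\subseteq\WD^n_d$, and restricting semiholonomic lifts along $\CJ^n_d\hookrightarrow J^{[n]}_d$ to get $\SDiff^n_d\subseteq\CD^n_d$ --- are precisely the arrow-by-arrow content of the paper's argument, which packages them as a commutative diagram of jet functors ($A\otimes-\twoheadrightarrow\SJ^n_d\twoheadrightarrow\CJ^n_d$ mapping into $J^n_d$, $J^{[n]}_d$, $J^{(n)}_d$) onto which the prolongations form a cone.
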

\begin{proof}
We can prove \eqref{prop:properties_primitive_DOs:1}, \eqref{prop:properties_primitive_DOs:2}, \eqref{prop:properties_primitive_DOs:6}, and \eqref{prop:properties_primitive_DOs:7}, \textit{mutatis mutandis} as in Proposition \ref{prop:properties_elemental_DOs}.

Point \eqref{prop:properties_primitive_DOs:8} follows \textit{mutatis mutandis} from Proposition \ref{prop:criterion_WDO}.\eqref{prop:criterion_WDO:1}.

Point \eqref{prop:properties_primitive_DOs:3} follows from \eqref{prop:properties_primitive_DOs:1} and \eqref{prop:properties_primitive_DOs:2}.

For point \eqref{prop:properties_primitive_DOs:4}, we consider the following diagram of jet functors
\begin{equation}
\begin{tikzcd}
A\otimes -\ar[r,two heads]&\SJ^n_d\ar[r,two heads]\ar[d,hook]&\CJ^n_d\ar[d,hook]\\
&J^n_d\ar[r]&J^{[n]}_d\ar[r,hook]&J^{(n)}_d
\end{tikzcd}
\end{equation}
The jet prolongations form a cone from $E$ onto this diagram, which yields the diagram \eqref{diag:inclusion_primitive_DOs} of inclusions of functors of differential operators.

Point \eqref{prop:properties_primitive_DOs:5} follows from representability, cf.\ \eqref{prop:properties_primitive_DOs:3}, Corollary \ref{cor:WDO_always_representable}, together with Lemma \ref{lemma:skelprimitivecompare}, \eqref{lemma:skelprimitivecompare:4} and \eqref{lemma:skelprimitivecompare:5}.
\end{proof}

Just as for the other notions of jets, given enough regularity, we can include the jet functor into a composition of jet functors
\begin{lemma}
\label{lemma:primitive_smm}\ 
\begin{enumerate}
\item\label{lemma:primitive_smm:1} The functor $\CJ^n_d$ preserves epis, and if $\Omega^1_d$ is flat in $\ModA$, then it also preserves monos.
\item\label{lemma:primitive_smm:2} If $\Omega^1_d$ is flat in $\ModA$, then the $(A,B)$-bilinear natural inclusion $\Sl^{n,m}_d\colon \SJ^{n+m}_d\hookrightarrow \SJ^n_d\circ \SJ^m_d$ turns into an $(A,B)$-bilinear natural inclusion $\Col^{n,m}_d\colon \CJ^{n+m}_d\hookrightarrow \CJ^n_d\circ \CJ^m_d$.
It can also be constructed explicitly as $a\Cj^{n+m}_{d,E}(e)\mapsto a\Cj^{n}_{d,\Cj^m_d E}\circ \Cj^{m}_d(e)$ for $E$ in $\AMod$ and $a\in A$ and $e\in E$.
\end{enumerate}
\end{lemma}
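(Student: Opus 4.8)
The plan is to piggy-back on the comparison epimorphism $\breve{p}^n_d\colon\SJ^n_d\twoheadrightarrow\CJ^n_d$ of Lemma \ref{lemma:skelprimitivecompare}, transporting the analogous facts about elemental jets (Lemma \ref{lemma:elemental_smm}) across it whenever $\Omega^1_d$ is flat, while giving a separate direct argument for the one assertion that must hold \emph{without} flatness, namely epi-preservation.

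\textbf{Part \eqref{lemma:primitive_smm:1}.} First I would prove epi-preservation directly, mimicking the proof of Lemma \ref{lemma:elemental_smm}.\eqref{lemma:elemental_smm:1}. For a map $\phi\colon E\to F$ in $\AModB$, the defining factorization of $\CJ^n_d$ through $\widehat{p}^{(n)}_d$ produces a commutative diagram with rows $A\otimes E\twoheadrightarrow\CJ^n_d E\hookrightarrow J^{(n)}_d E$ and $A\otimes F\twoheadrightarrow\CJ^n_d F\hookrightarrow J^{(n)}_d F$ and columns $\id_A\otimes\phi$, $\CJ^n_d(\phi)$, $J^{(n)}_d(\phi)$, exactly parallel to \eqref{diag:elemental_jet_on_maps}. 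If $\phi$ is epi then $\id_A\otimes\phi$ is epi by right exactness of $A\otimes-$, so the diagonal of the left square is epi and hence $\CJ^n_d(\phi)$ is epi. For mono-preservation under flatness I would invoke Lemma \ref{lemma:skelprimitivecompare}.\eqref{lemma:skelprimitivecompare:5}, which makes $\breve{p}^n_d$ a natural isomorphism $\SJ^n_d\cong\CJ^n_d$ for all $n$; since $\SJ^n_d$ preserves monos under flatness by Lemma \ref{lemma:elemental_smm}.\eqref{lemma:elemental_smm:1}, so does $\CJ^n_d$, by naturality. (Alternatively, $J^{(n)}_d$ is an $n$-fold composite of $J^1_d$, which preserves monos under flatness by Lemma \ref{lemma:jets_preserving_monos}.\eqref{lemma:jets_preserving_monos:1}, and mono-preservation can be read off from the right square of the diagram above.)

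\textbf{Part \eqref{lemma:primitive_smm:2}.} Under flatness the three maps $\breve{p}^{n+m}_d$, $\breve{p}^n_d$, $\breve{p}^m_d$ are all isomorphisms, so I would define $\Col^{n,m}_d$ by transporting $\Sl^{n,m}_d$ across them:
\begin{equation}
\Col^{n,m}_d\colonequals\breve{p}^n_{d,\CJ^m_d}\circ\SJ^n_d(\breve{p}^m_d)\circ\Sl^{n,m}_d\circ(\breve{p}^{n+m}_d)^{-1}.
\end{equation}
This is $(A,B)$-bilinear and natural since each factor is, and it is a monomorphism because $\Sl^{n,m}_d$ is one (Lemma \ref{lemma:elemental_smm}.\eqref{lemma:elemental_smm:2}) and the remaining factors are isomorphisms. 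The explicit formula then follows by chasing a prolongation: $\breve{p}^n_d$ is compatible with prolongations, i.e.\ $\breve{p}^n_d\circ\Sj^n_d=\Cj^n_d$ (induced by the factorization in \eqref{diag:primitiveskelcomp}), and $\Sl^{n,m}_d$ sends $a\Sj^{n+m}_{d,E}(e)$ to $a\Sj^n_{d,\SJ^m_d E}(\Sj^m_{d,E}(e))$ by the compatibility square \eqref{diag:compatibility_elemental_l_with_l}. Tracking $a\Cj^{n+m}_{d,E}(e)$ through the four factors, using naturality of $\Sj^n_d$ at each step, yields $a\Cj^n_{d,\CJ^m_d E}(\Cj^m_{d,E}(e))$, which is the stated value. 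As primitive jets are $A$-linear combinations of prolongations, this determines $\Col^{n,m}_d$ uniquely.

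\textbf{Main obstacle.} The diagram chase for epi-preservation and the variance bookkeeping (functor-whiskering versus objectwise application) in the transport formula are routine. The point requiring genuine care is the prolongation compatibility pinning down the explicit formula: one must check that the pair $\breve{p}^n_{d,\CJ^m_d}\circ\SJ^n_d(\breve{p}^m_d)$ carries $\Sj^n_{d,\SJ^m_d}\circ\Sj^m_d$ to $\Cj^n_{d,\CJ^m_d}\circ\Cj^m_d$, which reduces to naturality of $\breve{p}^n_d$ evaluated at $\breve{p}^m_d$ together with the identity $\breve{p}^n_d\circ\Sj^n_d=\Cj^n_d$ applied at the object $\CJ^m_d E$.
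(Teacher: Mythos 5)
Your proposal is correct and follows essentially the same route as the paper: part \eqref{lemma:primitive_smm:1} is argued \emph{mutatis mutandis} from Lemma \ref{lemma:elemental_smm}.\eqref{lemma:elemental_smm:1} via the factorization $A\otimes E\twoheadrightarrow\CJ^n_d E\hookrightarrow J^{(n)}_d E$, and part \eqref{lemma:primitive_smm:2} is obtained by transporting $\Sl^{n,m}_d$ across the natural isomorphisms $\breve{p}^n_d$ supplied by Lemma \ref{lemma:skelprimitivecompare}.\eqref{lemma:skelprimitivecompare:5}, exactly as the paper's (much terser) proof indicates. Your explicit verification of the prolongation formula via naturality of $\Sj^n_d$ and the identity $\breve{p}^n_d\circ\Sj^n_d=\Cj^n_d$ simply fills in detail the paper leaves implicit.
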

\begin{proof}\
\begin{enumerate}
\item This point is proven \textit{mutatis mutandis} as in Lemma \ref{lemma:elemental_smm}.\eqref{lemma:elemental_smm:1}.
\item It follows as a consequence of Lemma \ref{lemma:elemental_smm}.\eqref{lemma:elemental_smm:2} together with Lemma \ref{lemma:skelprimitivecompare}.\eqref{lemma:skelprimitivecompare:5}.\qedhere
\end{enumerate}
\end{proof}

Analogously to the elemental case, we can also define a notion of symmetric forms.
\begin{defi}\label{def:primitivesymforms}
We term $\CS^n_d\colonequals N^{(n-1)}_d/N^{(n)}_d$ the functor of \emph{primitive symmetric forms}.
For $n=0$ we set $\CS^0_d=\id_{\AModB}$.
\end{defi}
Notice that if we define $N^{(-1)}_d(E)=A\otimes E$, then the definition of $\CS^n_d$ is compatible with that of $\CS^0_d$.
\begin{prop}\label{prop:symmformprimitiveskelcomparison}
The inclusion \eqref{eq:holnonholNds} induces a natural transformation
\begin{equation}\label{eq:symmformprimitiveskelcomparison}
\breve{q}^n_d\colon \SS^n_d\longrightarrow \CS^n_d.
\end{equation}
\begin{enumerate}
\item\label{prop:symmformprimitiveskelcomparison:1} For $n=0,1,2$, the map $\breve{q}^n_d$ is an isomorphism and for $n=3$, it is an epimorphism.
\item\label{prop:symmformprimitiveskelcomparison:2} If $\Omega^1_d$ is flat in $\ModA$, the map $\breve{q}^n_d$ is an isomorphism for all $n$.
\end{enumerate}
\end{prop}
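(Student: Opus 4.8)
The plan is to exhibit $\breve{q}^n_d$ as the map induced on quotients by a morphism of short exact sequences, and then to compute its kernel and cokernel via the snake lemma, feeding in the known behaviour of the comparison maps $\breve{p}^\bullet_d$ from Lemma \ref{lemma:skelprimitivecompare}.

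First I would write down the two defining short exact sequences coming from Definition \ref{def:elementalsymforms} and Definition \ref{def:primitivesymforms},
\begin{equation*}
0\to N^n_d\hookrightarrow N^{n-1}_d\twoheadrightarrow \SS^n_d\to 0,
\qquad
0\to N^{(n)}_d\hookrightarrow N^{(n-1)}_d\twoheadrightarrow \CS^n_d\to 0.
\end{equation*}
The inclusions \eqref{eq:holnonholNds} in degrees $n$ and $n-1$ are compatible with the filtration inclusions $N^n_d\subseteq N^{n-1}_d$ and $N^{(n)}_d\subseteq N^{(n-1)}_d$ --- this is exactly the compatibility recorded in Lemma \ref{lemma:skelprimitivecompare}.\eqref{lemma:skelprimitivecompare:2} --- so the left-hand square of the ladder
\begin{equation*}
\begin{tikzcd}
0\ar[r]&N^n_d\ar[r,hook]\ar[d,hook]&N^{n-1}_d\ar[r,two heads]\ar[d,hook]&\SS^n_d\ar[r]\ar[d,dashed,"\breve{q}^n_d"]&0\\
0\ar[r]&N^{(n)}_d\ar[r,hook]&N^{(n-1)}_d\ar[r,two heads]&\CS^n_d\ar[r]&0
\end{tikzcd}
\end{equation*}
commutes. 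The universal property of the cokernel then produces the dashed arrow $\breve{q}^n_d$ making the whole ladder commute, and since every map involved is a natural transformation of functors $\AModB\to\AModB$, so is $\breve{q}^n_d$; this already gives the unnumbered assertion of the proposition.

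Next I would apply the snake lemma to this ladder. The two outer vertical maps are the monomorphisms \eqref{eq:holnonholNds}, so their kernels vanish and the six-term sequence collapses to
\begin{equation*}
0\to \ker\breve{q}^n_d\to \coker\bigl(N^n_d\hookrightarrow N^{(n)}_d\bigr)\to \coker\bigl(N^{n-1}_d\hookrightarrow N^{(n-1)}_d\bigr)\to \coker\breve{q}^n_d\to 0.
\end{equation*}
Here I would invoke Lemma \ref{lemma:skelprimitivecompare}.\eqref{lemma:skelprimitivecompare:2}, which identifies the cokernel of $N^m_d\hookrightarrow N^{(m)}_d$ with $\ker\breve{p}^m_d$; the displayed sequence thus becomes
\begin{equation*}
0\to \ker\breve{q}^n_d\to \ker\breve{p}^n_d\to \ker\breve{p}^{n-1}_d\to \coker\breve{q}^n_d\to 0.
\end{equation*}

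Finally the two numbered claims drop out by plugging in the vanishing of these kernels. The case $n=0$ is immediate from the conventions $\SS^0_d=\CS^0_d=\id_{\AModB}$. For $n=1,2$, Lemma \ref{lemma:skelprimitivecompare}.\eqref{lemma:skelprimitivecompare:4} gives $\ker\breve{p}^n_d=\ker\breve{p}^{n-1}_d=0$, so both nontrivial terms of the sequence vanish and $\breve{q}^n_d$ is an isomorphism; for $n=3$ only $\ker\breve{p}^2_d=0$ is guaranteed, which forces $\coker\breve{q}^3_d=0$ and hence makes $\breve{q}^3_d$ an epimorphism, proving \eqref{prop:symmformprimitiveskelcomparison:1}. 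For \eqref{prop:symmformprimitiveskelcomparison:2}, flatness of $\Omega^1_d$ in $\ModA$ makes $\breve{p}^m_d$ an isomorphism for every $m$ by Lemma \ref{lemma:skelprimitivecompare}.\eqref{lemma:skelprimitivecompare:5}, so $\ker\breve{p}^n_d=\ker\breve{p}^{n-1}_d=0$ for all $n$ and $\breve{q}^n_d$ is an isomorphism throughout. I expect the only genuinely delicate input to be the identification of those two cokernels with kernels of $\breve{p}^\bullet_d$; since Lemma \ref{lemma:skelprimitivecompare}.\eqref{lemma:skelprimitivecompare:2} already supplies it, everything else is pure diagram chasing.
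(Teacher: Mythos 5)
Your proposal is correct and follows essentially the same route as the paper: the same two short exact sequences defining $\SS^n_d$ and $\CS^n_d$, the ladder commuting by Lemma \ref{lemma:skelprimitivecompare}.\eqref{lemma:skelprimitivecompare:2}, the dashed map $\breve{q}^n_d$ from the cokernel universal property, and the snake lemma combined with points \eqref{lemma:skelprimitivecompare:4} and \eqref{lemma:skelprimitivecompare:5} of that lemma. Your write-up is in fact more explicit than the paper's, since you spell out the four-term exact sequence $0\to \ker\breve{q}^n_d\to \ker\breve{p}^n_d\to \ker\breve{p}^{n-1}_d\to \coker\breve{q}^n_d\to 0$ (using the identification $\coker\bigl(N^m_d\hookrightarrow N^{(m)}_d\bigr)\cong\ker\breve{p}^m_d$, which is indeed established in the proof of Lemma \ref{lemma:skelprimitivecompare}.\eqref{lemma:skelprimitivecompare:2}) from which all the case distinctions, including the epimorphism claim at $n=3$, drop out exactly as the paper intends.
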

\begin{proof}
By Lemma \ref{lemma:skelprimitivecompare}.\eqref{lemma:skelprimitivecompare:2}, we have that the wight square in the following diagram commutes
\begin{equation}\label{diag:symmformprimitiveskelcomparison}
\begin{tikzcd}
N^n_d\ar[d,hook]\ar[r,hook]&N^{n-1}_d\ar[d,hook] \ar[r,two heads]&\SS^n_d\ar[d,dashed,"\breve{q}^n_d"]\\
N^{(n)}_d\ar[r,hook]&N^{(n-1)}_d \ar[r,two heads]&\CS^n_d
\end{tikzcd}
\end{equation}
The universal property of the cokernel gives the dashed map.

For \eqref{prop:symmformprimitiveskelcomparison:1}, the case $n=0$ follows by definition of $\CS^0_d$.
For $n=1,2,3$ it follows from Lemma \ref{lemma:skelprimitivecompare}, points \eqref{lemma:skelprimitivecompare:2} and \eqref{lemma:skelprimitivecompare:4} together with the snake lemma applied to \eqref{diag:symmformprimitiveskelcomparison}.

For \eqref{prop:symmformprimitiveskelcomparison:2}, it follows analogously applying Lemma \ref{lemma:skelprimitivecompare}.\eqref{lemma:skelprimitivecompare:5}
\end{proof}

\begin{prop}\label{prop:primitiveexactsequence}
The primitive jet projections $\Cpi^{n,n-1}_d$ are epimorphisms and $\ker(\Cpi^{n,n-1}_d\colon\CJ^n_d\to \CJ^{n-1}_d)=\CS^n_d$.
Moreover, the following is a map of short exact sequence
\begin{equation}\label{diag:primitive_jet_ses}
\begin{tikzcd}
0\ar[r]&\SS^n_d\ar[d,"\breve{q}^n_d"']\ar[r,hook,"\Siota^n_d"]&\SJ^n_d\ar[d,two heads,"\breve{p}^n_d"]\ar[r,two heads,"\Spi^{n,n-1}_d"]&\SJ^{n-1}_d\ar[r]\ar[d,two heads,"\breve{p}^{n-1}_d"]&0\\
0\ar[r]&\CS^n_d\ar[r,hook,"\Ciota^n_d"]&\CJ^n_d\ar[r,two heads,"\Cpi^{n,n-1}_d"]&\CJ^{n-1}_d\ar[r]&0
\end{tikzcd}
\end{equation}
If $\breve{p}^n_d$ is an isomorphism, then the map $\breve{q}^n_d$ is a monomorphism, and if $\breve{p}^{n-1}_d$ is an isomorphism, then the map $\breve{q}^n_d$ is an epimorphism.
\end{prop}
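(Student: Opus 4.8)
The plan is to mirror the proof of Proposition \ref{prop:elementalexactsequence}, replacing the elemental jet functors by the primitive ones throughout. Surjectivity of $\Cpi^{n,n-1}_d$ was already recorded before the statement, so the first task is to identify its kernel. Following the elemental case, I would build the primitive analogue of diagram \eqref{diag:elementalcokfest}: the commutative triangle \eqref{diag:phatn_compatible_with_pi}, together with Lemma \ref{lemma:skelprimitivecompare}.\eqref{lemma:skelprimitivecompare:3} (which gives $\CJ^m_d = (A\otimes-)/N^{(m)}_d$), exhibits $\Cpi^{n,n-1}_d$ as the map induced on quotients by the inclusion $N^{(n)}_d \hookrightarrow N^{(n-1)}_d$. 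Restricting $\widehat{p}^{(n)}_d$ to $N^{(n-1)}_d$ and computing kernels and cokernels of the vertical maps of the resulting two-row diagram, the snake lemma yields $\coker(\widehat{p}^{(n)}_d|_{N^{(n-1)}_d}) = 0$ and $\ker(\widehat{p}^{(n)}_d|_{N^{(n-1)}_d}) = N^{(n)}_d$, whence $\ker(\Cpi^{n,n-1}_d) \cong N^{(n-1)}_d/N^{(n)}_d = \CS^n_d$. This produces the bottom short exact sequence of \eqref{diag:primitive_jet_ses}.

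Next I would verify that \eqref{diag:primitive_jet_ses} is a genuine morphism of short exact sequences, i.e. that both squares commute. For the right square, since the epimorphism $\Sphat^n_d\colon A\otimes - \twoheadrightarrow \SJ^n_d$ (the epi part of $\widehat{p}^n_d$) is surjective, it suffices to check that $\Cpi^{n,n-1}_d \circ \breve{p}^n_d$ and $\breve{p}^{n-1}_d \circ \Spi^{n,n-1}_d$ agree after precomposition with $\Sphat^n_d$; using the compatibilities in \eqref{diag:primitiveskelcomp}, \eqref{diag:elemental_jet_projection}, and \eqref{diag:phatn_compatible_with_pi}, both composites reduce to the canonical surjection $A\otimes-\twoheadrightarrow\CJ^{n-1}_d$. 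For the left square, I would precompose with the projection $N^{n-1}_d \twoheadrightarrow \SS^n_d$ and use the defining property of $\breve{q}^n_d$ from \eqref{diag:symmformprimitiveskelcomparison} together with $\breve{p}^n_d \circ \Sphat^n_d = $ (the canonical surjection onto $\CJ^n_d$): both $\Ciota^n_d \circ \breve{q}^n_d$ and $\breve{p}^n_d \circ \Siota^n_d$ then reduce to the restriction to $N^{n-1}_d$ of the canonical surjection $A\otimes - \twoheadrightarrow \CJ^n_d$.

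Finally, for the two conditional claims I would apply the snake lemma to the morphism of short exact sequences \eqref{diag:primitive_jet_ses}, obtaining the exact sequence $0 \to \ker\breve{q}^n_d \to \ker\breve{p}^n_d \to \ker\breve{p}^{n-1}_d \to \coker\breve{q}^n_d \to \coker\breve{p}^n_d \to \coker\breve{p}^{n-1}_d \to 0$. The crucial input is Lemma \ref{lemma:skelprimitivecompare}.\eqref{lemma:skelprimitivecompare:1}: the maps $\breve{p}^n_d$ and $\breve{p}^{n-1}_d$ are always epimorphisms, so $\coker\breve{p}^n_d = \coker\breve{p}^{n-1}_d = 0$ and the sequence collapses to $0 \to \ker\breve{q}^n_d \to \ker\breve{p}^n_d \to \ker\breve{p}^{n-1}_d \to \coker\breve{q}^n_d \to 0$. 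If $\breve{p}^n_d$ is an isomorphism, then $\ker\breve{p}^n_d = 0$ forces $\ker\breve{q}^n_d = 0$, so $\breve{q}^n_d$ is a monomorphism; if $\breve{p}^{n-1}_d$ is an isomorphism, then $\ker\breve{p}^{n-1}_d = 0$ exhibits $\coker\breve{q}^n_d$ as a quotient of $0$, so $\breve{q}^n_d$ is an epimorphism.

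I expect the only delicate point to be the verification that the squares of \eqref{diag:primitive_jet_ses} commute, and in particular pinning down the left square, where one must unwind the definition of $\breve{q}^n_d$ as the cokernel-induced map of \eqref{diag:symmformprimitiveskelcomparison} and match it against the inclusions $\Siota^n_d$ and $\Ciota^n_d$. Once this naturality bookkeeping is in place, the snake-lemma arguments are routine, the key observation being that the $\breve{p}$'s are \emph{a priori} epimorphisms, which is exactly what makes the two one-sided conclusions about $\breve{q}^n_d$ fall out.
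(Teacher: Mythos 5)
Your proposal is correct and follows essentially the same route as the paper's own proof, which likewise obtains the bottom short exact sequence \emph{mutatis mutandis} from Proposition \ref{prop:elementalexactsequence}, verifies the commutativity of \eqref{diag:primitive_jet_ses} by direct computation, and deduces both one-sided statements about $\breve{q}^n_d$ by applying the snake lemma to \eqref{diag:primitive_jet_ses}. The details you supply (precomposing with the epimorphisms from $A\otimes-$ and $N^{n-1}_d$ to check the two squares, and using that the maps $\breve{p}^n_d$, $\breve{p}^{n-1}_d$ are always epimorphisms so that their cokernels vanish in the six-term snake sequence) are precisely the computations the paper leaves implicit.
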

\begin{proof}
The exactness of the top sequence follows from Proposition \ref{prop:elementalexactsequence}, while the exactness of the bottom one follows in the same way \textit{mutatis mutandis}.
The commutativity of \eqref{diag:primitive_jet_ses} follows from direct computation.

The final statements of the proposition follow from applying the snake lemma to \eqref{diag:primitive_jet_ses}.
\end{proof}

\begin{rmk}
Analogously to §\eqref{ss:Elemental_infinity_jet}, one can develop the theory of the infinity primitive jet \textit{mutatis mutandis}.
\end{rmk}
\section{Symbols of differential operators}
In the spirit of \cite[§5.2, p.~86]{alekseevskij28geometry}, we define the notion of symbol of a differential operator.
\begin{defi}[Symbol of a differential operator]\label{def:symbolquotientdef}
Let $E$ and $F$ be in $\AMod$, we define
\begin{equation}
\Symb^n_d(E,F)\colonequals \Diff^n_d(E,F)/\Diff^{n-1}_d(E,F),
\end{equation}
for all $n\ge 0$, with the convention $\Diff^{-1}_d(E,F)=0$.
Let $\symb^n_d\colon \Diff^n_d(E,F)\to \Symb^n_d(E,F)$ be the quotient projection.
The \emph{symbol of a differential operator} $\Delta\in\Diff^n_d(E,F)$ is the equivalence class $\symb^n_d(\Delta)$ containing $\Delta$.

We can give analogous definitions for the nonholonomic, semiholonomic, elemental, and primitive differential operators.
\end{defi}
\begin{rmk}
The quotient $\Symb^n_d$ is bifunctorial in $E$ and $F$, and the maps $\symb^n_d=\symb^n_{d,E,F}$ constitute a natural epimorphism $\symb^n_d\colon \Diff^n_d\twoheadrightarrow\Symb^n_d$.
\end{rmk}

\subsection{Representability of symbols}\label{ss:representabilitysymbols}
We will now give a more explicit representation of symbols.
In order to do that, we will assume
\begin{equation}\label{eq:symforms_gen_by_proj}
\im(\iota^n_{d,E})\subseteq Aj^n_{d}(E)=\SJ^n_dE.
\end{equation}
\begin{rmk}
The property \eqref{eq:symforms_gen_by_proj} is satisfied in particular if $J^n_d E$ is a representing object for differential operators, cf.\ Corollary \ref{cor:representability_diff_op}.
Another sufficient condition for \eqref{eq:symforms_gen_by_proj} to hold  is that the $n$-jet sequence at $E$ is left exact and $S^n_d(E)$ is generated by prolongations of differential relations of order $n-1$, cf.\ Proposition \ref{prop:surjectivephatN}.
\end{rmk}
We can characterise this property in terms of lifts of the zero map, cf.\ §\ref{ss:Lifts_of_the_zero_map}.
\begin{prop}\label{prop:characterization_symm_generated_by_prol}
The following are equivalent:
\begin{enumerate}
\item\label{prop:characterization_symm_generated_by_prol:1} $\im(\iota^n_{d,E})\subseteq Aj^n_{d}(E)=\SJ^n_dE$;
\item\label{prop:characterization_symm_generated_by_prol:2} For all $F$ in $\AMod$, and for all lifts $z\colon J^n_d E\to F$ of the $0$ map, $z\circ \iota^n_{d,E}=0$;
\item\label{prop:characterization_symm_generated_by_prol:3} The composition of $\iota^n_{d,E}$ with the cokernel projection $J^n_d E\twoheadrightarrow \coker(\widehat{p}^n_{d,E})\cong J^n_d E/\SJ^n_d E$ vanishes.
\end{enumerate}
\end{prop}
\begin{proof}\
\item [\eqref{prop:characterization_symm_generated_by_prol:1}$\Rightarrow$\eqref{prop:characterization_symm_generated_by_prol:2}]For any lift $z\colon J^n_d E\to F$ of the $0$ map, consider $z\circ \iota^n_{d,E}$.
For all $\sigma\in S^n_d E$, by \eqref{prop:characterization_symm_generated_by_prol:1}, we can write $\iota^n_d (\sigma)=\sum_i a_i j^n_d (e_i)$.
Therefore, $z\circ \iota^n_{d,E}(\sigma)=z(\sum_i a_i j^n_{d,E} (e_i))=\sum_i a_i z\circ j^n_{d,E} (e_i))=0$.
\item [\eqref{prop:characterization_symm_generated_by_prol:2}$\Rightarrow$\eqref{prop:characterization_symm_generated_by_prol:3}]
This step follows from the fact that the projection is a lift of $0$, as $j^n_{d,E}$ lands in its kernel.
\item [\eqref{prop:characterization_symm_generated_by_prol:3}$\Rightarrow$\eqref{prop:characterization_symm_generated_by_prol:1}] If the mentioned composition vanishes, $\iota^n_{d,E}$ factors through the kernel of the projection $J^n_d E\twoheadrightarrow J^n_d /\SJ^n_d$, that is $\SJ^n_d E$, and therefore so does its image.
\end{proof}
\begin{prop}\label{prop:symbols_representation_well-defined}
If $\im(\iota^n_{d,E})\subseteq \SJ^n_{d}(E)$, then the mapping $r^n_{d,E,F}$ defined by
\begin{align}\label{eq:rmap}
r^n_{d,E,F}\colon\Symb^n_d(E,F)\longrightarrow \AHom(S^n_d(E),F),
&\hfill&
\symb^n_d(\Delta)\longmapsto \widetilde{\Delta}\circ \iota^n_{d,E}.
\end{align}
is well-defined and natural in $E$ and $F$.
Moreover,
\begin{enumerate}
\item\label{prop:symbols_representation_well-defined:1} if the $n$-jet sequence at $E$ is right exact, then $r^n_{d,E,-}$ is a monomorphism;
\item\label{prop:symbols_representation_well-defined:2} $r^n_{d,E,-}$ is surjective if and only if $\iota^n_{d,E}$ is a section (admits a left inverse) in $\AMod$.
\end{enumerate}
\end{prop}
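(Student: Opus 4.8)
The plan is to verify well-definedness first, since this is where the standing hypothesis $\im(\iota^n_{d,E})\subseteq \SJ^n_d E$ does all the work, and then to deduce the two extra clauses by elementary diagram chasing. Well-definedness of $r^n_{d,E,F}$ amounts to two independence statements. First, the value $\widetilde{\Delta}\circ\iota^n_{d,E}$ must not depend on the chosen lift: if $\widetilde{\Delta}_1,\widetilde{\Delta}_2$ both lift $\Delta$ through $j^n_{d,E}$, then $\widetilde{\Delta}_1-\widetilde{\Delta}_2$ is a lift of the zero map, so Proposition \ref{prop:characterization_symm_generated_by_prol}.\eqref{prop:characterization_symm_generated_by_prol:2} (available precisely because $\im(\iota^n_{d,E})\subseteq\SJ^n_d E$) forces $(\widetilde{\Delta}_1-\widetilde{\Delta}_2)\circ\iota^n_{d,E}=0$. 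Second, the value must not depend on the representative of the symbol class: if $\Delta\in\Diff^{n-1}_d(E,F)$, choose a lift $\widetilde{\Delta}'\colon J^{n-1}_d E\to F$ through $j^{n-1}_{d,E}$; then $\widetilde{\Delta}'\circ\pi^{n,n-1}_{d,E}$ lifts $\Delta$ through $j^n_{d,E}$ (using $\pi^{n,n-1}_{d,E}\circ j^n_{d,E}=j^{n-1}_{d,E}$), and since $\iota^n_{d,E}$ is the kernel of $\pi^{n,n-1}_{d,E}$ we get $\widetilde{\Delta}'\circ\pi^{n,n-1}_{d,E}\circ\iota^n_{d,E}=0$. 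Combined with lift-independence, this shows $r^n_{d,E,F}$ annihilates $\Diff^{n-1}_d(E,F)$ and hence descends to $\Symb^n_d(E,F)$. Naturality in $E$ and $F$ then follows by chasing the naturality squares of $\iota^n_d$, $\pi^{n,n-1}_d$ and $j^n_d$.

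For clause \eqref{prop:symbols_representation_well-defined:1}, right exactness of the $n$-jet sequence means $\pi^{n,n-1}_{d,E}$ is an epimorphism. Suppose $r^n_{d,E,F}(\symb^n_d(\Delta))=0$ for some $\Delta\in\Diff^n_d(E,F)$; then a lift $\widetilde{\Delta}$ vanishes on $\im(\iota^n_{d,E})=\ker(\pi^{n,n-1}_{d,E})$, so $\widetilde{\Delta}$ factors as $\widetilde{\Delta}=\widetilde{\Delta}'\circ\pi^{n,n-1}_{d,E}$ for a unique $A$-linear $\widetilde{\Delta}'\colon J^{n-1}_d E\to F$, the epimorphism $\pi^{n,n-1}_{d,E}$ guaranteeing that the factorization is defined on all of $J^{n-1}_d E$. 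Precomposing with $j^n_{d,E}$ yields $\Delta=\widetilde{\Delta}'\circ j^{n-1}_{d,E}\in\Diff^{n-1}_d(E,F)$, so $\symb^n_d(\Delta)=0$. Thus $r^n_{d,E,F}$ is injective for every $F$, i.e.\ $r^n_{d,E,-}$ is a monomorphism.

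For clause \eqref{prop:symbols_representation_well-defined:2}, suppose first that $\iota^n_{d,E}$ admits a left inverse $s\colon J^n_d E\to S^n_d(E)$. Given any $\phi\in\AHom(S^n_d(E),F)$, set $\widetilde{\Delta}\colonequals\phi\circ s$ and $\Delta\colonequals\widetilde{\Delta}\circ j^n_{d,E}$; then $\Delta\in\Diff^n_d(E,F)$ and, by lift-independence, $r^n_{d,E,F}(\symb^n_d(\Delta))=\widetilde{\Delta}\circ\iota^n_{d,E}=\phi\circ s\circ\iota^n_{d,E}=\phi$, proving surjectivity. Conversely, if $r^n_{d,E,-}$ is surjective, apply surjectivity of $r^n_{d,E,S^n_d(E)}$ to $\id_{S^n_d(E)}$: there is a $\Delta$ with lift $\widetilde{\Delta}\colon J^n_d E\to S^n_d(E)$ satisfying $\widetilde{\Delta}\circ\iota^n_{d,E}=\id_{S^n_d(E)}$, so $\widetilde{\Delta}$ is the desired left inverse of $\iota^n_{d,E}$.

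The main obstacle is the lift-independence in the well-definedness step: this is the only place the standing hypothesis enters, through Proposition \ref{prop:characterization_symm_generated_by_prol}, and without it the composite $\widetilde{\Delta}\circ\iota^n_{d,E}$ would genuinely depend on the choice of jet-lift. Once that is secured, clauses \eqref{prop:symbols_representation_well-defined:1} and \eqref{prop:symbols_representation_well-defined:2} are routine consequences of the exactness of the $n$-jet sequence and of the universal property of (co)kernels, respectively.
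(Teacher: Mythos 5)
Your proposal is correct, and for the well-definedness step and clause \eqref{prop:symbols_representation_well-defined:2} it follows essentially the paper's own argument: the paper likewise splits well-definedness into lift-independence (where the hypothesis $\im(\iota^n_{d,E})\subseteq\SJ^n_d(E)$ enters, exactly as in Proposition \ref{prop:characterization_symm_generated_by_prol}, whose content the paper re-derives inline) and vanishing on $\Diff^{n-1}_d(E,F)$ via the lift $\widetilde{\eta}\circ\pi^{n,n-1}_{d,E}$, and it proves clause \eqref{prop:symbols_representation_well-defined:2} by the same two computations with $s\circ\iota^n_{d,E}=\id$ and the preimage of $\id_{S^n_d(E)}$. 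Where you genuinely diverge is clause \eqref{prop:symbols_representation_well-defined:1}: the paper assembles the diagram of zero-lift modules $\ZL^{n-1}(E,-)$, $\ZL^n(E,-)$, applies the snake lemma, and identifies $\coker(-\circ\pi^{n,n-1}_{d,E})$ with $\im(-\circ\iota^n_{d,E})$ to conclude that the comparison map $\rho$ is an isomorphism, whereas you argue directly: a lift with $\widetilde{\Delta}\circ\iota^n_{d,E}=0$ kills $\im(\iota^n_{d,E})=\ker(\pi^{n,n-1}_{d,E})$ and so factors through the epimorphism $\pi^{n,n-1}_{d,E}$, exhibiting $\Delta$ as an operator of order at most $n-1$. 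Your version is more elementary and makes transparent that right exactness is used precisely through ``$\pi$ epi and $\im\iota=\ker\pi$''; the paper's version buys the extra structural information that $\rho\colon\coker(-\circ\pi^{n,n-1}_{d,E})\to\Symb^n_d(E,-)$ is an isomorphism, which it then reuses in the proof of clause \eqref{prop:symbols_representation_well-defined:2}. One small caution: in your well-definedness step you justify $\pi^{n,n-1}_{d,E}\circ\iota^n_{d,E}=0$ by asserting that $\iota^n_{d,E}$ \emph{is the kernel} of $\pi^{n,n-1}_{d,E}$; that exactness is not among the hypotheses at that point (it is only invoked later, in clause \eqref{prop:symbols_representation_well-defined:1}), but the weaker fact you actually need --- that the jet sequence is a complex, so the composite vanishes --- holds by construction, and it is all the paper uses there as well.
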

\begin{proof}
In order to show that this map is well-defined we have to show that this map is independent of two choices: the choice of differential operator corresponding to a given symbol and the choice of the lift of said operator to $J^n_d E$.
From the $n$-jet sequence we obtain the following commutative diagram (without the dashed arrows).
\begin{equation}\label{diag:hatchet}
\begin{tikzcd}
0\ar[r]& \ZL^{n-1}(E,-)\ar[r,hook]& \AHom(J^{n-1}_dE,-)\ar[r,two heads,"-\circ j^{n-1}_{d,E}"]\ar[d,"-\circ\pi^{n,n-1}_{d,E}"']&[20pt] \Diff^{n-1}_d(E,-)\ar[d,hook]\\
0\ar[r]& \ZL^{n}(E,-)\ar[r,hook]& \AHom(J^{n}_dE,-)\ar[r,two heads,"-\circ j^{n}_{d,E}"]\ar[d,"-\circ\iota^{n}_{d,E}"']& \Diff^n_d(E,-)\ar[d,two heads]\ar[dl,dashed,"\widehat{r}^n_{d,E,-}"]\\
&& \AHom(S^{n}_d(E),-)& \Symb^n_d(E,-)\ar[l,dashed,"r^n_{d,E,-}"]
\end{tikzcd}
\end{equation}
We first show the existence of $\widehat{r}^n_{d,E,-}$ commuting in \eqref{diag:hatchet}, and then we prove it factors as $r^n_{d,E,-}$.
Given a differential operator $\Delta$, we show that $\widetilde{\Delta}\circ \iota^n_{d,E}$ is independent of the choice of the lift $\widetilde{\Delta}\colon J^n_d E\to F$.
Any other lift is of the form $\widetilde{\Delta}+\alpha$, where $\alpha\in \ZL^n_d(E)$, so we just need to prove that $\alpha\circ \iota^n_{d,E}=0$.
By hypothesis, for all $\sigma\in S^n_d E$, there exist some $a_i\in A$ and $e_i\in E$ such that $\iota^n_{d,E}(\sigma)=\sum_i a_i j^n_{d,E}(e_i)$.
Thus,
\begin{equation}
\alpha\circ \iota^n_{d,E}
=\alpha\left(\sum_i a_i j^n_{d,E}(e_i)\right)
=\sum_i a_i \alpha\circ j^n_{d,E}(e_i)
=0.
\end{equation}
It follows that $\widehat{r}^n_{d,E,-}$ is well-defined.
In order to show that $r^n_{d,E,-}$ is well-defined, we show that $\widehat{r}^n_{d,E,-}$ vanishes on differential operators of order $n-1$.
Let $\eta\in \Diff^{n-1}_d(E,F)$, and let $\widetilde{\eta}\colon J^{n-1}_d E\to F$ be one of its lifts.
Then $\widetilde{\eta}\circ \pi^{n,n-1}_d$ is a lift of $\eta$ to $J^n_d E$.
Hence,
\begin{equation}
\widehat{r}^n_{d,E,F}(\eta)
=\widetilde{\eta}\circ\pi^{n,n-1}_{d,E}\circ\iota^n_{d,E}
=0.
\end{equation}
It follows that $r^n_{d,E,-}$ is well-defined and commutes in \eqref{diag:hatchet}.

For \eqref{prop:symbols_representation_well-defined:1} we take the top two rows of \eqref{diag:hatchet}, complete them by the kernel universal property.
By the snake lemma, the top three vertical morphisms are injective, and we have an epimorphism $\rho$ from the cokernel of $-\circ\pi^{n,n-1}_{d,E}$ to $\Symb^n_d(E,-)$, fitting in the following diagram.
\begin{equation}\label{diag:waraxe}
\begin{tikzcd}
0\ar[r]& \ZL^{n-1}(E,-)\ar[d]\ar[r,hook]& \AHom(J^{n-1}_dE,-)\ar[r,two heads,"-\circ j^{n-1}_{d,E}"]\ar[d,"-\circ\pi^{n,n-1}_{d,E}"']&[20pt] \Diff^{n-1}_d(E,-)\ar[d,hook]\\
0\ar[r]& \ZL^{n}(E,-)\ar[r,hook]& \AHom(J^{n}_dE,-)\ar[r,two heads,"-\circ j^{n}_{d,E}"]\ar[dd,"-\circ\iota^{n}_{d,E}"',bend right=80pt]\ar[d,two heads]& \Diff^n_d(E,-)\ar[d,two heads]\\
&& \coker(-\circ \pi^{n,n-1}_{d,E})\ar[r,two heads,"\rho"]\ar[d]& \Symb^n_d(E,-)\ar[dl,"r^n_{d,E,-}"]\\
&& \AHom(S^{n}_d(E),-)
\end{tikzcd}
\end{equation}
The $n$-jet sequence is right exact, so the vertical sequence in \eqref{diag:hatchet} is left exact, and thus the kernel map of $-\circ\iota^n_{d,E}$ is the inclusion $-\circ\pi^{b,b-1}_{d,E}$.
Thus, $\coker(-\circ\pi^{n,n-1}_{d,E})=\im(-\circ\iota^n_{d,E})$.
Since the bottom right squares in \eqref{diag:hatchet} and \eqref{diag:waraxe} commute, and since $-\circ\iota^n_{d,E}$ surjects onto its image, it follows that the bottom right triangle of \eqref{diag:waraxe} also commutes.
Thus, the composition $r^n_{d,E,-}\circ\rho$ is the inclusion of the image of $-\circ\iota^n_{d,E}$.
It follows that $\rho$ is an inclusion, and hence an isomorphism, making $r^n_{d,E,-}$ a monomorphism.

For \eqref{prop:symbols_representation_well-defined:2}, since $\iota^n_{d,E}$ is a section, the map $-\circ\iota^n_{d,E}\colon \AHom(J^n_d E,-)\to \AHom(S^n_d (E),-)$ is surjective.
Thus $\AHom(S^n_d (E),-)$ coincides with the image of $\iota^n_{d,E}$, and hence $r^n_{d,E,-}$ and $\rho$ in \eqref{diag:waraxe} are one inverse to the other.
Now, if $r^n_{d,E,-}$ is surjective, then so is $-\circ \iota^n_{d,E}\colon \AHom(J^n_d E,-)\to \AHom(S^n_d (E),-)$ by \eqref{diag:hatchet}, as it is composition of epis.
In particular, the preimage of $\id_{S^n_d(E)}$ under $-\circ \iota^n_{d,E}$ will be a right inverse for $\iota^n_{d,E}$.
\end{proof}
Whenever \eqref{eq:symforms_gen_by_proj} holds, one might prefer to generalize the classical notion of symbol in a different direction.
\begin{defi}\label{def:restrictionsymbol}
The \emph{$n$-th order restriction symbol} of a linear differential operator $\Delta:E\to F$ of order at most $n$ is the following precomposition
\begin{equation}
\widetilde{\Delta}\circ \iota^n_d\colon S^n_d(E)\longrightarrow F
\end{equation}
\end{defi}
This definition is independent of the lift $\widetilde{\Delta}$ because of \eqref{eq:symforms_gen_by_proj}.
Without assuming \eqref{eq:symforms_gen_by_proj}, we can always define the notion of $n$-th order restriction symbol for elemental (resp.\ primitive) jets by precomposing the (unique)-lift of the $n$-th order elemental (resp.\ primitive) linear differential operator of order at most $n$, by $\Siota^n_d$ (resp.\ $\Ciota^n_d$).
\begin{rmk}\label{rem:identifyrestrictionsymbol}
When the map $r^n_{d,E,F}$ defined in \eqref{eq:rmap} is a monomorphism for all $F$, the functor $\Symb^n_d(E,-)$ can be embedded into the representable functor $\AHom(S^n_d(E),-)$.
When this holds, we can identify symbols of a differential operator $\Delta\colon E\to F$ with the restriction symbol, that is $\symb^n_d(\Delta)=\widetilde{\Delta}\circ \iota^n_{d,E}$.
Furthermore, if $r^n_{d,E,-}$ is an isomorphism, then the functor $\Symb^n_d(E,-)$ is representable with representing object $S^n_d(E)$.

In particular, representability of the symbol functor holds in the classical case, i.e.\ \cite[\jetssscman]{FMW}.
\end{rmk}
\begin{rmk}\label{rmk:representability_maps_are_endobilinear}
Naturality yields that for all $E,F\in\AMod$, the maps
\begin{align}
-\circ j^n_{d,E}\colon \AHom(J^n_d E,F)\longrightarrow \Diff^n_d(E,F),
&\hfill&
r^n_{d,E,F}\colon\Symb^n_d(E,F)\longrightarrow \AHom(S^n_d(E),F).
\end{align}
are $(\AHom(F,F),\AHom(E,E))$-bilinear, where the multiplicative actions on $\Diff^n_d(E,F)$ and $\Symb^n_d(E,F)$ are given by \cite[\jetscoralgebrasofDOs]{FMW} and Corollary \ref{cor:symb_graded_algebra}, respectively, and the action on modules of the form $\AHom(G(E),F)$ for a functor $F$ is given by
\begin{align}
\psi f \phi\colonequals \psi\circ f\circ G(\phi),
&\hfill&
\textrm{for }
f\in\AHom(G(E),F),\ 
\psi\in\AHom(F,F),\ 
\phi\in\AHom(E,E).
\end{align}
\end{rmk}
Before proceeding, we give another result that will expand \cite[\jetsproptensorcomparison]{FMW}.
\begin{lemma}\label{lemma:2jet_right_exact}
Consider the natural transformation $\gamma^n_d\colon J^n_d A\otimes_A \to J^n_d$, cf.\ \cite[\jetsproptensorcomparison]{FMW}.
\begin{enumerate}
\item\label{lemma:2jet_right_exact:1} For $n=0,1$, the map $\gamma^n_d$ is an isomorphism;
\item\label{lemma:2jet_right_exact:2} For $n=2$, the map $\gamma^2_d$ is always an epimorphism;
\item\label{lemma:2jet_right_exact:3} The map $\gamma^n_d$ is always compatible with the map $\tau^n_d$, cf.\ \cite[\jetspropStensorcomparison]{FMW}, i.e.\ 
\begin{equation}\label{diag:compatibility_gamma_tau}
\begin{tikzcd}
S^n_d\otimes_A -\ar[r,"\iota^n_d\otimes \id"]\ar[d,"\tau^n_d"]&J^n_d A\otimes_A -\ar[d,"\gamma^n_d"]\\
S^n_d\ar[r,"\iota^n_d"]&J^n_d
\end{tikzcd}
\end{equation}
\item\label{lemma:2jet_right_exact:4} The $2$-jet sequence is always right exact.
\begin{equation}\label{diag:2jet_res}
\begin{tikzcd}
S^2_d\ar[r,"\iota^n_d"]&J^2_d \ar[r,"\pi^{2,1}_d",two heads]&J^1_d\ar[r]&0
\end{tikzcd}
\end{equation}
\end{enumerate}
\end{lemma}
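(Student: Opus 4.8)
The plan is to establish the four assertions in the order (iii), (i), (ii), (iv), since the compatibility square (iii) and the low-degree isomorphisms (i)–(ii) feed directly into the diagram chase that yields right exactness in (iv). Throughout I would lean on two facts: that $\gamma^n_d$ is an isomorphism on flat modules (\cite[\jetsproptensorcomparison]{FMW}, as used in Lemma~\ref{lemma:tensorial_comparison}), and on the pointwise description of $\gamma^n_d$ obtained from naturality, namely $\gamma^n_{d,E}(\theta\otimes_A e)=J^n_d(e)(\theta)$, where $e\colon A\to E$ denotes the left $A$-linear map $a\mapsto ae$. The same reasoning applied to \cite[\jetspropStensorcomparison]{FMW} gives $\tau^n_{d,E}(\sigma\otimes_A e)=S^n_d(e)(\sigma)$.

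For (iii), these formulas make the claim immediate: for $\sigma\in S^n_d A$ and $e\in E$, naturality of $\iota^n_d$ with respect to $e\colon A\to E$ gives $J^n_d(e)\circ\iota^n_{d,A}=\iota^n_{d,E}\circ S^n_d(e)$, so that $\gamma^n_{d,E}(\iota^n_{d,A}(\sigma)\otimes_A e)=\iota^n_{d,E}(S^n_d(e)(\sigma))=\iota^n_{d,E}(\tau^n_{d,E}(\sigma\otimes_A e))$, which is exactly commutativity of \eqref{diag:compatibility_gamma_tau}. For (i), the case $n=0$ is the canonical isomorphism $A\otimes_A E\cong E$. For $n=1$ I would argue that $\gamma^1_d$ is a natural transformation between the right exact functors $J^1_d A\otimes_A-$ and $J^1_d$ that is an isomorphism on projectives (these being flat); right exactness of $J^1_d$ follows from the always-short-exact $1$-jet sequence $0\to\Omega^1_d\otimes_A-\to J^1_d\to\id\to 0$, whose outer terms are right exact. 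Choosing a projective presentation $P_1\to P_0\to E\to 0$ and applying both functors then produces two right exact rows whose first two comparison maps are isomorphisms, so the five lemma forces $\gamma^1_{d,E}$ to be an isomorphism.

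For (ii) I would first record the identity $\im(\gamma^n_{d,E})=\im(J^n_d(\varphi))$ for any surjection $\varphi\colon F\twoheadrightarrow E$ with $F$ free: each $e\colon A\to E$ lifts through $\varphi$, so the element formula shows that the image of $\gamma^n_{d,E}$ equals the $A$-submodule generated by the images of the maps $J^n_d(e)$, which is precisely $\im(J^n_d(\varphi))$ once $\gamma^n_{d,F}$ is an isomorphism (the flat case). Hence $\gamma^2_d$ is an epimorphism precisely when $J^2_d$ preserves epimorphisms, and this is the crux of the lemma. I would prove the latter using the natural monomorphism $J^2_d\hookrightarrow J^1_d\circ J^1_d$ (cf.\ Lemma~\ref{lemma:jets_preserving_monos}): since $J^1_d$ is right exact it preserves epimorphisms, hence so does $J^1_d\circ J^1_d$, and one is left to check that the holonomy condition cutting out $J^2_d$ inside $J^1_d\circ J^1_d$ lifts along an epimorphism. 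Realising $J^2_d$ as the kernel of a natural map $J^1_d\circ J^1_d\to T$ and running the snake lemma against an epimorphism $\varphi$ reduces this to surjectivity of the induced map on the relevant kernels; establishing this lifting is the main obstacle. Equivalently, once $J^2_d$ is known to preserve epimorphisms one simply invokes Lemma~\ref{lemma:tensorial_comparison}.

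Finally, for (iv) I would combine the previous parts. Tensoring the $2$-jet sequence $0\to S^2_dA\to J^2_dA\to J^1_dA\to 0$ (exact, since $A$ is flat over itself) with $E$ gives a right exact top row, which maps by $\tau^2_d$, $\gamma^2_d$ and $\gamma^1_d$ to the sequence $S^2_dE\xrightarrow{\iota^2_d}J^2_dE\xrightarrow{\pi^{2,1}_d}J^1_dE$; the left square commutes by (iii) and the right one by naturality of $\gamma$ with respect to $\pi^{2,1}_d$. Since $\gamma^1_d$ is an isomorphism by (i), $\gamma^2_d$ an epimorphism by (ii), and the top projection an epimorphism, surjectivity of $\pi^{2,1}_{d,E}$ is immediate. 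For exactness at $J^2_dE$ one has $\im(\iota^2_d)\subseteq\ker(\pi^{2,1}_d)$ from the jet tower, and conversely any $x\in\ker(\pi^{2,1}_{d,E})$ lifts through the epimorphism $\gamma^2_d$ to some $y$; chasing $y$ through the isomorphism $\gamma^1_d$ places it in the image of $S^2_dA\otimes_A E$, whence the commuting left square from (iii) returns $x\in\im(\iota^2_{d,E})$. This yields the right exact $2$-jet sequence \eqref{diag:2jet_res}.
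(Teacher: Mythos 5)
Your parts (i), (iii), and (iv) are essentially sound. For (iii), the pointwise description $\gamma^n_{d,E}(\theta\otimes_A e)=J^n_d(\hat{e})(\theta)$, with $\hat{e}\colon A\to E$, $a\mapsto ae$ (valid because $\gamma^n_d$ is natural and restricts to the canonical isomorphism at $E=A$), combined with naturality of $\iota^n_d$, is in fact slicker than the paper's inductive argument through $l^n_d$ and $\iota^n_\wedge$. For (i) at $n=1$, your route (both functors right exact, agreement on projectives, comparison of presentations) is a legitimate, if heavier, replacement for the paper's appeal to the definition of $J^1_d$. Your (iv) is the same comparison-diagram chase as the paper's, and your identity $\im(\gamma^2_{d,E})=\im(J^2_d(\varphi))$ for a free cover $\varphi$ is correct.

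The genuine gap is in (ii), and you flag it yourself: you reduce the claim to ``$J^2_d$ preserves epimorphisms'' and then declare that establishing the required lifting ``is the main obstacle,'' without proving it. Since (ii) is precisely the unconditional content of the lemma, and the input your (iv) depends on, the proof is incomplete as written. The missing idea, which is how the paper argues, is to use that $J^2_d$ is \emph{by definition} the kernel of the natural surjection $\widetilde{\DH}_d\colon J^{(2)}_d\twoheadrightarrow \Omega^1_d\ltimes\Omega^2_d$, whose source and target are tensor functors: by (i) and functoriality, $J^{(2)}_dE\cong J^{(2)}_dA\otimes_A E$ and $(\Omega^1_d\ltimes\Omega^2_d)(E)\cong(\Omega^1_d\ltimes\Omega^2_d)(A)\otimes_A E$ compatibly with $\widetilde{\DH}_d$. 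Tensoring the short exact sequence $0\to J^2_dA\to J^{(2)}_dA\to(\Omega^1_d\ltimes\Omega^2_d)(A)\to 0$ with $E$ and invoking right exactness of $-\otimes_A E$ gives $\im(l^2_{d,A}\otimes_A\id_E)=\ker(\widetilde{\DH}_{d,A}\otimes_A\id_E)\cong J^2_dE$, i.e.\ $\gamma^2_{d,E}$ is onto, with no separate epimorphism-preservation statement needed. Alternatively, your snake-lemma route can be closed: since $\coker(J^{(2)}_d(\varphi))=0$, it suffices that $\ker(J^{(2)}_d(\varphi))\to\ker((\Omega^1_d\ltimes\Omega^2_d)(\varphi))$ be surjective, and this holds because for any surjection of right modules $M\twoheadrightarrow Q$ and any epimorphism $\varphi$ with kernel $L$ one has $\ker(Q\otimes_A\varphi)=\im(Q\otimes_A L\to Q\otimes_A\dom\varphi)$, whose generators $q\otimes l$ lift to elements $m\otimes l\in\ker(M\otimes_A\varphi)$. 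Either way, the necessary ingredient is the defining kernel sequence of $J^2_d$ together with the tensor-functor identifications, which your outline stops short of supplying.
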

\begin{proof}\
\begin{enumerate}
\item True by definition, cf.\ \cite[\jetssonejetfunctor]{FMW}.
\item Let us consider the diagram \cite[\jetsdiagtensorcomparison]{FMW} for $n=2$.
We have
\begin{equation}\label{diag:tensor_comparison}
\begin{tikzcd}[column sep=45pt]
J^2_d A\otimes_A -\ar[r,"l^2_{d,A}\otimes_A \id"]\ar[d,dashed,"\gamma^2_d"']&J^{(2)}_d A\otimes_A -\ar[d,"J^1_d(\gamma^1_d)"',"\wr"]\ar[r,twoheadrightarrow,"\widetilde{\DH}_d\otimes_A \id"]&\Omega^1_d\ltimes \Omega^2_d A\otimes_A -\ar[d,"(\Omega^1_d\ltimes \Omega^2_d)(\gamma^0_d)","\wr"']\\
J^2_d \ar[r,hookrightarrow,"l^2_d"']&J^{(2)}_d \ar[r,twoheadrightarrow,"\widetilde{\DH}_d"']&\Omega^1_d\ltimes \Omega^2_d
\end{tikzcd}
\end{equation}

The central and right vertical arrows are isomorphisms by \eqref{lemma:2jet_right_exact:1} and functoriality.
This implies that the right square in \eqref{diag:tensor_comparison} is an isomorpgism of maps, and thus there is an isomorphism induced by the kernel functor.
By definition of $J^n_d$, the bottom sequence in \eqref{diag:tensor_comparison} is a short exact sequence.
In particular $J^2_d$ is the kernel of $\widetilde{\DH}_d$, and thus, the epi-mono factorization of $\gamma^2_d$ determines the epi-mono factorization of $l^2_{d,A}\otimes_A \id$.
In particular, the image of $\gamma^2_d$ can be identified with that of $l^2_{d,A}\otimes_A \id$ up to isomorphisms.
Since the left tensoring with a module is a right exact functor, the top sequence is right exact, and in particular the image of $l^2_{d,A}\otimes_A \id$ is the kernel of $\widetilde{\DH}_d$ up to isomorphism.
It follows that $\gamma^2_d$ is surjective.
\item We prove this point by induction.
For $n=0,1$, the statement is true by definition of jet functor and functor of symmetric forms.
For $n\ge 2$, consider the following diagram
\begin{equation}\label{diag:compatibility_gamma_tau_induction}
\begin{tikzcd}
S^n_d\otimes_A -\ar[ddd,"\tau^n_d"']\ar[rrrr,"\iota^n_d\otimes_A \id"]\ar[rd,"\iota^n_{\wedge,A}\otimes_A \id"]&[-20pt]&[30pt]&[30pt]&[-20pt]J^n_d A\otimes_A - \ar[ld,"l^n_{d,A}\otimes_A \id"']\ar[ddd,"\gamma^n_d"]\\
&\Omega^1_d\circ S^{n-1}_d\otimes_A -\ar[d,"\Omega^1_d(\tau^{n-1}_d)"']\ar[r,"\Omega^1_d(\iota^{n-1}_{d,A})\otimes_A \id"] &\Omega^1_d\circ J^{n-1}_d A\otimes_A - \ar[d,"\Omega^1_d(\gamma^{n-1}_d)"]\ar[r,"\iota^1_{d,J^{n-1}_d A}\otimes_A \id"] &J^1_d\circ J^{n}_d A\otimes_A -\ar[d,"J^1_d(\gamma^{n-1}_d)"]\\
&\Omega^1_d\circ S^{n-1}_d\ar[r,"\Omega^1_d(\iota^{n-1}_d)"']&\Omega^1_d\circ J^{n-1}_d\ar[r,"\iota^1_{d,J^{n-1}_d }"']&J^1_d\circ J^{n-1}_d\\
S^n_d\ar[rrrr,"\iota^n_d"']\ar[ru,hookrightarrow,"\iota^n_{\wedge}"']&&&& J^n_d\ar[lu,hookrightarrow,"l^n_d"]
\end{tikzcd}
\end{equation}
The left central square in \eqref{diag:compatibility_gamma_tau_induction} is commutative by inductive hypothesis and by the functoriality of $\Omega^1_d$.
The central right square commutes by the naturality of $\iota^1_d$ with respect to the map $\gamma^{n-1}_d$, cf.\ \cite[\jetsproponejses]{FMW}.
Both the top and bottom squares also commute by definition of $\iota^n_d$, cf.\ \cite[\jetslemmauniqiota]{FMW}.
Finally, also the outer left and right squares of \eqref{diag:compatibility_gamma_tau_induction} commute, cf.\ \cite[\jetspropStensorcomparison]{FMW} and \cite[\jetsproptensorcomparison]{FMW}, respectively.
It follows that the two sides of the outer square in \eqref{diag:compatibility_gamma_tau_induction} coincide when composed with $l^n_d$, i.e.\ 
\begin{equation}
l^n_d\circ\gamma^n_d\circ(\iota^n_d\otimes_A\id)
=l^n_d\circ\iota^n_d\circ\tau^n_d.
\end{equation}
The commutativity of \eqref{diag:compatibility_gamma_tau} follows from the fact that $l^n_d$ is, by definition, a monomorphism.
\item We already know that the map $\pi^{2,1}_d$ is an epi, because $J^1_d=\SJ^1_d$, and thus for all $E$ in $\AModB$, $J^1_d E$ is generated by prolongations $j^1_{d,E}(e)$, which can be obtained as $\pi^{2,1}_{d,E}(j^2_{d,E}(e))$, for $j^2_{d,E}(e)\in J^2_d E$.

We are left to prove that the sequence is exact in $J^2$, and thus that $\ker(\pi^{2,1}_d)=\Im(\iota^n_d)$.
Let thus $E$ be in $\AModB$, and let $\xi\in J^2_d E$ such that $\pi^{2,1}_{d,E}(\xi)=0$.
From point \eqref{lemma:2jet_right_exact:3} and \cite[\jetsrmkgammacomppi]{FMW}, we have the following commutative diagram.
\begin{equation}\label{diag:comparison_2jet_sequence_with_tensors}
\begin{tikzcd}
S^2_d\otimes_A E\ar[d,two heads,"\gamma^2_{d,E}"]\ar[r,"\iota^2_{d,A}\otimes_A \id_E"]&[40pt]J^2_d A\otimes_A E \ar[d,two heads,"\tau^2_{d,E}"]\ar[r,"\pi^{2,1}_{d,A}\otimes_A \id_E",two heads]&[40pt]J^1_dA\otimes E\ar[d,"\wr"',"\tau^1_{d,E}"]\ar[r]&0\\
S^2_d(E)\ar[r,"\iota^2_{d,E}"]&J^2_d E \ar[r,"\pi^{2,1}_{d,E}"]&J^1_d E\ar[r]&0
\end{tikzcd}
\end{equation}
By point \eqref{lemma:2jet_right_exact:2}, we can always find an element $\xi'\in J^2_d A\otimes_A E$ such that $\gamma^n_d(\xi')=\xi$.
When evaluated on $A$, the $2$-jet sequence is exact, since $\Tor^A_1(\Omega^1_d,A)=0$, cf.\ \cite[\jetspropdefeth]{FMW} and \cite[\jetsproptorsiontwojetses]{FMW}.
Thus, if we tensor this sequence on the right by $E$, we get a right exact sequence, namely, the top sequence in \eqref{diag:comparison_2jet_sequence_with_tensors}.
By commutativity of \eqref{diag:comparison_2jet_sequence_with_tensors}, we have that
\begin{equation}
\tau^1_{d,E}\circ (\pi^{2,1}_{d,A}\otimes_A \id_E)(\xi')
=\pi^{2,1}_{d,E}(\tau^2_{d,E}(\xi'))
=\pi^{2,1}_{d,E}(\xi)
=0.
\end{equation}
Since $\tau^1_{d,E}$ is an iso by \eqref{lemma:2jet_right_exact:1}, we have that $\xi'\in\ker(\pi^{2,1}_{d,A}\otimes_A \id_E)=\Im(\iota^2_{d,A}\otimes_A \id_E)$.
Thus, there exists $\sigma\in S^2_d\otimes_A E$ such that $(\iota^2_{d,A}\otimes_A \id_E)(\sigma)=\xi'$.
We now have
\begin{equation}
\iota^2_{d,E}(\gamma^2_{d,E}(\sigma))
=\tau^2_{d,E}\circ(\iota^2_{d,A}\otimes_A \id_E)(\sigma)
=\tau^2_{d,E}(\xi')
=\xi.
\end{equation}
Thus, $\xi\in\Im(\iota^2_{d,E})$, proving right exactness of \eqref{diag:2jet_res}.
\qedhere
\end{enumerate}
\end{proof}
In low grades we now obtain more regularity via the following.
\begin{prop}\label{prop:symbol_representability_low_dimension}\
\begin{enumerate}
\item\label{prop:symbol_representability_low_dimension:1} For $n=0,1$, the map $r^n_d$, cf.\ \eqref{eq:rmap}, is well-defined and an isomorphism;
\item\label{prop:symbol_representability_low_dimension:2} For $n=2$, if $\Omega^2_d=\Omega^2_{d,\text{max}}$, the map $r^2_{d,E,-}$ is well defined and a mono.
Furthermore, if we also have $\Omega^1_d$ and $E$ projective in $\AMod$, then $r^2_{d,E,-}$ is an isomorphism.
\end{enumerate}
\end{prop}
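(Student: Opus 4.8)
The plan is to reduce all three degrees to Proposition~\ref{prop:symbols_representation_well-defined}, whose three hypotheses I verify in turn: condition \eqref{eq:symforms_gen_by_proj} for well-definedness, right exactness of the $n$-jet sequence at $E$ for the monomorphism property, and the existence of a left inverse of $\iota^n_{d,E}$ in $\AMod$ for surjectivity. For well-definedness in degrees $n=0,1$ I would invoke Proposition~\ref{prop:elemental_jet_properties}.\eqref{prop:elemental_jet_properties:1}, giving $\SJ^n_d=J^n_d$, so that $\im(\iota^n_{d,E})\subseteq J^n_d E=\SJ^n_d E$ holds automatically. In degree $n=2$ I would instead use Proposition~\ref{prop:rep_for_2jet}.\eqref{prop:rep_for_2jet:1}: since $\Omega^2_d=\Omega^2_{d,\text{max}}$ forces $S^2_d=S^2_{d,\text{min}}$, one gets $\iota^2_d(S^2_d)=\iota^2_d(S^2_{d,\text{min}})=\widehat{p}^2_d(N_d)\subseteq\im(\widehat{p}^2_d)=\SJ^2_d$, which is exactly \eqref{eq:symforms_gen_by_proj}. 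I would emphasize that this uses only maximality and not the vanishing of any $\Tor$ term, matching the hypotheses of (ii).

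For the monomorphism property I need right exactness of the jet sequence. The case $n=0$ is vacuous, as $S^0_d=\id$ and $r^0_d$ is the identity. For $n=1$ the sequence $S^1_d E\xrightarrow{\iota^1_{d,E}}J^1_d E\xrightarrow{\pi^{1,0}_{d,E}}E\to 0$ is right exact, since $\pi^{1,0}_{d,E}$ is always an epimorphism and $S^1_d$ computes its kernel. For $n=2$ this is precisely Lemma~\ref{lemma:2jet_right_exact}.\eqref{lemma:2jet_right_exact:4}, which gives right exactness of the $2$-jet sequence unconditionally. In each case Proposition~\ref{prop:symbols_representation_well-defined}.\eqref{prop:symbols_representation_well-defined:1} then shows $r^n_{d,E,-}$ is a monomorphism, settling the mono assertion of (ii) and half of (i).

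It remains to produce a left inverse of $\iota^n_{d,E}$ (Proposition~\ref{prop:symbols_representation_well-defined}.\eqref{prop:symbols_representation_well-defined:2}); this is where the real work lies. The case $n=0$ is immediate, and for $n=1$ I would reduce along the isomorphism $\gamma^1_d$ of Lemma~\ref{lemma:2jet_right_exact}.\eqref{lemma:2jet_right_exact:1} to the $1$-jet sequence of the free module $A$, which splits. For the substantive case $n=2$ with $\Omega^1_d$ and $E$ projective in $\AMod$, my strategy is to show that the quotient term $J^1_d E$ of the $2$-jet sequence is projective, so that $\pi^{2,1}_{d,E}$ splits and hence $\iota^2_{d,E}$ is a section. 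Concretely: since $E$ is projective it is flat, so the $2$-jet sequence is short exact; projectivity of $E$ also makes $\pi^{1,0}_{d,E}\colon J^1_d E\to E$ split, giving $J^1_d E\cong E\oplus(\Omega^1_d\otimes_A E)$. Writing $E$ as a direct summand of a free left module and applying $\Omega^1_d\otimes_A-$ exhibits $\Omega^1_d\otimes_A E$ as a summand of a direct sum of copies of $\Omega^1_d$, hence projective once $\Omega^1_d$ is; therefore $J^1_d E$ is projective and the sequence splits. The main obstacle I anticipate is exactly this projectivity bookkeeping — keeping the left and right $A$-module structures straight across the tensor product $\Omega^1_d\otimes_A E$ — together with checking that flatness of $E$ makes the comparison maps $\gamma^2_d,\tau^2_d$ isomorphisms, so that the $2$-jet sequence is genuinely short exact before one attempts to split it.
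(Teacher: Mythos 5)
Most of your proposal coincides with the paper's proof: well-definedness for $n=0,1$ via Proposition \ref{prop:elemental_jet_properties}.\eqref{prop:elemental_jet_properties:1} and for $n=2$ via Proposition \ref{prop:rep_for_2jet}.\eqref{prop:rep_for_2jet:1} (you are right that only maximality, and no $\Tor$ hypothesis, is needed there); the mono assertions via right exactness plus Proposition \ref{prop:symbols_representation_well-defined}.\eqref{prop:symbols_representation_well-defined:1}, with Lemma \ref{lemma:2jet_right_exact}.\eqref{lemma:2jet_right_exact:4} supplying the $n=2$ case; and the $n=2$ isomorphism by showing $J^1_d E\cong E\oplus(\Omega^1_d\otimes_A E)$ is projective so that the $2$-jet sequence splits and Proposition \ref{prop:symbols_representation_well-defined}.\eqref{prop:symbols_representation_well-defined:2} applies. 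Your direct-summand bookkeeping for the projectivity of $J^1_d E$ is exactly the content of \cite[\jetspropOmJstab]{FMW}, which the paper cites rather than re-proves; also, the short exactness of the $2$-jet sequence follows from $\Tor^A_1(\Omega^1_d,E)=0$ alone (\cite[\jetspropdefeth, \jetsproptorsiontwojetses]{FMW}), so no appeal to the comparison maps $\gamma^2_d$, $\tau^2_d$ is needed at that point.

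The genuine gap is your $n=1$ surjectivity argument. You propose to transport the splitting of the $1$-jet sequence of $A$ along the isomorphism $\gamma^1_d\colon J^1_d A\otimes_A E\to J^1_d E$. But the retraction $\widetilde{d}\colon J^1_d A\to \Omega^1_d$ is only left $A$-linear, not right $A$-linear: on classes it acts by $[a\otimes b]\mapsto a\,db$, and $\widetilde{d}([a\otimes b]\,c)=a\,d(bc)=a\,(db)\,c+ab\,dc\neq \widetilde{d}([a\otimes b])\,c$, by the Leibniz rule. Hence $\widetilde{d}\otimes_A \id_E$ is not even defined, and $-\otimes_A E$ does not carry the splitting at $A$ to a splitting at $E$; tensoring over $A$ preserves split exactness only when the splitting maps are bimodule maps. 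Nor can this be repaired by a cleverer base change: any left $A$-linear retraction of $\iota^1_{d,E}$ is precisely the jet lift of a left connection on $E$ (cf.\ Proposition \ref{prop:symbolsofconnections} and \cite[\jetspropconnexionsplits]{FMW}), so a valid argument of your type would manufacture a left connection on an arbitrary module $E$ — this is exactly where the content of the surjectivity statement is concentrated. The paper handles $n=1$ by directly invoking that $\iota^1_d$ admits $\widetilde{d}$ as a left inverse, citing \cite[\jetssssSplitting]{FMW}; that citation (rather than the $\gamma^1_d$ reduction) is what your proof should rest on here.
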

\begin{proof}\
\begin{enumerate}
\item For $n=0,1$, we have that $\Im(\iota^n_{d,E})\subseteq \SJ^n_d (E)$, cf.\ Proposition \ref{prop:elemental_jet_properties}.\eqref{prop:elemental_jet_properties:1}.
In particular, for $n=0$, $\Symb^0(E,F)=\AHom(E,F)$, and $\iota^0_d=\id$, so the map $r^0_{d,E,F}$ is essentially the identity map.
For $n=1$, we know that the $1$-jet sequence is exact and $\iota^1_d$ has $\widetilde{d}$ as a left inverse, so by Proposition \ref{prop:symbols_representation_well-defined}, the map $r^1_d$ is an isomorphism.
\item By Proposition \ref{prop:rep_for_2jet}.\eqref{prop:rep_for_2jet:1}, the map $r^2_d$ is well defined.
By Lemma \ref{lemma:2jet_right_exact}.\eqref{lemma:2jet_right_exact:4} and Proposition \ref{prop:symbols_representation_well-defined}.\eqref{prop:symbols_representation_well-defined:1}, we obtain that $r^2_d$ is a mono.

If now $E$ is projective in $\AMod$, in particular $\Tor^A_1(\Omega^1_d,E)=0$, so the $2$-jet sequence at $E$ is exact, cf.\ \cite[\jetspropdefeth]{FMW} and \cite[\jetsproptorsiontwojetses]{FMW}.
Moreover, since $\Omega^1_d$ and $E$ are projective in $\AMod$, so is $J^1_d E$, cf.\ \cite[\jetspropOmJstab]{FMW}, so the $2$-jet sequence is actually split exact.
Proposition \ref{prop:symbols_representation_well-defined}.\eqref{prop:symbols_representation_well-defined:2} then yields the isomorphism.\qedhere
\end{enumerate}
\end{proof}

As a consequence, we obtain the following characterization of left connections.
\begin{prop}\label{prop:symbolsofconnections}
	Let $E$ be a left $A$-module.
	A linear differential operator $\nabla\colon E \rightarrow \Omega^1_d(E)$ of order at most $1$ is a left connection if and only if its first order symbol is the identity, i.e.,
	\begin{equation}
		\symb^1_d(\nabla) = \id_{\Omega^1_d(E)}.
	\end{equation}
\end{prop}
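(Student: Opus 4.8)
The plan is to translate the abstract symbol into the concrete restriction symbol, read off a Leibniz-type identity from the universal property of the first jet, and then compare it with the defining relation of a left connection.

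First I would invoke representability in degree $1$. By Proposition \ref{prop:elemental_jet_properties}.\eqref{prop:elemental_jet_properties:1} we have $\SJ^1_dE=J^1_dE$, so \eqref{eq:symforms_gen_by_proj} holds automatically and Proposition \ref{prop:symbol_representability_low_dimension}.\eqref{prop:symbol_representability_low_dimension:1} tells us that $r^1_{d,E,-}$ is an isomorphism. Consequently (cf.\ Remark \ref{rem:identifyrestrictionsymbol}) I may identify $\symb^1_d(\nabla)$ with its restriction symbol $\widetilde{\nabla}\circ\iota^1_{d,E}$, where $\widetilde{\nabla}\colon J^1_dE\to\Omega^1_d(E)$ is the unique lift of $\nabla$ and $S^1_d(E)=\Omega^1_d(E)$. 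Under this identification $\symb^1_d(\nabla)$ is a left $A$-linear endomorphism of $\Omega^1_d(E)$, so the asserted equality $\symb^1_d(\nabla)=\id_{\Omega^1_d(E)}$ makes sense.

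Next I would exploit the fundamental identity $j^1_{d,E}(ae)=a\,j^1_{d,E}(e)+\iota^1_{d,E}(da\otimes e)$ relating the prolongation to the jet inclusion (cf.\ \cite[\jetseqonejetd]{FMW}). Applying the left $A$-linear map $\widetilde{\nabla}$ and using $\nabla=\widetilde{\nabla}\circ j^1_{d,E}$ yields
\begin{equation}
\nabla(ae)=a\,\nabla(e)+\symb^1_d(\nabla)(da\otimes e)
\end{equation}
for all $a\in A$ and $e\in E$. Since $\nabla$ is a left connection precisely when $\nabla(ae)=da\otimes e+a\,\nabla(e)$, comparing the two expressions shows that $\nabla$ is a left connection if and only if $\symb^1_d(\nabla)(da\otimes e)=da\otimes e$ for all $a\in A$, $e\in E$.

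Finally I would close the gap between ``fixes every $da\otimes e$'' and ``equals the identity'': every element of $\Omega^1_d(E)=\Omega^1_d\otimes_A E$ is a finite left $A$-linear combination of elements $da\otimes e$, since $\Omega^1_d$ is generated as a left $A$-module by the image of $d$; as $\symb^1_d(\nabla)$ is left $A$-linear, fixing all $da\otimes e$ is equivalent to being $\id_{\Omega^1_d(E)}$. I expect the only delicate point to be the bookkeeping in the first step—correctly matching the coset-valued symbol $\symb^1_d(\nabla)$ with the concrete endomorphism $\widetilde{\nabla}\circ\iota^1_{d,E}$ through $r^1_d$, and confirming the identification $S^1_d(E)=\Omega^1_d(E)$—after which the computation and the generation argument are routine.
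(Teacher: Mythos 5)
Your proof is correct, and it diverges from the paper's after the shared first step. Both arguments begin identically: invoke Proposition \ref{prop:symbol_representability_low_dimension}.\eqref{prop:symbol_representability_low_dimension:1} to identify $\symb^1_d(\nabla)$ with the restriction symbol $\widetilde{\nabla}\circ\iota^1_{d,E}$ (your appeal to $\SJ^1_dE=J^1_dE$ for uniqueness of the lift is sound, cf.\ Remark \ref{rmk:representability_low_degree} and Theorem \ref{theo:do_lift_uniquely}). From there the paper simply cites \cite[\jetspropconnexionsplits]{FMW}, which already says that $\nabla$ is a left connection if and only if its lift left-splits the $1$-jet sequence, i.e.\ $\widetilde{\nabla}\circ\iota^1_{d,E}=\id_{\Omega^1_d(E)}$ — both implications are immediate from that citation. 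You instead reprove this equivalence from scratch: the identity $j^1_{d,E}(ae)=a\,j^1_{d,E}(e)+\iota^1_{d,E}(da\otimes e)$ you use is the correct one in this framework (it is exactly the relation $\iota^1_{d,A}(da)=[1\otimes a-a\otimes 1]$ used in the proof of Proposition \ref{prop:fields_and_tangent_space}, with coefficients in $E$), and applying $\widetilde{\nabla}$ converts it into the Leibniz comparison. What your route costs is the extra generation step — passing from ``$\symb^1_d(\nabla)$ fixes every $da\otimes e$'' to ``$\symb^1_d(\nabla)=\id$'' — which silently uses the surjectivity axiom of a first order differential calculus, namely that $\Omega^1_d$ is spanned as a left $A$-module by $dA$; this is part of the standing conventions here, so the step is legitimate, but it is worth stating explicitly since it is precisely where the calculus axiom enters. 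What your route buys is self-containedness: the reader gets the splitting characterization of connections as a two-line computation rather than a black-box reference, at the price of a slightly longer argument.
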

\begin{proof}
	By Proposition \ref{prop:symbol_representability_low_dimension}.\eqref{prop:symbol_representability_low_dimension:1}, we can identify symbols with their image under the map $r^1_d$.
	
	Suppose that $\nabla$ is a left connection.
	Then, by the proof of \cite[\jetspropconnexionsplits]{FMW}, we have $\widetilde{\nabla} \circ \iota^1_{d,E} = \id_{\Omega^1_d(E)}$, which establishes the forward implication.
	Conversely, suppose $\nabla$ has symbol $\id_{\Omega^1_d(E)}$.
	Then $\widetilde{\nabla} \circ \iota^1_{d,E}=\id_{\Omega^1_d(E)}$, and so it is a left-splitting operator of the $1$-jet exact sequence in $\AMod$.
	Hence $\nabla$ is a left connection by \cite[\jetspropconnexionsplits]{FMW}.
\end{proof}

Finally, we show that one can directly compute the restriction to $S^2_{d,\textrm{max}}$ of the lift of a differential operator of degree $2$ via the following result.
\begin{lemma}\label{lemma:explicitrestrictedsymbol}
Let $\Delta\in\Diff^2_d(E,F)$, then for all lifts $\widetilde{\Delta}\colon J^2_d E\to F$, the restriction $\widetilde{\Delta}\circ \iota^2_d$ evaluated at $\sum_{i,j} da_{i,j}\otimes db_{i,j}\otimes_A e_j\in S^2_{d,\textrm{min}}(E)\subseteq S^2_d(E)$ is
\begin{equation}
\widetilde{\Delta}\circ \iota^2_d\left(\sum_{i,j} da_{i,j}\otimes_A db_{i,j}\otimes_A e_j\right)
=-\sum_{i,j} a_{i,j}\Delta(b_{i,j} e_j)
\end{equation}
for all $\sum_{i} a_{i,j}\otimes b_{i,j}\in N_d$ and $e_j\in E$.
\end{lemma}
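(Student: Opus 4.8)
The plan is to reduce everything to the explicit computation already carried out in \eqref{eq:imageN1max} of Proposition \ref{prop:rep_for_2jet}, which requires no additional hypotheses on $E$. That computation shows precisely that, for $\sum_i a_{i,j}\otimes b_{i,j}\in N_d$ and $e_j\in E$, one has
\[
\iota^2_{d,E}\left(\sum_{i,j} da_{i,j}\otimes_A db_{i,j}\otimes_A e_j\right) = -\widehat{p}^2_{d,E}\left(\sum_{i,j} a_{i,j}\otimes b_{i,j}e_j\right).
\]
Thus the left-hand side of the claimed identity is, up to sign, the image under $\widehat{p}^2_{d,E}$ of an element of $A\otimes E$, and the element $\sum_{i,j} da_{i,j}\otimes_A db_{i,j}\otimes_A e_j$ indeed lands in $S^2_{d,\text{min}}(E)\subseteq S^2_d(E)$, where $\iota^2_d$ is defined and $\widetilde{\Delta}$ is available on its image in $J^2_d E$.

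First I would unwind the definition \eqref{def:phatn} of $\widehat{p}^2_{d,E}$ to rewrite the right-hand side above as $-\sum_{i,j} a_{i,j}\, j^2_{d,E}(b_{i,j}e_j)$. Then I would apply the lift $\widetilde{\Delta}$, use its $A$-linearity to extract the coefficients $a_{i,j}$ from the sum, and finally invoke the defining property $\widetilde{\Delta}\circ j^2_{d,E}=\Delta$ of a lift to replace each $\widetilde{\Delta}(j^2_{d,E}(b_{i,j}e_j))$ by $\Delta(b_{i,j}e_j)$. This produces $-\sum_{i,j} a_{i,j}\Delta(b_{i,j}e_j)$, which is exactly the asserted formula.

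There is essentially no obstacle here, since the genuinely computational step—the algebraic manipulation identifying $\iota^2_{d,E}$ on the minimal symmetric forms with $-\widehat{p}^2_{d,E}$ on the corresponding element of $N_d(E)$—has already been dispatched in \eqref{eq:imageN1max}. The only point worth flagging is that the resulting expression depends solely on $\Delta$ and not on the chosen lift $\widetilde{\Delta}$; this is consistent with (and in fact reproves on $S^2_{d,\text{min}}(E)$) the independence of the restriction symbol from the lift, as recorded after Definition \ref{def:restrictionsymbol}. For the present statement, however, it suffices to verify the displayed identity for an arbitrary fixed lift $\widetilde{\Delta}$, which the steps above accomplish.
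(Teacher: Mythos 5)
Your proposal is correct and takes essentially the same route as the paper: the paper's proof also reduces the claim to the identity $\iota^2_{d,E}\circ(-d\otimes_A d\otimes_A E)=\widehat{p}^2_{d,E}$ on $N_d(E)$ (established by the same algebraic manipulation as in \eqref{eq:imageN1max}, packaged as the commutative diagram \eqref{diag:restrictionsymbol}), and then concludes by applying $\widetilde{\Delta}$ and using $A$-linearity together with $\widetilde{\Delta}\circ j^2_{d,E}=\Delta$, i.e.\ that $\widetilde{\Delta}\circ\widehat{p}^2_{d,E}$ is the map $a\otimes e\mapsto a\Delta(e)$. The only cosmetic difference is that you cite \eqref{eq:imageN1max} directly while the paper re-derives that computation inside the proof; your observation that no $\Tor$ hypothesis is needed is likewise consistent with Proposition \ref{prop:rep_for_2jet}.\eqref{prop:rep_for_2jet:1}.
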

\begin{proof}
We first show that the following diagram commutes
\begin{equation}\label{diag:restrictionsymbol}
\begin{tikzcd}
N_d\otimes_A E\ar[r,two heads]\ar[d,"-d\otimes_A d\otimes_A E"']&N_d(E)\ar[r,hook]&J^2_u E\ar[d,"\widehat{p}^n_{d,E}"'] \ar[dr,"\widetilde{\Delta}_u"]\\
S^2_d(E)\ar[rr,"\iota^2_{d,E}"]&&J^2_d E \ar[r,"\widetilde{\Delta}"]&[40pt]F
\end{tikzcd}
\end{equation}
Here, the map
\begin{align}
\widetilde{\Delta}_u\colon J^2_u E=A\otimes E\longrightarrow F,
&\hfill&
a\otimes e\longmapsto a\Delta (e)
\end{align}
is the (unique) lift of $\Delta$ to $J^2_u E$, and
\begin{align}
d\otimes_A d\otimes_A E\colon N_d(E)\longrightarrow S^2_d(E),
&\hfill&
\sum_{i,j} a_{i,j}\otimes b_{i,j}\otimes_A e_j\longmapsto \sum_{i,j} da_{i,j}\otimes_A db_{i,j} \otimes_A e_j
\end{align}
is well defined and has image $S^2_{d,\textrm{min}}(E)$, cf.\ \cite[\jetseqStwomindd]{FMW}.

The left square of \eqref{diag:restrictionsymbol} commutes because for all elements $\sum_{i,j} a_{i,j}\otimes b_{i,j} e_j\in N^2_d(E)$ we have
\begin{equation}
\begin{split}
&\iota^2_{d,E}\left((-d\otimes_A d\otimes_A E)\left(\sum_{i,j} a_{i,j}\otimes b_{i,j}\otimes_A e_j\right)\right)
=-\iota^2_{d,E}\left(\sum_{i,j} da_{i,j}\otimes_A db_{i,j}\otimes_A e_j\right)\\
&\qquad=-\sum_{i,j}[1\otimes a_{i,j}-a_{i,j}\otimes 1]\otimes_A [1\otimes b_{i,j}-b_{i,j}\otimes 1]\otimes_A e_j\\
&\qquad=-\sum_{i,j}\left([1\otimes 1]\otimes_A [a_{i,j}\otimes b_{i,j}]-[1\otimes 1]\otimes_A [a_{i,j}b_{i,j}\otimes 1]\right.\\
&\qquad\quad\left.-[a_{i,j}\otimes 1]\otimes_A [1\otimes b_{i,j}]+[a_{i,j}\otimes b_{i,j}]\otimes_A [1\otimes 1]\right)\otimes_A e_j\\
&\qquad=\sum_{i,j} [a_{i,j}\otimes 1]\otimes_A [1\otimes b_{i,j}]\otimes_A e_j\\
&\qquad=\sum_{i,j} a_{i,j}j^2_{d,E}( b_{i,j} e_j)\\
&\qquad=\widehat{p}^2_d\left(\sum_{i,j} a_{i,j}\otimes b_{i,j} e_j\right)
\end{split}
\end{equation}
It follows that on the elements of $S^2_{d,\textrm{min}}(E)$, which are therefore in the image of $-d\otimes_A d\otimes_A E$, we can compute
\begin{equation}
\begin{split}
\widetilde{\Delta}\circ \iota^2_d\left(\sum_{i,j} da_{i,j}\otimes_A db_{i,j}\otimes_A e_j\right)
&=-\widetilde{\Delta}\circ \iota^2_d\circ(-d\otimes_A d \otimes_A E)\left(\sum_{i,j} a_{i,j}\otimes b_{i,j}\otimes_A e_j\right)\\
&=-\widetilde{\Delta}_u\left(\sum_{i,j} a_{i,j}\otimes b_{i,j} e_j\right)\\
&=-\sum_{i,j} a_{i,j}\Delta(b_{i,j} e_j).
\end{split}
\end{equation}
\end{proof}
\begin{rmk}
If $\Omega^2_d=\Omega^2_{d,\textit{max}}$, then Lemma \ref{lemma:explicitrestrictedsymbol} allows to compute the symbol of any differential operator of order at most $2$ the whole $S^2_d(E)$ via Proposition \ref{prop:symbol_representability_low_dimension}.\eqref{prop:symbol_representability_low_dimension:2}.
\end{rmk}

\subsection{Symbols of elemental differential operators}
As mentioned in Definition \ref{def:symbolquotientdef}, we can give define a notion of symbols also for elemental differential operators.
Thus, we write $\WS^n_d(E,F)\colonequals \WD^n_d(E,F)/\WD^{n-1}_d(E,F)$, for all $n\ge 0$, with the convention $\WD^{-1}_d(E,F)=0$.
We maintain the notation $\symb^n_d\colon \WD^n_d(E,F)\to \WS^n_d(E,F)$ for the quotient projection.

We can compare the functor of symbols with that of symbols of elemental differential operators via the following proposition.
\begin{prop}
There exists a canonical binatural transformation
\begin{align}\label{eq:comparing_symbols}
\Symb^n_d(E,F) \longrightarrow \WS^n_d(E,F),
&\hfill&
\symb^n_d(\Delta)\longmapsto \symb^n_d(\Delta).
\end{align}
This map is injective for the modules $E$ and $F$ if differential operators of order at most $n-1$ on $E$ are representable, and is surjective if differential operators of order at most $n$ on $E$ are representable.
\end{prop}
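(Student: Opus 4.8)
The plan is to realize the transformation \eqref{eq:comparing_symbols} as the map induced on quotients by the inclusions of filtered pieces, and then to compute its kernel and cokernel directly in terms of these pieces. By Proposition~\ref{prop:properties_elemental_DOs}.\eqref{prop:properties_elemental_DOs:3} we have the inclusions $\Diff^{n-1}_d(E,F)\subseteq\WD^{n-1}_d(E,F)$ and $\Diff^n_d(E,F)\subseteq\WD^n_d(E,F)$, and these are compatible with the order filtrations, so they assemble into a commutative square of inclusions inside $\Hom(E,F)$. First I would observe that the inclusion of numerators $\Diff^n_d(E,F)\hookrightarrow\WD^n_d(E,F)$ carries $\Diff^{n-1}_d(E,F)$ into $\WD^{n-1}_d(E,F)$, and hence descends to a well-defined $\bk$-linear map on the quotients $\Symb^n_d(E,F)=\Diff^n_d(E,F)/\Diff^{n-1}_d(E,F)\to\WD^n_d(E,F)/\WD^{n-1}_d(E,F)=\WS^n_d(E,F)$ sending $\symb^n_d(\Delta)$ to $\symb^n_d(\Delta)$, exactly as claimed. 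Since the filtration inclusions are natural subfunctor inclusions in both $E$ and $F$, and the quotient projections defining $\Symb^n_d$ and $\WS^n_d$ are binatural (cf.\ the remark following Definition \ref{def:symbolquotientdef}), the induced map is automatically binatural, establishing the first sentence.

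For injectivity I would compute the kernel explicitly. An element $\symb^n_d(\Delta)$ lies in the kernel precisely when $\Delta\in\WD^{n-1}_d(E,F)$, that is, when $\Delta\in\Diff^n_d(E,F)\cap\WD^{n-1}_d(E,F)$; thus the kernel is $\bigl(\Diff^n_d(E,F)\cap\WD^{n-1}_d(E,F)\bigr)/\Diff^{n-1}_d(E,F)$. The key input is Corollary~\ref{cor:representability_iff_WDO=DO}: if $\Diff^{n-1}_d(E,-)$ is representable, then every elemental linear differential operator of order at most $n-1$ is already a differential operator, i.e.\ $\WD^{n-1}_d(E,F)=\Diff^{n-1}_d(E,F)$. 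Substituting this equality into the kernel formula collapses it to $\Diff^{n-1}_d(E,F)/\Diff^{n-1}_d(E,F)=0$, so the map is injective.

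For surjectivity I would argue at the level of numerators. Every class in $\WS^n_d(E,F)$ is represented by some $\Delta\in\WD^n_d(E,F)$; if $\Diff^n_d(E,-)$ is representable, then Corollary~\ref{cor:representability_iff_WDO=DO} gives $\WD^n_d(E,F)=\Diff^n_d(E,F)$, so $\Delta$ is itself a genuine differential operator and its class $\symb^n_d(\Delta)\in\Symb^n_d(E,F)$ maps onto the given element. Hence the transformation is surjective. The whole argument is a formal quotient computation, and the only real content is recognizing that the two hypotheses are exactly the representability conditions that trigger Corollary~\ref{cor:representability_iff_WDO=DO}; the main point to get right is keeping the numerators and denominators straight so that representability in degree $n-1$ controls injectivity while representability in degree $n$ controls surjectivity.
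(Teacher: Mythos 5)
Your proposal is correct and takes essentially the same route as the paper: there the map \eqref{eq:comparing_symbols} is obtained from the cokernel universal property applied to the commutative diagram of filtration inclusions $\Diff^{n-1}_d\subseteq\WD^{n-1}_d$ and $\Diff^n_d\subseteq\WD^n_d$, and the injectivity/surjectivity claims then follow by the snake lemma together with Corollary \ref{cor:representability_iff_WDO=DO}. Your explicit kernel computation $\bigl(\Diff^n_d(E,F)\cap\WD^{n-1}_d(E,F)\bigr)/\Diff^{n-1}_d(E,F)$ and the lifting argument for surjectivity are exactly the snake-lemma argument unwound by hand, resting on the same key corollary with the same degree bookkeeping.
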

\begin{proof}
The inclusions $\Diff^n_d\subseteq \WD^n_d$ is compatible with the inclusion of the modules of differential operators into modules of differential operators of higher order.
By the cokernel universal property, we obtain \eqref{eq:comparing_symbols} as the dashed map in the following diagram.
\begin{equation}\label{diag:elemental_comparison}
\begin{tikzcd}
0\ar[r]&\Diff^{n-1}_d\ar[d,hook]\ar[r,hook]&	\Diff^n_d\ar[d,hook]\ar[r,two heads]&\Symb^n_d\ar[d,dashed]\ar[r]&0\\
0\ar[r,two heads]&\WD^{n-1}_d\ar[r,hook]&	\WD^n_d\ar[r]&\WS^n_d\ar[r]&0
\end{tikzcd}
\end{equation}

The last two statements follow directly from applying the snake lemma to \eqref{diag:elemental_comparison}, thanks to Corollary \ref{cor:representability_iff_WDO=DO}.
\end{proof}

Compared to the functors of symbols, the functors of symbols of elemental differential operators behave, in general, more regularly with respect to the functor $\SS^n_d$.
We thus obtain an analogue of Proposition \ref{prop:symbols_representation_well-defined} for elemental differential operators and elemental symmetric forms.
\begin{prop}\label{prop:elemental_symbols_representation}
There exists a monomorphism $\Sr^n_d$ defined as
\begin{align}\label{eq:Srmap}
\Sr^n_{d,E,F}\colon\WS^n_d(E,F)\longrightarrow \AHom(\SS^n_d(E),F),
&\hfill&
\symb^n_d(\Delta)\longmapsto \widetilde{\Delta}\circ \Siota^n_{d,E}.
\end{align}
natural in $E$ both $F$.
Moreover, the map $\Sr^n_{d,E,-}$ is a natural isomorphism if and only if $\Siota^n_{d,E}$ is a section (it admits a left inverse).
E.g.\ when $\SJ^n_d E$ is projective.
\end{prop}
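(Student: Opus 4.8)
The plan is to run the elemental analogue of the argument for Proposition~\ref{prop:symbols_representation_well-defined}, with the decisive simplification that the relevant short exact sequence is now available without any hypothesis. First I would evaluate the elemental jet sequence of Proposition~\ref{prop:elementalexactsequence} at $E$, namely
\[
0\longrightarrow \SS^n_d E\xrightarrow{\;\Siota^n_{d,E}\;}\SJ^n_d E\xrightarrow{\;\Spi^{n,n-1}_{d,E}\;}\SJ^{n-1}_d E\longrightarrow 0,
\]
and apply the left exact contravariant functor $\AHom(-,F)$ to obtain the left exact sequence
\[
0\longrightarrow \AHom(\SJ^{n-1}_d E,F)\xrightarrow{-\circ\Spi^{n,n-1}_{d,E}}\AHom(\SJ^n_d E,F)\xrightarrow{-\circ\Siota^n_{d,E}}\AHom(\SS^n_d E,F).
\]

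Next I would feed in the representability of elemental differential operators from Corollary~\ref{cor:WDO_always_representable}: precomposition with $\Sj^n_d$ and $\Sj^{n-1}_d$ identifies the two left-hand Hom-modules with $\WD^n_d(E,F)$ and $\WD^{n-1}_d(E,F)$, and, since $\Spi^{n,n-1}_{d,E}\circ\Sj^n_{d,E}=\Sj^{n-1}_{d,E}$ by construction, the first map becomes exactly the inclusion $\WD^{n-1}_d(E,F)\hookrightarrow\WD^n_d(E,F)$. By left exactness the image of $-\circ\Siota^n_{d,E}$ is the cokernel of this inclusion, i.e.\ $\WS^n_d(E,F)$; hence the epi-mono factorization of $-\circ\Siota^n_{d,E}$ through $\symb^n_d$ produces precisely the injection $\Sr^n_{d,E,F}\colon\WS^n_d(E,F)\to\AHom(\SS^n_d E,F)$ of the statement, sending $\symb^n_d(\Delta)$ to $\widetilde{\Delta}\circ\Siota^n_{d,E}$ with $\widetilde{\Delta}$ the unique elemental lift (Proposition~\ref{prop:properties_elemental_DOs}.\eqref{prop:properties_elemental_DOs:2}). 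Well-definedness and injectivity are then automatic, and naturality in $E$ and $F$ is inherited from that of $\Siota^n_d$, $\Spi^{n,n-1}_d$, $\Sj^n_d$ and $\symb^n_d$.

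For the isomorphism criterion I would use that $\Sr^n_{d,E,-}$ is always a monomorphism, so it is an isomorphism if and only if it is an epimorphism; as $\Sr^n_{d,E,F}\circ\symb^n_d=-\circ\Siota^n_{d,E}$ and $\symb^n_d$ is surjective, this is equivalent to surjectivity of $-\circ\Siota^n_{d,E}$ for every $F$. If $\Siota^n_{d,E}$ admits a left inverse $s$, then $g\mapsto g\circ s$ is a section of $-\circ\Siota^n_{d,E}$, giving surjectivity; conversely, surjectivity at $F=\SS^n_d E$ yields a preimage of $\id_{\SS^n_d E}$, which is a left inverse of $\Siota^n_{d,E}$. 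The stated example is then the remark that $\Siota^n_{d,E}$ is a section precisely when the sequence above splits, a splitting being produced by lifting $\id_{\SJ^{n-1}_d E}$ through $\Spi^{n,n-1}_{d,E}$ whenever the relevant jet module is projective.

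Since the whole argument is diagrammatic, no step is genuinely hard; the only points demanding care are the identification of $-\circ\Spi^{n,n-1}_{d,E}$ with the inclusion of lower-order operators, ensuring that the cokernel is honestly $\WS^n_d$ and not a mere subquotient, and the precise projectivity hypothesis entering the splitting of \eqref{diag:elemental_jet_ses}, where the lift required is that of the identity of the \emph{quotient} jet module.
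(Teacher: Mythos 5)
Your proposal is correct and matches the paper's proof in essentials: both dualize the elemental jet short exact sequence of Proposition \ref{prop:elementalexactsequence} by $\AHom(-,F)$, use Corollary \ref{cor:WDO_always_representable} to turn $-\circ\Spi^{n,n-1}_{d,E}$ into the inclusion $\WD^{n-1}_d(E,F)\subseteq\WD^n_d(E,F)$, obtain $\Sr^n_{d,E,F}$ as the induced injection out of the cokernel $\WS^n_d(E,F)$ (the paper phrases this step via the snake lemma, you via left exactness directly, which is the same computation), and establish the isomorphism criterion exactly as in Proposition \ref{prop:symbols_representation_well-defined}.\eqref{prop:symbols_representation_well-defined:2}. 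Your closing remark that the splitting of \eqref{diag:elemental_jet_ses} is obtained by lifting the identity of the \emph{quotient} module, so that the projectivity actually needed is that of $\SJ^{n-1}_d E$ rather than of $\SJ^n_d E$ itself, is a correct sharpening of the paper's ``E.g.'' clause.
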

\begin{proof}
By Proposition \ref{prop:elementalexactsequence}, we have the short exact sequence \eqref{diag:elemental_jet_ses}.
For all $F$, the functor $\AHom(-, F)$ is left exact, so the following is a left exact sequence.
\begin{equation}\label{diag:representable_elemental_jet_les}
\begin{tikzcd}
0\ar[r]&\AHom(\SJ^{n-1}_d,F)\ar[r,hook,"-\circ \Spi^{n,n-1}_d"]&	\AHom(\SJ^n_d,F)\ar[r,"-\circ \Siota^n_d"]&\AHom(\SS^n_d,F).
\end{tikzcd}
\end{equation}
All maps appearing in \eqref{diag:representable_elemental_jet_les} are natural in $F$.

By Corollary \ref{cor:WDO_always_representable}, the two leftmost functors are isomorphic to the corresponding functors of elemental differential operators.
We thus get the following diagram of bifunctors, which commutes by the compatibility between projections and prolongations in the elemental case.
\begin{equation}\label{diag:elemental_bifunctors}
\begin{tikzcd}
0\ar[r]&\WD^{n-1}_d\ar[d,"\wr","(-\circ\Sj^{n-1}_d)^{-1}"']\ar[r,hook]&[40pt]	\WD^n_d\ar[d,"\wr"',"(-\circ\Sj^{n-1}_d)^{-1}"]\ar[r,two heads]&[40pt]\WS^n_d\ar[d,dashed,"\Sr^n_d"]\ar[r]&0\\
0\ar[r]&\AHom(\SJ^{n-1}_d,-)\ar[r,hook,"-\circ \Spi^{n,n-1}_d"]&	\AHom(\SJ^n_d,-)\ar[r,"-\circ \Siota^n_d"]&\AHom(\SS^n_d,-)
\end{tikzcd}
\end{equation}
The map $\Sr^n_d$ in \eqref{diag:elemental_bifunctors} exists because $\WS^n_d$ is by definition the cokernel of the inclusion $\WD^{n-1}_d \subseteq \WD^n_d$, or equivalently it corresponds to the cokernel of the isomorphic inclusion $-\circ\Spi^{n,n-1}_d$, and the bottom row of \eqref{diag:elemental_bifunctors} is a complex.
If we apply the snake lemma to \eqref{diag:elemental_bifunctors}, we obtain that $\Sr^n_d$ has to be a monomorphism.
The vertical isomorphisms in \eqref{diag:elemental_bifunctors} correspond to mapping a differential operator $\Delta$ to the (unique) lift $\widetilde{\Delta}$, and thus the monomorphism $\Sr^n_d$ corresponds to the inclusion of the image of the map described in \eqref{eq:Srmap}.

The proof of the last statement of the proposition is proven exactly as Proposition \ref{prop:symbols_representation_well-defined}.\eqref{prop:symbols_representation_well-defined:2}.
\end{proof}
\begin{rmk}
The analogue of Proposition \ref{prop:elemental_symbols_representation} for primitive differential operators and symbols holds \textit{mutatis mutandis}.
\end{rmk}
\begin{rmk}
The analogue of Lemma \ref{lemma:explicitrestrictedsymbol} also holds for elemental and primitive differential operators \textit{mutatis mutandis}.
\end{rmk}

\subsection{Symbol category and symbol algebras}\label{ss:symbcat}
In \cite[\jetscordiffcat]{FMW} it is shown that there is a category $\Diff_d$ whose objects are those of $\AMod$ and whose maps are differential operators of finite order.
Moreover, this category is enriched over $\Mod$, cf.\ \cite[\jetsremDOcatareenrichedinMod]{FMW}, and more specifically over ascending filtered $\bk$-modules.
\begin{prop}\label{prop:symbol_composition}
The composition of finite order differential operators induces a well-defined composition on the corresponding symbols
\begin{align}\label{eq:symbol_composition}
\circ\colon \Symb^n_d(F,G)\times \Symb^m_d(E,F)\longrightarrow \Symb^{n+m}_d(E,G),
&\hfill&
\symb^n_d(\Delta)\circ \symb^m_d(\Delta')\colonequals \symb^{n+m}_d(\Delta\circ\Delta'),
\end{align}
for $E$, $F$, $G$ in $\AMod$.
This induces a graded $\bk$-module morphism
\begin{equation}
\circ\colon \Symb^{\bullet}_d(F,G)\otimes \Symb^{\bullet}_d(E,F)\longrightarrow \Symb^{\bullet}_d(E,G),
\end{equation}
where $\Symb^{\bullet}(E,F)_d\colonequals\bigoplus_{n=0}^\infty \Symb^n_d(E,F)$.
\end{prop}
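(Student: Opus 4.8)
The plan is to recognize the symbol composition as the passage to the associated graded of the already-filtered composition on $\Diff_d$, so that the entire content of the statement reduces to a well-definedness check on quotients together with a bilinearity bookkeeping. The one structural input I would invoke is that $\Diff_d$ is enriched over ascending filtered $\bk$-modules (cf.\ \cite[\jetscordiffcat, \jetsremDOcatareenrichedinMod]{FMW}); concretely, this says that composition respects the filtration additively,
\begin{equation*}
\Diff^n_d(F,G)\circ \Diff^m_d(E,F)\subseteq \Diff^{n+m}_d(E,G),
\end{equation*}
and that this composition is $\bk$-bilinear. Everything else is formal.

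First I would establish that the rule in \eqref{eq:symbol_composition} is independent of representatives. Fix $\Delta_1,\Delta_2\in \Diff^n_d(F,G)$ with $\symb^n_d(\Delta_1)=\symb^n_d(\Delta_2)$, that is $\Delta_1-\Delta_2\in\Diff^{n-1}_d(F,G)$, together with $\Delta_1',\Delta_2'\in\Diff^m_d(E,F)$ satisfying $\Delta_1'-\Delta_2'\in\Diff^{m-1}_d(E,F)$. Using the telescoping identity
\begin{equation*}
\Delta_1\circ\Delta_1'-\Delta_2\circ\Delta_2'=\Delta_1\circ(\Delta_1'-\Delta_2')+(\Delta_1-\Delta_2)\circ\Delta_2',
\end{equation*}
the filtered compatibility places the first summand in $\Diff^n_d(F,G)\circ\Diff^{m-1}_d(E,F)\subseteq \Diff^{n+m-1}_d(E,G)$ and the second in $\Diff^{n-1}_d(F,G)\circ\Diff^m_d(E,F)\subseteq\Diff^{n+m-1}_d(E,G)$. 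Hence the difference lies in $\Diff^{n+m-1}_d(E,G)$, so $\symb^{n+m}_d(\Delta_1\circ\Delta_1')=\symb^{n+m}_d(\Delta_2\circ\Delta_2')$, which is exactly well-definedness of the composition on symbols.

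Finally, for the graded morphism, I would observe that the composition $\Diff^n_d(F,G)\times\Diff^m_d(E,F)\to\Diff^{n+m}_d(E,G)$ is $\bk$-bilinear and that the symbol projections $\symb^\bullet_d$ are $\bk$-linear, whence the induced map on symbols is $\bk$-bilinear; it therefore factors uniquely through the tensor product $\Symb^n_d(F,G)\otimes\Symb^m_d(E,F)$. Assembling these factored maps over all $n,m\ge 0$, with $\Symb^n\otimes\Symb^m$ landing in $\Symb^{n+m}$, yields the graded $\bk$-module morphism $\circ\colon\Symb^\bullet_d(F,G)\otimes\Symb^\bullet_d(E,F)\to\Symb^\bullet_d(E,G)$. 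I do not anticipate a genuine obstacle here: this is the standard construction of the associated graded of a filtered $\bk$-linear category, and the only point requiring care is the additivity of orders under composition, which is precisely the cited enrichment over filtered modules.
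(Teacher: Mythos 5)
Your proof is correct and follows essentially the same route as the paper: both arguments reduce well-definedness to the filtered compatibility of composition (your telescoping identity $\Delta_1\circ\Delta_1'-\Delta_2\circ\Delta_2'=\Delta_1\circ(\Delta_1'-\Delta_2')+(\Delta_1-\Delta_2)\circ\Delta_2'$ is just a repackaging of the paper's expansion of $(\Delta+\alpha)\circ(\Delta'+\alpha')$), and both obtain the graded morphism from $\bk$-bilinearity of composition and $\bk$-linearity of $\symb^n_d$, factoring through the tensor product. No gaps.
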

\begin{proof}
We start by proving that the composition \eqref{eq:symbol_composition} is well-defined.
Consider two linear differential operators equivalent to $\Delta$ and $\Delta'$ respectively.
By definition, they must be of the form $\Delta+\alpha$ and $\Delta'+\alpha'$, respectively, where $\alpha\in\Diff^{n-1}_d(F,G)$ and $\alpha'\in\Diff^{m-1}_d(E,F)$.
We get
\begin{equation}
\symb^n_d(\Delta+\alpha)\circ \symb^m_d(\Delta'+\alpha')
=\symb^{n+m}_d\left((\Delta+\alpha)\circ(\Delta'+\alpha')\right)
=\symb^{n+m}_d\left(\Delta\circ\Delta'+\Delta\circ\alpha'+\alpha\circ\Delta'+\alpha\circ\alpha'\right).
\end{equation}
Since the composition of differential operators is compatible with the filtration, we have that
\begin{align}
\Delta\circ\alpha'+\alpha\circ\Delta'\in\Diff^{n+m-1}_d(E,G)
&\hfill&
\text{and}
&\hfill&
\alpha\circ\alpha'\in\Diff^{n+m-2}_d(E,G)\subseteq\Diff^{n+m-1}_d(E,G).
\end{align}
It follows that
\begin{equation}
\symb^{n+m}_d\left(\Delta\circ\Delta'+\Delta\circ\alpha'+\alpha\circ\Delta'+\alpha\circ\alpha'\right)
=\symb^{n+m}_d\left(\Delta\circ\Delta'\right),
\end{equation}
and thus the composition is well-defined.

This gives us an operation
\begin{equation}
\circ\colon \Symb^{\bullet}_d(F,G)\times \Symb^{\bullet}_d(E,F)\longrightarrow \Symb^{\bullet}_d(E,G),
\end{equation}
which is compatible with the grading.
This multiplication factors through the tensor product $\Symb^{\bullet}_d(F,G)\otimes \Symb^{\bullet}_d(E,F)$, with the induced grading, because the composition of differential operators is $\bk$-bilinear, and $\symb^n_d$ is $\bk$-linear.
\end{proof}
Notice that $\Symb^\bullet_d(E,F)$ is the graded module associated to the ascending filtered module $\Diff_d(E,F)$ for all $E$, $F$ in $\AMod$.

Proposition \ref{prop:symbol_composition} allows us to define the following category.
\begin{defi}
Let the \emph{symbol category}, denoted $\Symb^{\bullet}_d$, be the enriched category over graded $\bk$-modules having as objects left $A$-modules, and as morphisms between $E$ and $F$, the elements of the graded module $\Symb^\bullet_d(E,F)$, with the natural composition induced by Proposition \ref{prop:symbol_composition}.
\end{defi}
\begin{cor}\label{cor:symb_graded_algebra}
From the category $\Symb^{\bullet}_d$, the module $\Symb^\bullet_d(E,E)$ inherits a structure of graded $\bk$-algebra with respect to the composition, and $\Symb^\bullet_d(E,F)$ inherits a structure of $(\Symb^\bullet_d(F,F),\Symb^\bullet_d(E,E))$-bimodule, for all $E$, $F$ in $\AMod$ from the $(\Diff_d(F,F),\Diff_d(E,E))$-bimodule structure on $\Diff_d(E,F)$, cf.\ \cite[\jetscoralgebrasofDOs]{FMW}.

In particular, $\Symb^{\bullet}_d(E,F)$ will also inherit a structure of graded $(\AHom(F,F),\AHom(E,E))$-bimodule.
Hence, $\Symb^{\bullet}_d(A,A)$ has an induced structure of graded $(A^{\op},A^{\op})$-bimodule cf.\ \cite[\jetscorAopzeroorder]{FMW}, and therefore of $A$-bimodule by swapping the actions.
Since the composition map is compatible with the $A$-actions, this induces a structure of graded $A$-algebra on $\Symb^{\bullet}_d(A,A)$.
\end{cor}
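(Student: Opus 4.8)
The plan is to derive everything formally from the enriched structure recorded in Proposition~\ref{prop:symbol_composition}, together with the identification of the degree-zero components, so that the only genuine content is the bookkeeping of left versus right actions.

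First I would note that Proposition~\ref{prop:symbol_composition} exhibits $\Symb^\bullet_d$ as a category enriched over graded $\bk$-modules, with unit morphisms $\symb^0_d(\id_E)$ sitting in degree zero. In any category enriched over a monoidal category, the endomorphism hom-object is a monoid and each hom-object is a bimodule over the relevant endomorphism monoids; applied here, composition makes $\Symb^\bullet_d(E,E)$ a graded $\bk$-algebra and makes $\Symb^\bullet_d(E,F)$ a graded $(\Symb^\bullet_d(F,F),\Symb^\bullet_d(E,E))$-bimodule via post- and precomposition, the two actions commuting by associativity of composition. Equivalently, this is the associated-graded algebra and bimodule of the ascending filtered structure on $\Diff_d$ coming from \cite[\jetscoralgebrasofDOs]{FMW}. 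This disposes of the first two assertions.

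For the ``in particular'' clause I would identify the degree-zero piece. By Definition~\ref{def:symbolquotientdef} and the convention $\Diff^{-1}_d=0$, we have $\Symb^0_d(G,G)=\Diff^0_d(G,G)=\AHom(G,G)$. Restricting the graded bimodule actions along the inclusion of this degree-zero summand (for $G=E$ and $G=F$) equips $\Symb^\bullet_d(E,F)$ with the structure of a graded $(\AHom(F,F),\AHom(E,E))$-bimodule. Specializing to $E=F=A$ and invoking $\AHom(A,A)\cong A^{\op}$ from \cite[\jetscorAopzeroorder]{FMW} then yields a graded $(A^{\op},A^{\op})$-bimodule structure on $\Symb^\bullet_d(A,A)$; since a left (respectively right) $A^{\op}$-module is the same datum as a right (respectively left) $A$-module, interchanging the two sides produces the asserted $A$-bimodule.

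Finally, to promote this to a graded $A$-algebra I would verify that the composition multiplication is $A$-bilinear for these actions, i.e.\ that it descends across the middle $A$-action; this is immediate because the bimodule actions are themselves implemented by composition with degree-zero symbols, so $A$-bilinearity of the product reduces once more to associativity of composition. The only delicate point, and the main thing to get right, is the swap from $(A^{\op},A^{\op})$ to an honest $A$-bimodule: one must confirm that under $\AHom(A,A)\cong A^{\op}$ the unit $\symb^0_d(\id_A)$ corresponds to $1\in A$ and that the left and right actions are assigned the intended sides, after which the algebra axioms hold by construction.
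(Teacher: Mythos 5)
Your proposal is correct and follows essentially the same route the paper intends: the corollary is stated without a separate proof precisely because it follows formally from the enriched category structure of Proposition~\ref{prop:symbol_composition} (endomorphism hom-objects are graded algebras, hom-objects are bimodules via pre- and postcomposition), the degree-zero identification $\Symb^0_d(G,G)=\Diff^0_d(G,G)={}_A\!\Hom(G,G)$, and the identification ${}_A\!\Hom(A,A)\cong A^{\op}$ with the action swap. Your extra care about the unit $\symb^0_d(\id_A)\mapsto 1$ and the side bookkeeping in the $A^{\op}$-to-$A$ swap is exactly the right point to check and is handled correctly.
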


\begin{defi}
\label{defi:symbol_algebra}
Given $E$ in $\AMod$, we term the $\bk$-algebra $\Symb^\bullet_d(E,E)$ the \emph{symbol algebra on $E$}.
\end{defi}

\begin{rmk}
Observe that what has been developed in this subsection (i.e. § \ref{ss:symbcat}) also holds \textit{mutatis mutandis} for elemental (and primitive) differential operators and their symbols if $\Omega^1_d$ is flat in $\ModA$.
\end{rmk}

\begin{rmk}
Classically, given a smooth manifold $M$ and a vector bundle $E$ over $M$, $\Symb^{\bullet}(A,A)$ is (a dense subset of) the algebra of smooth functions on the cotangent bundle $T^*M$ (cf.\ \cite[§5.4]{alekseevskij28geometry} and \cite[Proposition 10.12]{nestruev2020smooth}). 
Hence we can regard $\Symb^{\bullet}_d(A,A)$ (and its elemental and primitive analogues) as providing noncommutative notions of ‘‘total space algebra'' of $\Omega^1_d$. 
\end{rmk}

When we are given a left connection on $E$ in $\AMod$, this provides a natural way to construct a differential operator of order at most $1$ on $E$ with a given symbol.
\begin{prop}\label{prop:quantisation}\
\begin{enumerate}
\item\label{prop:quantisation:1} Let $E$ and $F$ be in $\AMod$, and let $\nabla\colon E\to \Omega^1_d(E)$ be a left connection on $E$.
Then there exists a right inverse to $\symb^1_d$ given by
\begin{equation}
q^1\colon \Symb^1_d(E,F)\longrightarrow \Diff^1_d(E,F),
\end{equation}
corresponding, at the level of the associated representable functor, to the precomposition
\begin{align}\label{eq:nabla_quantisation}
\AHom(\Omega^1_d(E),F)\longrightarrow \AHom(J^1_d E,F),
&\hfill&
\phi\longmapsto \phi\circ \widetilde{\nabla}.
\end{align}
This map is left $\AHom(F,F)$-linear.
\item\label{prop:quantisation:2} There is a canonical natural right inverse to $\symb^1_d$
\begin{equation}
q^1\colon \Symb^1_d(A,-)\longrightarrow \Diff^1_d(A,-)
\end{equation}
corresponding to the precomposition with $\widetilde{d}\colon J^1_d A\to \Omega^1_d$.
This map is right $A$-linear.
\end{enumerate}
\end{prop}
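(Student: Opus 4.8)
The plan is to treat both parts by the same device: a left connection (respectively the universal differential) provides a retraction of the jet inclusion $\iota^1_{d,E}$, and precomposition with such a retraction produces a section of the symbol map. Throughout I use the two identifications valid in degree $1$, namely the representability isomorphism $\Diff^1_d(E,F)\cong\AHom(J^1_d E,F)$ (Remark \ref{rmk:representability_low_degree}) and the isomorphism $r^1_{d,E,F}\colon\Symb^1_d(E,F)\xrightarrow{\sim}\AHom(S^1_d(E),F)=\AHom(\Omega^1_d(E),F)$ of Proposition \ref{prop:symbol_representability_low_dimension}. Under these identifications the diagram \eqref{diag:hatchet} shows that $\symb^1_d$ is carried to the precomposition $-\circ\iota^1_{d,E}$, so that producing a right inverse of $\symb^1_d$ amounts to producing a right inverse of $-\circ\iota^1_{d,E}$.

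For \eqref{prop:quantisation:1}, recall from Proposition \ref{prop:symbolsofconnections} that a left connection $\nabla$ has $\symb^1_d(\nabla)=\id_{\Omega^1_d(E)}$, which under the identification above means precisely that its lift satisfies $\widetilde{\nabla}\circ\iota^1_{d,E}=\id_{\Omega^1_d(E)}$; thus $\widetilde{\nabla}$ is a retraction of $\iota^1_{d,E}$. I would then define $q^1$ to be the map corresponding, via the isomorphisms above, to precomposition $\phi\mapsto\phi\circ\widetilde{\nabla}$, $\AHom(\Omega^1_d(E),F)\to\AHom(J^1_d E,F)$. The section property is immediate: precomposing $\phi\circ\widetilde{\nabla}$ with $\iota^1_{d,E}$ yields $\phi\circ(\widetilde{\nabla}\circ\iota^1_{d,E})=\phi$, whence $\symb^1_d\circ q^1=\id$. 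Left $\AHom(F,F)$-linearity then holds because that action is postcomposition, which commutes with precomposition by $\widetilde{\nabla}$: for $\psi\in\AHom(F,F)$ one has $\psi\circ(\phi\circ\widetilde{\nabla})=(\psi\circ\phi)\circ\widetilde{\nabla}$.

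For \eqref{prop:quantisation:2} I would specialise to $E=A$, taking as retraction the lift $\widetilde{d}\colon J^1_d A\to\Omega^1_d$ of the universal differential; the identity $\widetilde{d}\circ\iota^1_{d,A}=\id_{\Omega^1_d}$ is already recorded in the proof of Proposition \ref{prop:symbol_representability_low_dimension}. The argument of \eqref{prop:quantisation:1} then shows that precomposition with $\widetilde{d}$ defines a right inverse $q^1$ of $\symb^1_d$, which concretely sends the symbol represented by $\phi\in\AHom(\Omega^1_d,F)$ to the operator $\phi\circ d\colon A\to F$. Naturality in $F$ is clear, since $\widetilde{d}$ is independent of $F$ and every map in sight is induced by (pre- or post-)composition. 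It remains to establish right $A$-linearity, for which I would invoke the right $A$-module structures on $\Symb^1_d(A,-)$ and $\Diff^1_d(A,-)$ coming from the symbol and differential-operator algebras $\Symb^\bullet_d(A,A)$ and $\Diff_d(A,A)$ (Corollary \ref{cor:symb_graded_algebra}), together with the bilinearity of the representability maps recorded in Remark \ref{rmk:representability_maps_are_endobilinear}, and verify that precomposition with $\widetilde{d}$ intertwines the two actions.

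The cleanest steps are the construction of $q^1$ and the verification that it is a section, both of which collapse to the single identity $\widetilde{\nabla}\circ\iota^1_{d,E}=\id$ (resp. $\widetilde{d}\circ\iota^1_{d,A}=\id$) read through the degree-$1$ representability results. The main obstacle is the bookkeeping of the module structures: in \eqref{prop:quantisation:1} left $\AHom(F,F)$-linearity is automatic because it is the postcomposition side, whereas \eqref{prop:quantisation:2} asserts right $A$-linearity, which lives on the precomposition side, where the Leibniz rule for $d$ enters. Identifying the precise right $A$-actions under which $\widetilde{d}$ is compatible—so that the resulting lower-order Leibniz term is accounted for—is the delicate point, and I would handle it by transporting the actions through Corollary \ref{cor:symb_graded_algebra} and Remark \ref{rmk:representability_maps_are_endobilinear} rather than by a direct computation on prolongations.
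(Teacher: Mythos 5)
Your part \eqref{prop:quantisation:1}, and the construction and section property in part \eqref{prop:quantisation:2}, coincide with the paper's own proof: one passes to the representable side via Remark \ref{rmk:representability_low_degree} and Proposition \ref{prop:symbol_representability_low_dimension}, invokes $\widetilde{\nabla}\circ\iota^1_{d,E}=\id_{\Omega^1_d(E)}$ (resp.\ $\widetilde{d}\circ\iota^1_{d,A}=\id_{\Omega^1_d}$, which is just the case of the canonical left connection $\nabla=d$, exactly how the paper phrases it), and obtains left $\AHom(F,F)$-linearity from associativity of composition. Up to that point your argument is essentially identical to the paper's.

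The genuine gap is your handling of right $A$-linearity in \eqref{prop:quantisation:2}. You locate the action ``on the precomposition side, where the Leibniz rule for $d$ enters,'' and propose to verify that precomposition with $\widetilde{d}$ intertwines the two actions, with ``the resulting lower-order Leibniz term accounted for.'' For the source-side action --- the right $\AHom(A,A)\cong A^{\op}$-action by precomposition, transported via $a\mapsto R_a$ --- the claim is simply \emph{false}, and no bookkeeping can repair it: one computes $\widetilde{d}\circ J^1_d(R_a)\bigl([b\otimes c]\bigr)=b\,d(ca)=b(dc)a+bc\,da$, whereas $R_a\circ\widetilde{d}\bigl([b\otimes c]\bigr)=b(dc)a$, so the intertwining fails exactly by the Leibniz term; concretely, $q^1\bigl(\symb^1_d(\Delta\circ R_a)\bigr)$ and $q^1\bigl(\symb^1_d(\Delta)\bigr)\circ R_a$ differ by the order-zero operator $b\mapsto b\,\bigl(\widetilde{\Delta}\circ\iota^1_{d,A}\bigr)(da)$, which is nonzero already in the classical case. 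The right $A$-action the statement refers to is the other one: by Corollary \ref{cor:symb_graded_algebra} the $A$-bimodule structure arises from the $(A^{\op},A^{\op})$-bimodule structure \emph{by swapping the actions}, so the right action of $a$ is postcomposition with $R_a$, i.e.\ the left $\AHom(A,A)$-action read through $A^{\op}\cong\AHom(A,A)$ --- consistent with $X\cdot a=R_a\circ X$ in Definition \ref{def:vectorfields}. For that action, right $A$-linearity is immediate from associativity, being nothing but the left $\AHom(F,F)$-linearity already established in part \eqref{prop:quantisation:1}; this is precisely the paper's one-line argument that right $A$-linearity follows from $A^{\op}=\AHom(A,A)$-linearity. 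So the fix is not to account for a Leibniz correction, but to recognize that the relevant action lives on the postcomposition side, where the verification is trivial.
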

\begin{proof}\
\begin{enumerate}
\item By Remark \ref{rmk:representability_low_degree} and Proposition \ref{prop:symbol_representability_low_dimension}, using the natural isomorphism, it is equivalent to define the map $q^1$ on the representable functors, as shown in \eqref{eq:nabla_quantisation}.
As in the proof of Proposition \ref{prop:symbolsofconnections}, we invoke that $\widetilde{\nabla}\circ\iota^1_{d,E}=\id_{\Omega^1_d (E)}$ from \cite[\jetspropconnexionsplits]{FMW}, which proves that $\symb^1_d\circ q^1=\id_{\Diff^1_d(E,F)}$.

Left $\AHom(F,F)$-linearity follows from associativity, since $q^{1}$ is a precomposition.
\item It follows from \eqref{prop:quantisation:1} by taking the canonical left connection $d\colon A\to \Omega^1_d$.
Right $A$-linearity follows from $A^\op=\AHom(A,A)$-linearity.\qedhere
\end{enumerate}
\end{proof}

\section{Vector fields}
In this section we are presenting a generalization of the notion of vector field.
Classically, vector fields can be interpreted as those differential operators of order at most $1$ on the algebra of smooth functions that annihilate the subalgebra of locally constant functions.
Therefore, we give the following definition for the noncommutative setting.
\begin{defi}\label{def:vectorfields}
Given a first order differential calculus $d\colon A\to\Omega^1_d$, we denote the \emph{right $A$-module of vector fields on $A$} by
	\begin{align}
		\VF\colonequals \Ann(\ker(d))\cap \Diff^1_d(A,A),
	\end{align}
	where $\operatorname{Ann}$ is the annihilator.
	The right module structure is given by
	\begin{equation}\label{eq:right_action_VF}
		X\cdot a = R_a \circ X
	\end{equation}
	for all $a\in A$, where $R_a\colon A\to A$ is the right multiplication by $a$.
\end{defi}
\begin{prop}
	The right $A$-module structure on $\VF$ is well-defined.
\end{prop}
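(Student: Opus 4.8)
The plan is to verify the two requirements that make the formula $X \cdot a = R_a \circ X$ a right $A$-module structure: first, that $\VF$ is \emph{closed} under this operation, i.e.\ that $R_a \circ X \in \VF$ whenever $X \in \VF$ and $a \in A$; and second, that the operation obeys the right-module axioms. The substantive content lies entirely in the closure statement, since the axioms reduce to elementary identities for right multiplication.

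For closure I would first observe that the right multiplication $R_a \colon A \to A$, $b \mapsto ba$, is left $A$-linear, since $R_a(cb) = cba = c\,R_a(b)$; hence $R_a \in \AHom(A,A) = \Diff^0_d(A,A)$ is a zeroth-order operator. Now $\VF = \Ann(\ker(d)) \cap \Diff^1_d(A,A)$, so I must check both membership conditions for $R_a \circ X$. For the order condition, I would use that $X \in \Diff^1_d(A,A)$ admits an $A$-linear lift $\widetilde{X} \colon J^1_d A \to A$ with $X = \widetilde{X} \circ j^1_{d,A}$ (recall that $J^1_d$ always represents $\Diff^1_d$, cf.\ Remark \ref{rmk:representability_low_degree}); then $R_a \circ \widetilde{X}$ is again $A$-linear and satisfies $R_a \circ X = (R_a \circ \widetilde{X}) \circ j^1_{d,A}$, exhibiting $R_a \circ X$ as a differential operator of order at most $1$. (Equivalently, this is the composition $\Diff^0_d \circ \Diff^1_d \subseteq \Diff^1_d$.) For the annihilator condition, if $k \in \ker(d)$ then $X(k) = 0$, whence $(R_a \circ X)(k) = R_a(0) = 0$, so $R_a \circ X$ annihilates $\ker(d)$. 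Together these give $R_a \circ X \in \VF$.

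For the module axioms I would record the identities $R_1 = \id_A$, $R_{ab} = R_b \circ R_a$, and $R_{\lambda a + b} = \lambda R_a + R_b$ (all immediate from the definition of $R$), and read off the laws: $X \cdot 1 = R_1 \circ X = X$; associativity $(X \cdot a)\cdot b = R_b \circ R_a \circ X = R_{ab}\circ X = X \cdot (ab)$; and $\bk$-bilinearity of $(X,a) \mapsto R_a \circ X$ from bilinearity of composition together with $R_{\lambda a + b} = \lambda R_a + R_b$. This establishes that $\VF$ is a right $A$-module. I do not expect a genuine obstacle; the only point requiring care is recognizing that $R_a$ is a zeroth-order operator, so that post-composition preserves both the order-at-most-$1$ bound and the vanishing on $\ker(d)$, and everything else is a formal check of the right-module laws.
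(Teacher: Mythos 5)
Your proposal is correct and follows essentially the same route as the paper: the paper's proof likewise notes that $R_a$ is a differential operator of order $0$ and invokes the composition result of \cite[\jetspropdifferentialoperatorcomposition]{FMW} (which you instead verify directly via the explicit lift $R_a\circ\widetilde{X}$), then observes that $R_a\circ X$ annihilates $\ker(d)$ since $X$ does. Your additional verification of the right-module axioms is fine but is taken as immediate in the paper.
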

\begin{proof}
	Let $a\in A$ and $X \in \VF$.
	Since $R_a$ is a differential operator of order $0$, cf.\ \cite[\jetspropzeroorder]{FMW}, $X\cdot a$ is still a first order operator, cf.\ \cite[\jetspropdifferentialoperatorcomposition]{FMW}, and it annihilates $\ker(d)$, since $X$ does.
	Hence, the right action is well-defined.
\end{proof}
In the literature, there are already two notions of modules of vector fields, called right and left, cf.\ \cite[§2.7, p.~167]{BeggsMajid}, first proposed in \cite[Main theorem, p.~1201]{borowiec1996cartan}.
In fact, our notion coincides with that of left vector fields, namely $\AHom(\Omega^1_d,A)$, where the left and right $A$-actions are given by precomposition and composition with the right multiplication map, respectively, cf.\ \cite[§2.7, p.~168]{BeggsMajid}.
\begin{prop}\label{prop:fields_and_tangent_space}
	We have the following
	\begin{enumerate}
		\item\label{prop:fields_and_tangent_space:i} $\VF$ is isomorphic to $\AHom(\Omega^1_d,A)$ as a right $A$-module via the maps
	\begin{align}\label{eq:iso_vfs_BM_left_vfs}
		\VF\longrightarrow \AHom(\Omega^1_d,A),
		&\hfill&
		X \longmapsto \widetilde{X} \circ \iota^1_{d,A},
	\end{align}
	\begin{align}\label{eq:iso_vfs_BM_left_vfs_inverse}
		\AHom(\Omega^1_d, A)\longrightarrow \VF,
		&\hfill&
		\theta \longmapsto \theta \circ d.
	\end{align}
		\item\label{prop:fields_and_tangent_space:ii} Consider the $\bk$-linear operator on $\Diff^1_d(A,A)$ given by 
			\begin{equation}\label{eq:endomorphism_selecting_vf}
				\Delta \longmapsto \widetilde{\Delta}\circ \iota^1_{d,A} \circ d.
			\end{equation}
			The space $\VF$ is precisely the $1$-eigenspace of this map.
	\end{enumerate}
\end{prop}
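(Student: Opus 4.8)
The plan is to prove part \eqref{prop:fields_and_tangent_space:i} first and then derive part \eqref{prop:fields_and_tangent_space:ii} as a consequence. For \eqref{prop:fields_and_tangent_space:i} I would first verify that the two displayed maps \eqref{eq:iso_vfs_BM_left_vfs} and \eqref{eq:iso_vfs_BM_left_vfs_inverse} are well-defined, then show they are mutually inverse right $A$-module morphisms. The map \eqref{eq:iso_vfs_BM_left_vfs_inverse} sending $\theta \mapsto \theta \circ d$ lands in $\Diff^1_d(A,A)$ because $\theta\circ d$ factors through $J^1_d A$ via $\theta \circ \widetilde{d}$ (recalling $\widetilde{d}\circ j^1_{d,A}=d$), so it is a first-order operator; moreover it annihilates $\ker(d)$ tautologically, hence lands in $\VF$. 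For the map \eqref{eq:iso_vfs_BM_left_vfs}, given $X\in\VF$ with unique lift $\widetilde{X}\colon J^1_d A\to A$, the composite $\widetilde{X}\circ\iota^1_{d,A}\colon \Omega^1_d\to A$ is a well-defined left $A$-module map (using that $S^1_d=\Omega^1_d$, which is the $n=1$ case of the symmetric-forms identification implicit in the $1$-jet exact sequence). Note that the lift $\widetilde{X}$ is unique precisely because $J^1_d$ represents $\Diff^1_d$ in degree $1$ (Remark \ref{rmk:representability_low_degree}), so this map is unambiguous.

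The core of \eqref{prop:fields_and_tangent_space:i} is checking the two compositions are identities. Starting from $\theta\in\AHom(\Omega^1_d,A)$, form $X=\theta\circ d$; its lift is $\widetilde{X}=\theta\circ\widetilde{d}$, and then $\widetilde{X}\circ\iota^1_{d,A}=\theta\circ\widetilde{d}\circ\iota^1_{d,A}=\theta$, using the splitting relation $\widetilde{d}\circ\iota^1_{d,A}=\id_{\Omega^1_d}$ from \cite[\jetspropconnexionsplits]{FMW} (exactly as invoked in the proof of Proposition \ref{prop:symbolsofconnections}). Conversely, starting from $X\in\VF$, set $\theta=\widetilde{X}\circ\iota^1_{d,A}$; then $\theta\circ d=\widetilde{X}\circ\iota^1_{d,A}\circ d$, and I must show this equals $X$. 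This is the step requiring the annihilator hypothesis: the $1$-jet exact sequence gives $j^1_{d,A}(a)=\iota^1_{d,A}(da)+(\text{a term in }\SJ^0_d)$ corresponding to the splitting $J^1_d A\cong \Omega^1_d \oplus A$ induced by $\widetilde{d}$, and applying $\widetilde{X}$ while using that $X$ annihilates $\ker(d)$ forces the zero-order contribution to vanish, yielding $\widetilde{X}\circ\iota^1_{d,A}\circ d = \widetilde{X}\circ j^1_{d,A}=X$. Right $A$-linearity of \eqref{eq:iso_vfs_BM_left_vfs} then follows by matching the right action \eqref{eq:right_action_VF}, $X\cdot a = R_a\circ X$, against the Beggs–Majid right action (composition with right multiplication) under the correspondence.

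Part \eqref{prop:fields_and_tangent_space:ii} is then immediate from \eqref{prop:fields_and_tangent_space:i}. The operator \eqref{eq:endomorphism_selecting_vf} on $\Diff^1_d(A,A)$ is exactly the composite of \eqref{eq:iso_vfs_BM_left_vfs} followed by \eqref{eq:iso_vfs_BM_left_vfs_inverse}, extended from $\VF$ to all of $\Diff^1_d(A,A)$: it sends $\Delta\mapsto(\widetilde{\Delta}\circ\iota^1_{d,A})\circ d$. A differential operator lies in its $1$-eigenspace iff $\widetilde{\Delta}\circ\iota^1_{d,A}\circ d=\Delta$; by the argument above this composite always reproduces an element of $\VF$, and it equals $\Delta$ exactly when $\Delta$ already annihilates $\ker(d)$, i.e.\ when $\Delta\in\VF$. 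The only subtlety is that the operator is idempotent onto $\VF$, which follows since on $\VF$ the round-trip is the identity by \eqref{prop:fields_and_tangent_space:i}. I expect the main obstacle to be the second round-trip computation in \eqref{prop:fields_and_tangent_space:i}: carefully extracting, from the $1$-jet sequence, how $\widetilde{X}\circ\iota^1_{d,A}\circ d$ recovers $X$, and verifying that the annihilator condition is precisely what kills the zero-order defect rather than merely being compatible with it.
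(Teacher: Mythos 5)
Your proposal is correct and follows essentially the same route as the paper: both round trips in part (i) rest on $\widetilde{d}\circ\iota^1_{d,A}=\id_{\Omega^1_d}$ and on the computation $\widetilde{X}\circ\iota^1_{d,A}\circ d(a)=X(a)-aX(1)=X(a)$, where your splitting decomposition $j^1_{d,A}(a)=\iota^1_{d,A}(da)+a\,j^1_{d,A}(1)$ is exactly the paper's identity $\iota^1_{d,A}(da)=\widehat{p}_{d,A}(d_u a)=[1\otimes a-a\otimes 1]$, with the annihilator hypothesis entering only through $X(1)=0$. Part (ii) is likewise argued as in the paper: the operator \eqref{eq:endomorphism_selecting_vf} is the composite of the two maps, hence the identity on $\VF$, and any fixed point annihilates $\ker(d)$ because $d$ does.
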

\begin{proof}\
\begin{enumerate}
\item The map \eqref{eq:iso_vfs_BM_left_vfs} is well defined, as the lift to $J^1_d A$ is unique, cf.\ \cite[\jetspropuniquelift]{FMW}.
	The maps \eqref{eq:iso_vfs_BM_left_vfs} and \eqref{eq:iso_vfs_BM_left_vfs_inverse} are inverse to eachother.
	In one direction, for all $X\in\VF$ and $a\in A$, we use \cite[\jetseqonejetd]{FMW}, and \cite[\jetsequniquefirstorderlift]{FMW} to obtain
	\begin{equation}	
		\begin{split}
		\widetilde{X}\circ\iota^1_{d,A}\circ d(a)
		&=\widetilde{X}\circ\iota^1_{d,A}\circ p_{d,A}\circ d_u(a)\\
		&=\widetilde{X}\circ \widehat{p}_{d,A}( d_u(a))\\
		&=\widetilde{X}([1\otimes a-a\otimes 1])\\
		&=X(a)-aX(1)\\
		&=X(a),
	\end{split}
	\end{equation}
	where the last equality follows from the definition of $\VF$, as $1\in\ker(d)$.
	
	Vice versa, for all $\theta\in\AHom(\Omega^1_d,A)$, we have that $\theta\circ d=\theta\circ\widetilde{d}\circ j^1_{d,A}$, and hence $\widetilde{\theta\circ d}=\theta\circ\widetilde{d}$.
	If we now apply \eqref{eq:iso_vfs_BM_left_vfs}, we obtain $\theta\circ\widetilde{d}\circ \iota^1_{d,A}=\theta$, since $\widetilde{d}\circ \iota^1_{d,A}=\id_{J^1_d A}$, cf.\ \cite[\jetssssSplitting]{FMW}.
	
	Finally, \eqref{eq:iso_vfs_BM_left_vfs_inverse} is right $A$-linear, since $(\theta a)\circ d= (R_a \circ \theta)\circ d=R_a \circ (\theta\circ d)=(\theta\circ d)a$.

	\item Since \eqref{eq:endomorphism_selecting_vf} coincides with the composition of \eqref{eq:iso_vfs_BM_left_vfs} and \eqref{eq:iso_vfs_BM_left_vfs_inverse}, and hence the identity, when restricted to vector fields, we get that $\VF$ is contained in the $1$-eigenspace of this operator.
	On the other hand, if $\Delta\in\Diff^1_d(A,A)$ is in the $1$-eigenspace, we have $\Delta=\widetilde{\Delta}\circ \iota^1_{d,A}\circ d$, and thus $\Delta$ vanishes on $\ker(d)$, yielding $\Delta\in\VF$.\qedhere
\end{enumerate}
\end{proof}

\begin{rmk}
The map \eqref{eq:iso_vfs_BM_left_vfs} is just the restriction of the map $r^1_d$, cf.\ \eqref{eq:rmap}, to $\VF\subseteq \Diff^1_d(A,A)$.
Since \eqref{eq:iso_vfs_BM_left_vfs} is an isomorphism, we have $\AHom(\Omega^1_d,A)\cong \Symb^1(A,A)$.

The fact that this restriction is an isomorphism implies that the inverse map \eqref{eq:iso_vfs_BM_left_vfs_inverse} is a right inverse for $\symb^1_d$, and in particular it is the one induced by $d$ as in Proposition \ref{prop:quantisation}.\eqref{prop:quantisation:2}.
Thus, a vector field is a canonical representative for each first order symbol.

This induces a splitting of first order differential operators into differential operators of order $0$, i.e.\ the bimodule of right multiplications $R_A\cong A^\op$ by $A$, cf.\ \cite[\jetspropzeroorder]{FMW}, and vector fields
\begin{equation}
\Diff^1_d(A,A)=R_A\oplus_A \VF.
\end{equation}
\end{rmk}
\begin{prop}\label{prop:tangentbundle_is_bundle}
	The right $A$-module of vector fields $\VF$ can be equipped with a bimodule structure, by introducing the left action
	\begin{equation}\label{eq:VFleftaction}
		aX
		= \widetilde{X} \circ \iota^1_{d,A} \circ R_a \circ d,
	\end{equation}
	where $R_a$ is the right multiplication operator of $a\in A$ on $\Omega^1_d$.

With this action, the right $A$-linear isomorphism of Proposition \ref{prop:fields_and_tangent_space} becomes an $A$-bilinear isomorphism.
\end{prop}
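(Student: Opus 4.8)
The plan is to avoid verifying the module axioms for \eqref{eq:VFleftaction} directly, and instead to obtain them by \emph{transport of structure} along the isomorphism of Proposition \ref{prop:fields_and_tangent_space}. First I would record that the target $\AHom(\Omega^1_d, A)$ is itself an $A$-bimodule. Its right action is the one already appearing in Proposition \ref{prop:fields_and_tangent_space} (postcomposition with the right multiplication $R_a$ on $A$), while its left action is precomposition with the right multiplication operator $R_a$ on the bimodule $\Omega^1_d$, that is $(a\theta)(\omega) = \theta(\omega a)$, equivalently $a\theta = \theta \circ R_a$. This is genuinely a \emph{left} action precisely because $R_b \circ R_a = R_{ab}$ on $\Omega^1_d$, and $a\theta$ remains left $A$-linear since the left and right actions on $\Omega^1_d$ are compatible; together with the right action this exhibits $\AHom(\Omega^1_d, A)$ as an $A$-bimodule.

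Next I would transport this left action to $\VF$ along the right $A$-module isomorphism $\Phi\colon X \mapsto \widetilde{X}\circ\iota^1_{d,A}$ of \eqref{eq:iso_vfs_BM_left_vfs}, whose inverse is $\Psi\colon \theta\mapsto\theta\circ d$ from \eqref{eq:iso_vfs_BM_left_vfs_inverse}. Setting $a\cdot X\colonequals \Psi(a\cdot\Phi(X))$ and computing
\begin{equation}
a\cdot X = \Psi\left(\widetilde{X}\circ\iota^1_{d,A}\circ R_a\right) = \widetilde{X}\circ\iota^1_{d,A}\circ R_a\circ d
\end{equation}
recovers exactly formula \eqref{eq:VFleftaction}. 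The virtue of this route is that well-definedness comes for free: $a\cdot X$ lies in $\VF$ because $\Psi$ lands in $\VF$ by Proposition \ref{prop:fields_and_tangent_space}, and the left-module axioms hold on $\VF$ because they hold on $\AHom(\Omega^1_d, A)$ and $\Psi$ is a bijection intertwining the left actions by construction.

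Finally I would conclude bilinearity. By construction $\Phi$ intertwines the left actions, and by Proposition \ref{prop:fields_and_tangent_space} it already intertwines the right actions, so it only remains to check the bimodule compatibility $(aX)b = a(Xb)$ on $\VF$; applying the right-linear bijection $\Phi$ to both sides reduces this to the corresponding identity in the bimodule $\AHom(\Omega^1_d, A)$, where it holds. Hence $\VF$ is an $A$-bimodule with the stated left action and $\Phi$ is an $A$-bilinear isomorphism. There is no serious obstacle here; the only care required is the bookkeeping of which right multiplication acts on which module ($\Omega^1_d$ versus $A$), and in particular the order reversal in $a\theta = \theta\circ R_a$, which is exactly what the identity $R_b\circ R_a = R_{ab}$ on $\Omega^1_d$ resolves.
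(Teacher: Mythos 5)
Your proposal is correct, and it takes a genuinely different route from the paper. The paper works intrinsically on $\VF$: it verifies by hand that $aX$ in \eqref{eq:VFleftaction} is a vector field (a composition of order-$0$ operators with $d$, annihilating $\ker(d)$), then checks the action axiom $a(bX)=(ab)X$ through an explicit manipulation of jet lifts of composite operators --- in particular it must identify the lift $\reallywidetilde{(\widetilde{X}\circ\iota^1_{d,A}\circ R_b\circ d)}$ and collapse it using $\widetilde{d}\circ\iota^1_{d,A}=\id_{\Omega^1_d}$ --- and finally checks the bimodule compatibility and the left linearity of \eqref{eq:iso_vfs_BM_left_vfs_inverse} by similar direct computations. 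You instead first equip $\AHom(\Omega^1_d,A)$ with its standard bimodule structure (the one from \cite[\S 2.7]{BeggsMajid}, which the paper itself invokes just before Proposition \ref{prop:fields_and_tangent_space}), verify the easy identities there ($R_b\circ R_a=R_{ab}$ on $\Omega^1_d$ handling the order reversal, bimodule compatibility of $\Omega^1_d$ handling left linearity of $a\theta$), and then transport the left action along the right-linear bijection $\Phi\colon X\mapsto\widetilde{X}\circ\iota^1_{d,A}$ with inverse $\Psi\colon\theta\mapsto\theta\circ d$, observing that $\Psi(\Phi(X)\circ R_a)$ is literally \eqref{eq:VFleftaction}. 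This makes well-definedness ($a\cdot X\in\VF$ since $\Psi$ lands in $\VF$), the module axioms, the compatibility $(aX)b=a(Xb)$, and the bilinearity of $\Phi$ all automatic, at the cost of one preliminary check on the Hom side. What each approach buys: yours eliminates the lift bookkeeping entirely, which is where the paper's proof carries its only real delicacy; the paper's direct computation, on the other hand, displays explicitly how jet lifts of composites behave, and this computational pattern is reused immediately afterwards (e.g.\ in the proofs of Proposition \ref{prop:generalized_Leibniz} and Corollary \ref{cor:commutative_differentials_yield_commutative_vector_fields}), so the paper's route has expository value that the transport argument hides.
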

\begin{proof}
	The formula \eqref{eq:VFleftaction} defines a vector field.
	The presence of $d$ ensures that $\ker(d)$ is annihilated, and since $R_a$, $\iota^1_{d,A}$, and $\widetilde{X}$ are differential operators of order $0$, the expression defines a differential operator of order at most $1$.
	
	The formula \eqref{eq:VFleftaction} defines an action, since we have
	\begin{equation}
	\begin{split}
		a(bX)
		&= \widetilde{bX} \circ \iota^1_{d,A} \circ R_a \circ d\\
		&= \reallywidetilde{(\widetilde{X} \circ \iota^1_{d,A} \circ R_b \circ d)} \circ \iota^1_{d,A} \circ R_a \circ d\\
		&= \reallywidetilde{(\widetilde{X} \circ \iota^1_{d,A} \circ R_b \circ \widetilde{d}\circ j^1_{d,A})} \circ \iota^1_{d,A} \circ R_a \circ d\\
		&= \widetilde{X} \circ \iota^1_{d,A} \circ R_b \circ \widetilde{d} \circ \iota^1_{d,A} \circ R_a \circ d\\
		&= \widetilde{X} \circ \iota^1_{d,A} \circ R_b \circ R_a \circ d\\
		&= \widetilde{X} \circ \iota^1_{d,A} \circ R_{ab} \circ d\\
		&= (ab)X.
	\end{split}
	\end{equation}
	Here we used that $\widetilde{d} \circ \iota^1_{d,A} = \id_{\Omega^1_d}$ and the fact that jet lifts are operators of order $0$.
	
	Furthermore, this left $A$-action is compatible with the right $A$-action \eqref{eq:right_action_VF}, as for all $a,b\in A$ and $X\in \VF$, we have
	\begin{equation}
	(aX)b
	=R_b\circ (\widetilde{X}\circ\iota^1_{d,A}\circ R_a\circ d)
	=R_b\circ \widetilde{X}\circ\iota^1_{d,A}\circ R_a\circ d
	=\widetilde{R_b\circ X}\circ\iota^1_{d,A}\circ R_a\circ d
	=a(Xb).
	\end{equation}

	For the bilinearity of the isomorphisms \eqref{eq:iso_vfs_BM_left_vfs} and \eqref{eq:iso_vfs_BM_left_vfs_inverse}, we only need to check that either map is also left $A$-linear, so we choose \eqref{eq:iso_vfs_BM_left_vfs_inverse}.
	For all $\theta\in\AHom(\Omega^1_d,A)$, we have
	\begin{equation}
		(a\theta)\circ d
		= \theta \circ R_a \circ d
		= \theta\circ \widetilde{d}\circ \iota^1_{d,A} \circ R_a \circ d
		= \reallywidetilde{\theta\circ d}\circ \iota^1_{d,A} \circ R_a \circ d
		= a(\theta \circ d),
	\end{equation}
	which yields the desired left $A$-linearity.
\end{proof}
\begin{cor}\label{cor:commutative_differentials_yield_commutative_vector_fields}
Let $A$ be a commutative $\bk$-algebra and let the calculus be a quotient of the $A$-bimodule of K\"ahler differentials (e.g.\ the de Rham calculus $\Omega^1_{dR}$), then the left action on $\VF$ by an element $a\in A$ coincides with the right action by the same element.
\end{cor}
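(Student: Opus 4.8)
The plan is to reduce the claim to the fact that a commutative calculus is \emph{symmetric}. First I would record that, for a commutative $A$, the bimodule of Kähler differentials $\Omega^1_{dR}=I/I^2$, with $I=\ker(A\otimes A\to A)$ and $da=1\otimes a-a\otimes 1$, is a symmetric bimodule: a direct computation shows that $b\cdot da-da\cdot b$ agrees modulo $I^2$ with $-(a\otimes 1-1\otimes a)(b\otimes 1-1\otimes b)$ once $ba$ is rewritten as $ab$, and the latter product lies in $I^2$. Hence $a\omega=\omega a$ for all $a\in A$ and $\omega\in\Omega^1_{dR}$. Since $\Omega^1_d$ is by hypothesis a bimodule quotient of $\Omega^1_{dR}$, the same identity descends to $\Omega^1_d$; equivalently, the operators of right and left multiplication by $a$ on $\Omega^1_d$ coincide, so that $R_a$ acts on $\Omega^1_d$ as left multiplication by $a$.

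Next I would exploit the description of vector fields obtained in the proof of Proposition \ref{prop:fields_and_tangent_space}. For $X\in\VF$ set $\theta\colonequals\widetilde{X}\circ\iota^1_{d,A}\in\AHom(\Omega^1_d,A)$; this map is left $A$-linear and satisfies $\theta\circ d=X$. Unwinding the definition \eqref{eq:VFleftaction} of the left action and evaluating on $b\in A$, I compute
\begin{equation}
(aX)(b)=\widetilde{X}\circ\iota^1_{d,A}\circ R_a\circ d(b)=\theta(db\cdot a)=\theta(a\cdot db)=a\,\theta(db)=a\,X(b),
\end{equation}
where the middle equality uses the symmetry $db\cdot a=a\cdot db$ from the first step and the following one uses left $A$-linearity of $\theta$ together with $\theta\circ d=X$.

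Finally, commutativity of $A$ gives $a\,X(b)=X(b)\,a=(R_a\circ X)(b)=(X\cdot a)(b)$ for every $b\in A$, by the definition \eqref{eq:right_action_VF} of the right action; hence $aX=X\cdot a$, as required.

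The only genuinely nontrivial ingredient is the symmetry statement of the first paragraph; everything after it is a one-line consequence of left $A$-linearity of the covector $\theta$ and of commutativity of $A$. Equivalently, I could argue entirely through the $A$-bilinear isomorphism $\VF\cong\AHom(\Omega^1_d,A)$ of Proposition \ref{prop:tangentbundle_is_bundle}: there the left and right $A$-actions become precomposition $\theta\mapsto\theta\circ R_a$ and postcomposition $\theta\mapsto R_a\circ\theta$, and symmetry of $\Omega^1_d$ combined with commutativity of $A$ identifies the two.
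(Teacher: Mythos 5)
Your proof is correct and follows essentially the same route as the paper: symmetry of the calculus (left multiplication equals right multiplication on $\Omega^1_d$, inherited from the K\"ahler differentials by passing to the quotient), combined with left $A$-linearity of $\widetilde{X}\circ\iota^1_{d,A}$ and commutativity of $A$, identifies $aX$ with $Xa$. The only differences are cosmetic: you verify the symmetry of $I/I^2$ by explicit computation and evaluate the actions pointwise on $b\in A$, whereas the paper treats the symmetry as immediate and argues at the operator level, noting that $R_a$ commutes with $\widetilde{X}\circ\iota^1_{d,A}$.
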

\begin{proof}
	Commutativity yields that the left action by an element $a\in A$ coincides with the right action by the same element ($R_a$), both on $A$ and on $\Omega^1_{d}$.
	Thus, $R_a$ commutes with $\widetilde{X}\circ \iota^1_d$ for all $X\in\VF$ and $a\in A$.
	Hence,
	\begin{equation}
		aX
		= \widetilde{X} \circ \iota^1_{d,A} \circ R_a \circ d
		= R_a \circ \widetilde{X} \circ \iota^1_{d,A} \circ d
		=R_a \circ X
		=Xa.
	\end{equation}
\end{proof}
\begin{prop}[Generalized Leibniz]\label{prop:generalized_Leibniz}
With the left action \eqref{eq:VFleftaction}, we get the following equality for all $X\in\VF$ and $a,b\in A$.
\begin{equation}\label{eq:generalized_Leibniz}
X(ab)
= a(X(b)) + (bX)(a).
\end{equation}
\end{prop}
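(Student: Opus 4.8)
The plan is to use the characterization of vector fields obtained in the proof of Proposition~\ref{prop:fields_and_tangent_space}, namely that every $X\in\VF$ satisfies $X=\widetilde{X}\circ\iota^1_{d,A}\circ d$ as a $\bk$-linear endomorphism of $A$; this is precisely the statement that $\VF$ is the $1$-eigenspace of the operator \eqref{eq:endomorphism_selecting_vf}. Evaluating this identity on a product $ab$ and invoking the Leibniz rule for the differential, $d(ab)=(da)b+a\,(db)$, reduces the entire computation to pushing the two summands of $d(ab)$ through $\widetilde{X}\circ\iota^1_{d,A}$ and matching each with the corresponding term of \eqref{eq:generalized_Leibniz}.

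First I would split, using additivity of $\widetilde{X}\circ\iota^1_{d,A}$,
\[
X(ab)=\widetilde{X}\circ\iota^1_{d,A}\bigl((da)b\bigr)+\widetilde{X}\circ\iota^1_{d,A}\bigl(a\,(db)\bigr).
\]
For the second summand, the expression $a\,(db)$ is the left $A$-action on $\Omega^1_d$; since both $\iota^1_{d,A}$ and $\widetilde{X}$ are morphisms in $\AMod$, hence left $A$-linear, the scalar $a$ factors out, and the summand becomes $a\cdot\bigl(\widetilde{X}\circ\iota^1_{d,A}\circ d\bigr)(b)=a\,(X(b))$, using once more the characterization $X=\widetilde{X}\circ\iota^1_{d,A}\circ d$.

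For the first summand, I would observe that $(da)b=R_b\circ d(a)$ is the right $A$-action, so that $\iota^1_{d,A}\bigl((da)b\bigr)=\iota^1_{d,A}\circ R_b\circ d(a)$; applying $\widetilde{X}$ then produces exactly $\bigl(\widetilde{X}\circ\iota^1_{d,A}\circ R_b\circ d\bigr)(a)$, which is $(bX)(a)$ by the very definition \eqref{eq:VFleftaction} of the left action. Combining the two summands yields \eqref{eq:generalized_Leibniz}.

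The computation is essentially mechanical; the conceptual heart of the matter is recognizing that the right $A$-action $(da)b$ produced by the Leibniz rule for $d$ is precisely the $R_b$ built into the definition \eqref{eq:VFleftaction} of the left action. This is exactly what makes the chosen left module structure the correct one for obtaining a Leibniz rule, and is the step I would verify most carefully. The remaining manipulation of the term $a\,(db)$ is routine, relying only on the left $A$-linearity of $\iota^1_{d,A}$ and $\widetilde{X}$ as morphisms in $\AMod$.
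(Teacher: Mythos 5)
Your proposal is correct and is essentially the paper's proof run in reverse: the paper starts from $(bX)(a)=\widetilde{X}\circ\iota^1_{d,A}\circ R_b\circ d(a)$ and uses the Leibniz rule $(da)b = d(ab)-a\,db$ together with the left $A$-linearity of $\widetilde{X}\circ\iota^1_{d,A}$ and the identity $X=\widetilde{X}\circ\iota^1_{d,A}\circ d$ to arrive at $X(ab)-a(X(b))$, exactly the ingredients you invoke starting instead from $X(ab)$. The rearrangement is algebraically identical, so this counts as the same approach.
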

\begin{proof}
The statement follows from the following computation.
\begin{equation}
\begin{split}
(bX)(a)
&=\widetilde{X}\circ\iota^1_{d,A}\circ R_b\circ d(a)\\
&=\widetilde{X}\circ\iota^1_{d,A}( (da)b)\\
&=\widetilde{X}\circ\iota^1_{d,A}( d(ab)-adb)\\
&=\widetilde{X}\circ\iota^1_{d,A}\circ d(ab)-a(\widetilde{X}\circ\iota^1_{d,A}\circ d(b))\\
&=X(ab)-a(X(b)).
\end{split}
\end{equation}
\end{proof}

\begin{rmk}
Proposition \ref{prop:generalized_Leibniz} highlights the fact that vector fields $\VF$ in the sense of Definition \ref{def:vectorfields}, with the canonical inclusion $\VF\subset \End(A)$, form a (canonical) \emph{left Cartan pair}, in the sense of \cite[Definition 2.1, p.~1199]{borowiec1996cartan}.
Hence, they are a bimodule of \emph{left generalized vector fields} in the sense of \cite[Definition 2.1, p.~1199]{borowiec1996cartan}.
\end{rmk}
In the commutative case, \eqref{eq:generalized_Leibniz} reduces to the standard Leibniz rule, as we can see in the following result.
\begin{cor}\label{cor:commutative_generalised_Leibniz_is_Leibniz}
Let $A$ be a commutative $\bk$-algebra and let the calculus be a quotient of the $A$-bimodule of K\"ahler differentials (e.g.\ the de Rham calculus $\Omega^1_{dR}$), then for all $X\in\VF$ and $a,b\in A$ we have
\begin{equation}
X(ab)
= aX(b) + X(a)b.
\end{equation}
\end{cor}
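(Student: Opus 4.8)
The plan is to derive the statement directly from the Generalized Leibniz rule of Proposition \ref{prop:generalized_Leibniz} by analyzing what the left action \eqref{eq:VFleftaction} becomes under the stated commutativity hypotheses. The starting point is the identity
\begin{equation*}
X(ab) = a(X(b)) + (bX)(a),
\end{equation*}
valid for all $X\in\VF$ and $a,b\in A$. The entire content of the corollary is to simplify the two terms on the right-hand side using commutativity, so the proof is essentially a specialization rather than a fresh argument.

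First I would handle the term $a(X(b))$. Since $A$ is commutative, the left action of $a$ on $A$ coincides with right multiplication $R_a$; more to the point, the ambient product $aX(b)$ in $A$ equals $X(b)a$. So this term becomes $aX(b)$ read in the commutative algebra, with no ambiguity between left and right multiplication. Second, and this is the step carrying the real work, I would rewrite $(bX)(a)$ using Corollary \ref{cor:commutative_differentials_yield_commutative_vector_fields}, which asserts precisely that under these hypotheses the left action of $b$ on a vector field coincides with its right action, i.e.\ $bX = Xb = R_b\circ X$. Consequently $(bX)(a) = (R_b\circ X)(a) = X(a)b$. Substituting both simplifications into the Leibniz identity yields
\begin{equation*}
X(ab) = aX(b) + X(a)b,
\end{equation*}
which is exactly the claimed classical Leibniz rule.

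The main (and really the only) obstacle is ensuring that the invocation of Corollary \ref{cor:commutative_differentials_yield_commutative_vector_fields} is legitimate, namely that its hypotheses — $A$ commutative and the calculus a quotient of the Kähler differentials — are identical to those assumed here, so that $bX=Xb$ is available. Once that identification is in place the computation is immediate: there is no homological input, no diagram chase, and no subtlety beyond tracking that $(bX)(a)$ means the vector field $bX$ evaluated at $a$, which by $bX=R_b\circ X$ is $bX(a)=X(a)b$ in the commutative setting. I would therefore present the proof as a short two-line substitution, explicitly citing both Proposition \ref{prop:generalized_Leibniz} for the Leibniz identity and Corollary \ref{cor:commutative_differentials_yield_commutative_vector_fields} for the coincidence of the left and right actions that collapses $(bX)(a)$ into $X(a)b$.
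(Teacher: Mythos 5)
Your proposal is correct and follows exactly the paper's own proof, which likewise derives the statement immediately from Proposition \ref{prop:generalized_Leibniz} together with Corollary \ref{cor:commutative_differentials_yield_commutative_vector_fields}; your expansion of $(bX)(a)=(R_b\circ X)(a)=X(a)b$ is precisely the substitution the paper leaves implicit.
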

\begin{proof}
	It follows directly from Proposition \ref{prop:generalized_Leibniz} and Corollary \ref{cor:commutative_differentials_yield_commutative_vector_fields}.
\end{proof}

\begin{cor}
The left action \eqref{eq:VFleftaction} on $\VF$ can be equivalently expressed as 
\begin{equation}
aX=X\circ R_a - R_{X(a)}.
\end{equation}
\end{cor}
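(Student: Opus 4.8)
The plan is to verify the asserted operator identity pointwise: I would show that $aX$ and $X\circ R_a - R_{X(a)}$ agree as $\bk$-linear maps $A\to A$ by evaluating both on an arbitrary element $b\in A$. The only tool needed is the generalized Leibniz rule of Proposition~\ref{prop:generalized_Leibniz}, which already encodes the interaction of the left action \eqref{eq:VFleftaction} with the multiplication of $A$, so that the statement is essentially a repackaging of that rule.

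First I would unwind the right-hand side. Since $R_a$ and $R_{X(a)}$ denote right multiplication by $a$ and by $X(a)$ respectively, we have $R_a(b)=ba$ and $R_{X(a)}(b)=b\,X(a)$, so applying $X\circ R_a - R_{X(a)}$ to $b$ gives $X(ba)-b\,X(a)$. Next I would apply Proposition~\ref{prop:generalized_Leibniz} in the form that places the left action by $a$ and the evaluation at $b$, namely $X(ba)=b\,X(a)+(aX)(b)$; rearranging yields $(aX)(b)=X(ba)-b\,X(a)$. Comparing the two computations gives $(aX)(b)=(X\circ R_a - R_{X(a)})(b)$ for every $b\in A$, whence the operator equality. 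The resulting map is automatically a vector field, being equal to $aX$, so no separate well-definedness check is required.

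I do not expect any genuine obstacle: the content is the correct rearrangement of the generalized Leibniz rule together with careful bookkeeping of the left-versus-right multiplication conventions, so that one reads off $R_a(b)=ba$ and $R_{X(a)}(b)=b\,X(a)$ rather than the reversed products. As an independent consistency check, one could expand $aX$ directly from \eqref{eq:VFleftaction}: using $R_a\circ d(b)=(db)\,a=d(ba)-b\,(da)$ (the Leibniz rule for $d$), together with the left $A$-linearity of $\widetilde{X}\circ\iota^1_{d,A}$ and the identity $\widetilde{X}\circ\iota^1_{d,A}\circ d=X$ from Proposition~\ref{prop:fields_and_tangent_space}, one recovers $(aX)(b)=X(ba)-b\,X(a)$ again, confirming the formula.
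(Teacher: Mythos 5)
Your proof is correct and matches the paper's approach: the paper's proof is precisely ``follows immediately from Proposition \ref{prop:generalized_Leibniz},'' and you spell out exactly that rearrangement, correctly swapping the roles of $a$ and $b$ in the generalized Leibniz rule and handling the convention $R_a(b)=ba$. The direct unwinding of \eqref{eq:VFleftaction} you offer as a consistency check is a nice bonus but not needed.
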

\begin{proof}
The result follows immediately from Proposition \ref{prop:generalized_Leibniz}.
\end{proof}

\begin{rmk}
In general, $\ker(d)$ can be far larger than $\bk$.
For instance, let $\bk$ be a commutative ring with unit, and $S\subseteq \bk$ a multiplicative subset not containing any zero divisor.
Let $A=S^{-1}\bk$, i.e.\ the localization of $\bk$ by $S$, e.g.\ $\bk=\Z$ and $A=\Q$.
We have that the quotient map $A\otimes A\to A\otimes_{S^{-1}\bk} A$ has kernel generated by elements of the form
\begin{align}
x\frac{r}{s}	\otimes y-x\otimes \frac{r}{s}y,
&\hfill&
\textrm{for }x, y\in A, r\in\bk, s\in S.
\end{align}
If we multiply this element by $s\in S\subset k$, we obtain $0$, but this multiplication is injective, because $s$ is now invertible in $A$.
It follows that $A\otimes A\cong A\otimes_A A$, which is isomorphic to $A$ via the multiplication map.
Hence, in this scenario $\Omega^1_u=0$, and thus $\ker(d)=\ker(d_u)=A$.

Let $E$, and $F$ be in $\AMod$.
Since $J^1_u E=E$, we have that $\AHom(E,F)=\Diff^0_u(E,F)=\Diff^1_u(E,F)=\Hom(E,F)$, cf.\ \cite[\jetspropzeroorder, Proposition 4.3, p.~15]{FMW}.
This is an alternative proof of the fact that a $\bk$-linear map between $S^{-1}\bk$-modules is automatically $S^{-1}\bk$-linear.
For instance, all $\Z$-linear maps between $\Q$-modules are $\Q$-linear.
\end{rmk}
\subsection{Interior product}
\label{ss:Interior_product}
\begin{defi}
Given $\omega\in\Omega^1_d$ and $X\in\VF$, we define the \emph{interior product} of $\omega$ with $X$ by
	\begin{align}
		\langle\omega, X\rangle 
		\colonequals \widetilde{X}\circ\iota^1_{d,A}(\omega)\in A.
	\end{align}
\end{defi}

Notice that we can interpret the map \eqref{eq:iso_vfs_BM_left_vfs} as
\begin{align}\label{eq:VF_to_dual_forms}
\VF\longrightarrow \AHom(\Omega^1_d,A)
&\hfill&
X\longmapsto \langle -, X\rangle,
\end{align}
where $\langle -,X\rangle$ is the map $\omega\longmapsto \langle\omega, X\rangle$.

\begin{prop}\label{prop:pairing_props}\
\begin{enumerate}
\item\label{prop:pairing_props:1} The map $\langle-, -\rangle\colon \Omega^1_d\times \VF\to A$ descends to an $A$-bilinear map $\Omega^1_d\otimes_A \VF\to A$.
\item\label{prop:pairing_props:2} $\langle-,X\rangle\circ d=X$ for all $X\in\VF$, and more specifically, the map \eqref{eq:VF_to_dual_forms} is an isomorphism.
As a consequence, $\langle-,-\rangle$ is nondegenerate in its right component.
\end{enumerate}
\end{prop}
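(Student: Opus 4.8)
The plan is to reduce everything to the isomorphism already established in Proposition~\ref{prop:fields_and_tangent_space} together with the explicit description of the bimodule structure on $\VF$ from Proposition~\ref{prop:tangentbundle_is_bundle}. For part~\eqref{prop:pairing_props:1}, the essential point is $A$-balancedness, i.e.\ that $\langle\omega a, X\rangle = \langle\omega, aX\rangle$ for all $\omega\in\Omega^1_d$, $a\in A$, and $X\in\VF$; this is exactly what is needed to factor $\langle-,-\rangle$ through $\Omega^1_d\otimes_A\VF$. First I would record that the lift of the left action \eqref{eq:VFleftaction} is $\widetilde{aX} = \widetilde{X}\circ\iota^1_{d,A}\circ R_a\circ\widetilde{d}$: this composition is left $A$-linear and, precomposed with $j^1_{d,A}$, recovers $aX$ because $\widetilde{d}\circ j^1_{d,A}=d$, so it is the lift by uniqueness of the first-order lift, cf.\ \cite[\jetspropuniquelift]{FMW}. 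Then, using the splitting identity $\widetilde{d}\circ\iota^1_{d,A}=\id_{\Omega^1_d}$, one computes $\langle\omega, aX\rangle = \widetilde{X}\circ\iota^1_{d,A}\circ R_a\circ\widetilde{d}\circ\iota^1_{d,A}(\omega) = \widetilde{X}\circ\iota^1_{d,A}(\omega a) = \langle\omega a, X\rangle$, which is balancedness.

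Once the map descends to $\Omega^1_d\otimes_A\VF$, bilinearity is routine. Left $A$-linearity in the $\Omega^1_d$-slot, $\langle a\omega, X\rangle = a\langle\omega, X\rangle$, holds because both $\iota^1_{d,A}$ and the lift $\widetilde{X}$ are left $A$-linear. Right $A$-linearity in the $\VF$-slot, $\langle\omega, X\cdot a\rangle = \langle\omega, X\rangle\, a$, holds because $X\cdot a = R_a\circ X$ by \eqref{eq:right_action_VF} has lift $R_a\circ\widetilde{X}$ (again by uniqueness of lifts), so that $\langle\omega, X\cdot a\rangle = R_a\bigl(\langle\omega, X\rangle\bigr)$. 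I would present these two one-line computations explicitly.

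For part~\eqref{prop:pairing_props:2}, the identity $\langle-,X\rangle\circ d = X$ is precisely the computation already carried out in the proof of Proposition~\ref{prop:fields_and_tangent_space}\eqref{prop:fields_and_tangent_space:i}, since $\langle d(a), X\rangle = \widetilde{X}\circ\iota^1_{d,A}\circ d(a) = X(a)$ using $1\in\ker(d)$; I would simply cite that. The map \eqref{eq:VF_to_dual_forms}, namely $X\mapsto\langle-,X\rangle = \widetilde{X}\circ\iota^1_{d,A}$, is literally the map \eqref{eq:iso_vfs_BM_left_vfs}, which Proposition~\ref{prop:fields_and_tangent_space}\eqref{prop:fields_and_tangent_space:i} shows to be an isomorphism with inverse \eqref{eq:iso_vfs_BM_left_vfs_inverse}. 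Nondegeneracy in the right component then follows at once from injectivity: if $\langle\omega, X\rangle = 0$ for all $\omega$, then $\langle-,X\rangle = 0$, whence $X=0$.

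The only step requiring genuine care is the balancedness verification in part~\eqref{prop:pairing_props:1}, since it hinges on correctly identifying the lift $\widetilde{aX}$ of the left action rather than on any superficial manipulation of the pairing; everything else is bookkeeping against results already in hand. Notably, no homological or representability hypothesis is needed here, because we work inside $\Diff^1_d(A,A)$, where lifts to the $1$-jet module always exist and are unique.
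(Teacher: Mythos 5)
Your proposal is correct and follows essentially the same route as the paper: part \eqref{prop:pairing_props:1} reduces to the $A$-bilinearity of the map $X\mapsto\langle-,X\rangle$ already established in Propositions \ref{prop:fields_and_tangent_space} and \ref{prop:tangentbundle_is_bundle}, and part \eqref{prop:pairing_props:2} is a direct citation of Proposition \ref{prop:fields_and_tangent_space}.\eqref{prop:fields_and_tangent_space:i}. The only cosmetic difference is that where the paper quotes the left $A$-linearity of the isomorphism outright to get balancedness, you re-derive it inline by identifying the lift $\widetilde{aX}=\widetilde{X}\circ\iota^1_{d,A}\circ R_a\circ\widetilde{d}$ and using $\widetilde{d}\circ\iota^1_{d,A}=\id_{\Omega^1_d}$, which is precisely the computation underlying the paper's proof of Proposition \ref{prop:tangentbundle_is_bundle}.
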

\begin{proof}\
\begin{enumerate}
\item Let $X\in\VF$, $\omega\in\Omega^1_d$, and $a\in A$.
By Proposition \ref{prop:tangentbundle_is_bundle}, the mapping $X\mapsto\langle-,X\rangle$ is left $A$-linear, so we have
\begin{equation}
\begin{split}
\langle \omega, aX\rangle
&=\langle -, aX\rangle(\omega)
=(a\langle -, X\rangle)(\omega)
=\langle -, X\rangle(R_a(\omega))
=\langle \omega a, X\rangle.
\end{split}
\end{equation}
Thus the inner product factors to a map $\langle-,-\rangle\colon \Omega^1_d\otimes_A \VF\to A$.

By Proposition \ref{prop:fields_and_tangent_space}.\eqref{prop:fields_and_tangent_space:i}, we have right $A$-linearity of this map, as
\begin{equation}
\begin{split}
\langle \omega, Xa\rangle
&=\langle -, Xa\rangle(\omega)
=(\langle -, X\rangle a)(\omega)
=R_a\circ \langle -, X\rangle(\omega)
=\langle \omega, X\rangle a.
\end{split}
\end{equation}
Left $A$-linearity instead follows from the fact that $\langle -,X\rangle\in\AHom(\Omega^1_d,A)$.
\item It follows from Proposition \ref{prop:fields_and_tangent_space}.\eqref{prop:fields_and_tangent_space:i}.\qedhere
\end{enumerate}
\end{proof}
This pairing corresponds to the evaluation maps mentioned in \cite[§2.7, p.~168]{BeggsMajid}.

\subsection{Brackets of vector fields}
\label{ss:brakets_VF}
In this subsection we provide a generalization of the Lie bracket of vector fields to the noncommutative setting.
There are two perspectives on the classical notion of Lie bracket.
One of them sees it as a binary operation on the vector space of vector fields satisfying certain algebraic rules.
The other sees it as a composition of antisymmetrized vector fields.

We take the latter approach, where we suitably generalize antisymmetrization.
The composition of two vector fields still vanishes on $\ker(d)$, but in general it is a linear differential operator of order at most $2$.
The antisymmetrization operation ensures that the composition of antisymmetrized vector fields, is still a differential operator of order at most $1$, so that the bracket takes values in $\VF$.

\begin{prop}\label{prop:natural_pairing}
	There is a well-defined natural pairing between $\Omega^1_d \otimes_A \Omega^1_d$ and $\VF \otimes_A\VF$, given by the evaluation map,
		\begin{align}\label{eq:natural_pairing}
		\llangle - , -\rrangle\colon (\Omega^1_d\otimes_A \Omega^1_d)\times(\VF \otimes_A\VF) \longrightarrow A,
		&\hfill&
		(\alpha \otimes_A \beta,X\otimes_A Y) \longmapsto  \langle \alpha\langle\beta,X\rangle, Y\rangle,
		\end{align}
		descending to an $A$-bilinear map $\Omega^1_d\otimes_A \Omega^1_d\otimes_A \VF \otimes_A\VF \to A$.

		Moreover, the map $\VF \otimes_A \VF \rightarrow \AHom(\Omega^1_d \otimes_A \Omega^1_d,A)$ is a morphism in $\AModA$.
\end{prop}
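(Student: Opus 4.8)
The claim has two halves: first, that the pairing $\llangle - , -\rrangle$ is well-defined and descends to an $A$-bilinear map on the fourfold tensor product over $A$; second, that the adjoint map $\VF \otimes_A \VF \to \AHom(\Omega^1_d \otimes_A \Omega^1_d, A)$ is a morphism in $\AModA$, i.e.\ is $A$-bilinear. Let me sketch how I would prove each.

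=== PROOF PROPOSAL ===

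The plan is to build the pairing by iterating the already-established inner product $\langle-,-\rangle$ and to track $A$-linearity in each slot using Proposition \ref{prop:pairing_props} and Proposition \ref{prop:tangentbundle_is_bundle}. First I would observe that for fixed $Y\in\VF$, the expression $\langle \alpha\langle\beta,X\rangle, Y\rangle$ is built from two applications of the inner product: the inner one $\langle\beta,X\rangle\in A$ produces a scalar, which left-multiplies $\alpha\in\Omega^1_d$ to give an element $\alpha\langle\beta,X\rangle\in\Omega^1_d$, and then the outer $\langle-,Y\rangle$ is applied. Since $\langle-,-\rangle$ is already known to be $A$-bilinear and to descend to $\Omega^1_d\otimes_A\VF\to A$ by Proposition \ref{prop:pairing_props}.\eqref{prop:pairing_props:1}, the bulk of the work is a bookkeeping of how scalars move across the tensor symbols $\otimes_A$.

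\begin{itemize}
\item To show descent to $\Omega^1_d\otimes_A\Omega^1_d\otimes_A\VF\otimes_A\VF$, I would check balancing over $A$ at each of the three internal tensor symbols. For the middle symbol $\Omega^1_d\otimes_A\Omega^1_d$, I must verify $\llangle \alpha a\otimes_A\beta, X\otimes_A Y\rrangle = \llangle \alpha\otimes_A a\beta, X\otimes_A Y\rrangle$: the left side is $\langle \alpha a\langle\beta,X\rangle, Y\rangle$ and the right is $\langle \alpha\langle a\beta,X\rangle, Y\rangle = \langle \alpha\, a\langle\beta,X\rangle, Y\rangle$ using left $A$-linearity of $\langle-,X\rangle$ from Proposition \ref{prop:pairing_props}.\eqref{prop:pairing_props:1}, so the two agree. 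For the symbol $\Omega^1_d\otimes_A\VF$ (i.e.\ moving $a$ between $\beta$ and $X$), I would use $\langle\beta a, X\rangle=\langle\beta,aX\rangle$, again from Proposition \ref{prop:pairing_props}.\eqref{prop:pairing_props:1}. For the symbol $\VF\otimes_A\VF$, I would use right $A$-linearity in the $X$-slot together with left $A$-linearity in the $Y$-slot to move $a$ across.
\item For the final statement, I would verify that the adjoint $\Phi\colon X\otimes_A Y\mapsto \llangle-,X\otimes_A Y\rrangle$ is $A$-bilinear. Left $A$-linearity amounts to checking $\Phi(a(X\otimes_A Y))=a\,\Phi(X\otimes_A Y)$, where the right action on $\AHom(\Omega^1_d\otimes_A\Omega^1_d,A)$ is by precomposition/multiplication; right $A$-linearity is analogous on the $Y$-slot.
\end{itemize}

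The main obstacle will be the $\VF\otimes_A\VF$ slot, because the left $A$-action on $\VF$ is the nonstandard action \eqref{eq:VFleftaction} rather than naive composition, so one cannot blindly slide scalars through the outer inner product. The cleanest route is to transport everything through the bimodule isomorphism $\VF\cong\AHom(\Omega^1_d,A)$ of Proposition \ref{prop:fields_and_tangent_space}.\eqref{prop:fields_and_tangent_space:i} (shown $A$-bilinear in Proposition \ref{prop:tangentbundle_is_bundle}), under which $\langle-,X\rangle$ is literally the image of $X$; then the required relations $\langle\omega,aX\rangle=\langle\omega a,X\rangle$ and $\langle\omega,Xa\rangle=\langle\omega,X\rangle a$ are exactly the bilinearity already recorded in Proposition \ref{prop:pairing_props}, and the delicate left action \eqref{eq:VFleftaction} never needs to be unfolded by hand. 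With this identification in place, all three balancing checks and the final bilinearity reduce to associativity of the $A$-actions and the established bilinearity of the single inner product, so the remaining computations are routine.
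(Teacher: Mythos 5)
Your proposal is correct and takes essentially the same route as the paper's own proof: the paper verifies balancedness at each of the three internal tensor symbols using exactly the relations $\langle a\beta,X\rangle=a\langle\beta,X\rangle$, $\langle\beta a,X\rangle=\langle\beta,aX\rangle$, and $\langle\beta,Xa\rangle=\langle\beta,X\rangle a$ from Proposition \ref{prop:pairing_props}, and then establishes bilinearity of the adjoint map via $\llangle R_a-,X\otimes_A Y\rrangle=\llangle-,X\otimes_A Y\rrangle\circ R_a$, which is precisely the transport through the identification $\VF\cong\AHom(\Omega^1_d,A)$ that you propose to avoid unfolding the left action \eqref{eq:VFleftaction} by hand. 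The remaining computations are the routine ones you indicate, so no gap remains.
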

\begin{proof}
	This mapping is well defined, as for all $\alpha,\beta\in\Omega^1_d$, $X,Y\in\VF$, and $a\in A$, we have
	\begin{equation}
	\begin{split}
		&\llangle \alpha a \otimes_A \beta, X \otimes_A Y\rrangle
		=\langle \alpha a \langle\beta,X\rangle, Y\rangle
		=\langle \alpha \langle a \beta,X\rangle, Y\rangle
		=\llangle \alpha  \otimes_A a \beta, X \otimes_A Y\rrangle,\\
		&\llangle \alpha \otimes_A \beta, X a \otimes_A Y\rrangle
		=\langle \alpha\langle\beta,X a \rangle, Y\rangle
		=\langle \alpha \langle \beta,X\rangle a, Y\rangle
		=\langle \alpha \langle \beta,X\rangle, a Y\rangle
		=\llangle \alpha  \otimes_A \beta, X \otimes_A a  Y\rrangle,
	\end{split}
	\end{equation}
	by Proposition \ref{prop:pairing_props}.\eqref{prop:pairing_props:1}.
	For the same result, we prove that it descends to the tensor product over $A$
	\begin{equation}
	\llangle \alpha \otimes_A \beta a, X \otimes_A Y\rrangle
		=\langle \alpha \langle\beta a,X\rangle, Y\rangle
		=\langle \alpha \langle \beta,a X\rangle, Y\rangle
		=\llangle \alpha  \otimes_A \beta, a X \otimes_A Y\rrangle,
	\end{equation}
	and that the resulting map is $A$-bilinear, as so is $\langle -,-\rangle \colon \Omega^1_d\otimes_A \VF$.
	
	Using the last result, we also obtain that the map $X\otimes_A Y\mapsto \llangle - , X\otimes_A Y\rrangle$ is $A$-bilinear, in fact.
	\begin{equation}
	\begin{split}
		&\llangle - , a X \otimes_A Y\rrangle
		=\llangle R_a - ,  X \otimes_A Y\rrangle
		=\llangle - ,  X \otimes_A Y\rrangle\circ R_a
		=a\llangle - ,  X \otimes_A Y\rrangle,\\
		&\llangle - , X \otimes_A Y a\rrangle
		=R_a \circ \llangle - ,  X \otimes_A Y\rrangle
		=\llangle - ,  X \otimes_A Y\rrangle a.\\
	\end{split}
	\end{equation}
\end{proof}

\begin{defi}\label{def:Asym}
	Let
	\begin{align}
		\Ann(S^2_d)
		\colonequals \bigcap_{\sigma\in S^2_d}\ker(\llangle \sigma,- \rrangle)
		\subseteq \VF \otimes_A \VF
	\end{align} 
	be the $A$-bimodule of \emph{antisymmetrized (in a suitable noncommutative sense) vector fields}, and let the $A$-bimodule
	\begin{align}
	\Asym(\VF)
	= \otimes_A^{-1}(\Ann(S^2_d))
	\subseteq \VF \otimes \VF
	\end{align} 
	be its preimage over the canonical projection $\otimes_A \colon \VF \otimes \VF \rightarrow \VF \otimes_A \VF$.
\end{defi}
One can view $\Asym(\VF)$ as the $A$-bimodule of (linear combinations of) vector fields that yield a vector field under composition.
\begin{theo}\label{theo:asymtodiff}
	We have the composition map
	\begin{align}
		\comp\colon\VF \otimes \VF \longrightarrow \Diff^2_d(A,A),
		&\hfill&
		X\otimes Y \longmapsto Y \circ X.
	\end{align}
	Suppose that $\Omega^2_d=\Omega^2_{d,\text{max}}$, and $\sum_i X_i \otimes Y_i \in \Asym(\VF)$, then $\comp(\sum_i X_i \otimes Y_i)\in \VF$.
\end{theo}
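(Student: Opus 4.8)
The plan is to observe first that $\comp\big(\sum_i X_i\otimes Y_i\big)=\sum_i Y_i\circ X_i$ automatically lies in $\Diff^2_d(A,A)$, being a $\bk$-linear combination of compositions of first order operators (cf.\ \cite[\jetspropdifferentialoperatorcomposition]{FMW}), and that it annihilates $\ker(d)$: if $c\in\ker(d)$ then $X_i(c)=0$ for each $i$, hence $Y_i(X_i(c))=0$. Thus membership in $\VF=\Ann(\ker(d))\cap\Diff^1_d(A,A)$ reduces to showing that this operator has order at most $1$, which by the defining property of symbols (Definition \ref{def:symbolquotientdef}) is equivalent to the vanishing of its second order symbol $\symb^2_d\big(\sum_i Y_i\circ X_i\big)$.

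Since $\Omega^2_d=\Omega^2_{d,\text{max}}$ and $\Tor^A_1(\Omega^1_d,A)=0$, Proposition \ref{prop:symbol_representability_low_dimension}.\eqref{prop:symbol_representability_low_dimension:2} tells us that $r^2_{d,A,-}$ is a well-defined monomorphism, so the symbol vanishes if and only if the restriction symbol $\widetilde{\Delta}\circ\iota^2_{d,A}\colon S^2_d(A)\to A$ vanishes, where $\Delta=\sum_i Y_i\circ X_i$. The core of the argument is then to identify this restriction symbol with the natural pairing of Proposition \ref{prop:natural_pairing}. By $\bk$-linearity it suffices to treat a single term and to prove, for every generator $\sigma\in S^2_d(A)$,
\begin{equation}
\widetilde{Y\circ X}\circ\iota^2_{d,A}(\sigma)=\llangle\sigma,\,X\otimes_A Y\rrangle.
\end{equation}
Here I use that $\Omega^2_d$ maximal forces $S^2_d=S^2_{d,\text{min}}$ (Proposition \ref{prop:rep_for_2jet}), so that every $\sigma$ may be written as $\sum_{k,j} da_{k,j}\otimes_A db_{k,j}\otimes_A e_j$ with $\sum_k a_{k,j}\otimes b_{k,j}\in N_d$ and $e_j\in A$.

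The hard part will be this identification, which is a bookkeeping computation juggling the left and right module structures on $\VF$. Lemma \ref{lemma:explicitrestrictedsymbol} evaluates the left hand side directly as $-\sum_{k,j}a_{k,j}Y(X(b_{k,j}e_j))$, whereas expanding the right hand side through the interior product and the generalized Leibniz rule (Proposition \ref{prop:generalized_Leibniz}, in the form $aX=X\circ R_a-R_{X(a)}$) yields the same term together with several cross terms. I expect each cross term to vanish by one of three mechanisms: that $X$, $Y$, and the twisted fields $gY$ are first order operators and hence kill differential relations, giving $\sum_k a_{k,j}X(b_{k,j}e_j)=0$ and $\sum_k a_{k,j}(gY)(b_{k,j})=0$ via Proposition \ref{prop:criterion_WDO}; that $N_d$ is a left submodule of $A\otimes A$ under right multiplication on the second factor, so $\sum_k a_{k,j}\otimes b_{k,j}e_j$ is again a relation; and that $N_d\subseteq\Omega^1_u=\ker(m)$ (Remark \ref{rmk:prosphat}), so $\sum_k a_{k,j}b_{k,j}=0$. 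Assembling these makes the cross terms cancel and leaves exactly $-\sum_{k,j}a_{k,j}Y(X(b_{k,j}e_j))$.

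Once the identification is in place, the conclusion is immediate. Membership $\sum_i X_i\otimes Y_i\in\Asym(\VF)$ means precisely $\sum_i X_i\otimes_A Y_i\in\Ann(S^2_d)$, so the pairing $\llangle\sigma,\sum_i X_i\otimes_A Y_i\rrangle$ vanishes for all $\sigma\in S^2_d(A)$; by the identity and $\bk$-linearity the restriction symbol of $\Delta$ vanishes, hence $\symb^2_d(\Delta)=0$, hence $\Delta\in\Diff^1_d(A,A)$. Combined with the annihilation of $\ker(d)$ noted at the outset, this gives $\comp\big(\sum_i X_i\otimes Y_i\big)\in\VF$, as desired.
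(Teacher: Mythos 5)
Your proof is correct, and it follows the paper's skeleton at the top level (composition lands in $\Diff^2_d(A,A)$; annihilation of $\ker(d)$ is immediate; reduce to $\symb^2_d=0$ via the monomorphism $r^2_{d,A,A}$ of Proposition \ref{prop:symbol_representability_low_dimension}; show the restriction symbol of $\comp(\sum_i X_i\otimes Y_i)$ equals $\llangle -,\sum_i X_i\otimes_A Y_i\rrangle$, which vanishes on $\Ann(S^2_d)$ by hypothesis). Where you genuinely diverge is in the central identity. The paper establishes it functorially, in \eqref{eq:humongous}: it lifts the composition as $\widetilde{Y}\circ J^1_d(\widetilde{X})\circ l^{1,1}_{d,A}$, pushes $\iota^2_{d,A}$ through the factorization $l^2_d\circ\iota^2_d=\iota^1_{d,J^1_d}\circ\Omega^1_d(\iota^1_d)\circ\iota^2_{\wedge}$, and uses naturality of $\iota^1_d$ to land on the pairing, never choosing representatives. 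You instead argue elementwise: maximality gives $S^2_d(A)=S^2_{d,\text{min}}(A)$ (Proposition \ref{prop:rep_for_2jet}), so every symmetric form is $\sum_{k,j} da_{k,j}\otimes_A db_{k,j}\otimes_A e_j$ with $\sum_k a_{k,j}\otimes b_{k,j}\in N_d$; then Lemma \ref{lemma:explicitrestrictedsymbol} computes the left side as $-\sum_{k,j}a_{k,j}Y(X(b_{k,j}e_j))$, and the right side expands via $\langle\omega a,X\rangle=\langle\omega,aX\rangle$, $\langle db,Z\rangle=Z(b)$, and $gY=Y\circ R_g-R_{Y(g)}$ into that same term plus cross terms. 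I checked the bookkeeping you sketched: the expansion produces exactly three spurious terms, killed respectively by the first-order criterion applied to $X$ on the relation $\sum_k a_{k,j}\otimes b_{k,j}e_j\in N_d(A)$, by $\sum_k a_{k,j}b_{k,j}=0$ (since $N_d\subseteq\Omega^1_u$), and by the criterion applied to $Y$ on $\sum_k a_{k,j}\otimes b_{k,j}X(e_j)\in N_d(A)$ — so your three mechanisms do close the argument, though your write-up leaves this as an expectation rather than carrying it out. The trade-off: the paper's abstract computation transfers verbatim to the elemental setting (it is reused in Theorem \ref{theo:wasymtodiff} via \eqref{eq:lesshumongous}), whereas your route is more elementary, makes the role of $\Omega^2_d=\Omega^2_{d,\text{max}}$ concrete (it is exactly what lets you present all of $S^2_d(A)$ by prolonged relations), and in effect re-derives by hand the identity the paper extracts from the jet formalism.
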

\begin{proof}
	Since $\VF$ consists of linear differential operators of order at most one, the composition is a linear differential operator of order at most $2$, so the map $\comp$ is well defined.
	We compute the symbol $\symb^2_d(\comp(\sum_i X_i \otimes Y_i))$ using the map $r^2_{d,A,A}$ from Proposition \ref{prop:symbols_representation_well-defined}.
	Since we are considering second order jets and the second order of the exterior algebra is maximal, $r^2_{d,A,A}$ is a well defined mono, cf.\ Proposition \ref{prop:symbol_representability_low_dimension}.\eqref{prop:symbol_representability_low_dimension:2}.
	Furthermore, we have
	\begin{equation}	\label{eq:humongous}
	\begin{split}
		r^2_{d,A,A}\left(\symb^2_d\left(\comp\left(\sum_i X_i \otimes Y_i\right)\right)\right)\left(\sum_j \alpha_j\otimes \beta_j\right)
		&=\reallywidetilde{\comp\left(\sum_i X_i \otimes Y_i\right)}  \circ \iota^2_{d,A} \left(\sum_j \alpha_j\otimes \beta_j\right)\\
		&=\sum_{i} \widetilde{Y_i}\circ J^1_d(\widetilde{X_i}) \circ  l^{1,1}_{d,A}\circ \iota^2_{d,A} \left(\sum_j\alpha_j\otimes \beta_j\right)\\
		&=\sum_{i} \widetilde{Y_i}\circ J^1_d(\widetilde{X_i}) \circ  l^{2}_{d,A}\circ \iota^2_{d,A} \left(\sum_j \alpha_j\otimes \beta_j\right)\\
		&=\sum_{i} \widetilde{Y_i}\circ J^1_d(\widetilde{X_i}) \circ \iota^1_{d,J^1_d A} \circ\Omega^1_d(\iota^1_{d,A})\circ\iota^2_{\wedge,A}\left(\sum_j \alpha_j\otimes \beta_j\right)\\
		&=\sum_{i,j} \widetilde{Y_i}\circ \iota^1_{d,A}\circ \Omega^1_d(\widetilde{X_i}) \circ\Omega^1_d(\iota^1_{d,A})(\alpha_j\otimes \beta_j)\\
		&=\sum_{i,j} \langle \Omega^1_d(\widetilde{X_i} \circ \iota^1_{d,A})\circ\iota^2_{\wedge,A}(\alpha_j\otimes \beta_j),Y_i\rangle\\
		&=\sum_{i,j} \langle \alpha_j(\widetilde{X_i} \circ \iota^1_{d,A})(\beta_j),Y\rangle\\
		&=\sum_{i,j} \langle \alpha_j \langle \beta_j,X_i\rangle  ,Y_i\rangle\\
		&=\sum_{i,j}\llangle \alpha_j\otimes \beta_j, X_i\otimes_A Y_i\rrangle\\
		&=\llangle \sum_{j} \alpha_j\otimes \beta_j, \sum_{i} X_i\otimes_A Y_i\rrangle\\
		&=0,
	\end{split}
	\end{equation}
	for all $\sum_j \alpha_j\otimes \beta_j \in S^2_d$, cf.\ \cite[\jetsoperatorcomposition]{FMW} and cf.\ \cite[\jetsestwojetDt]{FMW}.
	By Proposition \ref{prop:symbol_representability_low_dimension}.\eqref{prop:symbol_representability_low_dimension:2}, the map $r^2_{d,A,A}$ is injective, and hence the symbol $\symb^2_d(\comp(\sum_i X_i \otimes Y_i))$ vanishes.

	Therefore, $\comp(\sum_i X_i \otimes Y_i)$ is a differential operator of order at most one.
	This operator annihilates $\ker(d)$, as so does $X$, and hence $\comp(\sum_i X_i \otimes Y_i)\in\VF$.
\end{proof}
The computations in \eqref{eq:humongous}, show the commutativity of the following diagram displaying the ingredients of this construction.
\begin{equation}
        \begin{tikzcd}[column sep=40pt]
        			&{\VF}&\Diff^1_d(A,A)\\
                {\operatorname{Asym}(\VF)} & {\VF\otimes\VF} & {\Diff^2_d(A,A)} & {\Symb^2_d(A,A)} \\
                {\Ann(S^2_d)} & {\VF\otimes_A\VF} & {{}_A\operatorname{Hom}(\Omega^1_d \otimes_A \Omega^1_d, A)} & {{}_A\operatorname{Hom}(S^2_d, A)}
                \arrow[hook, from=1-3, to=2-3]
                \arrow[hook, from=1-2, to=1-3]
                \arrow["0", from=1-3, to=2-4]
                \arrow[dashed, from=2-1, to=1-2]
                \arrow["{\otimes_A}"', from=2-1, to=3-1]
                \arrow["{\otimes_A}", from=2-2, to=3-2]
                \arrow[hook, from=3-1, to=3-2]
                \arrow[hook, from=2-1, to=2-2]
                \arrow[phantom, "{\lrcorner}"{description, pos=0.1}, from=2-1, to=3-2]
                \arrow["\comp", from=2-2, to=2-3]
                \arrow["{\symb^2_d}", from=2-3, to=2-4]
                \arrow["{\llangle -,-\rrangle}", from=3-2, to=3-3]
                \arrow["-\circ\iota^2_{\wedge,A}",from=3-3, to=3-4]
                \arrow["r^2_{d,A,A}",hook, from=2-4, to=3-4]
        \end{tikzcd}
\end{equation}

For the following constructions we will assume that $\Omega^2_d=\Omega^2_{d,\text{max}}$ for a given first order differential calculus.
\begin{defi}\label{def:quantumliebracket}
	Given a choice of a $\bk$-linear retract
	\begin{equation}
		\wp\colon\VF \otimes \VF \longrightarrow\Asym(\VF),
	\end{equation}
	we define the map $[-,-]^{\wp}_d \colon \VF \times \VF \rightarrow \VF$ by 
	\begin{equation}\label{eq:quantumliebracket}
		[X,Y]^{\wp}_d \colonequals  c \circ \wp(Y \otimes X),
	\end{equation} 
	and call it the \emph{bracket of vector fields} corresponding to $\wp$.
\end{defi}
\begin{cor}\label{cor:bracket_is_VF}
	For $X,Y \in \VF$, we have $[X,Y]^{\wp}_d \in \VF$.	
\end{cor}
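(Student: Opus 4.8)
The plan is to observe that this corollary is an immediate consequence of Theorem~\ref{theo:asymtodiff} together with the defining property of the retract $\wp$. First I would unwind the definition: by Definition~\ref{def:quantumliebracket}, the bracket is
\begin{equation}
[X,Y]^{\wp}_d = \comp\bigl(\wp(Y\otimes X)\bigr),
\end{equation}
so the statement reduces to checking that the right-hand side lands in $\VF$. The single point to verify is that the argument fed into $\comp$ genuinely lies in the domain on which Theorem~\ref{theo:asymtodiff} guarantees a vector-field output.

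The key step is that $\wp$ is, by hypothesis, a $\bk$-linear retract onto $\Asym(\VF)$, and in particular a map $\VF\otimes\VF\to\Asym(\VF)$. Hence for any $X,Y\in\VF$ the element $\wp(Y\otimes X)$ belongs to $\Asym(\VF)$, regardless of which retract was chosen. Writing $\wp(Y\otimes X)=\sum_i X_i\otimes Y_i\in\Asym(\VF)$, the standing assumption of this subsection that $\Omega^2_d=\Omega^2_{d,\text{max}}$ lets me apply Theorem~\ref{theo:asymtodiff} verbatim, which yields $\comp\bigl(\sum_i X_i\otimes Y_i\bigr)\in\VF$. Combining this with the displayed identity above gives $[X,Y]^{\wp}_d\in\VF$, completing the argument.

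There is essentially no obstacle here: all the analytic content—namely that the symbol of the composition vanishes precisely when the argument is annihilated by the pairing $\llangle-,-\rrangle$ against $S^2_d$, and hence that elements of $\Asym(\VF)$ compose to order-one operators annihilating $\ker(d)$—has already been discharged in the proof of Theorem~\ref{theo:asymtodiff}. The corollary merely records that feeding an arbitrary input through the retract $\wp$ produces an element of $\Asym(\VF)$, so the hypothesis of that theorem is automatically met. If anything warrants a word of care, it is only the bookkeeping that $\wp(Y\otimes X)$—rather than $\wp(X\otimes Y)$—is the element one composes, but this is purely a matter of the convention fixed in \eqref{eq:quantumliebracket} and does not affect membership in $\Asym(\VF)$.
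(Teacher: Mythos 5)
Your proof is correct and is exactly the paper's argument: the paper also dismisses the corollary as immediate from Theorem \ref{theo:asymtodiff} together with the definition \eqref{eq:quantumliebracket}, since $\wp(Y\otimes X)\in\Asym(\VF)$ by the retract hypothesis. Your additional remark about the order of the arguments is consistent with the convention fixed in Definition \ref{def:quantumliebracket} and changes nothing.
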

\begin{proof}
	Immediate by Proposition \ref{theo:asymtodiff} given \eqref{eq:quantumliebracket}.
\end{proof}

\begin{rmk}
	Observe that it is necessary to define the bracket on $\VF\otimes \VF$, because even classically, the Lie bracket does not factor through $\VF\otimes_A\VF$, e.g.\
	\begin{equation}
		c(\partial_x \otimes x \cdot \partial_x - x\cdot \partial_x \otimes \partial_x)
		= [x\partial_x, \partial_x]_{dR}
		= -\partial_x.
	\end{equation}
\end{rmk}

\begin{prop}[Classical correspondence]\label{prop:classicalLiebracket}
Let $\bk=\R$ and $A=\smooth{M}$, for a smooth manifold $M$.
If $A$ is equipped with the exterior algebra $\Omega^{\bullet}_{dR}$ arising from the de Rham differential,
then the bracket $[-,-]_{dR}$ corresponding to the canonical antisymmetric map $\VF\otimes_{\R} \VF \longrightarrow \Asym(\VF)$ given by $X\otimes Y \mapsto X\otimes Y -Y\otimes X$ is equivalent to the classical Lie bracket.
\end{prop}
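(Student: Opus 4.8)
The plan is to reduce the statement to the elementary fact that the commutator of two derivations of $\smooth{M}$ is their classical Lie bracket, and to check that the hypotheses of our machinery are met in the de Rham case. First I would record the two structural facts we need about $A=\smooth{M}$ equipped with $\Omega^\bullet_{dR}$: that $\ker(d)$ is the subalgebra of locally constant functions, so that by Corollary \ref{cor:commutative_generalised_Leibniz_is_Leibniz} every $X\in\VF$ obeys the ordinary Leibniz rule and is hence a classical vector field, i.e.\ a derivation of $\smooth{M}$; and that the de Rham calculus satisfies $\Omega^2_{dR}=\Omega^2_{d,\text{max}}$, so that Theorem \ref{theo:asymtodiff} and Corollary \ref{cor:bracket_is_VF} are available. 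Under the resulting identification $\VF\cong\field{M}$, the target of the bracket is exactly the space carrying the classical Lie bracket, and it remains only to match the two operations.

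Next I would verify that the map $\wp\colon X\otimes Y\mapsto X\otimes Y-Y\otimes X$ genuinely takes values in $\Asym(\VF)$ (as asserted in the statement), since this is what makes Definition \ref{def:quantumliebracket} applicable and, by Corollary \ref{cor:bracket_is_VF}, forces the output into $\VF$. By Definition \ref{def:Asym} this amounts to showing that $\otimes_A(X\otimes Y-Y\otimes X)=X\otimes_A Y-Y\otimes_A X$ lies in $\Ann(S^2_d)$. In the commutative setting Proposition \ref{prop:pairing_props} identifies $\langle\beta,X\rangle$ with the evaluation $\beta(X)\in A$, so the pairing of Proposition \ref{prop:natural_pairing} becomes $\llangle\alpha\otimes_A\beta,X\otimes_A Y\rrangle=\beta(X)\,\alpha(Y)$. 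Flipping the cotensor corresponds to swapping the two vector-field slots, so for a symmetric (flip-invariant) form $\sigma\in S^2_d$ the expression $P(X,Y):=\llangle\sigma,X\otimes_A Y\rrangle$ satisfies $P(X,Y)=P(Y,X)$, whence $\llangle\sigma,X\otimes_A Y-Y\otimes_A X\rrangle=0$ for all such $\sigma$. Since $S^2_d$ for the de Rham calculus is generated by symmetric tensors, this yields the required membership in $\Ann(S^2_d)$.

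With this in hand the computation is immediate. Recalling that $\comp(U\otimes V)=V\circ U$, Definition \ref{def:quantumliebracket} gives
\begin{equation}
[X,Y]^{\wp}_{dR}=\comp\bigl(\wp(Y\otimes X)\bigr)=\comp(Y\otimes X-X\otimes Y)=X\circ Y-Y\circ X .
\end{equation}
Since $X$ and $Y$ are derivations of $\smooth{M}$, their commutator $X\circ Y-Y\circ X$ is again a derivation, and it is by definition the classical Lie bracket of vector fields; this is exactly the element of $\VF$ produced above. Hence $[-,-]^{\wp}_{dR}$ coincides with the classical Lie bracket under the identification $\VF\cong\field{M}$.

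The genuinely substantive step is the membership $X\otimes_A Y-Y\otimes_A X\in\Ann(S^2_d)$, i.e.\ the symmetry of the pairing against $S^2_d$; everything else is either a direct substitution or a recollection of classical facts. I would therefore expect the main care to go into (i) making explicit that $S^2_d$ for the de Rham calculus is generated by symmetric tensors and that $\llangle\sigma,-\rrangle$ is honestly symmetric in the two vector-field arguments, and (ii) justifying the two background identifications of the first paragraph — in particular $\Omega^2_{dR}=\Omega^2_{d,\text{max}}$ and $\VF\cong\field{M}$ via Corollary \ref{cor:commutative_generalised_Leibniz_is_Leibniz} — cleanly enough that Theorem \ref{theo:asymtodiff} may be invoked.
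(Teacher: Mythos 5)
Your proposal is correct and follows exactly the route the paper intends: the paper's proof is literally ``Straightforward computation,'' and your argument is that computation spelled out — checking $\Omega^2_{dR}=\Omega^2_{d,\text{max}}$ and $\VF\cong\field{M}$ so that Theorem \ref{theo:asymtodiff} applies, verifying that $X\otimes_A Y-Y\otimes_A X$ annihilates $S^2_d$ because the classical $S^2_d$ consists of flip-invariant tensors and the pairing $\llangle\alpha\otimes_A\beta,X\otimes_A Y\rrangle=\beta(X)\alpha(Y)$ is symmetric against them, and then unwinding Definition \ref{def:quantumliebracket} to get $[X,Y]^{\wp}_{dR}=X\circ Y-Y\circ X$, the commutator of derivations. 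The only cosmetic remark is that the identification $\VF\cong\field{M}$ is most cleanly cited from Proposition \ref{prop:fields_and_tangent_space} (giving $\VF\cong\AHom(\Omega^1_{dR},A)$, hence both inclusions) rather than from the Leibniz corollary alone, which gives only one direction.
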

\begin{proof}
Straightforward computation.
\end{proof}

\subsection{Brackets of vector fields on algebras equipped with an inner product}
\label{ss:bracket_examples}
For this subsection, we let $\bk=\R$ or $\C$.
We will treat examples where the algebra comes equipped with an inner product.
In these cases we are able to use an orthogonal projection to define a canonical bracket of vector fields.

\begin{theo}\label{theo:inner_product_brackets}
	Let $\bk$ be a field and $(A,\langle -,-\rangle)$ be a finite-dimensional associative $\bk$-algebra equipped with a $\bk$-bilinear inner product.
	Then for each first order differential calculus $\Omega^1_d$ there is a canonical bracket $[-,-]^{\wp}_d$.
\end{theo}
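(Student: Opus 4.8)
The plan is to manufacture, canonically out of the inner product, a $\bk$-linear retract $\wp\colon\VF\otimes\VF\to\Asym(\VF)$ of the inclusion $\Asym(\VF)\hookrightarrow\VF\otimes\VF$ from Definition \ref{def:Asym}; once such a $\wp$ is produced, Definition \ref{def:quantumliebracket} delivers the bracket $[-,-]^\wp_d$, after completing $\Omega^1_d$ to its maximal prolongation so that $\Omega^2_d=\Omega^2_{d,\text{max}}$ as that definition requires. Since the only datum entering Definition \ref{def:quantumliebracket} is the retract, the entire content of the statement is the \emph{canonical} choice of $\wp$; over a field such a retract always exists (every subspace splits off), but it is not otherwise distinguished, and the inner product is exactly what singles one out.

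First I would record that every space in sight is finite-dimensional over $\bk$. As $A$ is finite-dimensional, so is $\End_\bk(A)$, and hence so are the subspaces $\VF\subseteq\Diff^1_d(A,A)\subseteq\End_\bk(A)$, the $\bk$-tensor product $\VF\otimes\VF$, and its subspace $\Asym(\VF)$. No appeal to $\Omega^1_d$ beyond Definition \ref{def:vectorfields} is needed for this, although one could equally invoke the identification $\VF\cong\AHom(\Omega^1_d,A)$ of Proposition \ref{prop:fields_and_tangent_space}.

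Next I would transport the form. The nondegenerate inner product $\langle-,-\rangle$ on $A$ equips $\End_\bk(A)$ with its Hilbert--Schmidt (Frobenius) form $\langle S,T\rangle_{\mathrm{HS}}\colonequals\tr(S^{\ast}T)$, where $S^{\ast}$ is the adjoint of $S$ with respect to $\langle-,-\rangle$; this is a canonical form depending only on the datum on $A$. Restricting it along $\VF\hookrightarrow\End_\bk(A)$ and then forming the tensor-product form endows $\VF\otimes\VF$ with a canonical bilinear form determined by $\langle-,-\rangle$ and the calculus $\Omega^1_d$ alone, with no further choices.

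Finally, I would take $\wp$ to be the orthogonal projection of $\VF\otimes\VF$ onto $\Asym(\VF)$ for this form. An orthogonal projection onto a subspace restricts to the identity on that subspace, so $\wp$ is a retract in the sense of Definition \ref{def:quantumliebracket}, which therefore produces the bracket $[-,-]^\wp_d$; it takes values in $\VF$ by Theorem \ref{theo:asymtodiff} (equivalently Corollary \ref{cor:bracket_is_VF}). The step that needs genuine attention is the existence of the orthogonal projection, i.e.\ the splitting $\VF\otimes\VF=\Asym(\VF)\oplus\Asym(\VF)^{\perp}$, which amounts to the induced form remaining nondegenerate on $\Asym(\VF)$, equivalently $\Asym(\VF)\cap\Asym(\VF)^{\perp}=0$. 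This is automatic when the inner product is positive-definite (as for $\bk=\R$), and is precisely what the standing assumption $\bk=\R$ or $\C$ of this subsection secures; it is the only hypothesis beyond finite-dimensionality that must be monitored.
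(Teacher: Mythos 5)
Your proof is correct and takes essentially the same route as the paper: the paper transports the inner product to $\Hom(A,A)\cong A^{\ast}\otimes A$ via the tensor-product form (which coincides with your Hilbert--Schmidt form $\tr(S^{\ast}T)$), restricts it along $\VF\subseteq\Diff^1_d(A,A)\subseteq\Hom(A,A)$, tensors to get a form on $\VF\otimes\VF$, and takes $\wp$ to be the orthogonal projection onto $\Asym(\VF)$, after which Definition \ref{def:quantumliebracket} delivers the bracket. If anything, you are more careful than the paper, which asserts the existence of the orthogonal projection without remarking that this requires the form to stay nondegenerate on $\Asym(\VF)$ --- automatic in the positive-definite case, but a genuine point to monitor for a $\C$-bilinear form.
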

\begin{proof}
First $\Hom(A,A)\cong A^\ast\otimes A$, where $A^\ast\colonequals \Hom(A,\bk)$.
The inner product induced on the dual space yields an inner product on $A^\ast\otimes A$ given by
\begin{equation}
\langle \alpha\otimes a, \beta\otimes b\rangle
=\langle \alpha, \beta\rangle\langle a, b\rangle.
\end{equation}
Considering the following sequence of inclusions
\begin{equation}
\VF
\subseteq \Diff^1_d(A,A)
\subseteq \Hom(A,A).
\end{equation}
we get an induced inner product on $\VF$.
Subsequently, this induces an inner product on $\VF\otimes \VF$.
Hence, there is a canonical map $\wp$, namely the orthogonal projection to the $A$-subbimodule $\Asym(\VF)$, i.e.\ $\wp\colon \VF\otimes \VF\to \Asym(\VF)$.
This gives the bracket of vector fields via Definition \ref{def:quantumliebracket}.
\end{proof}

\begin{defi}
	A \emph{group algebra} for a finite group $G$ is the unital associative algebra $\bk[G]$ of formal $\bk$-linear combinations of group elements, where the multiplication is the group product extended by linearity.
\end{defi}
\begin{defi}\label{def:CliffordAlgebra}
	Let $V$ be a finite-dimensional $\bk$-vector space equipped with a (nondegenerate) symmetric bilinear form $b\colon V \times V \rightarrow \bk$.
	The \emph{Clifford algebra} $\Cl(V,b)$ is the unital associative algebra generated by formal products of elements of $V$, subject to the relations 
	\begin{equation}
	        v v + b(v, v) 1 = 0
	\end{equation}
	for $v \in V$.
\end{defi}

\begin{cor}\label{cor:clifford_brackets}
	Suppose that either $A=\bk[G]$ is the group algebra of a finite group $G$, or $A$ is a Clifford algebra for a bilinear form associated to an inner product.
Then any first order differential calculus $\Omega^1_d$ on $A$ induces a canonical bracket $[-,-]^{\wp}_d$ on the associated vector fields $\VF$.
\end{cor}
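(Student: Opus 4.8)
The plan is to deduce the corollary directly from Theorem~\ref{theo:inner_product_brackets}. In each of the two cases the algebra $A$ is finite-dimensional over the field $\bk$, so all I need to supply is a \emph{canonical} $\bk$-bilinear inner product on $A$; the theorem then furnishes the orthogonal projection $\wp\colon\VF\otimes\VF\to\Asym(\VF)$ and, through Definition~\ref{def:quantumliebracket}, the bracket $[-,-]^{\wp}_d$ for the given first order differential calculus (taken with its maximal prolongation, as assumed throughout this subsection). Thus the entire task reduces to exhibiting the two inner products and checking that they are genuinely canonical.

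For the group algebra $A=\bk[G]$ I would first note that $\dim_{\bk}\bk[G]=|G|<\infty$. I then take the inner product for which the group elements form an orthonormal basis, that is
\begin{equation}
\langle g,h\rangle\colonequals\delta_{g,h},
\qquad g,h\in G,
\end{equation}
extended $\bk$-bilinearly to all of $\bk[G]$. This form is symmetric and nondegenerate by construction and manifestly depends only on the group $G$, so it is canonical. Applying Theorem~\ref{theo:inner_product_brackets} then yields the bracket.

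For the Clifford algebra $A=\Cl(V,b)$ I would use that $\dim_{\bk}\Cl(V,b)=2^{\dim_{\bk}V}<\infty$. Since $b$ is associated to an inner product on $V$, it induces a canonical inner product on each exterior power via the Gram determinant, $\langle v_1\wedge\cdots\wedge v_k,\,w_1\wedge\cdots\wedge w_k\rangle=\det\big(b(v_i,w_j)\big)$, with distinct degrees declared mutually orthogonal. Transporting this form along the canonical $\bk$-linear symbol isomorphism $\Cl(V,b)\cong\Lambda^{\bullet}V$ equips the Clifford algebra with a $\bk$-bilinear inner product. Equivalently, one may fix a $b$-orthonormal basis of $V$ and declare the resulting monomial basis of $\Cl(V,b)$ orthonormal, or describe the form intrinsically through the reversal anti-automorphism and the canonical trace. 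Theorem~\ref{theo:inner_product_brackets} then produces the bracket in this case as well.

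The only step that is not purely formal is the canonicity of the Clifford inner product, namely its independence of the chosen orthonormal basis of $V$; I expect this to be the main (though still mild) obstacle. It is resolved by the basis-free determinant description above, or equivalently by phrasing the construction through the intrinsic symbol isomorphism $\Cl(V,b)\cong\Lambda^{\bullet}V$, which carries the canonical form on $\Lambda^{\bullet}V$ determined by $b$. The group algebra case requires no such check. With both inner products in hand, the corollary follows immediately from Theorem~\ref{theo:inner_product_brackets}.
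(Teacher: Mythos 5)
Your proof is correct and follows essentially the same route as the paper: in both cases one exhibits a canonical $\bk$-bilinear inner product on the finite-dimensional algebra (declaring $G$ orthonormal in $\bk[G]$; extending the inner product from $V$ to $\Cl(V,b)$) and then invokes Theorem~\ref{theo:inner_product_brackets}. The only cosmetic difference is how canonicity of the Clifford inner product is certified: you use the basis-free Gram-determinant form transported along $\Cl(V,b)\cong\Lambda^{\bullet}V$ (equivalently, an orthonormal monomial basis), whereas the paper characterizes the same form as the unique extension invariant under the left multiplication operators $\{L_v \mid v\in V\}$ --- these single out the same inner product, since left multiplication by unit vectors permutes the orthonormal monomial basis up to sign.
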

\begin{proof}
Since $\bk[G]$ is equipped with a canonical basis, namely $G$, it also comes equipped with a canonical inner product, defined by declaring this basis to be orthonormal.
Note that this inner product is invariant under the left regular representation of $G$ on $\bk[G]$.

Definition \ref{def:CliffordAlgebra} yields a linear embedding $V \rightarrow \Cl(V,b)$.
When the bilinear form arises from an inner product on $V$, then this inner product can be extended to a canonical inner product on $\Cl(V,b)$, uniquely specified by being invariant under the left multiplication operators $\{L_v | v \in V\}$.

Now the result follows from Theorem \ref{theo:inner_product_brackets}.
\end{proof}

\subsubsection{Terminal calculus for Clifford algebras}

Let us specialize to the case $\bk=\R$ and let $A=\Cl(V)\colonequals \Cl(V,\langle -,- \rangle)$ be the Clifford algebra of a Euclidean inner product $\langle -,- \rangle$ on $V$.

Consider $A=\Cl(V)$, and let $e_1, \dots, e_k$ be an orthonormal basis, where $k=\dim(V)$.
Then we may construct the terminal calculus $\Omega^1_V = \Omega^1_{\{e_1, \dots, e_k \}}$ over $A$, and this calculus is canonical for $A$.
A basis for $\Cl(V)$ is given by the unit $1$ together with the ordered products $e_{i_1 \dots i_l} = e_{i_l} \cdots e_{i_l}$, where $i_{1} < i_{2} < \dots < i_{l}$, where $l\le k$.

\begin{prop}
	The calculus $\Omega^1_V$ is parallelizable, with left basis $de_1, \dots, de_k$.
	The structure equations for $\Omega^1_V$ are given by
	\begin{equation}\label{eq:cliffordstructure}
		d e_{ij} = e_j de_i - e_i de_j -\frac{2}{k-1}\sum_{s=1}^k e_{ij} e_s de_s
	\end{equation}
\end{prop}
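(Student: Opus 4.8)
The plan is to first record parallelizability directly from the construction of the terminal calculus, and then to obtain the structure equations by combining the Leibniz rule with the bimodule relations that the terminal calculus imposes on $\Omega^1_V$.

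\emph{Parallelizability.} By the very construction of the terminal calculus $\Omega^1_{\{e_1,\dots,e_k\}}$ attached to the generating set $\{e_1,\dots,e_k\}$ (cf.\ \cite[\jetsdefiSterminal]{FMW}), the differentials $de_1,\dots,de_k$ form a free left $A$-module basis, so that $\Omega^1_V\cong\bigoplus_{s=1}^k A\,de_s$ as a left $A$-module. First I would recall this construction and verify that the relations defining the terminal calculus do not collapse this decomposition, i.e.\ that the $de_s$ remain left $A$-linearly independent; parallelizability with the stated left basis is then immediate.

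\emph{Reduction of the structure equations.} Since $e_{ij}=e_ie_j$ and $d$ is a derivation, the Leibniz rule gives $d(e_{ij})=(de_i)e_j+e_i\,de_j$, in which the second summand is already expressed in the left basis. Thus the entire problem reduces to rewriting the right product $(de_i)e_j$ in terms of $de_1,\dots,de_k$. In the terminal calculus the right $A$-action is as constrained as possible subject to left-freeness, so it is uniquely determined by the relations forced upon $\Omega^1_V$ by differentiating the defining relations of $\Cl(V)$ together with associativity of the bimodule structure.

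\emph{Determining the right action.} Differentiating the Clifford relations $e_ae_b+e_be_a=-2\delta_{ab}1$ and using $d(1)=0$ yields $(de_a)e_b+e_a\,de_b+(de_b)e_a+e_b\,de_a=0$, which fixes the symmetric part $(de_i)e_j+(de_j)e_i=-e_i\,de_j-e_j\,de_i$; in particular $(de_i)e_i=-e_i\,de_i$. The antisymmetric part is pinned down by associativity, equivalently by differentiating the quadratic identities $e_ie_je_i=e_j$ (valid for $i\neq j$) to obtain $(de_i)e_je_i+e_i(de_j)e_i+e_ie_j\,de_i=de_j$. Summing this family over the $k-1$ indices $i\neq j$ produces the factor $k-1$ on the right-hand side and introduces the canonical one-form $\theta\colonequals\sum_{s=1}^k e_s\,de_s$, whose partial cancellations (controlled by $e_s^2=-1$) leave, after solving the trace part of the system,
\begin{equation*}
(de_i)e_j=e_j\,de_i-2e_i\,de_j-\frac{2}{k-1}\sum_{s=1}^k e_ie_je_s\,de_s .
\end{equation*}
Substituting this into $d(e_{ij})=(de_i)e_j+e_i\,de_j$ and cancelling the $e_i\,de_j$ terms yields \eqref{eq:cliffordstructure}. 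As a consistency check one confirms that the symmetric part of the displayed formula reproduces the relation coming from differentiating $e_ie_j+e_je_i=0$.

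\emph{Main obstacle.} The delicate point is the determination of the antisymmetric part of $(de_i)e_j$ and, with it, the exact coefficient $\tfrac{2}{k-1}$. This requires verifying that the proposed right action is well defined and associative on all of $\Cl(V)$—not merely on the generators $e_s$—and that it is genuinely the terminal such structure; the coefficient emerges only after carefully summing the associativity constraints over the orthonormal basis, so that the bookkeeping of the $\theta$-contributions and of the signs produced by $e_s^2=-1$ must be tracked exactly.
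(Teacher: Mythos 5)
Your reduction via the Leibniz rule to rewriting $(de_i)e_j$ in the left basis is fine, and the intermediate formula you aim for is indeed equivalent to \eqref{eq:cliffordstructure}; the gap is in the step that is supposed to produce it. You claim the right action on $\Omega^1_V$ is ``uniquely determined'' by the identities obtained by differentiating the relations of $\Cl(V)$ together with bimodule associativity. This is false: those identities are formal consequences of the Leibniz rule and therefore hold in \emph{every} first order calculus over $\Cl(V)$ that is left-free on $de_1,\dots,de_k$, so they cannot single out the terminal one. Concretely, let $\chi$ be the grade involution of $\Cl(V)$ (so $\chi(v)=-v$ for $v\in V$) and consider the calculus that is free as a left module on $de_1,\dots,de_k$ with right action $\omega f=\chi(f)\,\omega$, with $d$ extended by the Leibniz rule. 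Since $\chi$ is an algebra automorphism, the bimodule structure is associative, and $d$ kills the Clifford relations, since $d\bigl(uv+vu+2b(u,v)1\bigr)=\chi(v)\,du+u\,dv+\chi(u)\,dv+v\,du=0$ for $u,v\in V$. This calculus satisfies your symmetric-part identity, your identity from differentiating $e_ie_je_i=e_j$, and every other differentiated relation, yet it has $(de_i)e_j=-e_j\,de_i$, hence $de_{ij}=e_i\,de_j-e_j\,de_i$ with no trace term; for $k\ge 3$ this genuinely differs from \eqref{eq:cliffordstructure} (compare coefficients of $de_i$ using left-freeness). Consequently no amount of summing your constraints over $i\neq j$ can produce the coefficient $\tfrac{2}{k-1}$: any identity so derived holds in both calculi. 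Your ``main obstacle'' paragraph gestures at exactly this issue (one must use ``that it is genuinely the terminal such structure''), but the proposal contains no mechanism for doing so.

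The missing input is the actual construction of $\Omega^1_V$, and this is precisely what the paper's proof uses instead of any axiomatic characterization of the right action. The paper realizes $\Omega^1_V$ as the direct sum $\Omega^1_{e_1}\oplus\dots\oplus\Omega^1_{e_k}$ of singleton terminal calculi from \cite[\jetsdefiSterminal]{FMW}, in which the $s$-th component of $df$ is the commutator $[e_s,f]$ and the bimodule structure is componentwise multiplication in $\Cl(\R^k)$. There the $de_i$ become explicit $k$-tuples, which is also how parallelizability is actually \emph{verified} --- left-freeness is a computed fact about this realization, not automatic from terminality as your first step suggests (though you do promise a check). The paper then observes the calculus is central with central basis $E_1,\dots,E_k$, inverts the $k\times k$ change-of-basis matrix to get $E_i=\tfrac{-1}{2(k-1)}\bigl((k-2)\,de_i+\sum_{j\neq i}e_ie_j\,de_j\bigr)$, and computes in tuples that $de_{ij}=-2e_jE_i+2e_iE_j$; substituting yields \eqref{eq:cliffordstructure}. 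The factor $k-1$ thus enters through the matrix inversion, not through summing differentiated relations. To repair your argument you would have to replace the false uniqueness claim by an actual computation of the right action in the terminal calculus, which leads you back to the paper's explicit construction.
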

\begin{proof}
	The $V$--terminal can be considered as a terminal calculus for a finite number of elements, by taking a basis of $V$. 
	Of course, this will be $e_1, \dots, e_k$.
	The direct sum calculus $\Omega^1 = \Omega^1_{e_1} \oplus \dots \oplus \Omega^1_{e_k}$ then has the property that an algebra element $f$ satisfies $df = 0$ if and only if it is in the kernel of the differential of each component.
	Thus, the direct sum construction implicitly describes the intersection of differential relations.
	Therefore, $\Omega^1 \simeq \Omega^1_V$ as first order calculi, and we can compute properties such as structure relations on the left--hand side.
	Thus, using the formula for the terminal calculus for a singleton from \cite[\jetsdefiSterminal]{FMW}, we obtain
	\begin{equation}
		\begin{split}
			&de_1 = (0, -2 e_{12}, \dots, -2 e_{1k})\\
			&de_2 = (2 e_{12}, 0, -2 e_{23},  \dots, -2 e_{1k})\\
			&\vdots\\
			&de_{k-1} = (2 e_{1 (k-1)},  \dots, 2 e_{(k-2)(k-1)}, 0, -2 e_{(k-1)k})\\
			&de_{k} = (2 e_{1 k},  \dots, 2 e_{(k-1)k}, 0)
		\end{split}
	\end{equation}
	These are obviously independent and provide a left basis over $\Cl(\R^k)$, so $\Omega^1_V$ is parallelizable of cotangent dimension $k$.
	The calculus is also central as a bimodule, with central generators
	\begin{equation}
		E_i = (0, \dots, 0, 1, 0, \dots, 0),
	\end{equation}
	where the nonzero entry is in the $i$th slot.
	We can rewrite the differential basis in terms of the central basis by inverting a $k\times k$ matrix with entries in $\Cl(\R^k)$. 
	Doing so, one obtains
	\begin{equation}
		\begin{split}
			&E_1 = \tfrac{1}{2(k-1)}(-(k-2)de_{1} -e_{12}de_2    - e_{13}de_3    - \dots -e_{1k}de_k )\\
			&E_2 = \tfrac{1}{2(k-1)}(e_{12}de_1    -(k-2)de_{2}  - e_{23}de_3    - \dots -e_{2k}de_k )\\
			&E_3 = \tfrac{1}{2(k-1)}(e_{12}de_1    +e_{23}de_3    - (k-2)de_{3}  - \dots -e_{3k}de_k )\\
			&\vdots\\
			&E_k = \tfrac{1}{2(k-1)}(e_{1k}de_1    +e_{2k}de_3    + e_{2k}de_3    + \dots   -(k-2)de_{k})
		\end{split}
	\end{equation}
	Then using the Leibniz rule, we obtain
	\begin{equation}
		de_{ij} = e_i de_j + de_i \cdot e_j = (0, \dots, 0,  -2 e_j, 0, \dots, 0, 2 e_i, \dots 0) = -2e_j E_i + 2 e_i E_j,
	\end{equation}
	where the nonzero slots are in the $i$th and $j$th positions (and $i<j$).
	Inserting the formula 
	\begin{equation}
		\begin{split}
			E_i 	&= \tfrac{1}{2(k-1)}(e_{1i}de_1+\dots +e_{(i-1)i}de_{i-1} - (k-2)de_{i} -e_{i(i+1)}de_{i+1}   - \dots -e_{ik}de_k )\\
				&= \tfrac{-1}{2(k-1)}((k-2)de_i +    \sum_{j\not= i} e_i e_j de_j)
		\end{split}
	\end{equation}
	into $de_{ij}$ (and the same for $j$), we obtain
	\begin{equation}
		de_{ij} = \frac{1}{k-1}\big( \sum_{s\not=i} e_j e_i e_s de_s +(k-2)e_j de_i -\sum_{s\not=j} e_i e_j e_s de_s -(k-2)e_i de_j \big)
	\end{equation}
	and extracting the $j$th term from the first sum and the $i$th term from the second sum, as well as using anticommutativity to exchange product orders, we obtain the expression
	\begin{equation}
		de_{ij}	= \frac{1}{k-1}\big( \sum_{i\not= s\not=j} -2e_i e_j e_s de_s +(k-3)e_j de_i -(k-3)e_i de_j \big)
		= \frac{1}{k-1}\big( (k-3)e_j de_i -(k-3)e_i de_j - 2\sum_{i\not= s\not=j} e_{ij}e_s de_s \big)
	\end{equation}
	and subtracting terms while adding equivalent terms into the sum yields the desired expression \eqref{eq:cliffordstructure}.
\end{proof}

\begin{prop}
	The terminal calculus $\Omega_{i,j}$ for $\Hq$ from \cite[\jetsssquaternions]{FMW} is $\Omega^1_V$ for $\Cl(V)$ when $V=\R^2$.
\end{prop}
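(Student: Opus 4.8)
The plan is to reduce the claim to two facts: that $\Cl(\R^2)$ with the Euclidean form is isomorphic as a unital $\R$-algebra to $\Hq$, and that the terminal calculus attached to a finite generating set is canonically determined by the pair (algebra, generating set), and hence is transported by any algebra isomorphism. Once both are in place, matching $\Omega^1_V$ with $\Omega_{i,j}$ becomes a matter of identifying the two generating sets under the isomorphism.

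First I would make the algebra isomorphism explicit. Let $e_1,e_2$ be the orthonormal basis of $V=\R^2$ used to build $\Omega^1_V$. Definition \ref{def:CliffordAlgebra} gives $e_1^2=e_2^2=-1$, and polarizing the relation $vv+b(v,v)1=0$ yields $e_1e_2+e_2e_1=0$. Hence $e_{12}=e_1e_2$ satisfies $e_{12}^2=-e_1^2e_2^2=-1$ and anticommutes with both $e_1$ and $e_2$. Setting $i=e_1$, $j=e_2$, $k=e_{12}$ reproduces exactly the Hamilton relations $i^2=j^2=k^2=-1$ and $ij=k=-ji$, so the assignment extends to a unital algebra isomorphism $\varphi\colon\Cl(\R^2)\xrightarrow{\sim}\Hq$. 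In particular, $\varphi$ carries the generating set $\{e_1,e_2\}$ onto $\{i,j\}$.

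Second, I would invoke canonicity of the construction. As recalled in the proof of the preceding proposition, $\Omega^1_V=\Omega^1_{\{e_1,e_2\}}$ is built from the generating set $\{e_1,e_2\}$ via the singleton terminal calculi of \cite[\jetsdefiSterminal]{FMW} and the direct-sum (intersection-of-relations) construction; this depends only on the underlying algebra and the chosen finite generating set, and is therefore preserved by $\varphi$. By definition, the calculus $\Omega_{i,j}$ of \cite[\jetsssquaternions]{FMW} is precisely the terminal calculus attached to the generating pair $\{i,j\}$ of $\Hq$. Transporting $\Omega^1_{\{e_1,e_2\}}$ along $\varphi$ thus produces $\Omega_{i,j}$, giving $\Omega^1_V\cong\Omega_{i,j}$ as first order differential calculi. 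The main obstacle is purely one of conventions: one must confirm that the explicit presentation of $\Omega_{i,j}$ in \cite[\jetsssquaternions]{FMW}, the sign conventions in the Clifford relation, and the normalization of the inner product all line up, so that ``terminal calculus for a finite set of generators'' means the same object on both sides. Since terminality is a universal property, once the generating sets correspond under $\varphi$ the identification is forced; concretely, one verifies this by checking that $\varphi$ intertwines the structure relations computed above (in the case $k=2$) with those recorded for $\Omega_{i,j}$.
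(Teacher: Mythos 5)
Your proposal is correct and follows essentially the same route as the paper, whose proof consists precisely of the two facts you identify: the algebra isomorphism $\Cl(\R^2)\simeq\Hq$ and the observation that the generating set $\{i,j\}$ corresponds to the image of the embedding $\R^2\rightarrow\Cl(\R^2)$, so that the terminal calculi are identified. You simply make explicit what the paper leaves implicit — the concrete isomorphism $e_1\mapsto i$, $e_2\mapsto j$, $e_{12}\mapsto k$ via the Hamilton relations, and the canonicity of the terminal-calculus construction under algebra isomorphisms.
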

\begin{proof}
	It is well known that $\Cl(\R^2)\simeq \Hq$ as associative algebras.
	Under this isomorphism, the subspace spanned by $\{i,j\}$ is precisely the image of the embedding $\R^2 \rightarrow \Cl(\R^2)$.
\end{proof}
\begin{eg}
	Let us compute the bracket of vector fields for $\Hq$ equipped with $\Omega_{i,j}$.
The vector fields can be written as sums of partial derivatives $\partial_i, \partial_j$ with quaternion coefficients on the right (i.e. Right multiplication operators $R_x$ on the left).
We have that the canonical projector $\wp$ is characterized by its kernel. 
Computing the orthogonal complement to the image of $\Asym(\VF)$ in $\VF \otimes \VF$, we obtain
\begin{equation}
	\ker \wp = \langle  s_\Hq(\partial_i \otimes \partial_j - \partial_j \otimes \partial_i) \rangle_\Hq
\end{equation}
where we the span is by right multiplication by elements of $\Hq$, and
\begin{equation}
	s_\Hq(x) = -x + i\cdot x \cdot i+ j\cdot x \cdot j+k\cdot x \cdot k.
\end{equation}
This allows us to compute the brackets, and we compile the multiplication table for a basis of quaternionic vector fields into Table \ref{tab:quatbrackets} below:

\begin{table}[H]
	\centering
	\begin{tabular}{|c||c|c|c|c|c|c|c|c|}
		\hline
		$[-,-]_\wp$ 		&$\partial_i$ 		&$\partial_i \cdot i$ 	&$\partial_i \cdot j$ 				&$\partial_i \cdot k$				&$\partial_j$ 		&$\partial_j \cdot i$ 	&$\partial_j \cdot j$				&$\partial_j \cdot k$ \\
		\hhline{|=# =| =| =| =| =| =| =| =|}
		$\partial_i$ 		&$0$			&$\partial_i$ 		&$0$						&$-\partial_i \cdot j$				&$0$			&$\partial_j$ 					&$0$			&$-\partial_j \cdot j$\\
		\hline                                                                                                                                                                                                                                                                  
		$\partial_i\cdot i$ 	&$0$			&$\partial_i \cdot i$	&$0$						&$\partial_i \cdot k$				&$0$			&$\partial_j \cdot i$				&$0$			&$\partial_j \cdot k$\\
		\hline                                                                                                                                                                                                                                                                  
		$\partial_i\cdot j$ 	&$0$			&$\partial_i \cdot j$	&$0$						&$\partial_i$					&$-\partial_i$		&$-\partial_i \cdot i + \partial_j \cdot j$	&$\partial_i \cdot j$	&$\partial_i \cdot k + \partial_j$\\         
		\hline                                                                                                                                                                                                                                                                  
		$\partial_i\cdot k$ 	&$0$			&$\partial_i \cdot k$	&$0$	 					&$-\partial_i \cdot i$				&$\partial_i \cdot i$	&$\partial_j \cdot k - \partial_i$		&$\partial_i \cdot k$	&$-\partial_i \cdot j - \partial_j \cdot i$\\
		\hline
		$\partial_j$ 		&$0$			&$0$ 	 		&$\partial_i$					&$\partial_i \cdot i$				&$0$			&$0$						&$\partial_j$		&$\partial_j \cdot i$\\ 
		\hline
		$\partial_j\cdot i$ 	&$-\partial_j$		&$\partial_j \cdot i$	&$\partial_i \cdot i - \partial_j \cdot j$	&$\partial_j \cdot k - \partial_i$		&$0$			&$0$						&$\partial_j \cdot i$	&$-\partial_j$\\ 
		\hline
		$\partial_j\cdot j$ 	&$0$			&$0$			&$\partial_i \cdot j$				&$\partial_i \cdot k$				&$0$			&$0$						&$\partial_j \cdot j$	&$\partial_j \cdot k$\\
		\hline
		$\partial_j\cdot k$ 	&$-\partial_j \cdot j$	&$\partial_j \cdot k$	&$\partial_i \cdot k + \partial_j$	 	&$-\partial_i \cdot j - \partial_j \cdot i$	&$0$			&$0$						&$\partial_j \cdot k$	&$-\partial_j \cdot j$\\
		\hline 
	\end{tabular}
	\caption{Brackets of quaternionic vector fields}
	\label{tab:quatbrackets}
\end{table}
\end{eg}
\begin{rmk}
	
We note that, while the algebra $\Hq \simeq \Cl(\R^2)$ can be seen as a super-algebra, it is not a supercommutative superalgebra, and in particular the brackets do not turn out to be graded commutators in general.
Instead, the right span of $\partial_i$ forms a subalgebra under the bracket, and for this subalgebra the bracket is simply the composition.
The same is true for the right span of $\partial_j$.
For brackets of two vector fields with opposite basis elements, the bracket turns out to be be the anti-commutator if at least one coefficient is $k$, or both coefficients are equal to $1$, and to be the commutator if at least one coefficient is one of $i$ or $j$ and the coefficients are different.
If the coefficient is the same, but the basis fields are different, then we get the anti-commutator.
Here commutator, anti-commutator or composition refers to the vector fields seen as differential operators, e.g.
\begin{equation}
	[\partial_i, \partial_j \cdot i]_\wp = \partial_i \circ \partial_j \cdot i -\partial_j \cdot i \circ \partial_i = \partial_j.
\end{equation}
\end{rmk}

\begin{eg}
	Let us now consider the clifford algebra $\Cl(\R^3)$ equipped with $\Omega^1_V$.
	The structure equations become
	\begin{equation}
		\begin{split}
			de_{12} &= -e_{123}de_3\\
			de_{13} &= e_{123}de_2\\
			de_{23} &= -e_{123}de_1
		\end{split}
		\label{eq:cl3structureeq}
	\end{equation}
	and we note that the element $e_{123}$ is central and locally constant: $de_{123}=0$.
	Therefore it is not enough for a differential operator $X$ of order at most 1 satisfy $X(1)=0$ to be a vector field, following Definition \ref{def:vectorfields} we must also have $X(e_{123})=0$.
	Then the vector fields are again spanned by operators of the form $X(a) = \partial_b \cdot c (a)$ where $b$ is a basis element of $V=\R^3$ and $c \in \Cl(\R^3)$.
	The structure equations provides the description $N_d=\langle d_ue_{12} +e_{123}d_ue_3, d_ue_{23} -e_{123}d_ue_1, d_ue_{23} +e_{123}d_ue_1 \rangle_{\Cl(\R^3)} $
	We apply the construction \cite[\jetseqStwomindefNd]{FMW} to obtain a set of generators for $S^2_d$, i.e.
	\begin{equation}
		d\otimes d: N_d \rightarrow S^2_d, \quad \sum_i a_i \otimes_\R b_i \mapsto \sum_i da_i \otimes_{\Cl(\R^3)} db_i
	\end{equation}
	The above construction then takes a generating set to a generating set.
	But then, since all coefficients in the structure equations are locally constant, it is immediate that $S^2_d = \{0 \otimes 0\}$ since $\{0\}$ is a generating set.
	Therefore, we have
	\begin{equation}
		\VF \otimes_{\Cl(\R^3)} \VF = \Ann(S^2_d)
	\end{equation}
	and by the retraction property from Def. \ref{def:quantumliebracket}, we have $\Asym(\VF) \simeq \VF \otimes_\R \VF$ with $\wp$ an isomorphism.
	Therefore we have
	\begin{prop}
		For the algebra $\Cl(\R^3)$ equipped with the maximal prolongation of the calculus $\Omega^1_V$, there is a unique bracket and it coincides with the composition map: for all vector fields $X,Y$
		\begin{equation}
			[X,Y]_\wp = X \circ Y.
		\end{equation}
	\end{prop}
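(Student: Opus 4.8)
The plan is to recognize that the substantive content has already been extracted in the preceding example, so the proof amounts to unwinding Definition \ref{def:quantumliebracket} once the module $\Asym(\VF)$ has been identified. First I would record the key input established just above: because $e_{123}$ is central and locally constant, and every coefficient appearing in the structure equations \eqref{eq:cl3structureeq} is a scalar multiple of $e_{123}$, the map $d\otimes d$ carries the displayed generating set of $N_d$ to $\{0\}$, whence $S^2_d = 0$ (using that the maximal prolongation gives $S^2_d = S^2_{d,\text{min}}$).

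With $S^2_d = 0$ in hand, I would then feed this into Definition \ref{def:Asym}: the annihilator $\Ann(S^2_d) = \bigcap_{\sigma \in S^2_d}\ker(\llangle \sigma, -\rrangle)$ reduces to the full module $\VF \otimes_A \VF$, and its preimage under the projection $\otimes_A$ is the full tensor square, so $\Asym(\VF) = \VF \otimes \VF$. Consequently the defining inclusion $\Asym(\VF) \hookrightarrow \VF \otimes \VF$ is the identity map, and the only $\bk$-linear retract of the identity is the identity itself; this forces $\wp = \id_{\VF \otimes \VF}$ and thereby establishes that the bracket is unique.

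Finally I would substitute $\wp = \id$ into the defining formula \eqref{eq:quantumliebracket}, keeping track of the composition convention $\comp(X \otimes Y) = Y \circ X$ from Theorem \ref{theo:asymtodiff}, to obtain
\begin{equation}
[X,Y]_\wp = \comp\bigl(\wp(Y \otimes X)\bigr) = \comp(Y \otimes X) = X \circ Y,
\end{equation}
which is precisely the asserted identity.

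I do not anticipate a genuine obstacle, since the hard step --- namely $S^2_d = 0$ --- is already available from the structure equations. The only points deserving care are bookkeeping ones: that the annihilator of the zero module should be read as the whole of $\VF \otimes_A \VF$ (so that $\Asym(\VF)$ becomes the entire tensor square rather than collapsing), and that the order of composition in \eqref{eq:quantumliebracket} must be tracked so that the bracket lands on $X \circ Y$ rather than $Y \circ X$.
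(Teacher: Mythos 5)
Your proposal is correct and follows essentially the same route as the paper: the example preceding the proposition establishes $S^2_d=0$ from the locally constant coefficients in the structure equations via the generating map $d\otimes d\colon N_d\to S^2_d$, whence $\Ann(S^2_d)=\VF\otimes_A\VF$, $\Asym(\VF)=\VF\otimes\VF$, and the retraction property forces $\wp=\id$, so $[X,Y]_\wp=\comp(Y\otimes X)=X\circ Y$. Your bookkeeping on the composition convention and on reading the annihilator of the zero module as the full tensor square matches the paper exactly.
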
 
\end{eg}

\subsection{Elemental and primitive brackets}
\label{ss:elemental_brakets_VF}
In this subsection we provide a variation of the construction in §\ref{ss:brakets_VF} for the construction of Lie brackets for vector fields that makes use of elemental jets.
The reason for considering this generalization is that differential operators of order at most $1$ coincide with elemental differential operators of order at most $1$.
This means that a linear combination of compositions of vector fields is a vector field if and only if it is an elemental differential operator of order at most $1$.
Therefore, in this section we derive new sufficient conditions for establishing this closure in the setting of elemental differential operators, and further show that these conditions are also necessary, cf.\ Theorem \ref{theo:wasymtodiff}.

For this section we will start from the initial data of a first order differential calculus $\Omega^1_d$ over the $\bk$-algebra $A$ such that $\Omega^1_d$ is flat in $\ModA$.
This in particular also implies that the following construction coincides with its primitive analogue.

In the spirit of Definition \ref{def:Asym}, we give the following.
\begin{defi}\label{def:elementalAsym}
	Let $\Omega^1_d$ be a first order differential calculus over $A$ such that $\Omega^1_d$ is flat in $\ModA$, then we have
	\begin{align}
		\Ann(\SS^2_d)
		\colonequals \bigcap_{\sigma\in \SS^2_d}\ker(\llangle \sigma,- \rrangle)
		\subseteq \VF \otimes_A \VF
	\end{align} 
	be the $A$-subbimodule of \emph{elementally antisymmetrized vector fields}, and let the $A$-subbimodule
	\begin{align}
	\WAsym(\VF)
	= \otimes_A^{-1}(\Ann(\SS^2_d))
	\subseteq \VF \otimes \VF
	\end{align} 
	be its preimage over the canonical projection $\otimes_A \colon \VF \otimes \VF \to \VF \otimes_A \VF$.
\end{defi}

\begin{theo}\label{theo:wasymtodiff}
	Let $\Omega^1_d$ be a first order differential calculus over $A$.
	Consider the composition map
	\begin{align}
		\Scomp\colon\VF \otimes \VF \longrightarrow \WD^2_d(A,A),
		&\hfill&
		X\otimes Y \longmapsto Y \circ X.
	\end{align}
	If $\Omega^1_d$ is flat in $\ModA$, and $\sum_i X_i \otimes Y_i \in \WAsym(\VF)$, then $\Scomp(\sum_i X_i \otimes Y_i)\in \VF$.
\end{theo}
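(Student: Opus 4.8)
The plan is to rerun the argument proving Theorem~\ref{theo:asymtodiff}, but with the \emph{elemental} symbol apparatus replacing the holonomic one. The advantage is that the elemental symbol representation map $\Sr^2_{d,A,A}$ of Proposition~\ref{prop:elemental_symbols_representation} is always a monomorphism, with no condition on $\Omega^2_d$; consequently the hypothesis $\Omega^2_d=\Omega^2_{d,\text{max}}$ used in Theorem~\ref{theo:asymtodiff} can be dropped, and flatness of $\Omega^1_d$ in $\ModA$ is exactly what is needed to make the comparison maps $\SS^2_d\hookrightarrow S^2_d$ and $\Sl^{1,1}_d$ available.

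First I would note that each $Y_i\circ X_i$, being a composite of order-one operators, is a genuine holonomic differential operator of order at most $2$, so $\Scomp(\sum_i X_i\otimes Y_i)$ is literally the same $\bk$-linear map $A\to A$ as the holonomic composite $\comp(\sum_i X_i\otimes Y_i)$ of Theorem~\ref{theo:asymtodiff}, merely recorded in $\WD^2_d(A,A)$. By \eqref{eq:elementaljetcompositionlift} its elemental $2$-jet lift is $\sum_i\widetilde{Y_i}\circ\SJ^1_d(\widetilde{X_i})\circ\Sl^{1,1}_{d,A}$. Since $\SJ^1_d=J^1_d$ (Proposition~\ref{prop:elemental_jet_properties}.\eqref{prop:elemental_jet_properties:1}), the left square of \eqref{diag:compatibility_elemental_l_with_l} collapses to $\Sl^{1,1}_d=l^{1,1}_d\circ\iota_{\SJ^2_d}$, so this elemental lift is precisely the restriction along $\iota_{\SJ^2_d A}$ of the canonical holonomic lift $\sum_i\widetilde{Y_i}\circ J^1_d(\widetilde{X_i})\circ l^{1,1}_{d,A}$ used in \eqref{eq:humongous} — in agreement with Proposition~\ref{prop:properties_elemental_DOs}.\eqref{prop:properties_elemental_DOs:3}.

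Next I would evaluate $\Sr^2_{d,A,A}$ on the elemental symbol. Using the commuting square of Lemma~\ref{lemma:elemental_forms_are_forms} — whose hypothesis holds because flatness of $\Omega^1_d$ makes the $2$-jet sequence left exact (cf.\ Remark~\ref{rmk:conditions_lemma elemental_symm_forms}) — I can rewrite $\reallywidetilde{\Scomp(\sum_i X_i\otimes Y_i)}\circ\Siota^2_{d,A}$ as $\reallywidetilde{\comp(\sum_i X_i\otimes Y_i)}\circ\iota^2_{d,A}\circ\iota_{\SS^2_d A}$. The crucial observation is that the chain \eqref{eq:humongous}, read from its second line down to the identification with the pairing, is a purely formal computation that never uses maximality of $\Omega^2_d$; it gives $\reallywidetilde{\comp(\sum_i X_i\otimes Y_i)}\circ\iota^2_{d,A}=\llangle-,\sum_i X_i\otimes_A Y_i\rrangle$ as a map on $S^2_d(A)$. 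Precomposing with $\iota_{\SS^2_d A}$ therefore produces exactly $\sigma\mapsto\llangle\sigma,\sum_i X_i\otimes_A Y_i\rrangle$ for $\sigma\in\SS^2_d(A)$, which is the pairing of Definition~\ref{def:elementalAsym}; as $\sum_i X_i\otimes Y_i\in\WAsym(\VF)$ means $\sum_i X_i\otimes_A Y_i\in\Ann(\SS^2_d)$, this vanishes identically.

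Finally, since $\Sr^2_{d,A,A}$ is a monomorphism, the elemental symbol is forced to be zero, so $\Scomp(\sum_i X_i\otimes Y_i)\in\WD^1_d(A,A)=\Diff^1_d(A,A)$ by Proposition~\ref{prop:properties_elemental_DOs}.\eqref{prop:properties_elemental_DOs:4}; since it annihilates $\ker(d)$ (each $X_i$ does, hence so does each $Y_i\circ X_i$), it lands in $\VF$. I expect the main obstacle to be exactly the middle step: confirming that the elemental composition lift is the restriction of the canonical holonomic one, so that \eqref{eq:humongous} may be reused verbatim rather than recomputed with elemental jets, and checking that no line of \eqref{eq:humongous} covertly relied on $\Omega^2_d=\Omega^2_{d,\text{max}}$.
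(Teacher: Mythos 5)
Your proposal is correct and takes essentially the same route as the paper's own proof: the paper likewise observes that $\SJ^1_d=J^1_d$ collapses \eqref{diag:compatibility_elemental_l_with_l} to $\Sl^{1,1}_d=l^{1,1}_d\circ\iota_{\SJ^2_d}$, uses Lemma \ref{lemma:elemental_forms_are_forms} (available by flatness) to exchange $\Siota^2_{d,A}$ for $\iota^2_{d,A}\circ\iota_{\SS^2_d}$, reuses the formal computation \eqref{eq:humongous} verbatim, and concludes from the injectivity of $\Sr^2_{d,A,A}$ together with $\WD^1_d(A,A)=\Diff^1_d(A,A)$ and annihilation of $\ker(d)$. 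The only presentational difference is that the paper explicitly instantiates the holonomic apparatus with the maximal prolongation $\Omega^{\bullet}_{d,\text{max}}$ of $\Omega^1_d$ (so that $J^2_d$, $S^2_d$, and $\iota^2_d$ are well-defined from the first-order data alone), a choice your argument makes implicitly and which you should state.
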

\begin{proof}
Before starting with the proof, notice that, for the maximal prolongation $\Omega^{\bullet}_{d,\text{max}}$ of $\Omega^1_d$, in the special case of $n=m=1$, the bottom square of \eqref{diag:wanted_elemental_l} reduces to the following:
\begin{equation}
\begin{tikzcd}
	{\SJ^2_d} & {\SJ^1_d \SJ^1_d} & {\SJ^1_d J^1_d} \\
	{J^2_d} && {J^1_dJ^1_d}
	\arrow["{\Sl^{1,1}_d}", hook, from=1-1, to=1-2]
	\arrow[Rightarrow, no head, from=1-2, to=1-3]
	\arrow[Rightarrow, no head, from=1-3, to=2-3]
	\arrow["{\iota_{\SJ^2_d}}"', hook, from=1-1, to=2-1]
	\arrow["{l^{1,1}_d}"', hook, from=2-1, to=2-3]
\end{tikzcd}	
	\label{diag:reduced_wanted_skel_l}
\end{equation}
where $J^2_d$ denotes the $2$-jet corresponding to $\Omega^{\bullet}_{d,\text{max}}$.
The map $l^{1,1}_d=l^2_d$ is a mono by construction, and the commutativity then implies that $\Sl^{1,1}_d$ is also a mono.

\begin{equation}
\label{diag:spaceship}
\begin{tikzcd}
	{\SS^2_d} & {S^2_{d,\text{min}}} & {\Omega^1_d \Omega^1_d} & {\Omega^1_dJ^1_d = \Omega^1_d \SJ^1_d} \\
	{\SJ^2_d} & {J^2_d} && {J^1_dJ^1_d = \SJ^1_d \SJ^1_d}
	\arrow["{\Sl^2_d}", bend right=.6cm, dashed, hook, from=2-1, to=2-4]
	\arrow["{\iota_{\SJ^2_d}}", hook, from=2-1, to=2-2]
	\arrow["{l^2_d}", hook, from=2-2, to=2-4]
	\arrow["{\Siota^2_d}"', hook, from=1-1, to=2-1]
	\arrow["{\iota^2_d}", hook, from=1-2, to=2-2]
	\arrow["{\iota_{\SS^2_d}}"', hook, from=1-1, to=1-2]
	\arrow["{\iota^2_\wedge}"', hook, from=1-2, to=1-3]
	\arrow["{\Omega^1_d(\iota^1_d)}"', hook, from=1-3, to=1-4]
	\arrow["{\iota^1_{d,J^1_d}}", hook, from=1-4, to=2-4]
\end{tikzcd}
\end{equation}
The right square of \eqref{diag:spaceship} commutes by the upper left square in \cite[\jetsestwojetDt]{FMW}.
The left square of \eqref{diag:spaceship} commutes by Lemma \ref{lemma:elemental_forms_are_forms}, cf.\ Remark \ref{rmk:conditions_lemma elemental_symm_forms}.
The bottom triangle commutes by \eqref{diag:reduced_wanted_skel_l}.

	With these preliminaries now complete, we are now ready to prove the theorem.
	Since $\VF$ consists of (elemental) linear differential operators of order at most $1$, the composition is an (elemental) linear differential operator of order at most $2$, so the map $\Scomp$ is well defined.
	We compute the symbol $\symb^2_d(\Scomp(\sum_i X_i \otimes Y_i))$ using the mono $\Sr^2_{d,A,A}$ from Proposition \ref{prop:elemental_symbols_representation}. 
	Furthermore, for all $\sum_j \alpha_j\otimes \beta_j \in \SS^2_d$, we have
	\begin{equation}	\label{eq:lesshumongous}
	\begin{split}
		\Sr^2_{d,A,A}\left(\symb^2_d\left(\Scomp\left(\sum_i X_i \otimes Y_i\right)\right)\right)\left(\sum_j \alpha_j\otimes \beta_j\right)
		&=\reallywidetilde{\Scomp\left(\sum_i X_i \otimes Y_i\right)}  \circ \Siota^2_{d,A} \left(\sum_j \alpha_j\otimes \beta_j\right)\\
		&=\sum_{i} \widetilde{Y_i}\circ J^1_d(\widetilde{X_i}) \circ  \Sl^{1,1}_{d,A}\circ \Siota^2_{d,A} \left(\sum_j \alpha_j\otimes \beta_j\right)\\
		&=\sum_{i} \widetilde{Y_i}\circ J^1_d(\widetilde{X_i}) \circ \iota^1_{d,J^1_d A} \circ\Omega^1_d(\iota^1_{d,A})\circ\iota^2_{\wedge,A} \circ \iota_{S^2_d}\left(\sum_j \alpha_j\otimes \beta_j\right)\\
		&=\sum_{i,j} \widetilde{Y_i}\circ \iota^1_{d,A}\circ \Omega^1_d(\widetilde{X_i}) \circ\Omega^1_d(\iota^1_{d,A})(\alpha_j\otimes \beta_j)\\
		&=0,
	\end{split}
	\end{equation}
	The first equality follows from the definition of $\Sr^2_d$ and $\symb^2_d$.
	The second equality follows from \eqref{eq:elementaljetcompositionlift} and the definition of $\Scomp$.
	The third equality holds by \eqref{diag:spaceship}. 
	Since $\SJ^1_d = J^1_d$, the final two equalities follow from the computations in \eqref{eq:humongous}.
	By Proposition \ref{prop:elemental_symbols_representation}, the map $\Sr^2_{d,A,A}$ is a monomorphism, and hence the symbol $\symb^2_d(\Scomp(\sum_i X_i \otimes Y_i))$ vanishes.

	Therefore, $\Scomp(\sum_i X_i \otimes Y_i)$ is an (elemental) differential operator of order at most $1$.
	This operator annihilates $\ker(d)$ because $X_i$ does, and hence $\Scomp(\sum_i X_i \otimes Y_i)\in\VF$.
\end{proof}
The content of the proof of Theorem \ref{theo:wasymtodiff} can be summarized in the following commutative diagram.
\begin{equation}\label{diag:elementalbracket}
        \begin{tikzcd}[column sep=40pt]
        			&{\VF}&\WD^1_d(A,A)\\
                {\WAsym(\VF)} & {\VF\otimes\VF} & {\WD^2_d(A,A)} & {\WS^2_d(A,A)} \\
                {\Ann(\SS^2_d)} & {\VF\otimes_A\VF} & {{}_A\operatorname{Hom}(\Omega^1_d \otimes_A \Omega^1_d, A)} & {{}_A\operatorname{Hom}(\SS^2_d, A)}
                \arrow[hook, from=1-3, to=2-3]
                \arrow[hook, from=1-2, to=1-3]
                \arrow["0", from=1-3, to=2-4]
                \arrow[dashed, from=2-1, to=1-2]
                \arrow["{\otimes_A}"', from=2-1, to=3-1]
                \arrow["{\otimes_A}", from=2-2, to=3-2]
		\arrow[from=3-1, hook, to=3-2]
		\arrow[phantom, "{\lrcorner}"{description, pos=0.1}, from=2-1, to=3-2]
		\arrow[from=2-1, hook, to=2-2]
                \arrow["\Scomp", from=2-2, to=2-3]
                \arrow["{\symb^2_d}", from=2-3, to=2-4]
                \arrow["{\llangle -,-\rrangle}", from=3-2, to=3-3]
		\arrow["-\circ\iota^2_{\wedge,A} \circ \iota_{\SS^2_d}",from=3-3, to=3-4]
                \arrow["\Sr^2_{d,A,A}",hook, from=2-4, to=3-4]
        \end{tikzcd}
\end{equation}

\begin{prop}\label{prop:WAsymisKer}
	Let $\Omega^1_d$ be a first order differential calculus over $A$ such that $\Omega^1_d$ is flat in $\ModA$.	
	Then following holds:
	\begin{equation}
		\WAsym(\VF)
		=\ker(\symb^2_d \circ \Scomp)
		\subseteq \VF \otimes \VF.
\end{equation}
\end{prop}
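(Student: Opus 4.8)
The plan is to read the statement directly off the commutative diagram \eqref{diag:elementalbracket} assembled during the proof of Theorem \ref{theo:wasymtodiff}, exploiting that $\Sr^2_{d,A,A}$ is a monomorphism. The computation \eqref{eq:lesshumongous} is exactly the assertion that the outer rectangle of \eqref{diag:elementalbracket} commutes, that is,
\begin{equation*}
\Sr^2_{d,A,A}\circ\symb^2_d\circ\Scomp
= \bigl(-\circ\iota^2_{\wedge,A}\circ\iota_{\SS^2_d}\bigr)\circ\llangle-,-\rrangle\circ\otimes_A
\end{equation*}
as maps $\VF\otimes\VF\to\AHom(\SS^2_d,A)$. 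Since $\Sr^2_{d,A,A}$ is injective by Proposition \ref{prop:elemental_symbols_representation}, precomposing with it does not alter kernels, so $\ker(\symb^2_d\circ\Scomp)$ coincides with the kernel of the right-hand composite.

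Next I would unwind that kernel. For $w=\sum_i X_i\otimes Y_i\in\VF\otimes\VF$, its image under the right-hand composite is the element of $\AHom(\SS^2_d,A)$ sending $\sigma\mapsto\llangle\iota^2_{\wedge,A}\iota_{\SS^2_d}(\sigma),\otimes_A(w)\rrangle$. This vanishes for every $\sigma\in\SS^2_d$ precisely when $\otimes_A(w)$ lies in $\bigcap_{\sigma\in\SS^2_d}\ker(\llangle\sigma,-\rrangle)=\Ann(\SS^2_d)$, which by Definition \ref{def:elementalAsym} is the same as $w\in\otimes_A^{-1}(\Ann(\SS^2_d))=\WAsym(\VF)$. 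Chaining the two steps yields the desired equality of subsets of $\VF\otimes\VF$.

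The key observation is that almost all of the work is already discharged in Theorem \ref{theo:wasymtodiff}: the nontrivial identity is the pairing computation \eqref{eq:lesshumongous}, and the genuinely new ingredient here is upgrading the one-directional statement (``$w\in\WAsym(\VF)$ implies $\Scomp(w)\in\VF$'') to an exact description of the kernel. The role of the injectivity of $\Sr^2_{d,A,A}$ is exactly this: it turns ``the restriction symbol is annihilated by all of $\SS^2_d$'' into ``the symbol itself vanishes'', so that the pairing condition defining $\WAsym(\VF)$ becomes \emph{equivalent} to, rather than merely sufficient for, the vanishing of $\symb^2_d\circ\Scomp$. I expect the only mild subtlety to be the bookkeeping around the projection $\otimes_A$ and the inclusion $\SS^2_d\hookrightarrow\Omega^1_d\otimes_A\Omega^1_d$ furnished by $\iota^2_{\wedge,A}\circ\iota_{\SS^2_d}$, together with the standing flatness hypothesis on $\Omega^1_d$ in $\ModA$, which is what makes $\SS^2_d$, the diagram \eqref{diag:spaceship}, and the factorization $\Sl^{1,1}_d$ available in the first place.
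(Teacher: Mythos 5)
Your proof is correct and is in substance the same as the paper's: the paper likewise starts from the observation that $\Ann(\SS^2_d)$ is by definition the kernel of $(-\circ\iota^2_{\wedge,A}\circ\iota_{\SS^2_d})\circ\llangle-,-\rrangle$, and then combines the commutativity of \eqref{diag:elementalbracket} (i.e.\ the computation \eqref{eq:lesshumongous}, valid for arbitrary $w\in\VF\otimes\VF$ up to the final vanishing step) with the injectivity of $\Sr^2_{d,A,A}$ from Proposition \ref{prop:elemental_symbols_representation}. The only difference is presentational: the paper packages your kernel-and-preimage computation as the verification that the square \eqref{diag:wannabepb} is a pullback in $\Mod$, checked against a test module $M$, whereas you chase kernels directly --- the same fact, since $\otimes_A^{-1}(\Ann(\SS^2_d))$ is precisely that pullback.
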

\begin{proof}

The annihilator $\Ann(\SS^2_d)$ is, by definition, the kernel of the composition $(-\circ\iota^2_{\wedge,A} \circ \iota_{\SS^2_d})\circ \llangle -,- \rrangle $.
	By the commutativity of diagram \eqref{diag:elementalbracket}, and the universal property of the kernel, there exists a unique map from ${\ker(\symb^2 \circ \Scomp)} \rightarrow \Ann(\SS^2_d)$ making the following square commute.
\begin{equation}\label{diag:wannabepb}
	\begin{tikzcd}
	{\ker(\symb^2 \circ \Scomp)} & {\VF \otimes \VF} \\
	{\Ann(\SS^2_d)} & {\VF \otimes_A \VF}
	\arrow["{\otimes_A}", two heads, from=1-2, to=2-2]
	\arrow[hook, from=1-1, to=1-2]
	\arrow["{\otimes_A}", dashed, two heads, from=1-1, to=2-1]
	\arrow[hook, from=2-1, to=2-2]
	\end{tikzcd}
	\end{equation}	
	By Definition \ref{def:elementalAsym}, all we have to do now is to prove that \eqref{diag:wannabepb} is a pullback square in $\Mod$.
	Let $M$ be in $\Mod$, and let $x\colon M\to \VF\otimes \VF$ and $y\colon M\to \Ann(\SS^2_d)$ be two $\bk$-linear maps such that the left square of the following diagram commutes.
\begin{equation}\label{eq:pbproof}
\begin{tikzcd}[column sep=40pt]
                M & {\VF\otimes\VF} & {\WD^2_d(A,A)} & {\WS^2_d(A,A)} \\
                {\Ann(\SS^2_d)} & {\VF\otimes_A\VF} & {{}_A\operatorname{Hom}(\Omega^1_d \otimes_A \Omega^1_d, A)} & {{}_A\operatorname{Hom}(\SS^2_d, A)}
                \arrow["y"', from=1-1, to=2-1]
                \arrow["{\otimes_A}", from=1-2, to=2-2]
		\arrow[from=2-1, hook, to=2-2]
		\arrow["x",from=1-1, to=1-2]
                \arrow["\Scomp", from=1-2, to=1-3]
                \arrow["{\symb^2_d}", from=1-3, to=1-4]
                \arrow["{\llangle -,-\rrangle}", from=2-2, to=2-3]
		\arrow["-\circ\iota^2_{\wedge,A} \circ \iota_{\SS^2_d}",from=2-3, to=2-4]
                \arrow["\Sr^2_{d,A,A}",hook, from=1-4, to=2-4]
        \end{tikzcd}
\end{equation}
	By \eqref{diag:elementalbracket}, the right square of \eqref{eq:pbproof} commutes.
	We compose the diagonal map of this square with the map $(-\circ\iota^2_{\wedge,A} \circ \iota_{\SS^2_d})\circ \llangle -,- \rrangle$, which is zero, since the bottom horizontal composition of \eqref{eq:pbproof} vanishes.
	Therefore, also the composition $\Sr^2_{d,A,A}\circ \symb^2_d\circ \Scomp\circ x$ vanishes.
	By Proposition \eqref{prop:elemental_symbols_representation}, the map $\Sr^2_{d,A,A}$ is a mono, so $\symb^2_d\circ \Scomp\circ x=0$.
	This implies that the map $x$ factors uniquely through the kernel inclusion $\ker(\symb^2_d\circ \Scomp)\hookrightarrow \VF\otimes \VF$.
	Call this factorization $\overline{x}\colon M\to \ker(\symb^2_d\circ \Scomp)$.
	The diagram \eqref{diag:wannabepb} is a pullback square if we prove that $\otimes_A\circ \overline{x}=y$.
	This equation is true if and only if both sides coincide once we compose them with the inclusion map $\Ann(\SS^2_d)\hookrightarrow \VF\otimes \VF$.
	This is true as both equal $\otimes_A\circ x$.
\end{proof}
\begin{defi}\label{def:elementalquantumliebracket}
	Given a choice of a $\bk$-linear retract
	\begin{equation}
		\wp\colon\VF \otimes \VF \longrightarrow\WAsym(\VF),
	\end{equation}
	we define the map $[-,-]^{\wp}_d \colon \VF \times \VF \rightarrow \VF$ by 
	\begin{equation}
		[X,Y]^{\wp}_d \colonequals  \Scomp \circ \wp(Y \otimes X),
	\end{equation} 
	and call it the \emph{elemental bracket of vector fields} corresponding to $\wp$.
\end{defi}
Unlike in the setting of §\ref{ss:brakets_VF}, one can also obtain a converse without further assumptions:
\begin{theo}\label{thm:bracketsarebrackets}
	Let $\Omega^1_d$ be a first order differential calculus over $A$ such that $\Omega^1_d$ is flat in $\ModA$.
	Let $[-,-]\colon \VF \times \VF \rightarrow \VF$ be a $\bk$-bilinear map which consists of linear combinations of compositions, in the sense that there exists a $\bk$-linear endomorphism ${{B_{[-,-]}}}$ of $\VF\otimes \VF$ such that the bottom pentagon of the following diagram commutes.
	\begin{equation}
	\label{diag:conversebracket}
	\begin{tikzcd}
		\WAsym(\VF) \\
		{\VF \otimes \VF} & {\VF \otimes \VF} \\
		{\VF \times \VF} & \VF & {\WD^2_d(A,A)}
		\arrow[hook, from=1-1, to=2-2]
		\arrow["{\wp}", dashed, from=2-1, to=1-1]
		\arrow["{{B_{[-,-]}}}"', from=2-1, to=2-2]
		\arrow["\otimes", from=3-1, to=2-1]
		\arrow["\Scomp", from=2-2, to=3-3]
		\arrow["{{[-,-]}}"', from=3-1, to=3-2]
		\arrow[hook,  from=3-2, to=3-3]
	\end{tikzcd}
	\end{equation}
	Then there exists a unique $\bk$--linear map $\wp\colon\VF \otimes \VF \rightarrow \WAsym(\VF)$, making the top triangle in \eqref{diag:conversebracket} commute.
	Moreover, this bracket is a bracket of vector fields in the sense of Definition \ref{def:elementalquantumliebracket}, and we have
	\begin{equation}
		[-,-] = [-,-]^\wp_d.
	\end{equation}
\end{theo}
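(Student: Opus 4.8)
The plan is to read the bottom pentagon of \eqref{diag:conversebracket} as an identity, deduce that the image of $B_{[-,-]}$ is forced into $\WAsym(\VF)$, and then obtain $\wp$ as the induced corestriction; identifying $[-,-]$ with $[-,-]^\wp_d$ is a final unwinding of definitions.

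First I would record the content of the hypothesis. Commutativity of the bottom pentagon of \eqref{diag:conversebracket} says that for all $X,Y\in\VF$ one has $\Scomp(B_{[-,-]}(X\otimes Y))=[X,Y]$ in $\WD^2_d(A,A)$, where the right-hand side is read through the inclusion $\VF\hookrightarrow\WD^2_d(A,A)$. Since $[X,Y]\in\VF\subseteq\WD^1_d(A,A)$ and $\symb^2_d$ is the quotient projection $\WD^2_d(A,A)\to\WS^2_d(A,A)=\WD^2_d(A,A)/\WD^1_d(A,A)$, it annihilates $\WD^1_d(A,A)$; hence $\symb^2_d(\Scomp(B_{[-,-]}(X\otimes Y)))=0$ for every simple tensor. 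As the simple tensors $X\otimes Y$ span $\VF\otimes\VF$ over $\bk$ and the composite $\symb^2_d\circ\Scomp\circ B_{[-,-]}$ is $\bk$-linear, this pointwise vanishing propagates to $\symb^2_d\circ\Scomp\circ B_{[-,-]}=0$, i.e.\ $\im(B_{[-,-]})\subseteq\ker(\symb^2_d\circ\Scomp)$.

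Next I invoke Proposition \ref{prop:WAsymisKer}, which is available because $\Omega^1_d$ is flat in $\ModA$, to identify $\ker(\symb^2_d\circ\Scomp)$ with the subbimodule $\WAsym(\VF)\subseteq\VF\otimes\VF$. Consequently $B_{[-,-]}$ factors through the inclusion $\iota\colon\WAsym(\VF)\hookrightarrow\VF\otimes\VF$, and since $\iota$ is a monomorphism there is a \emph{unique} $\bk$-linear map $\wp\colon\VF\otimes\VF\to\WAsym(\VF)$ with $\iota\circ\wp=B_{[-,-]}$. This equality is exactly the commutativity of the top triangle of \eqref{diag:conversebracket}, and the monicity of $\iota$ supplies the asserted uniqueness of $\wp$.

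Finally, to conclude that $[-,-]=[-,-]^\wp_d$, and hence that the given operation is a bracket of vector fields in the sense of Definition \ref{def:elementalquantumliebracket}, I would substitute $\iota\circ\wp=B_{[-,-]}$ into the defining formula $[X,Y]^\wp_d=\Scomp(\wp(Y\otimes X))$ and compare it against the pentagon identity, taking care of the composition-order conventions built into $\Scomp$ and into $\wp$; the commutativity already assembled in \eqref{diag:elementalbracket} reduces this to a direct check. I do not expect a hard estimate anywhere: the only genuinely structural input is the identification $\WAsym(\VF)=\ker(\symb^2_d\circ\Scomp)$ from Proposition \ref{prop:WAsymisKer}, after which the whole statement is a kernel-factorization diagram chase. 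The main point to watch is therefore bookkeeping rather than difficulty, namely ensuring that $\bk$-linearity is used to pass from agreement on spanning simple tensors to agreement on all of $\VF\otimes\VF$, and that the flatness hypothesis is correctly fed into Proposition \ref{prop:WAsymisKer}.
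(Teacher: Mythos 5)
Your proposal is correct and follows essentially the same route as the paper's proof: the pentagon forces $\symb^2_d\circ\Scomp\circ B_{[-,-]}$ to vanish (the paper phrases your spanning-of-simple-tensors step as ``$\otimes$ is an epimorphism''), Proposition \ref{prop:WAsymisKer} identifies $\ker(\symb^2_d\circ\Scomp)$ with $\WAsym(\VF)$, and the unique factorization through this kernel inclusion yields $\wp$, with uniqueness from monicity. Your closing remark about watching the composition-order conventions in $\Scomp$ and Definition \ref{def:elementalquantumliebracket} is well taken, since the paper's own proof leaves that final unwinding implicit.
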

\begin{proof}
	Since the inclusion of vector fields in elemental differential operators $\VF \hookrightarrow \WD^2_d(A,A)$ factors through the inclusion $\Diff^1_d(A,A) = \WD^1_d(A,A) \hookrightarrow \WD^2_d(A,A)$, the commutativity of the pentagon in \eqref{diag:conversebracket} implies that $ \symb^2_d \circ \Scomp \circ B_{[-,-]}\circ \otimes$ is the zero map.
	Since the map $\otimes$ is an epimorphism, we also have $ \symb^2_d \circ \Scomp \circ B_{[-,-]}=0$.
	Therefore, $B_{[-,-]}$ factors uniquely through the kernel map $\ker(\symb^2_d \circ \Scomp)\hookrightarrow \VF\otimes \VF$.
	By Proposition \ref{prop:WAsymisKer}, we get that the latter is the inclusion $\WAsym(\VF) \rightarrow \VF \otimes \VF$, and we label the factorization $\wp$.
\end{proof}

\begin{rmk}
	Theorem \ref{thm:bracketsarebrackets} says that any ‘‘bracket-like map'' of vector fields which can be interpreted in terms of linear combinations of operator compositions, are examples of elemental brackets of vector fields in the sense of Definition \ref{def:elementalquantumliebracket}.
	If the corresponding $\wp$ factors through $\Asym(\VF)$, then it is a bracket of vector fields also in the sense of Definition \ref{def:quantumliebracket}.
\end{rmk}
\begin{cor}
	Let $\Omega^1_d$ be a first order differential calculus over $A$ such that $\Omega^1_d$ is flat in $\ModA$.
	Then, there exists a bijective correspondence between elemental brackets of vector fields in the sense of Definition \ref{def:elementalquantumliebracket} and $\bk$-linear maps $[-,-]\colon \VF \times \VF \rightarrow \VF$ for which there exists a $\bk$-linear endomorphism ${{B_{[-,-]}}}$ of $\VF\otimes \VF$ such that the bottom pentagon of \eqref{diag:conversebracket} commutes.
\end{cor}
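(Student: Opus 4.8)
The plan is to observe that both collections named in the statement are subsets of the $\bk$-module of $\bk$-bilinear maps $\VF\times\VF\to\VF$, and to prove that these two subsets in fact coincide; the asserted bijective correspondence is then realized simply by the identity map. Writing $\mathcal A$ for the set of elemental brackets $[-,-]^\wp_d$ obtained from a retract $\wp\colon\VF\otimes\VF\to\WAsym(\VF)$ as in Definition \ref{def:elementalquantumliebracket}, and $\mathcal B$ for the set of $\bk$-bilinear maps admitting a $\bk$-linear endomorphism $B_{[-,-]}$ of $\VF\otimes\VF$ for which the bottom pentagon of \eqref{diag:conversebracket} commutes, I would establish the two inclusions $\mathcal A\subseteq\mathcal B$ and $\mathcal B\subseteq\mathcal A$ separately.

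For $\mathcal A\subseteq\mathcal B$, I would fix a retract $\wp$ and let $\mathrm{sw}\colon\VF\otimes\VF\to\VF\otimes\VF$ denote the $\bk$-linear swap $X\otimes Y\mapsto Y\otimes X$. Then I would set $B_{[-,-]^\wp_d}\colonequals \iota_{\WAsym}\circ\wp\circ\mathrm{sw}$, where $\iota_{\WAsym}\colon\WAsym(\VF)\hookrightarrow\VF\otimes\VF$ is the inclusion; this is a $\bk$-linear endomorphism of $\VF\otimes\VF$. Evaluating on $X\otimes Y$ gives $\Scomp\circ B_{[-,-]^\wp_d}(X\otimes Y)=\Scomp(\wp(Y\otimes X))=[X,Y]^\wp_d$, since $\Scomp\circ\iota_{\WAsym}$ is merely the restriction of $\Scomp$ and $\wp(Y\otimes X)\in\WAsym(\VF)$. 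By Theorem \ref{theo:wasymtodiff} this value lies in $\VF$, so the computation expresses precisely that the bottom pentagon of \eqref{diag:conversebracket} commutes with $[-,-]=[-,-]^\wp_d$; hence $[-,-]^\wp_d\in\mathcal B$.

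For $\mathcal B\subseteq\mathcal A$, there is essentially nothing further to prove: given $[-,-]\in\mathcal B$ together with a witnessing $B_{[-,-]}$, Theorem \ref{thm:bracketsarebrackets} produces a (unique) retract $\wp$ making the top triangle of \eqref{diag:conversebracket} commute and satisfying $[-,-]=[-,-]^\wp_d$, so $[-,-]\in\mathcal A$. Combining the two inclusions yields $\mathcal A=\mathcal B$, and the identity map is the desired bijective correspondence. The only genuinely delicate point is the bookkeeping with the swap $\mathrm{sw}$, which appears in Definition \ref{def:elementalquantumliebracket} but not in the pentagon, together with keeping the two distinct inclusions $\iota_{\WAsym}$ and $\VF\subseteq\WD^2_d(A,A)$ straight; the substantive content (that $\WAsym(\VF)=\ker(\symb^2_d\circ\Scomp)$ and that a pentagon-commuting $B$ factors through it) is already carried by Proposition \ref{prop:WAsymisKer} and Theorem \ref{thm:bracketsarebrackets}, so the main obstacle is conceptual rather than computational, namely making explicit that ``bijective correspondence'' here amounts to equality of the two sets of maps.
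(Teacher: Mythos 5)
Your proof is correct and takes essentially the same route as the paper, which states this corollary without a separate proof as an immediate consequence of Theorem \ref{thm:bracketsarebrackets}: that theorem gives your inclusion $\mathcal{B}\subseteq\mathcal{A}$, while your explicit witness $B_{[-,-]^\wp_d}=\iota_{\WAsym(\VF)}\circ\wp\circ\mathrm{sw}$, combined with Theorem \ref{theo:wasymtodiff}, supplies $\mathcal{A}\subseteq\mathcal{B}$ exactly as intended. Your reading that the ``bijective correspondence'' is just the equality of two subsets of the $\bk$-bilinear maps $\VF\times\VF\to\VF$, together with the careful bookkeeping of the swap between Definition \ref{def:elementalquantumliebracket} and the pentagon in \eqref{diag:conversebracket}, matches the paper's intent.
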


\appendix
\section{Universal exterior algebra}\label{section:appendix}
In this appendix we apply the machinery developed in \cite{FMW} to an algebra equipped with the universal exterior algebra.
\subsection{Universal symmetric forms}
Recall that $\Omega^n_u=T^n_u=(\Omega^1_u)^{\otimes_A n}$, and the wedge product coincides with the tensor product $\otimes_A$, cf.\ \cite[Theorem 1.33, p.~24]{BeggsMajid} with the due generalization to a generic commutative unital ring $\bk$.
Furthermore, in \cite[\jetsexunisymmetricforms]{FMW} we described the functor of symmetric forms for the universal exterior algebra, and we obtained 
\begin{align}
S^0_u=\id_{\AMod},
&\hfill&
S^1_u=\Omega^1_u,
&\hfill&
S^n_u=0,
&\hfill&
\text{for all }n\ge 2.
\end{align}
By definition, we can find all the maps $\iota^n_{\wedge}\colon S^n_u\to \Omega^1_u\circ S^{n-1}_u$, and we obtain the identity for $n=1$ and the zero map for $n\ge 2$.

We can now build the Spencer $\delta$-complex for the universal exterior algebra, cf.\ \cite[\jetsssSpencer]{FMW}.
\begin{prop}\label{prop:universal_Spencer-acyclic}
The Spencer $\delta$-cohomology of the universal exterior algebra vanishes in all degrees ($H^{\bullet,\bullet}_{\delta_u}=0$).
\end{prop}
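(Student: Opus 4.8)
The plan is to exploit the fact that, for the universal exterior algebra, the functor of symmetric forms is concentrated in degrees $0$ and $1$, so that the Spencer $\delta$-complex collapses into a sequence of identity isomorphisms. First I would recall the data assembled just above: $S^0_u=\id_{\AMod}$, $S^1_u=\Omega^1_u$, and $S^n_u=0$ for all $n\ge 2$, together with the structure maps $\iota^n_{\wedge}\colon S^n_u\to \Omega^1_u\circ S^{n-1}_u$, which are the identity for $n=1$ and the zero map for $n\ge 2$.

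Next I would unwind the definition of the Spencer $\delta$-complex from \cite[\jetsssSpencer]{FMW}. Its bigraded terms are the compositions $\Omega^k_u\circ S^n_u$, and the differential $\delta$ is induced by the maps $\iota^{\bullet}_{\wedge}$ post-composed with the wedge product. Since $S^n_u=0$ for every $n\ge 2$, each term $\Omega^k_u\circ S^n_u$ with $n\ge 2$ vanishes identically, so the only surviving terms are those of symmetric degree $0$ or $1$. Consequently, in each fixed total degree the complex reduces to the single potentially nonzero arrow
\[0\to\Omega^k_u\circ S^1_u\xrightarrow{\ \delta\ }\Omega^{k+1}_u\circ S^0_u\to 0.\]

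The key step is then to identify this arrow as an isomorphism. Evaluating at an object $E$, both source and target are canonically $\Omega^{k+1}_u\otimes_A E$: on the left because $S^1_u=\Omega^1_u$ gives $\Omega^k_u\otimes_A\Omega^1_u\otimes_A E$, and on the right because $S^0_u=\id$; and — crucially — since the wedge product of the universal exterior algebra is the tensor product, one has $\Omega^k_u\otimes_A\Omega^1_u=\Omega^{k+1}_u$ on the nose. Under these identifications $\delta$ is induced by $\iota^1_{\wedge}=\id$ followed by the wedge isomorphism, hence is itself an isomorphism (up to the Spencer sign convention). Therefore each such two-term subcomplex is exact, its kernel and cokernel both vanishing, while all remaining terms vanish trivially; this gives $H^{\bullet,\bullet}_{\delta_u}=0$.

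The main obstacle I anticipate is purely organizational: matching the indexing and sign conventions of the Spencer complex of \cite[\jetsssSpencer]{FMW} so that the surviving differential is literally the wedge isomorphism (and confirming that the degree-$0$ normalization is the one used there), rather than merely an abstract isomorphism. Once the differential is pinned down as the identity-induced map, exactness is immediate and no genuine computation remains.
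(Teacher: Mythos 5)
Your proposal is correct and follows essentially the same route as the paper's own proof: both arguments observe that $S^n_u=0$ for $n\ge 2$ collapses each Spencer $\delta$-complex to the single arrow $\delta^{1,k}_u\colon \Omega^k_u\circ S^1_u\to \Omega^{k+1}_u$, which is an isomorphism because the wedge product of the universal exterior algebra is the tensor product and $\iota^1_{\wedge}=\id$, whence exactness in all degrees. Your added attention to sign conventions is finer-grained than the paper, which simply records that these maps ``happen to be identity maps (or isomorphisms),'' but the substance is identical.
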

\begin{proof}
The only $\Omega^k_u\circ S^h_u$ that do not vanish are the ones corresponding to $h=0$ or $h=1$.
Thus the only maps $\delta^{h,k}_u$ that are not zero maps are $\delta^{1,k}_u\colon \Omega^k_u\circ S^1_u\to \Omega^{k+1}_u$, and these happen to be identity maps (or isomorphisms).
It follows that the generic Spencer $\delta$-complex is of the following form.
\begin{equation}
\begin{tikzcd}
0\ar[r]&\cdots\ar[r]&0\ar[r]& \Omega^k_u\circ S^1_u\ar[r,equals]& \Omega^{k+1}_u\ar[r]&0
\end{tikzcd}
\end{equation}
These are exact sequences for all $k$.
\end{proof}

\subsection{Universal jet functors}
We now compute the universal jet functors as endofunctors of $\AModB$.
The $0$-jet functor is the identity functor regardless of the exterior algebra considered.
In \cite[\jetsssuniversalonejet]{FMW} we defined the first universal jet functor as
\begin{equation}
J^1_u=A\otimes -\colon \AModB\longrightarrow \AModB.
\end{equation}
By definition we obtain the nonholonomic jets by iterating $J^1_u$, and thus
\begin{equation}
J^{(n)}_u=A^{\otimes n}\otimes -\colon \AModB\longrightarrow \AModB.
\end{equation}

We can finally compute the universal holonomic jets by the following result.
\begin{theo}\label{theo:universal_higher_jets}
Let $A$ be a $\bk$-algebra, then the corresponding universal jet functors are of the following form
\begin{align}
J^0_u=\id_{\AModB}\colon \AModB\longrightarrow\AModB,
&\hfill&
J^n_u=A\otimes - \colon \AModB\longrightarrow\AModB,
&\hfill&
\forall n\ge 1,
\end{align}
with natural inclusions for $n\ge 2$ at every $E$ in $\AModB$ given by
\begin{align}
l^n_{u,E}\colon J^n_u E=A\otimes E\longrightarrow J^1_u J^{n-1}_u E=A\otimes A\otimes E,
&\hfill&
a\otimes e\longmapsto a\otimes 1\otimes e.
\end{align}
In particular we have
\begin{align}
\iota_{J^n_u E}\colon J^n_u E=A\otimes E\longrightarrow J^{(n)}_u E=A^{\otimes n}\otimes E,
&\hfill&
a\otimes e\longmapsto a\otimes 1 \otimes \cdots\otimes 1\otimes e.
\end{align}
which is a split monomorphism.

The jet projections are
\begin{align}
\pi^{1,0}_u=\cdot\colon A\otimes -\longrightarrow \id_{\AModB},
&\hfill&
\pi^{n,n-1}_u=\id_{A\otimes - }\colon A\otimes - \longrightarrow A\otimes -,
&\hfill&
\forall n\ge 2.
\end{align}
The jet prolongations are
\begin{align}
j^{0}_u=\id\colon \id_{\AModB} \longrightarrow \id_{\AModB},
&\hfill&
j^{n}_u=1\otimes - \colon \id_{\AModB}\longrightarrow A\otimes -,
&\hfill&
\forall n\ge 1.
\end{align}

Furthermore, for all $n\ge 1$ the $n$-jet sequence is short exact.
\begin{equation}
\begin{tikzcd}
0\ar[r]&S^n_u\ar[r,hook,"\iota^n_u"]&J^n_u\ar[r,two heads,"\pi^{n,n-1}_u"]&J^{n-1}_u\ar[r]&0
\end{tikzcd}
\end{equation}
\end{theo}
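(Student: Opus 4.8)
The plan is to run an induction on $n$, taking $J^0_u=\id_{\AModB}$ and $J^1_u=A\otimes-$ as the base (these are the definitions recalled above) and propagating the identification $J^n_u=A\otimes-$ upward. The first ingredient I would put in place is the nonholonomic tower: iterating $J^1_u=A\otimes-$ gives $J^{(n)}_u=A^{\otimes n}\otimes-$, with prolongation $j^{(n)}_u=1\otimes\cdots\otimes1\otimes-$ and the slot-deleting face maps as projections. This is the ambient functor inside which each $J^n_u$ sits through $\iota_{J^n_u}$, and every map in the statement is ultimately read off here.

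For the inductive step ($n\ge2$) I would assume $J^{n-1}_u=A\otimes-$ together with $j^{n-1}_u=1\otimes-$ and $\pi^{n-1,n-2}_u=\id$. Surjectivity of $\pi^{n,n-1}_u\colon J^n_u\to J^{n-1}_u$ is then immediate and self-contained: since $\pi^{n,n-1}_u$ is $A$-linear and compatible with prolongations, $\pi^{n,n-1}_{u,E}(a\cdot j^n_{u,E}(e))=a\cdot j^{n-1}_{u,E}(e)=a\otimes e$, and the elements $a\otimes e$ generate $J^{n-1}_u E=A\otimes E$. The opposite inclusion — injectivity of $\pi^{n,n-1}_u$, equivalently $\ker(\pi^{n,n-1}_u)=S^n_u$ — is the crux; here I would invoke the vanishing $S^n_u=0$ for $n\ge2$ recorded above, which forces $\ker(\pi^{n,n-1}_u)=0$ once the $n$-jet sequence is known to be left exact. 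That left exactness is precisely what the Spencer $\delta$-acyclicity of Proposition \ref{prop:universal_Spencer-acyclic} is designed to deliver, so the honest work is to feed that acyclicity into the jet-sequence machinery of \cite{FMW}. Granting it, $\pi^{n,n-1}_u$ is an isomorphism, so $J^n_u=A\otimes-$ and $\pi^{n,n-1}_u=\id$, and $j^n_u=1\otimes-$ is inherited through the isomorphism.

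With the identification secured, the remaining formulas are pure bookkeeping. Chasing the generator $j^n_{u,E}(e)=1\otimes e$ through the defining inclusion $l^n_d\colon J^n_d\hookrightarrow J^1_d\circ J^{n-1}_d$ and extending $A$-linearly gives $l^n_{u,E}(a\otimes e)=a\otimes1\otimes e$; iterating this down the nonholonomic tower yields $\iota_{J^n_u E}(a\otimes e)=a\otimes1\otimes\cdots\otimes1\otimes e$, and the $(A,B)$-bilinear map $a_1\otimes\cdots\otimes a_n\otimes e\mapsto a_1\otimes a_2\cdots a_n e$ is a retraction, exhibiting $\iota_{J^n_u}$ as a split monomorphism. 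Finally the short exact sequence is the defining sequence $0\to\Omega^1_u\to A\otimes-\xrightarrow{\,\cdot\,}\id_{\AModB}\to0$ of the universal calculus when $n=1$, and for $n\ge2$ it collapses to $0\to0\to A\otimes-\xrightarrow{\id}A\otimes-\to0$, which is trivially exact. I expect the only genuine obstacle to be the left-exactness/injectivity step, that is, converting $S^n_u=0$ into $\ker(\pi^{n,n-1}_u)=0$ via the $\delta$-acyclicity; everything else is either a definition or an $A$-linear generation argument inside $A^{\otimes n}\otimes-$.
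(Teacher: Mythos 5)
Your skeleton (induction, the nonholonomic ambient tower, the bookkeeping for $l^n_u$, $\iota_{J^n_u}$, $\pi^{n,n-1}_u$, $j^n_u$, and the retraction splitting $\iota_{J^n_uE}$) matches the paper, and your surjectivity argument for $\pi^{n,n-1}_u$ is fine. But the step you yourself flag as the crux --- converting $S^n_u=0$ into $\ker(\pi^{n,n-1}_u)=0$ --- is a genuine gap, not an honest delegation. The machinery you want to feed Proposition \ref{prop:universal_Spencer-acyclic} into, namely the exactness results for jet sequences in \cite[\jetscorspencerdeltajes]{FMW}, carries flatness hypotheses in addition to Spencer $\delta$-acyclicity: it requires the graded pieces $\Omega^\bullet_u$ to be flat in $\ModA$, which by \cite[\jetscoruniflat]{FMW} is guaranteed when $A$ is flat in $\Mod$ but can fail for an arbitrary unital algebra over an arbitrary commutative ring $\bk$. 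The theorem is stated with no hypotheses on $A$ whatsoever, and the remark immediately following the paper's proof makes precisely this point: when $A$ is flat, the statement \emph{is} a corollary of \cite[\jetscorspencerdeltajes]{FMW} together with $\delta$-acyclicity, and the actual content of the theorem is that the conclusion holds in absolute generality. So your route, as written, proves only the flat case. There is also a structural inversion worth noting: in the framework of \cite{FMW}, left exactness of the $n$-jet sequence (that $\ker(\pi^{n,n-1}_d)$ equals the image of $\iota^n_d$) is a theorem, not a definition, so using it to deduce injectivity of $\pi^{n,n-1}_u$ assumes the very thing that must be established; in the paper the short exact sequence appears as a \emph{conclusion}, trivially verified once $\pi^{n,n-1}_u=\id$ and $S^n_u=0$ are in hand.

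The paper closes this gap by bare hands instead of homological machinery. For $n\ge 2$ it unwinds the actual definition of $J^n_u$ as a kernel inside $J^1_uJ^{n-1}_uE=A\otimes A\otimes E$: it computes the map $\widetilde{\DH}_{u,E}$ explicitly, namely $a\otimes b\otimes e\mapsto a\,d_ub\otimes_A e+d_ua\wedge d_ub\otimes_A e$, checks that $l^2_{u,E}$ is a split mono killed by this map, and then shows by a direct tensor calculation that any $\sum_i a_i\otimes b_i\otimes e_i$ in the kernel satisfies $\sum_i a_i\otimes b_ie_i=\sum_i a_ib_i\otimes e_i$ (from the first component) and hence equals $\sum_i a_i\otimes 1\otimes b_ie_i$ (from the second), i.e.\ lies in the image of $l^2_{u,E}$; for $n>2$ the inductive step is the same computation, where in fact only the first component is needed. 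This works over any $\bk$ and any $A$ because it never tensors an exact sequence with anything. To repair your proof in the stated generality you would have to carry out essentially this kernel computation; alternatively, you could keep your argument verbatim but weaken the theorem by adding the hypothesis that $A$ is flat over $\bk$, which is exactly the situation the paper's closing remark describes.
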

\begin{proof}
For $n=0$ and $n=1$ , the proposition has already been discussed above.

For $\ge 2$ we proceed by induction.
We first consider the special case $n=2$.
The $2$-jet is defined as the kernel of $\widetilde{\DH}_{d,E}$, cf.\ \cite[\jetsdeftwojet]{FMW}, so we are done if we prove that the suggested $l^2_u$ is the kernel map of $\widetilde{\DH}_{d,E}$.
We can compute the latter map, obtaining
\begin{align}
\widetilde{\DH}_{d,E}\colon J^{(2)}_u E=A\otimes A\otimes E\longrightarrow (\Omega^1_u\ltimes \Omega^2_u)(E),
&\hfill&
a\otimes b\otimes e\longmapsto adb\otimes_A e+ da\wedge db\otimes_A e.
\end{align}
First of all, we have $\widetilde{\DH}_{d,E}\circ l^2_{u,E}=0$, as $d_u 1=0$.
Moreover, the map $l^2_{u,E}$ is a monomorphism, as the product on the first two components is a retraction for it.
We thus have to prove that the image of $l^2_{u,E}$ surjects onto the kernel of $\widetilde{\DH}_{d,E}$.
Let now $\sum_i a_i\otimes b_i\otimes e_i\in \ker(\widetilde{\DH}_{d,E})$.
By the vanishing of the first summand of $\widetilde{\DH}_{d,E}$, $\widetilde{\DH}^{I}_{d,E}$ we get
\begin{equation}\label{eq:DHI-universal}
0
=\sum_i a_id_u b_i\otimes_A e_i
=\sum_i a_i\otimes b_ie_i-\sum_i a_ib_i\otimes e_i.
\end{equation}
The vanishing of $\widetilde{\DH}^{II}_{d,E}$ implies
\begin{align}\label{eq:DHII-universal}
\begin{split}
0
&=\sum_i d_u a_i\wedge d_u b_i\otimes_A e_i\\
&=\sum_i (1\otimes a_i-a_1\otimes 1)\otimes_A (1\otimes b_i-b_i\otimes 1)\otimes_A e_i\\
&=\sum_i 1\otimes a_i\otimes b_ie_i-\sum_i 1\otimes a_ib_i\otimes e_i-\sum_i a_i\otimes 1\otimes b_ie_i+\sum_i a_i\otimes b_i\otimes e_i\\
&=\sum_i 1\otimes (a_i\otimes b_ie_i-a_ib_i\otimes e_i)-\sum_i a_i\otimes 1\otimes b_ie_i+\sum_i a_i\otimes b_i\otimes e_i\\
&=0-\sum_i a_i\otimes 1\otimes b_ie_i+\sum_i a_i\otimes b_i\otimes e_i,
\end{split}
\end{align}
where the last equality follows from \eqref{eq:DHI-universal}.
By \eqref{eq:DHII-universal} we thus get that
\begin{equation}
\sum_i a_i\otimes b_i\otimes e_i
=\sum_i a_i\otimes 1\otimes b_ie_i
=l^2_{u,E}\left(\sum_i a_i\otimes b_i e_i\right).
\end{equation}

Let now $n>2$, the aim is to prove that $l^n_{u,E}$ is the kernel map of
\begin{equation}
\begin{tikzcd}
J^1_u J^{n-1}_u E \ar[r,"J^1_u(l^{n-1}_{u,E})"]&[30pt] J^{(2)}_u J^{n-2}_u E\ar[r,two heads,"\widetilde{\DH}_{d,J^{n-2}_d E}"]&(\Omega^1_u\ltimes \Omega^2_u)(J^{n-2}_d E),
\end{tikzcd}
\end{equation}
which by inductive hypothesis becomes as follows.
\begin{equation}\label{eq:universal_jet_higher_edh}
\begin{tikzcd}
A\otimes A\otimes E \ar[r,"J^1_u(l^{n-1}_{u,E})"]&[40pt] A\otimes A\otimes A\otimes E\ar[r,two heads,"\widetilde{\DH}_{d,A\otimes   E}"]&[40pt] (\Omega^1_u\ltimes \Omega^2_u)(A\otimes E)\cong (\Omega^1_u\oplus \Omega^2_u)\otimes E\\[\vsfd]
a\otimes b\otimes e\ar[r,mapsto]&a\otimes b\otimes 1\otimes e\ar[r,mapsto]&ad_u b\otimes e+d_u a\wedge d_u b\otimes e
\end{tikzcd}
\end{equation}
As for $n=2$, we have that $l^n_{u,E}$ is injective, and vanishes if precomposed with \eqref{eq:universal_jet_higher_edh}.
Let now $\sum_i a_i\otimes b_i\otimes e_i$ be in the kernel of \eqref{eq:universal_jet_higher_edh}.
We proceed similarly to \eqref{eq:DHI-universal}, obtaining
\begin{equation}
0
=\sum_i a_i\otimes b_i\otimes e_i-\sum_i a_ib_i\otimes 1\otimes  e_i
\end{equation}
from the vanishing of the first component of \eqref{eq:universal_jet_higher_edh}.
This shows that
\begin{equation}
\sum_i a_i\otimes b_i\otimes e_i
=\sum_i a_ib_i\otimes 1\otimes  e_i
=l^{n}_{u,E}\left(\sum_i a_ib_i\otimes  e_i\right),
\end{equation}
proving that $l^n_{u,E}$ is the desired kernel map.
Thus, $J^n_u E=A\otimes E$ for all $n$.

By definition, $\iota_{J^n_u E}$ is of the desired form, which has as retraction the map
\begin{align}
A^{\otimes n}\otimes E\to A\otimes E,
&\hfill&
a_1\otimes \cdots\otimes a_n\otimes e\longmapsto a_1\cdots a_n\otimes e.
\end{align}

The projection $\pi^{1,0}_{u,E}$ is the multiplication by definition.
For $n\ge 2$, the projections $\pi^{n,n-1}_{u,E}$ are, by definition, the composition
\begin{equation}
\pi^{n,n-1}_{u,E}(a\otimes e)
=\pi^{1,0}_{u,J^{n-1}_u E}\circ l^n_{u,E}(a\otimes e)
=\pi^{1,0}_{u,J^{n-1}_u E}(a\otimes 1\otimes e)
= a\otimes e.
\end{equation}
Thus $\pi^{n,n-1}_{u,E}=\id_{A\otimes E}$.

The prolongation $j^n_{u,E}$ for $n<2$ has already been discussed in \cite{FMW}.
For $n\ge 2$, it is by definition the factorization through $l^n_{u,E}$ of the map
\begin{align}
j^1_{u,J^{n-1}_u E}\circ j^{n-1}_{u,E}\colon E\longrightarrow J^1_u J^{n-1}_u E,
&\hfill&
e\longmapsto 1\otimes 1\otimes e=l^{n}_{u,E}(1\otimes e).
\end{align}
Thus $j^{n}_{u,E}(e)=1\otimes e$.

We are left to prove that the $n$-jet sequence is exact.
The case $n=1$ holds by definition, cf.\ \cite[\jetsssuniversalonejet]{FMW}.
For $n\ge 2$, we have $S^2_d=0$, and the exactness of the sequence follows from the fact that $\pi^{n,n-1}_{u,E}=\id_{A\otimes E}$ is an isomorphism.
\end{proof}
\begin{rmk}
If $A$ is flat in $\Mod$, we have that $\Omega^1_u$ is flat in $\ModA$, cf.\ \cite[\jetscoruniflat]{FMW}.
Thus $\Omega^n_u=(\Omega^1_u)^{\otimes_A n}$ is also  flat for all $n$.
By Proposition \ref{prop:universal_Spencer-acyclic} we know that $H^{2,n}_{\delta_u}=0$, so we can obtain Theorem \ref{theo:universal_higher_jets} as a corollary of \cite[\jetscorspencerdeltajes]{FMW}.
What Theorem \ref{theo:universal_higher_jets} really shows is that this holds in absolute generality, with no conditions imposed on $A$.
\end{rmk}
\begin{cor}
The universal sesquiholonomic jets coincide with the universal holonomic jets for $n\ge 3$.
\end{cor}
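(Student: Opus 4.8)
The plan is to reduce everything to the explicit description of the universal holonomic jets supplied by Theorem \ref{theo:universal_higher_jets}, and then to compute the sesquiholonomic jet directly from its defining universal property. Recall (from \cite{FMW}) that the sesquiholonomic $n$-jet, which I will denote $\hat J^n_d$, sits in the chain of subfunctors $J^n_d\subseteq\hat J^n_d\subseteq J^{[n]}_d\subseteq J^{(n)}_d$ and is cut out of $J^1_d\circ J^{n-1}_d$ by imposing the semiholonomic compatibility only at the top level; concretely it is the fibre product
\[
	\hat J^n_d=J^1_d\bigl(J^{n-1}_d\bigr)\times_{J^1_d(J^{n-2}_d)}J^{n-1}_d,
\]
where the two structure maps are $J^1_d(\pi^{n-1,n-2}_d)$ and the one-step inclusion $l^{n-1}_d\colon J^{n-1}_d\hookrightarrow J^1_d\circ J^{n-2}_d$, and where the holonomic jet is included through $l^n_d=l^{1,n-1}_d\colon J^n_d\hookrightarrow J^1_d\circ J^{n-1}_d$. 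I would open the proof by recalling this description, since it is the only ingredient not already in the excerpt. Note that, unlike the semiholonomic jet (which is built inside the fully nonholonomic $J^{(n)}_u$ and need not collapse), the sesquiholonomic jet has a genuinely \emph{holonomic} jet $J^{n-1}_u$ on the inside, and this is what drives the argument.

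First I would observe that the hypothesis $n\ge 3$ is exactly what trivialises the connecting projection: since then $n-1\ge 2$ and $n-2\ge 1$, Theorem \ref{theo:universal_higher_jets} gives $J^{n-1}_u=J^{n-2}_u=A\otimes-$ together with $\pi^{n-1,n-2}_u=\id_{A\otimes-}$. Hence $J^1_u(J^{n-1}_u)=A\otimes A\otimes-=J^1_u(J^{n-2}_u)$, and the first structure map $J^1_u(\pi^{n-1,n-2}_u)$ of the fibre product is the identity.

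Next I would evaluate the fibre product with this input. When one leg of a pullback is an identity, the pullback degenerates to the graph of the other leg, so the sesquiholonomic inclusion $\hat J^n_u\hookrightarrow J^1_u(J^{n-1}_u)$ has image exactly $\operatorname{im}(l^{n-1}_u)=\{\,a\otimes 1\otimes e\,\}\subseteq A\otimes A\otimes-$. On the other hand, Theorem \ref{theo:universal_higher_jets} identifies the holonomic inclusion $l^n_u=l^{1,n-1}_u\colon J^n_u\to J^1_u(J^{n-1}_u)$ with the very same assignment $a\otimes e\mapsto a\otimes 1\otimes e$, so it has the same image. Since both $J^n_u\hookrightarrow J^1_u(J^{n-1}_u)$ and $\hat J^n_u\hookrightarrow J^1_u(J^{n-1}_u)$ are monomorphisms with identical image, the canonical inclusion $J^n_u\hookrightarrow\hat J^n_u$ is an isomorphism, for every $E$ in $\AModB$ and every $n\ge 3$; this is the assertion. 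Naturality is automatic, as all maps involved are the natural transformations produced by Theorem \ref{theo:universal_higher_jets}.

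The hard part is bookkeeping rather than mathematics: one must match the index conventions of \cite{FMW} for the sesquiholonomic construction (confirming that the inner jet is the genuinely holonomic $J^{n-1}_u$ and that the compatibility leg is $J^1_u(\pi^{n-1,n-2}_u)$, not a multiplication projection), and verify that the degeneration of the pullback respects the holonomic inclusion rather than merely yielding an abstract isomorphism — both settled by the explicit formulae for $l^n_u$, $\iota_{J^n_u}$ and $\pi^{n,n-1}_u$ in Theorem \ref{theo:universal_higher_jets}. It is worth flagging that the bound $n\ge 3$ is sharp in this argument: for $n=2$ the relevant projection is $\pi^{1,0}_u$, which is the multiplication and not the identity, so the degeneration fails — though there sesquiholonomic, semiholonomic and holonomic jets already coincide for trivial reasons, so the statement loses no content.
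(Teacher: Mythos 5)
Your computation is routed through a faulty premise: the description of the sesquiholonomic jet that you ``recall'' is not the definition used in \cite{FMW}, and it is not equivalent to it. The fibre product $J^1_d(J^{n-1}_d)\times_{J^1_d(J^{n-2}_d)}J^{n-1}_d$ along $J^1_d(\pi^{n-1,n-2}_d)$ and $l^{n-1}_d$ drops the requirement that the $J^{n-1}_d$-component be the image of the $J^1_dJ^{n-1}_d$-component under $\pi^{1,0}_{d,J^{n-1}_d}$. The correct object is the \emph{equalizer} of $J^1_d(\pi^{n-1,n-2}_d)$ and $l^{n-1}_d\circ\pi^{1,0}_{d,J^{n-1}_d}$, equivalently the kernel of $\widetilde{\DH}^{I}_{d,J^{n-2}_d}\circ J^1_d(l^{n-1}_d)$ (only the first component of the map whose full kernel is $J^n_d$). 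These genuinely differ: for any $0\ne\sigma\in S^{n-1}_d(E)$ the pair $\bigl(j^1_{d,J^{n-1}_dE}(\sigma),0\bigr)$ satisfies your fibre-product condition, since $J^1_d(\pi^{n-1,n-2}_d)(j^1(\sigma))=j^1(0)=0=l^{n-1}_d(0)$, yet $j^1(\sigma)$ is not even semiholonomic; and at $n=2$ your recipe returns all of $J^1_dJ^1_d$ rather than $J^{[2]}_d$, so your claimed chain $\hat J^n_d\subseteq J^{[n]}_d$ already fails for your object. In the universal setting the error happens to be invisible, because for $n\ge 3$ one has $J^1_u(\pi^{n-1,n-2}_u)=\id$ and $S^{n-1}_u=0$, so pullback and equalizer coincide and both reduce to $\im(l^{n-1}_u)=\{a\otimes 1\otimes e\}=\im(l^n_u)$ --- which is why your final identification is correct. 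The patch is small: either run your degeneration argument on the equalizer (the condition $\xi=l^{n-1}_u(\pi^{1,0}_{u,J^{n-1}_u}(\xi))$ with identity first leg cuts out exactly $\im(l^{n-1}_u)$, using $\pi^{1,0}_{u,J^{n-2}_u}\circ l^{n-1}_u=\pi^{n-1,n-2}_u=\id$), or keep your pullback but add the sandwich $J^n_u\subseteq\hat J^n_u\subseteq(\text{your pullback})$ and conclude from your computation that the outer terms agree. For comparison, the paper's proof is a one-line reduction: in the induction step of Theorem \ref{theo:universal_higher_jets} for $n>2$, surjectivity of $l^n_{u,E}$ onto the kernel used only $\widetilde{\DH}^{I}_d=0$, so the kernel of the first component alone --- the sesquiholonomic jet --- is already $\im(l^n_{u,E})=J^n_uE$; your argument is the same degeneracy, externalized.

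A secondary error: your closing remark that for $n=2$ ``sesquiholonomic, semiholonomic and holonomic jets already coincide for trivial reasons'' is false, and it is precisely why the corollary requires $n\ge 3$. The sesquiholonomic $2$-jet is $J^{[2]}_u=\ker(\widetilde{\DH}^{I}_u)$, and one checks directly that $d_ua\otimes_A d_ub\otimes_A e$ is killed by $\widetilde{\DH}^{I}_u$ but maps to $d_ua\wedge d_ub\otimes_A e$ under $\widetilde{\DH}^{II}_u$; hence $J^{[2]}_uE$ contains $\Omega^1_u\otimes_A\Omega^1_u\otimes_A E$ and strictly contains $J^2_uE$ whenever $\Omega^2_u(E)\ne 0$. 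This matches the proof of Theorem \ref{theo:universal_higher_jets}, where the $n=2$ case genuinely needs both components \eqref{eq:DHI-universal} and \eqref{eq:DHII-universal}, while only the first is needed for $n>2$.
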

\begin{proof}
It follows from the fact that in the proof of Theorem \ref{theo:universal_higher_jets}, for $n>2$, in order to prove surjectivity of $l^n_{u,E}$ on the kernel of \eqref{eq:universal_jet_higher_edh} we only used $\widetilde{\DH}^I_d=0$.
\end{proof}

Since the universal (holonomic) jets stabilize after degree $1$, we immediately have
\begin{prop}
The infinity universal jet is $J^\infty_u=A\otimes -\colon \AModB\to \AModB$, with
\begin{align}
\pi^{\infty,0}_u=\cdot,
&\hfill&
\pi^{\infty,n}_u=\id_{A\otimes -},
&\hfill&
j^\infty_u=1\otimes -,
&\hfill&
\forall n\ge 1.
\end{align}
\end{prop}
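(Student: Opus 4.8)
The plan is to read the result directly off Theorem \ref{theo:universal_higher_jets}, which shows that the holonomic jet tower for the universal exterior algebra degenerates after the first degree. Concretely, the tower defining $J^\infty_u$ is
\begin{equation*}
\cdots \xrightarrow{\pi^{n+1,n}_u} J^n_u \xrightarrow{\pi^{n,n-1}_u} \cdots \xrightarrow{\pi^{2,1}_u} J^1_u \xrightarrow{\pi^{1,0}_u} J^0_u = \id_{\AModB},
\end{equation*}
and by Theorem \ref{theo:universal_higher_jets} we have $J^n_u = A\otimes -$ for all $n\ge 1$, with $\pi^{n,n-1}_u = \id_{A\otimes -}$ for every $n\ge 2$ and $\pi^{1,0}_u$ equal to the multiplication map.

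First I would observe that the subsystem indexed by $n\ge 1$ is constant: every transition map is the identity of $A\otimes -$. Hence the cone with apex $A\otimes -$ whose leg to $J^n_u$ is $\id_{A\otimes -}$ for $n\ge 1$ and whose leg to $J^0_u$ is the multiplication $\pi^{1,0}_u$ is a limiting cone. Indeed, for any competing cone $(\psi_n)_n$ with $\psi_n\colon X\to A\otimes -$, commutativity with the identity transition maps forces $\psi_n = \psi_1$ for all $n\ge 1$, so the unique factorization through the apex is $\psi_1$ itself. This identifies $J^\infty_u = \lim_{n} J^n_u$ with $A\otimes -$ and at the same time yields $\pi^{\infty,n}_u = \id_{A\otimes -}$ for all $n\ge 1$.

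The remaining data then follow from the defining compatibility relations for the $\infty$-jet. For the bottom projection I would compute $\pi^{\infty,0}_u = \pi^{1,0}_u\circ \pi^{\infty,1}_u = \cdot\circ\id = \cdot$, i.e.\ the multiplication map. For the prolongation, the relation $\pi^{\infty,n}_u\circ j^\infty_u = j^n_u$ together with $\pi^{\infty,n}_u = \id$ for $n\ge 1$ gives $j^\infty_u = j^n_u = 1\otimes -$ for any $n\ge 1$, using the value of $j^n_u$ recorded in Theorem \ref{theo:universal_higher_jets}.

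There is essentially no obstacle here, matching the ``we immediately have'' framing: the single point deserving care is the elementary categorical fact that the limit of an inverse system whose transition maps are eventually isomorphisms coincides with the stable term. Everything else is direct substitution of the values established in Theorem \ref{theo:universal_higher_jets}.
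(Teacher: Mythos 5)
Your proposal is correct and matches the paper exactly: the paper offers no written proof beyond the remark that the universal jets stabilize after degree $1$, and your argument is precisely the expansion of that observation (an inverse system with eventually-identity transition maps has the stable term as its limit, after which the values of $\pi^{\infty,0}_u$, $\pi^{\infty,n}_u$, and $j^\infty_u$ follow by substituting the data from Theorem \ref{theo:universal_higher_jets} into the limit-cone compatibility relations). Nothing is missing, and your care about the leg at $n=0$ being forced by $\pi^{\infty,0}_u=\pi^{1,0}_u\circ\pi^{\infty,1}_u$ is exactly the right level of detail.
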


For completeness, we mention the following result concerning differential operators with respect to the universal exterior algebra.
\begin{prop}\label{prop:universal_DOs}
Given an associative unital algebra $A$ equipped with its universal exterior algebra, the differential operators are as follows:
\begin{align}
\Diff^0_u(E,F)=\AHom(E,F),
&\hfill&
\Diff^n_u(E,F)=\Hom(E,F),
&\hfill&
\text{for }n>0.
\end{align}
\end{prop}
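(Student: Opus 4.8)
The plan is to reduce everything to the identification of the universal jet functors obtained in Theorem \ref{theo:universal_higher_jets}, combined with the representability of universal differential operators established in Corollary \ref{cor:universal_representability}. Recall that the latter tells us that, for the universal exterior algebra, the composition map $-\circ j^n_{u,E}$ induces a natural isomorphism $\AHom(J^n_u E,F)\simeq \Diff^n_u(E,F)$ for every $n$, since $\widehat{p}^n_{u,E}$ is always surjective. Thus it suffices to compute $\AHom(J^n_u E,F)$ using the explicit form of $J^n_u$ and to track the identification through $-\circ j^n_{u,E}$.

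First I would dispose of the case $n=0$. Here $J^0_u=\id_{\AModB}$ and $j^0_{u,E}=\id_E$, so the representing isomorphism degenerates to the statement that a $\bk$-linear map lifting a map through $\id_E$ is simply an $A$-linear map; this is the general order-zero fact that $\Diff^0_d(E,F)=\AHom(E,F)$ for any exterior algebra, and in particular for $\Omega^\bullet_u$. For $n\ge 1$, Theorem \ref{theo:universal_higher_jets} gives $J^n_u E=A\otimes E$ with prolongation $j^n_{u,E}(e)=1\otimes e$. Feeding this into the representing isomorphism, I would invoke the extension--restriction-of-scalars adjunction $\AHom(A\otimes E,F)\simeq \Hom(E,F)$ already used in the proof of Proposition \ref{prop:zero_lift_representable}, under which precomposition with $1\otimes-$ is precisely the adjunction isomorphism. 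Composing the two identifications shows that $-\circ j^n_{u,E}$ identifies $\Diff^n_u(E,F)$ with all of $\Hom(E,F)$. Concretely, every $\bk$-linear map $\Delta\colon E\to F$ is a universal differential operator of order at most $n$, its unique lift being $\widetilde{\Delta}\colon A\otimes E\to F$, $a\otimes e\mapsto a\Delta(e)$, while conversely every differential operator is $\bk$-linear by definition; hence $\Diff^n_u(E,F)=\Hom(E,F)$.

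There is essentially no serious obstacle here, as all the substance is carried by Theorem \ref{theo:universal_higher_jets}. The only point requiring care is that the regimes $n=0$ and $n\ge 1$ genuinely produce different answers, which is traceable to the jump from $J^0_u=\id_{\AModB}$ to $J^n_u=A\otimes-$. An equally short alternative, which avoids invoking the adjunction explicitly, is to observe that $N^n_u=0$ for all $n>0$ by Remark \ref{rmk:prosphat}, so that the defining condition \eqref{eq:criterion_WDO} of Proposition \ref{prop:criterion_WDO} becomes vacuous; since $\Diff^n_u$ is representable, part \eqref{prop:criterion_WDO:3} of that proposition then forces every $\bk$-linear map to be a universal (hence elemental) differential operator of order at most $n$, recovering $\Diff^n_u(E,F)=\Hom(E,F)$ once more.
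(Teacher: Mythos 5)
Your argument is correct, but it takes a different route from the paper. The paper's own proof is a two-line sandwich: it cites from \cite{FMW} the order-zero fact ($\Diff^0_d(E,F)=\AHom(E,F)$ for any calculus), the first-order universal fact ($\Diff^1_u(E,F)=\Hom(E,F)$, i.e.\ \cite[\jetsproponediffuni]{FMW}), and the filtration $\Diff^1_u(E,F)\subseteq\Diff^n_u(E,F)\subseteq\Hom(E,F)$ of \cite[\jetseqDOfiltration]{FMW}, which immediately pins $\Diff^n_u(E,F)=\Hom(E,F)$ for all $n\ge 1$ without ever touching the higher universal jet modules. You instead compute directly for each $n$: Theorem \ref{theo:universal_higher_jets} gives $J^n_u E=A\otimes E$ with $j^n_{u,E}=1\otimes-$, Corollary \ref{cor:universal_representability} gives the representing isomorphism $-\circ j^n_{u,E}\colon\AHom(J^n_u E,F)\simeq\Diff^n_u(E,F)$, and the extension--restriction adjunction identifies $\AHom(A\otimes E,F)\simeq\Hom(E,F)$; you correctly note that these two identifications are literally the same map, which is the one point requiring care in this route. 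Your approach costs more machinery (the full appendix computation of $J^n_u$ plus representability, whereas the paper's sandwich needs only the order-$\le 1$ results), but it buys an explicit description of the unique lift $a\otimes e\mapsto a\Delta(e)$ and makes the mechanism transparent rather than order-by-order monotone. Your alternative via $N^n_u=0$ and Proposition \ref{prop:criterion_WDO}.\eqref{prop:criterion_WDO:3} is also sound and free of circularity: that criterion rests on Corollary \ref{cor:representability_iff_WDO=DO} and Proposition \ref{prop:elemental_jets_true_representing}, neither of which depends on the present proposition (only Proposition \ref{prop:properties_elemental_DOs}.\eqref{prop:properties_elemental_DOs:6} cites it, and you do not use that point). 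All three arguments are valid; the paper's is the most economical, yours the most self-contained within this paper's own results.
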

\begin{proof}
It follows directly from the following results: \cite[\jetsproponediffuni]{FMW}, \cite[\jetssecDOzero]{FMW}, and \cite[\jetseqDOfiltration]{FMW}.
\end{proof}
This shows once more that the universal calculus does not provide any extra differential information beyond order zero.

\bibliography{../Bibliography}
\bibliographystyle{alpha}
\end{document}